\DeclareFontFamily{OT1}{pzc}{}
\DeclareFontShape{OT1}{pzc}{m}{it}{<-> s * [1.10] pzcmi7t}{}
\DeclareMathAlphabet{\mathpzc}{OT1}{pzc}{m}{it}
\newtheorem{theorem}{Theorem}[section]
\newtheorem{lemma}[theorem]{Lemma}
\newtheorem{proposition}[theorem]{Proposition}
\newtheorem{remark}[theorem]{Remark}
\newtheorem{example}[theorem]{Example}
\newenvironment{Remark}{\begin{remark}\rm}{\end{remark}}
\newtheorem{assmp}{Assumption}
\newcommand{\ang}[1]{\ensuremath{ \left< #1 \right> }}
\newcommand{\E}{\mathbb{E}}
\newcommand{\N}{\mathbb{N}}
\newcommand{\R}{\mathbb{R}}
\newcommand{\Prob}{\mathbb{P}}
\newcommand{\Poly}{\mathscr{P}} 
\renewcommand{\P}{\mathcal{P}} 
\newcommand{\V}{\mathcal{V}} 
\newcommand{\calM}{\mathcal{M}} 
\newcommand{\calD}{\mathcal{D}}
\newcommand{\calI}{\mathds{I}}
\newcommand{\pab}[1][\alpha,\beta]{\mathrm{p}^{(#1)}}
\newcommand{\dpt}{\bm{d}_{\vat,\epsilon}}
\newcommand{\dph}{(d_\phi+1-\epsilon)}
\newcommand{\dz}{{\bm{d}}_{1,\epsilon}}
\newcommand{\h}{\hslash}
\newcommand{\Var}{\operatorname{Var}}
\newcommand{\Ent}{\operatorname{Ent}}
\newcommand{\Ran}{\operatorname{Ran}}
\newcommand{\Ker}{\operatorname{Ker}}
\newcommand{\Iph}{\widehat{\mathcal{I}}_{\phi}}
\newcommand{\Lv}{\lt^2(\vartheta_{\alpha})}
\newcommand{\lt}{{\rm{L}}}
\newcommand{\Rc}{\mathbf{R}}
\newcommand{\Em}{\mathrm{E}}
\newcommand{\calL}{\mathds{L}}
\newcommand{\calLd}{\mathbf{L}}
\newcommand{\calJ}{\mathds{J}} 
\newcommand{\calJd}{\mathbf{J}}
\newcommand{\Nu}{\mathcal{V}}
\newcommand{\m}{\mathfrak{m}}
\newcommand{\mo}{{\mu}}
\newcommand{\Bo}[1]{\mathcal{B}(#1)} 
\newcommand{\Bop}[2]{\mathcal{B}(#1,#2)} 
\newcommand{\lam}{{\bm{\lambda}_1}} 
\newcommand{\tet}{{\mathsf{r}_0}} 
\newcommand{\vat}{{\mathsf{r}_1}} 
\newcommand{\phm}{{\phi_\m^*}}
\renewcommand{\varphi}{{\phi_{\vat}^{\checkmark}}}
\newcommand{\C}{{\mathrm{C}}}
\newcommand{\Lcom}[2]{\lt^{#1}(#2)}
\newcommand{\mm}{{\bm{\delta}}}
\newcommand{\logSob}[1][\mo]{\bm{\lambda}_{\mathrm{logS}}^{\hspace{-0.06cm}(#1)}}
\newcommand{\J}{\mathds{Q}} 
\newcommand{\Q}[1][\mo]{\mathbf{Q}^{({#1})}}
\newcommand{\gab}[1][\lam,\mo]{{\beta_{#1}}}
\newcommand{\gabn}{{b}_{\alpha,\beta+n}}
\newcommand{\bpsi}{\mathscr{\beta}} 
\newcommand{\lga}{\lt^2(\gab)}
\newcommand{\Leb}{\mathrm{L}} 
\newcommand{\Mg}{\mathcal{M}_{\bpsi}}
\renewcommand{\leq}{\leqslant} 
\renewcommand{\geq}{\geqslant}
\newcommand{\norm}[1]{\left\lVert#1\right\rVert}
\numberwithin{equation}{section}
\author{P.~Cheridito}
\address{Department of Mathematics, ETH Zurich, Rämistrasse 101, 8092 Zurich, Switzerland.}
\email{patrick.cheridito@math.ethz.ch}
\author{P.~Patie}
\address{School of Operations Research and Information Engineering, Cornell University, Ithaca, NY 14853.}
\email{pp396@cornell.edu}
\author{A.~Srapionyan}
\address{Center for Applied Mathematics, Cornell University, Ithaca, NY 14853.}
\email{as3348@cornell.edu}
\author{A.~Vaidyanathan}
\address{Center for Applied Mathematics, Cornell University, Ithaca, NY 14853.}
\email{av395@cornell.edu}
\subjclass[2010]{37A30, 47D06, 47G20, 60J75}
\keywords{Markov semigroups, spectral theory, non-self-adjoint operators,
convergence to equilibrium, hypercontractivity, ultracontractivity, heat kernel estimates}
\thanks{The authors are grateful to Paul Jenkins and Soumik Pal for fruitful discussions, and they
would like to thank the anonymous referees for valuable comments.}
\crefname{assmp}{Assumption}{Assumptions}
\title{On non-local ergodic Jacobi semigroups: spectral theory, convergence-to-equilibrium and contractivity}
\begin{document}

\begin{abstract}
In this paper, we introduce and study non-local Jacobi operators, which generalize the classical (local) Jacobi operators. We show that these operators extend to generators of  ergodic Markov semigroups with  unique invariant probability measures and study their spectral and convergence properties. In particular, we derive a series expansion of the semigroup in terms of explicitly defined polynomials, which generalize the classical Jacobi orthogonal polynomials. In addition, we give a complete characterization of the spectrum of the non-self-adjoint generator and semigroup. We show that the variance decay of the semigroup is hypocoercive with explicit constants, which provides a natural generalization of the spectral gap estimate. After a random warm-up time, the semigroup also decays exponentially in entropy and is both hypercontractive and ultracontractive. Our proofs hinge on the development of commutation identities, known as intertwining relations, between local and non-local Jacobi operators and semigroups, with the local objects serving as reference points for transferring properties
from the local to the non-local case.
\end{abstract}

\maketitle

\section{Introduction}

In this paper we study the non-local Jacobi operators, given for suitable functions $f \colon [0,1] \to \R$, by
\begin{equation}
\label{eq:defJ}
\calJ f(x) = \calJd_\mo f(x) - h \diamond f'(x),
\end{equation}
where $\calJd_\mo$ is the classical Jacobi operator
\begin{equation*}
\calJd_\mo f(x)= x (1-x)f''(x)-\left(\lam x-\mo\right)f'(x),
\end{equation*}
$h \colon (1,\infty) \to [0,\infty)$ and $\lam, \mo \in (0, \infty)$ satisfy \Cref{assA} below,
and for two functions $f \colon D_f \to \R$ and $g \colon D_g \to \R$ with domains $D_f, D_g \subseteq [0, \infty)$,
we denote by $f \diamond g$ the product convolution given by
\begin{equation} \label{eq:defd}
f  \diamond g(x) = \int_0^{\infty} f(r) \bm{1}_{D_f}(r) g \left(\frac{x}{r} \right) \bm{1}_{D_g} \left(\frac{x}{r} \right) \frac{1}{r} dr.
\end{equation}
The classical Jacobi operator is a central object in the study of Markovian diffusions. For instance, it is a model candidate for testing functional inequalities such as the Sobolev and log-Sobolev inequalities; see for instance, Bakry \cite{Bakry-Jacobi} and Fontenas~\cite{fontenas:1998}. If $\mo = \frac{\lam}{2} = n$, an integer, there exists a homeomorphism between this particular Jacobi operator and the radial part of the Laplace--Beltrami operator on the $n$-sphere, revealing connections to diffusions on higher-dimensional manifolds that, in particular, lead to a curvature-dimension inequality as described in~Bakry et al.~\cite[Chapter 2.7]{Bakry_Book}. From the spectral theory viewpoint, the Markov semigroup $\Q[\mo] = (e^{t\calJd_\mo})_{t \geq 0}$ is diagonalizable with respect to an orthonormal, polynomial basis of $\Leb^2(\gab[\mo])$, where $\gab[\mo]$ denotes its unique invariant probability measure. As a consequence, it can be shown that the semigroup $\Q[\mo]$ converges to equilibrium in different ways, such as in variance or entropy, and is both hypercontractive as well as ultracontractive; see \Cref{sec:classical-Jacobi}, where we review essential facts about the classical Jacobi operator, semigroup and process. Classical
Jacobi processes have been used in applications such as population genetics under the name Wright--Fisher diffusion, see e.g., Ethier and Kurtz \cite[Chapter 10]{EthierKurtz}, Demni and Zani \cite{dem}, Griffiths et al.~\cite{griffiths:2010,griffiths2018}, Huillet \cite{huillet2007wright} or Pal~\cite{pal2013},
as well as in finance; see e.g., Delbaen and Shirikawa \cite{delbaen2002interest} or Gourieroux and Jasiak \cite{gourieroux2006}.

Due to the non-local part of $\calJ$ and its non-self-adjointness as a densely defined and closed operator in $\Leb^2(\bpsi)$ with $\bpsi$ denoting the invariant measure of the corresponding semigroup, a fact that is proved below, the traditional techniques that are used to study $\calJd_\mo$ seem out of reach. Nevertheless, our investigation of $\calJ$ yields generalizations of the
classical results mentioned above. An important ingredient of our approach is the notion of an intertwining relation, which is a type of commutation relationship for linear operators. For fixed $\lam$ and parameters $\widetilde{\mo}$, $\overline{\mo}$ to be specified below, we develop identities of the form
\begin{equation} \label{intJ}
\calJ \Lambda = \Lambda \calJd_{\widetilde{\mo}} \quad \mbox{and} \quad V \calJ = \calJd_{\overline{\mo}} V
\end{equation}
on the space of polynomials as well as
\begin{equation} \label{intQ}
\J_t \Lambda = \Lambda \Q[\widetilde{\mo}]_t \quad \mbox{and} \quad V\J_t = \Q[\overline{\mo}]_t V
\end{equation}
on $\Leb^2(\gab[\widetilde{\mo}])$ and $\Leb^2(\bpsi)$, respectively, where $\Lambda:\Leb^2(\gab[\widetilde{\mo}]) \to \Leb^2(\bpsi)$ and $V:\Leb^2(\bpsi) \to \Leb^2(\gab[\overline{\mo}])$ are bounded linear operators.
While \eqref{intJ} allows us to show that $\calJ$ generates an ergodic Markov semigroup $\J = (\J_t)_{t \geq 0}$
with unique invariant probability measure $\bpsi$, we use \eqref{intQ} to obtain
the spectral theory, convergence-to-equilibrium, hypercontractivity, and ultracontractivity estimates for $\J$.


The rest of the paper is organized as follows. The main results are stated in \Cref{sec:main-results}. All proofs are given in \Cref{sec:proofs}, and a specific family of non-local Jacobi semigroups is considered in \Cref{sec:examples}.
Known results on classical Jacobi operators, semigroups and processes are collected in \Cref{sec:classical-Jacobi}.






\section{Main results on non-local Jacobi operators and semigroups} \label{sec:main-results}

In this section we state our main results concerning the non-local operator $\calJ$ defined in \eqref{eq:defJ}.
Throughout the paper we make the following
\begin{assmp}
\label{assA}
The function $h \colon (1,\infty) \to [0,\infty)$ is assumed to be $0$ outside of $(1,\infty)$ and to satisfy
$\h = \int_1^\infty h(r)dr < \infty$. Moreover, $\Pi(dr) = -(e^rh(e^r))'dr$ is a finite non-negative Radon
measure on $(0,\infty)$, and if $h \not\equiv 0$,
\begin{equation*}
\lam > \bm{1}_{\{\mo < 1+\h\}} + \mo \quad \text{and} \quad \mo > \h,
\end{equation*}
while otherwise, $\lam > \mo > 0$.
\end{assmp}

\subsection{Preliminaries and existence of Markov semigroup}
%


Anticipating Theorem \ref{thm:main-inter-L2} below, we already mention that the c\`adl\`ag realization of the Markov semigroup $\J$  has downward jumps from $x$ to $e^{-r}x$, $r,x > 0$, which occur at a frequency given by the L\'evy kernel $\Pi(dr)/x$; see Lemma \ref{lem:rewriting-stuff}. Also note that for $h \not\equiv 0$, we have $\h >0$ and therefore, $\lam >1$. Next, we consider the convex twice differentiable and eventually increasing function $\Psi \colon [0,\infty) \to \R$ given by
\begin{equation}
\label{eq:defpsi}
\Psi(u) = u^2 + (\mo-\h-1)u + u \int_1^\infty (1-r^{-u})h(r)dr,
\end{equation}
which is easily seen to always have 0 as a root, and has a root $\mathsf{r} > 0$ if and only if $\mo < 1 + \h$. Set
\begin{equation}
\label{eq:def-root}
\tet = \mathsf{r}\bm{1}_{\{\mo < 1+\h\}} \quad \text{and} \quad \vat = 1-\tet,
\end{equation}
and define $\phi \colon [0,\infty) \to [0,\infty)$ by
\begin{equation}
\label{eq:def-phi}
\phi(u) = \frac{\Psi(u)}{u-\tet}.
\end{equation}
For instance, if $\tet = 0$, then
\begin{equation*}
\phi(u) = u + (\mo-1-\h) + \int_1^\infty (1-r^{-u})h(r)dr,
\end{equation*}
and we note that  $\calJ$ (resp.~$\phi$) is  uniquely determined by $\lam$, $\mo$ and $h$ (resp.~$\mo$ and $h$), so that for fixed $\lam$, there is a one-to-one correspondence between $\calJ$ and $\phi$. As we show in Lemma \ref{lem:phi-Psi1-claims} below, $\phi$ is a Bernstein function; that is, $\phi \colon [0,\infty) \to [0,\infty)$ is continuous, infinitely differentiable on $(0, \infty)$, and $(-1)^{n+1}\frac{d^n }{du^n}\phi(u)\geq 0$ for all $n = 1,2,\ldots$ and $u > 0$; see Bertoin \cite{Bertoin-s} or Schilling et al.~\cite{SchillingSongVondracek10} for a thorough exposition of Bernstein functions and subordinators.
As a Bernstein function, $\phi$ admits an analytic extension to the right half-plane $\{z \in \mathbb{C} : \Re(z) > 0\}$; see e.g., Patie and Savov~\cite[Chapter 4]{Patie-Savov-GeL}. We write $W_{\phi}$ for the unique solution, in the space of positive definite functions, to the functional equation
\begin{equation*}
W_{\phi}(z+1)=\phi(z)W_{\phi}(z), \quad \Re(z)>0,
\end{equation*}
with $W_{\phi}(1)=1$; see Patie and Savov~\cite{Patie-Savov-Bern} for a thorough account on this set of functions that generalize the gamma function, which appears as a special case if $\phi(z)=z$. In particular, for any
$n\in \N = \{0,1,2, \dots\}$,
\begin{equation}
\label{eq:product-Wphi}
W_{\phi}(n+1)=\prod_{k=1}^n \phi(k)
\end{equation}
with the convention $\prod_{k=1}^0 \phi(k) = 1$.


Let $C[0,1]$ be the Banach space of continuous functions $f \colon [0,1] \to \R$ equipped with the
sup-norm $|| \cdot ||_\infty$, and denote by $C^k[0,1]$ the subspace of $k$ times continuously
differentiable functions with $C^\infty[0,1] = \bigcap_{k=0}^\infty C^k[0,1]$. We call a one-parameter semigroup
$\J = (\J_t)_{t \geq 0}$ of linear operators on $C[0,1]$ a Markov semigroup if for all $f \in C[0,1]$ and $t \ge 0$,
$\J_t \bm{1}_{[0,1]}  = \bm{1}_{[0,1]}$, $\J_t f \geq 0$ if $f \geq 0$, $||\J_t f||_\infty \leq ||f||_\infty$
and $\lim_{t\to 0} ||\J_t f - f||_\infty = 0$. 
A probability measure $\bpsi$ on $[0,1]$ is invariant for $\J$ if for all $f \in C[0,1])$ and $t \ge 0$,
\begin{equation*}
\bpsi [ \J_t f ] = \bpsi[f] =\int_0^1 f(y)\bpsi(dy),
\end{equation*}
where the last equality serves as the definition of $\bpsi[f]$. It is then classical, see either Bakry et al.~\cite{Bakry_Book} or Da Prato \cite{da-prato:2006}, that a Markov semigroup on $C[0,1]$ with an invariant probability measure $\bpsi$
can be extended to the weighted Hilbert space
\begin{equation*}
\Leb^2(\bpsi) = \left\lbrace f \colon [0,1] \to \R \textrm{ measurable with } \bpsi[f^2] <\infty \right\rbrace.
\end{equation*}
Such a semigroup is said to be ergodic if, for every $f \in \Leb^2(\bpsi)$, $\lim_{T \to \infty} \frac{1}{T} \int_0^T \J_t f dt = \bpsi[f]$ in the $\Leb^2(\bpsi)$-norm.


Next, for complex numbers $x,a$ such that $a+x\neq -n, n\in\N$, we denote by $(a)_x$ the Pochhammer symbol
given by
\begin{equation} \label{eq:defPoc}
(a)_x = \frac{\Gamma(a+x)}{\Gamma(a)}.
\end{equation}
Writing $\mathscr{P}$ for the algebra of polynomials on $[0,1]$ and denoting $p_n(x) = x^n$, $x \in [0,1]$,
we formally define
\begin{equation}
\label{eq:mom-bpn}
\bpsi[p_n] = \frac{(\vat)_n}{(\lam)_n}\frac{W_{\phi}(n+1)}{n!} \quad \mbox{for } n \in \N,
\end{equation}
and note that in Lemma \ref{lem:phi-Psi1-claims} we show that $\vat \in (0,1]$. Recall that a sequence is said to be Stieltjes moment determinate if it is the moment sequence of a unique probability measure on $[0,\infty)$. Our first main result provides the existence of an ergodic Markov semigroup generated by the non-local Jacobi operator $\calJ$.

\begin{theorem} \label{thm:main-inter-L2} $\mbox{}$\\[-5mm]
\begin{enumerate}[\rm (i)]
\item \label{item-1:main-inter-L2}
$(\bpsi [ p_n ])_{n \geq 0}$ is a Stieltjes moment determinate sequence, and the corresponding probability measure  $\bpsi$ is an absolutely continuous
 measure with support $[0,1]$ and has a continuous density that is positive on $(0,1)$.
\item \label{item-2:main-inter-L2} The extension of $\calJ$ to an operator on $\Leb^2(\bpsi)$, still denoted by $\calJ$, is the infinitesmal generator, having $\Poly$ as a core, of an ergodic Markov semigroup $\J = (\J_t)_{t \geq 0}$ on $\Leb^2(\bpsi)$ whose unique invariant measure is $\bpsi$.
\end{enumerate}
\end{theorem}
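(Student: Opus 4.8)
The plan is to extract everything from the action of $\calJ$ on the polynomial algebra $\Poly$ together with the Bernstein-function machinery recalled above. The preliminary computation, obtained directly from \eqref{eq:defd}, \eqref{eq:defpsi} and \eqref{eq:def-phi}, is that $\calJ$ preserves $\Poly$ and is triangular in the monomial basis,
\begin{equation*}
\calJ p_n \;=\; \Psi(n)\,p_{n-1}\;-\;\alpha_n\,p_n,\qquad \alpha_n:=n(n+\lam-1),\ n\in\N,
\end{equation*}
with $\alpha_0=0$ and $(\alpha_n)_{n\ge1}$ strictly increasing and strictly positive (recall $\lam>\mo>0$, and $\lam>1$ if $h\not\equiv0$). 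Hence the numbers $\bpsi[p_n]$ prescribed in \eqref{eq:mom-bpn} are exactly the unique solution of the recursion $\alpha_n\bpsi[p_n]=\Psi(n)\bpsi[p_{n-1}]$ with $\bpsi[p_0]=1$; writing $\Psi(k)=(k-\tet)\phi(k)$ and invoking \eqref{eq:product-Wphi} recovers \eqref{eq:mom-bpn}, and this recursion is precisely the identity $\bpsi[\calJ p_n]=0$ that will later make $\bpsi$ invariant.

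For part (i) I would first show that $(\bpsi[p_n])_{n\ge0}$ is a Stieltjes moment sequence by identifying $\bpsi$ as the distribution of a product $B\,Y$ of two independent $[0,1]$-valued random variables. The factor $(\vat)_n/(\lam)_n$ is the moment sequence of a Beta$(\vat,\lam-\vat)$ law $B$, which is admissible since $\vat\in(0,1]$ by \Cref{lem:phi-Psi1-claims} and $\lam>\vat$ under \Cref{assA}; the complementary factor, renormalised against a Pochhammer symbol, is the moment sequence of an explicit $[0,1]$-valued variable $Y$ built from $W_\phi$ and of ``generalised beta'' type — the cleanest way to see this, and the conceptual source of the factorisation, being the intertwining \eqref{intJ}, under which $\bpsi$ is the image of the classical Jacobi invariant Beta law $\gab[\widetilde{\mo}]$ through the associated Markov kernel $\Lambda$. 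Stieltjes moment determinacy is then automatic: the moments are bounded by $1$, so Carleman's criterion $\sum_n(\bpsi[p_n])^{-1/(2n)}=\infty$ holds and no measure on $[0,\infty)$ other than $\bpsi$ carries these moments; and $\supp(\bpsi)=[0,1]$ follows from $\phi(u)/u\to1$ as $u\to\infty$ (so $\bpsi[p_n]^{1/n}\to1$) together with the product representation. For the absolute continuity and the continuous density that is strictly positive on $(0,1)$ I would pass to the Mellin transform, which for $\Re(s)>1-\vat$ has the closed form
\begin{equation*}
\Mg(s)\;=\;\frac{\Gamma(\lam)\,\Gamma(\vat+s-1)}{\Gamma(\vat)\,\Gamma(\lam+s-1)}\cdot\frac{W_\phi(s)}{\Gamma(s)},
\end{equation*}
and then use the decay of $W_\phi$ along vertical lines (Patie and Savov~\cite{Patie-Savov-Bern}), together with Stirling, to justify Mellin inversion along a suitable line; continuity of the resulting density on $(0,1)$ follows, and its strict positivity there follows again from the product representation since $B$ has full support $[0,1]$.

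For part (ii) I would construct the semigroup on $C[0,1]$ first and extend afterwards. Using \Cref{lem:rewriting-stuff} to rewrite $-h\diamond f'$ as the pure-jump operator $\int_{(0,\infty)}\bigl(f(e^{-r}x)-f(x)\bigr)\Pi(dr)/x$, one sees $\calJ$ as the classical Jacobi diffusion generator plus a nonnegative jump part, and verifies the positive maximum principle on $\Poly$: at an interior nonnegative maximum the diffusion part is $\le0$ and every jump increment is $\le0$, while at the endpoints $x=0$ and $x=1$ one uses, respectively, $\mo>\h\ge\int_1^\infty h(r)r^{-1}dr$ and $\lam>\mo$. Since $\lambda-\calJ$ restricts to a triangular bijection of $\Poly$ for every $\lambda>0$, its range is the dense set $\Poly\subset C[0,1]$, so by the Hille--Yosida--Ray theorem (see e.g.\ \cite[Chapter 4]{EthierKurtz}) the closure $\overline{\calJ|_\Poly}$ generates a positive strongly continuous contraction semigroup $\J$ on $C[0,1]$; it is conservative because $\calJ\bm{1}=0$, hence Markov, and admits $\Poly$ as a core. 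The measure $\bpsi$ of part (i) satisfies $\bpsi[\calJ p_n]=0$ for all $n$ by the recursion above, hence $\bpsi[\calJ f]=0$ on the core $\Poly$, so $\bpsi$ is invariant; conversely any invariant probability $\nu$ has $\nu[\calJ p_n]=0$, hence the same moments as $\bpsi$, hence $\nu=\bpsi$ because both are supported in the compact set $[0,1]$, which gives uniqueness. Finally, by the classical extension procedure recalled above, $\J$ extends to a strongly continuous contraction semigroup on $\Leb^2(\bpsi)$ having as a core the $\J$-invariant subspace $\Poly$ (dense by Weierstrass), and ergodicity follows from unique ergodicity: by the mean ergodic theorem $\frac1T\int_0^T\J_t f\,dt$ converges in $\Leb^2(\bpsi)$ to the orthogonal projection of $f$ onto the space of $\J$-invariant functions, and this projection coincides with $f\mapsto\bpsi[f]$ since the two agree on the dense subspace $C[0,1]$ by unique ergodicity of the Feller semigroup.

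I expect the real obstacle to lie in the regularity half of part (i) — the absolute continuity together with the continuity and strict positivity of the density — because the triangular recursion only pins down moments, and turning that into pointwise control of the density forces the quantitative analysis of $W_\phi$ on vertical strips and a careful Mellin-inversion argument; by contrast the moment-sequence, determinacy and support claims are soft, and the generation statement in part (ii) is a fairly routine positive-maximum-principle and Hille--Yosida--Ray argument once the non-local term has been recast in Lévy form.
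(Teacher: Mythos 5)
Your part (ii) is essentially sound, and in fact more elementary than the paper's route: you get density of $(q-\calJ)(\Poly)$ from the triangular action of $\calJ$ on monomials and you get invariance and uniqueness of $\bpsi$ directly from the moment recursion, whereas the paper channels both steps through the intertwining $\calJ\Lambda_{\phi}=\Lambda_{\phi}\calJd_{\vat}$ of Proposition \ref{prop:inter_P} and the factorization of Lemma \ref{lem:fact}; the only thing to add is a justification that $\J_t\Poly\subseteq\Poly$ (again triangularity, via the resolvent) when you claim $\Poly$ is a core in $\Leb^2(\bpsi)$. The genuine gap is in part (i), and it sits exactly where you declare the argument ``soft''. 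First, existence of a representing measure. Your split $\bpsi[p_n]=\frac{(\vat)_n}{(\lam)_n}\cdot\frac{W_\phi(n+1)}{n!}$ with a $[0,1]$-valued $Y$ carrying the second factor cannot work: when $\mo\geq 1+\h$ and $h\not\equiv 0$ one has $\phi(k)>k$ for all $k$, so $W_\phi(n+1)/n!=\prod_{k=1}^{n}\phi(k)/k$ grows like a power of $n$, and since its $n$-th root tends to $1$ no random variable whatsoever has these moments (polynomial growth forces support in $[0,1]$, contradicting unboundedness). The paper's split is the other one, $\frac{W_\phi(n+1)}{(\lam)_n}\cdot\frac{(\vat)_n}{n!}$, and the whole substance of part (i) is that the first factor, which equals $W_{\phi^*_{\lam}}(n+1)$, is a Stieltjes moment sequence; this uses that $u\mapsto\phi(u)/(u+\lam-1)$ is a Bernstein function together with the Berg--Dur\'an transformation, plus the auxiliary Bernstein function $\varphi$ when $\mo<1+\h$. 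Moreover your justification via \eqref{intJ} is oriented the wrong way: the intertwinings \eqref{intJ}--\eqref{intQ} yield $\bpsi\Lambda=\gab[\widetilde{\mo}]$, i.e.\ the beta law is the image of $\bpsi$ under $\Lambda$ (in random-variable terms $B_\phi\times X_\phi\stackrel{(d)}{=}B_{\vat}$), not $\bpsi$ as the image of $\gab[\widetilde{\mo}]$; the identity that does realize $\bpsi$ as a pushforward of a beta law is $\gab[\m]\mathrm{V}_{\phm}=\bpsi$ with the \emph{other} parameter $\m\in(\mo,\lam)$, and proving that $\Lambda_\phi$, $\mathrm{V}_{\phm}$ are genuine Markov multiplicative kernels is itself the nontrivial input (Lemma \ref{lem:Markov-kernel}, resting on Patie--Savov), so it cannot be invoked as a free consequence of \eqref{intJ}.

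Second, the regularity half. Mellin inversion to produce a continuous density requires integrability of $\calM_\bpsi$ along vertical lines; the available decay is $|\calM_\bpsi(a+ib)|\leq C|b|^{-\Delta}$ with $\Delta=\lam-\vat-\phi(0)-\h$ (Lemma \ref{lem:Mellin-bpsi}), and \Cref{assA} only guarantees $\Delta>0$ (for $\mo\geq1+\h$ one has $\Delta=\lam-\mo$, which can be made arbitrarily small), so in general you cannot even conclude an $\Leb^2$ density this way, let alone continuity; this is precisely why the paper imposes $\Delta>\frac12$ and $\Delta\geq2$ as extra hypotheses in Proposition \ref{prop:co-eigenfunctions} rather than in Theorem \ref{thm:main-inter-L2}. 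The paper instead derives absolute continuity, continuity and strict positivity of the density on $(0,1)$, and the full support $[0,1]$, from the factorization identities $B_{\m}\times X_{\phm}\stackrel{(d)}{=}B_\phi$ and $B_\phi\times X_\phi\stackrel{(d)}{=}B_{\vat}$ together with the continuity of the density of $X_{\phm}$ (or $X_\varphi$ when $\mo<1+\h$). Note also that your observation $\bpsi[p_n]^{1/n}\to1$ only identifies the supremum of the support, not that the support is the whole interval. So for part (i) you need to import the Bernstein--gamma/kernel machinery in the correct direction; the ``soft'' steps (determinacy via bounded moments, $\sup\mathrm{supp}=1$) are fine once existence is in place.
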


The proof of \eqref{item-2:main-inter-L2} makes use of an intertwining relation stated in Proposition \ref{prop:inter_P}, which is an original approach to showing that the assumptions of the Hille--Yosida--Ray Theorem are fulfilled; see Lemma \ref{lem:inter} for more details. More generally, the idea of constructing a new Markov semigroup by intertwining with a known, reference Markov semigroup goes back to Dynkin \cite{dynkin:1965}, whose ideas were extended by Rogers and Pitman in \cite{rogers:1981},
leading to the characterization of Markov functions; that is, measurable maps that preserve the Markov property. More recently, Borodin and Olshanski \cite{thoma-cone} also used intertwining relations combined with a limiting argument to construct a Markov process on the Thoma cone.


We also point out that the invariant measure $\bpsi$ is a natural extension of the beta distribution, which is recovered if $\phi(u) = u$, as in this case we have $W_\phi(n+1)=n!$ in \eqref{eq:mom-bpn}. The requirement in \Cref{assA} that $\Pi(dr) = -(e^rh(e^r))'dr$ be a finite measure is necessary for the existence of an invariant probability measure for $\J$. Indeed, as we illustrate in our proof of \Cref{thm:main-inter-L2}, any candidate for such a measure must have moments given by \eqref{eq:mom-bpn}. If $\Pi(dr) = -(e^rh(e^r))'dr$ is not a finite measure, then estimates by Patie and Savov in \cite[Theorem 3.3]{Patie-Savov-Bern} imply that the analytical extension of \eqref{eq:mom-bpn} to $\{z \in \mathbb{C} : \Re(z) > \vat\}$ is not bounded along imaginary lines, a necessary condition for $\beta$ to be a probability measure.


%

\subsection{Spectral theory of the Markov semigroup and generator}

We proceed by developing the $\Leb^2(\bpsi)$-spectral theory for both, the semigroup $\J$ and the operator $\calJ$. Recalling that, for fixed $\lam$, there is a one-to-one correspondence between $\calJ$ and the Bernstein function $\phi$ given in \eqref{eq:def-phi}, we define, for $n \in \N$, the polynomial $\P_n^\phi \colon [0,1] \to \R$ as
\begin{equation}
\label{eq:Jpolpsi}
\P_n^\phi (x)= \sqrt{\C_n(\vat)} \sum_{k=0}^n \frac{(-1)^{n+k}}{(n-k)!} \frac{(\lam-1)_{n+k}}{(\lam-1)_{n\phantom{+k}}} \frac{(\vat)_n}{(\vat)_k} \frac{x^k}{W_\phi(k+1)}
\end{equation}
where $\C_n(\vat)$ is given by
\begin{equation*}
\C_n(\vat)=(2n+\lam-1)\frac{n!(\lam)_{n-1}}{(\vat)_n(\lam-\vat)_n}.
\end{equation*}
Note that for $h \equiv 0$, we get $\Psi(u) = u(u-(1-\mo))$ in \eqref{eq:defpsi}, and the polynomials $(\P_n^\phi)_{n \geq 0}$ boil down, up to a normalizing constant if $\mu>1$, to the classical Jacobi  polynomials $(\P_n^{(\mo)})_{n \geq 0}$ reviewed in \Cref{sec:classical-Jacobi}. Let us denote by $\Rc_n$ the Rodrigues operator
\begin{equation}
\label{eq:def-Rodrigues}
\Rc_n f(x) = \frac{1}{n!} \frac{d^n}{dx^n}(x^n f(x))
\end{equation}
and set
\begin{equation} \label{Delta}
\Delta = \lam-\vat - (\mo-1)\bm{1}_{\{\mo \geq 1+\h\}} - \h\bm{1}_{\{\mo < 1+\h\}}.
\end{equation}
We write $\bpsi(dx) = \bpsi(x)dx$ for the density given in \Cref{thm:main-inter-L2}.\eqref{item-1:main-inter-L2}, and define,
for every $n \in \N$, the function $\gab[\lam+n,\mu] \colon (0,1) \to [0,\infty)$ as
\begin{equation*}
\gab[\lam+n,\mu](x) = \frac{\Gamma(\lam+n)}{\Gamma(\mu)\Gamma(\lam+n-\mu)} x^{\mu-1}(1-x)^{\lam+n-\mu-1}.
\end{equation*}
In particular, the function
\begin{equation*}
\gab[\lam+n,\lam](x) = \frac{(\lam)_n}{\Gamma(n)} x^{\lam-1}(1-x)^{n-1}
\end{equation*}
will be useful for us in the sequel. Let us denote by
$\Leb^2[0,1]$ the usual Lebesgue space of square-integrable functions on $[0,1]$.


\begin{proposition}
\label{prop:co-eigenfunctions}
Set $\V_0^\phi \equiv 1$, and define, for $n = 1,2,\ldots$, $\V_n^\phi: (0,1) \to \R$ as
\begin{equation}
\label{eq:v-n-rodrigues}
\V_n^\phi(x) = \frac{1}{\bpsi(x)} \frac{(\lam-\vat)_n}{(\lam)_n}\sqrt{\C_n(\vat)} \ \Rc_n (\gab[\lam+n,\lam] \diamond \bpsi) (x) = \frac{1}{\bpsi(x)}w_n(x),
\end{equation}
where $\diamond$ is the product convolution operator defined in \eqref{eq:defd}.
Then, $w_n \in C^\infty(0,1)$. Moreover, if $\Delta > \frac{1}{2}$, then $w_n \in \Leb^2[0,1]$, and if
$\Delta \geq 2$, then $\V_n^\phi \in C^{\lceil \Delta \rceil - 2}(0,1)$.
\end{proposition}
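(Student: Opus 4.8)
\emph{Explicit form.} Substituting $r\mapsto u=x/r$ in \eqref{eq:defd} and using $\gab[\lam+n,\lam](r)=\tfrac{(\lam)_n}{\Gamma(n)}r^{\lam-1}(1-r)^{n-1}\bm{1}_{(0,1)}(r)$ with $\supp\bpsi=[0,1]$, one gets, for $x\in(0,1)$ and $n\geq1$,
\[
(\gab[\lam+n,\lam]\diamond\bpsi)(x)=\frac{(\lam)_n}{\Gamma(n)}\,x^{\lam-1}G_n(x),\qquad G_n(x):=\int_x^1 u^{1-\lam-n}(u-x)^{n-1}\bpsi(u)\,du,
\]
hence $w_n=c_n\tfrac{(\lam)_n}{n!\,\Gamma(n)}\tfrac{d^n}{dx^n}\bigl(x^{\lam+n-1}G_n\bigr)$ with $c_n=\tfrac{(\lam-\vat)_n}{(\lam)_n}\sqrt{\C_n(\vat)}$. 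For the quantitative assertions I would instead use the Mellin transform $\widehat w_n(s):=\int_0^1x^{s-1}w_n(x)\,dx$: since $\diamond$ multiplies Mellin transforms and $\Rc_n$ multiplies them by $(1-s)_n/n!$, one finds $\widehat w_n(s)=\tfrac{(-1)^nc_n}{n!}\,\tfrac{W_\phi(s)}{\Gamma(s-n)}\,\tfrac{(\vat)_{s-1}}{(\lam+n)_{s-1}}$.

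\emph{Smoothness on $(0,1)$.} I would invoke $\bpsi\in C^\infty(0,1)$, a consequence of the Mellin--Barnes representation $\bpsi(x)=\tfrac1{2\pi i}\int_{(c)}\widehat\bpsi(s)x^{-s}\,ds$, with $\widehat\bpsi(s)=\tfrac{(\vat)_{s-1}}{(\lam)_{s-1}}W_\phi(s)/\Gamma(s)$, together with the rapid decay of $\widehat\bpsi$ along vertical lines coming from the Patie--Savov bounds on $W_\phi$ (both obtained while proving \Cref{thm:main-inter-L2}). Differentiating $G_n$: for $n\geq2$ the boundary term at $u=x$ vanishes (it carries a factor $(u-x)^{n-1}$), and iterating $n$ times gives $G_n^{(n)}(x)=(-1)^n(n-1)!\,x^{1-\lam-n}\bpsi(x)$ (for $n=1$ this is immediate); since $x^{1-\lam-n}\in C^\infty(0,1)$ and $\bpsi\in C^\infty(0,1)$, $G_n\in C^\infty(0,1)$, hence so is $x^{\lam+n-1}G_n$ and hence $w_n=\tfrac{d^n}{dx^n}(\cdots)\in C^\infty(0,1)$.

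\emph{The $\Leb^2$ and $C^{\lceil\Delta\rceil-2}$ assertions.} By the previous step these concern only the endpoints. Feeding the boundary asymptotics of $\bpsi$ (again from the behaviour of $\widehat\bpsi$: $\bpsi(u)\sim C(1-u)^{\Delta-1}$ as $u\uparrow1$, and an integrable power as $u\downarrow0$) into $G_n$ and substituting $u=x+(1-x)t$ near $1$ yields $G_n(x)\sim C'(1-x)^{n+\Delta-1}$, whence $w_n(x)\sim C''(1-x)^{\Delta-1}$ after $n$ differentiations, with an integrable power near $0$; thus $w_n\in\Leb^2[0,1]$ iff $2(\Delta-1)>-1$, i.e.\ $\Delta>\tfrac12$. (Equivalently, Mellin--Plancherel gives $\|w_n\|_{\Leb^2[0,1]}^2\asymp\int_\R|\widehat w_n(\tfrac12+it)|^2\,dt$, finite precisely when the Patie--Savov bound $|\widehat w_n(\tfrac12+it)|=O(|t|^{-\Delta})$ is square-integrable, i.e.\ when $\Delta>\tfrac12$.) Finally $\V_n^\phi=w_n/\bpsi$ with $\bpsi$ continuous and strictly positive on $(0,1)$, so $\V_n^\phi$ is smooth on compact subintervals; near $1$ both $w_n$ and $\bpsi$ equal $(1-x)^{\Delta-1}$ times a function smooth and non-vanishing at $1$ (up to strictly higher-order remainders), and tracking these through the quotient leaves exactly $\lceil\Delta\rceil-2$ continuous derivatives up to $1$ when $\Delta\geq2$, giving $\V_n^\phi\in C^{\lceil\Delta\rceil-2}(0,1)$.

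The crux is not the calculus above but the two analytic inputs on $\bpsi$ it relies on — its $C^\infty$-regularity on $(0,1)$ and its sharp two-sided endpoint asymptotics — which have to be drawn from the Mellin--Barnes representation and the Patie--Savov estimates on $W_\phi$; a secondary care point is to verify that the behaviour of $w_n$ at the left endpoint $x=0$ does not require a hypothesis stronger than $\Delta>\tfrac12$ (resp.\ $\Delta\geq2$).
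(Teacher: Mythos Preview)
Your argument has a genuine gap in the smoothness step. You claim $\bpsi\in C^\infty(0,1)$ via ``rapid decay'' of $\widehat\bpsi$ along vertical lines from the Patie--Savov bounds, but those bounds are only polynomial: the paper's Lemma~\ref{lem:Mellin-bpsi} gives $|\calM_\bpsi(a+ib)|\leq C|b|^{-\Delta}$, which via Mellin inversion yields at best $\bpsi\in C^{\lceil\Delta\rceil-2}(0,1)$, not $C^\infty$. Your computation $G_n^{(n)}(x)=(-1)^n(n-1)!\,x^{1-\lam-n}\bpsi(x)$ is correct, but it shows precisely that your route makes the smoothness of $w_n$ hostage to that of $\bpsi$. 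The paper sidesteps this entirely: it uses the commutation $\Rc_n(\gab[\lam+n,\lam]\diamond\bpsi)=(\Rc_n\gab[\lam+n,\lam])\diamond\bpsi$ to place all derivatives on the $C^\infty(0,1)$ factor $\gab[\lam+n,\lam]$, so $\bpsi$ enters only as an $\Leb^1$ weight and $w_n\in C^\infty(0,1)$ follows from standard convolution regularity with no hypothesis on $\bpsi$ beyond integrability.

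This gap cascades into your treatment of the $C^{\lceil\Delta\rceil-2}$ claim. The statement concerns regularity on the \emph{open} interval $(0,1)$, and the paper's logic is the reverse of yours: since $w_n\in C^\infty(0,1)$ (from the convolution trick) while $\bpsi$ is only $C^{\lceil\Delta\rceil-2}(0,1)$ (from the polynomial Mellin decay), the quotient $\V_n^\phi=w_n/\bpsi$ inherits the \emph{weaker} interior regularity $C^{\lceil\Delta\rceil-2}(0,1)$. Your endpoint-asymptotic argument, by contrast, presupposes both $\bpsi\in C^\infty(0,1)$ and the two-sided asymptotic $\bpsi(u)\sim C(1-u)^{\Delta-1}$; the former is unjustified and the latter would need a matching lower bound on $|\calM_\bpsi|$, which the available estimates do not provide. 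Your Mellin--Plancherel alternative for the $\Leb^2$ claim, on the other hand, is exactly the paper's argument and is correct.
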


\begin{Remark}
The definition in \eqref{eq:v-n-rodrigues} makes sense regardless of the differentiability of $\bpsi$, since $\gab[\lam+n,\lam] \in C^\infty(0,1)$ and $\Rc_n (\gab[\lam+n,\lam] \diamond \bpsi) = \Rc_n \gab[\lam+n,\lam] \diamond \bpsi$. However, the differentiability of $\V_n^\phi$ is limited by the smoothness of $\bpsi$, which is quantified by the index $\lceil \Delta \rceil - 2$. Note that for $h \equiv 0$, one has $\bpsi = \gab[\mo]$ and, by moment identification and determinacy, it is easily checked that \eqref{eq:v-n-rodrigues} boils down, up to a multiplicative constant,  to the Rodrigues representation of the classical Jacobi polynomials $\P_n^{(\mo)}$ given in \eqref{eq:Jac-Rod}. In this sense, $(\P_n^\phi)_{n \geq 0}$ and $(\V_n^\phi)_{n \geq 0}$  generalize $(\P_n^{(\mo)})_{n \geq 0}$ in different ways, related to different representations of these orthogonal polynomials.
\end{Remark}

We call two sequences $(f_n)_{n \geq 0}, (g_n)_{n \geq 0} \subseteq \Leb^2(\bpsi)$ biorthogonal if $\bpsi [ f_mg_n ] = 1$ for
$m=n$ and $\bpsi [f_mg_n] = 0$ otherwise, and then write $f_n \otimes g_n$ for the projection operator given by $f \mapsto \bpsi[f g_n] f_n$. Moreover, a sequence admitting a biorthogonal sequence will be called \emph{minimal} and a sequence that is both minimal and complete, in the sense that its linear span is dense in $\Leb^2(\bpsi)$,
will be called \emph{exact}. It is easy to show that a sequence $(f_n)_{n \geq 0}$ is minimal if and only if none of
its elements can be approximated by linear combinations of the others. If this is the case, then
a biorthogonal sequence is uniquely determined if and only if $(f_n)_{n \geq 0}$ is complete.
Next, a sequence $(f_n)_{n \geq 0} \subseteq \Leb^2(\bpsi)$ is said to be a Bessel sequence if there exists
a constant $B \ge 0$ such that, for all $f \in \Leb^2(\bpsi)$,
\begin{equation*}
\sum_{n=0}^\infty \bpsi [f_n f]^2 \leq B \: \bpsi[f^2].
\end{equation*}
The quantity $B$ is a Bessel bound of $(f_n)_{n \geq 0}$, and the smallest such $B$ is called the optimal Bessel bound of $(f_n)_{n \geq 0}$; see e.g., Christensen~\cite{Christensen2003a} for further information on these objects that play a central role in non-harmonic analysis.

We write $\sigma(\J_t)$ for the spectrum of the operator $\J_t$ in $\Leb^2(\bpsi)$ and $\sigma_p(\J_t)$ for its point spectrum, and similarly define $\sigma(\calJ)$ and $\sigma_p(\calJ)$. For an isolated eigenvalue $\varrho \in \sigma_p(\J_t)$ we write $\mathrm{M}_a(\varrho,\J_t)$ and $\mathrm{M}_g(\varrho,\J_t)$ for the algebraic and geometric multiplicity of $\varrho$, respectively. We also define, for $n \in \N$,
\begin{equation}
\label{eq:def-lambda-n}
\lambda_{n} = n(n-1)+\lam n = n^2+(\lam-1)n,
\end{equation}
noting that $\lambda_{1} = \lam$, which explains our choice of notation, and recall that $\sigma(\calJd_\mo) = \sigma_p(\calJd_\mo) = \{-\lambda_n; \ n \in \N\}$; see~\Cref{sec:classical-Jacobi}. Writing $\J_t^*$ for the $\Leb^2(\bpsi)$-adjoint of $\J_t$, we have the following spectral theorem for $\J$.




\begin{theorem}
\label{thm:spectral-representation} Let $t > 0$.
\begin{enumerate}[\rm(i)]
\item \label{item-1:thm:spectral-representation} Then,
\begin{equation*}
\J_t = \sum_{n=0}^{\infty} e^{-\lambda_n t}  \P_n^\phi \otimes \V_n^\phi, 
\end{equation*}
where the series converges in operator norm and $(\P_n^\phi)_{n \geq 0} \subseteq \Leb^2(\bpsi)$ is an exact Bessel sequence with optimal Bessel bound 1 and unique biorthogonal sequence $(\V_n^\phi)_{n \geq 0} \subseteq \Leb^2(\bpsi)$, which is also exact. Moreover, for all $n \in \N$, $\P_n^\phi$ ($\V_n^\phi$) is an eigenfunction of $\J_t$ ($\J_t^*$) with eigenvalue $e^{-\lambda_n t}$.
\item \label{item-2:thm:spectral-representation} {The operator $\J_t$ is compact; that is, the semigroup $\J$ is immediately compact.}
\item \label{item-3:thm:spectral-representation} The following spectral mapping theorem holds:
\begin{equation*}
\sigma(\J_t) \setminus \{0\} = \sigma_p(\J_t) = e^{t\sigma_p(\calJ)} = e^{t\sigma(\calJ)} = \left\lbrace e^{-\lambda_n t} : n \in \N \right\rbrace.
\end{equation*}
Furthermore, $\sigma(\J_t) = \sigma(\J_t^*)$ and, for any $n \in \N$,
\begin{equation*}
\mathrm{M}_a(e^{-\lambda_n t},\J_t) = \mathrm{M}_g(e^{-\lambda_n t},\J_t) = \mathrm{M}_a(e^{-\lambda_n t},\J_t^*) = \mathrm{M}_g(e^{-\lambda_n t},\J_t^*) = 1.
\end{equation*}
\item \label{item-4:thm:spectral-representation} The operator $\J_t$ is self-adjoint in $\Leb^2(\bpsi)$ if and only if $h \equiv 0$.
\end{enumerate}
\end{theorem}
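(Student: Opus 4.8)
The plan is to transfer the whole spectral picture from the classical Jacobi semigroups to $\J$ and $\J^*$ through the intertwining operators $\Lambda$ and $V$ of \eqref{intJ}--\eqref{intQ}, using as reference points that $\Q[\widetilde{\mo}]$ (resp.\ $\Q[\overline{\mo}]$) is self-adjoint and diagonalized by the orthonormal Jacobi basis $(\P_n^{(\widetilde{\mo})})_{n\geq 0}$ of $\Leb^2(\gab[\widetilde{\mo}])$ (resp.\ $(\P_n^{(\overline{\mo})})_{n\geq 0}$ of $\Leb^2(\gab[\overline{\mo}])$) with eigenvalues $e^{-\lambda_n t}$, and that $\sigma(\calJd_{\widetilde{\mo}})=\sigma_p(\calJd_{\widetilde{\mo}})=\{-\lambda_n:n\in\N\}$. \emph{Eigenfunctions and biorthogonality.} First I would record the action of $\Lambda$ on $\Poly$ coming from its construction (cf.\ \Cref{prop:inter_P}): $\Lambda$ maps $\Poly$ into $\Poly$ by a triangular map that divides the coefficient of $x^k$ by $W_\phi(k+1)$ up to the explicit constants entering \eqref{eq:Jpolpsi}; comparison then gives $\Lambda\P_n^{(\widetilde{\mo})}=\P_n^\phi$, whence (from \eqref{intJ}) $\calJ\P_n^\phi=-\lambda_n\P_n^\phi$, $\deg\P_n^\phi=n$, and (from \eqref{intQ}) $\J_t\P_n^\phi=e^{-\lambda_n t}\P_n^\phi$. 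Dually, taking $\Leb^2$-adjoints in the second identity of \eqref{intQ} and using $(\Q[\overline{\mo}]_t)^*=\Q[\overline{\mo}]_t$ gives $\J_t^*V^*=V^*\Q[\overline{\mo}]_t$, so $V^*\P_n^{(\overline{\mo})}$ is an eigenfunction of $\J_t^*$ for $e^{-\lambda_n t}$; matching moments (via the determinacy of \Cref{thm:main-inter-L2} and the Rodrigues formula \eqref{eq:v-n-rodrigues}) identifies it with a nonzero multiple $\V_n^\phi=c_n V^*\P_n^{(\overline{\mo})}$, which in particular puts $\V_n^\phi\in\Leb^2(\bpsi)$ and gives $\J_t^*\V_n^\phi=e^{-\lambda_n t}\V_n^\phi$. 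Finally $V\Lambda$ again intertwines $\calJd_{\widetilde{\mo}}$ with $\calJd_{\overline{\mo}}$ on $\Poly$, hence is diagonal in the two Jacobi bases, and computing $\bpsi[\P_m^\phi\V_n^\phi]$ through this diagonality, the classical orthonormality, and the normalizing constants of \eqref{eq:Jpolpsi} and \eqref{eq:v-n-rodrigues} produces $\bpsi[\P_m^\phi\V_n^\phi]=\delta_{mn}$.

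\emph{Bessel bound, completeness, exactness (rest of \eqref{item-1:thm:spectral-representation}).} Since $\Lambda$ is a Markov operator (a positive contraction fixing $\bm{1}$ with $\bpsi\circ\Lambda=\gab[\widetilde{\mo}]$), for $f\in\Leb^2(\bpsi)$ one has $\sum_n\bpsi[\P_n^\phi f]^2=\sum_n\langle\P_n^{(\widetilde{\mo})},\Lambda^*f\rangle^2=\|\Lambda^*f\|^2\leq\|f\|^2$, so $(\P_n^\phi)_{n\geq0}$ is Bessel with bound $\leq1$; taking $f=\bm{1}$ and using $\bm{1}=\V_0^\phi$ with biorthogonality gives $\sum_n\bpsi[\P_n^\phi\bm{1}]^2=1=\bpsi[\bm{1}^2]$, so the optimal bound is exactly $1$, and in particular $\|\P_n^\phi\|^2\leq1$ for all $n$. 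As $\deg\P_n^\phi=n$, the span of $(\P_n^\phi)_{n\geq0}$ is $\Poly$, dense in $\Leb^2(\bpsi)$, so $(\P_n^\phi)$ is complete; being minimal (it has the biorthogonal sequence $(\V_n^\phi)$) it is exact, and the biorthogonal sequence is then unique. For $(\V_n^\phi)$ the identity $\bpsi[f\V_n^\phi]=c_n\langle Vf,\P_n^{(\overline{\mo})}\rangle$ together with completeness of $(\P_n^{(\overline{\mo})})$ gives $\{\V_n^\phi\}^\perp=\Ker V$; since $V$ is injective, $(\V_n^\phi)$ is complete, and minimal by biorthogonality, hence exact.

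\emph{Norm-convergent expansion and compactness (\eqref{item-1:thm:spectral-representation}, \eqref{item-2:thm:spectral-representation}).} Put $S_N=\sum_{n=0}^N e^{-\lambda_n t}\,\P_n^\phi\otimes\V_n^\phi$. For $f\in\Poly$ of degree $M$, biorthogonality gives $f=\sum_{n=0}^M\bpsi[f\V_n^\phi]\P_n^\phi$, so $(\J_t-S_N)f=\sum_{n>N}e^{-\lambda_n t}\bpsi[f\V_n^\phi]\P_n^\phi$, and with $|\bpsi[f\V_n^\phi]|\leq\|f\|\,\|\V_n^\phi\|$ and $\|\P_n^\phi\|\leq1$ one gets $\|(\J_t-S_N)f\|\leq\|f\|\sum_{n>N}e^{-\lambda_n t}\|\V_n^\phi\|_{\Leb^2(\bpsi)}$. \textbf{The main obstacle is the norm estimate $\log\|\V_n^\phi\|_{\Leb^2(\bpsi)}=o(\lambda_n)$ as $n\to\infty$}: granting it, $\sum_n e^{-\lambda_n t}\|\V_n^\phi\|<\infty$ for every $t>0$, so by density of $\Poly$ and boundedness of $\J_t-S_N$ we obtain $\|\J_t-S_N\|\to0$; hence $\J_t=\sum_{n\geq0}e^{-\lambda_n t}\P_n^\phi\otimes\V_n^\phi$ in operator norm, proving \eqref{item-1:thm:spectral-representation}; $\J_t$, a norm limit of finite-rank operators, is compact, proving \eqref{item-2:thm:spectral-representation}; and taking adjoints, $\J_t^*=\sum_{n\geq0}e^{-\lambda_n t}\V_n^\phi\otimes\P_n^\phi$ in operator norm. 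The estimate itself I would extract from $\V_n^\phi=c_n V^*\P_n^{(\overline{\mo})}$, which gives $\|\V_n^\phi\|\leq|c_n|\,\|V\|$, and from bounding $\log|c_n|$ — a combination of Pochhammer symbols and $W_\phi(n+1)=\prod_{k=1}^n\phi(k)$ by \eqref{eq:product-Wphi} — using the at most linear growth of Bernstein functions, which forces $\phi(1)^n\leq W_\phi(n+1)\leq C^n n!$ and makes all these logarithms $O(n\log n)=o(\lambda_n)$.

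\emph{Spectrum, multiplicities, self-adjointness (\eqref{item-3:thm:spectral-representation}, \eqref{item-4:thm:spectral-representation}).} Since $\J_t$ is compact on the infinite-dimensional $\Leb^2(\bpsi)$, $0\in\sigma(\J_t)$ and $\sigma(\J_t)\setminus\{0\}=\sigma_p(\J_t)$. Each $e^{-\lambda_n t}$ is an eigenvalue by Step~1, and if $\J_t g=\varrho g$ with $\varrho\notin\{e^{-\lambda_n t}:n\in\N\}$, pairing the expansion with $\V_m^\phi$ and using $\J_t^*\V_m^\phi=e^{-\lambda_m t}\V_m^\phi$ gives $(\varrho-e^{-\lambda_m t})\bpsi[g\V_m^\phi]=0$, hence $\bpsi[g\V_m^\phi]=0$ for all $m$; if $\varrho\neq0$ this yields $\varrho g=\J_t g=0$ so $g=0$, and if $\varrho=0$ completeness of $(\V_n^\phi)$ gives $g=0$; thus $\sigma_p(\J_t)=\{e^{-\lambda_n t}:n\in\N\}$. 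Immediate compactness of $\J$ makes $\calJ$ have compact resolvent, so $\sigma(\calJ)=\sigma_p(\calJ)$, and the same transfer applied to $\J_t=e^{t\calJ}$ gives $\sigma_p(\calJ)=\{-\lambda_n:n\in\N\}$; this yields the displayed chain $\sigma(\J_t)\setminus\{0\}=\sigma_p(\J_t)=e^{t\sigma_p(\calJ)}=e^{t\sigma(\calJ)}=\{e^{-\lambda_n t}\}$, while $\sigma(\J_t^*)=\overline{\sigma(\J_t)}=\sigma(\J_t)$ since $\sigma(\J_t)\subseteq\R$. For the multiplicities, $\J_t g=e^{-\lambda_n t}g$ forces $\bpsi[g\V_m^\phi]=0$ for $m\neq n$, so $g=\bpsi[g\V_n^\phi]\P_n^\phi$ by the expansion and $\mathrm{M}_g(e^{-\lambda_n t},\J_t)=1$; and if $(e^{-\lambda_n t}-\J_t)^2 g=0$ then $h:=(e^{-\lambda_n t}-\J_t)g$ is an eigenvector, $h=c\P_n^\phi$, while $c=\bpsi[h\V_n^\phi]=e^{-\lambda_n t}\bpsi[g\V_n^\phi]-\bpsi[g\,\J_t^*\V_n^\phi]=0$, so $g$ is an eigenvector and $\mathrm{M}_a(e^{-\lambda_n t},\J_t)=\mathrm{M}_g(e^{-\lambda_n t},\J_t)=1$; the symmetric argument with $\P_n^\phi$ and $\V_n^\phi$ interchanged handles $\J_t^*$. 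Finally, for \eqref{item-4:thm:spectral-representation}: if $h\equiv0$ then $\calJ=\calJd_\mo$, $\bpsi=\gab[\mo]$ and $\J_t=\Q[\mo]_t$ is self-adjoint; conversely, self-adjointness of every $\J_t$ in $\Leb^2(\bpsi)$ means $\J$ is reversible with respect to $\bpsi$, i.e.\ $\bpsi(dx)\J_t(x,dy)=\bpsi(dy)\J_t(y,dx)$, and differentiating at $t=0^+$ the off-diagonal (jump) part of the generator must obey the same symmetry; but by \Cref{lem:rewriting-stuff} the jump kernel of $\calJ$ moves $x$ only to $e^{-r}x<x$, so it is carried by $\{y<x\}$ while its $\bpsi$-reversal is carried by $\{y>x\}$, and two measures with disjoint supports coincide only if both vanish, forcing $\Pi\equiv0$ and hence $h\equiv0$.
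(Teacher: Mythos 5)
Your treatment of items (i)--(iii) follows essentially the paper's own route: the intertwining $\J_t\Lambda_\phi=\Lambda_\phi\Q[\vat]_t$, the identity $\P_n^\phi=\Lambda_\phi\P_n^{(\vat)}$, the realization of $\V_n^\phi$ as a constant multiple of $\mathrm{V}_{\phm}^*\P_n^{(\m)}$ (composed with $\mathrm{U}_\varphi^*$ when $\mo<1+\h$), the Bessel bound $1$ from the Markov-kernel contractivity, and a finite-rank approximation whose tail is killed by $e^{-\lambda_n t}$. Two remarks on where you deviate harmlessly. First, your tail bound $\sum_{n>N}e^{-\lambda_n t}\norm{\V_n^\phi}_{\bpsi}$ is cruder than the paper's estimate, which exploits the Bessel property of the rescaled co-eigenfunctions; but since the constants relating $\V_n^\phi$ to $\mathrm{V}_{\phm}^*\P_n^{(\m)}$ grow only polynomially (as in \Cref{prop:riesz} and Remark \ref{rem:bounds}, $\mathfrak{c}_n(\m)/\mathfrak{c}_n(\vat)\sim n^{\m-\vat}$), and $\lambda_n\sim n^2$, even your generous $e^{O(n\log n)}$ allowance gives operator-norm convergence for every $t>0$. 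Second, your direct pairing arguments for $\sigma_p(\J_t)$, for injectivity of $\J_t$, and for the equality of algebraic and geometric multiplicities (the $\Ker(T^2)=\Ker(T)$ computation, which indeed suffices for a compact operator) replace the paper's citation of Patie--Savov, while the passage to $\sigma(\calJ)$ uses the same standard facts about immediately compact semigroups that the paper takes from Engel--Nagel. Be aware, though, that the step you flag as the "main obstacle" in a slightly misplaced way -- the identification $\V_n^\phi=c_n\mathrm{V}_{\phm}^*\P_n^{(\m)}$ -- is the genuine technical heart of the paper's argument (\Cref{prop:Convolution} and \Cref{prop:riesz}, done by Mellin-transform computations and distributional uniqueness); "matching moments via Rodrigues" is the right idea but must be carried out at that level of detail, and your completeness claim for $(\V_n^\phi)_{n\geq0}$ also presupposes injectivity of $\mathrm{V}_{\phm}$ on $\Leb^2(\bpsi)$, which itself requires proof (cf.\ \Cref{prop:quasi-similar}).

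The genuine gap is in the converse of item (iv). You assert that self-adjointness of $\J_t$ forces the jump part of the generator and its $\bpsi$-reversal -- carried by $\{y<x\}$ and $\{y>x\}$ respectively -- to coincide, hence to vanish. The conclusion is correct and the heuristic (purely one-sided jumps are incompatible with reversibility) is sound, but the pivotal step "differentiating at $t=0^+$, the off-diagonal part of the generator must obey the same symmetry" is not justified as written: to read the jump kernel off the semigroup you need either the Beurling--Deny/L\'evy-system description of the symmetric Dirichlet form associated with $\J$, or test functions $f,g$ with disjoint supports in the limit $\lim_{t\to0}t^{-1}\langle\J_tf,g\rangle_{\bpsi}$ -- and such functions are not polynomials, so membership in (or approximation within) the generator's domain must be established before differentiating. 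The paper sidesteps all of this with an elementary computation: differentiating $\langle\J_tp_n,p_m\rangle_{\bpsi}=\langle p_n,\J_tp_m\rangle_{\bpsi}$ at $t=0$ gives symmetry of $\calJ$ on monomials, which with the explicit moments \eqref{eq:mom-bpn} yields the functional equation $\Psi(n+m)(\lambda_n-\lambda_m)=\lambda_{n+m}(\Psi(n)-\Psi(m))$, and letting $n\to\infty$ forces $\int_1^\infty r^{-m}h(r)\,dr=0$ for all $m$, i.e.\ $h\equiv0$. Either supply the Dirichlet-form argument in full detail or adopt this moment computation; as it stands, your sketch of the converse does not constitute a proof.
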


The expansion in Theorem \ref{thm:spectral-representation}.\eqref{item-1:thm:spectral-representation} is not valid for $t =0$ since $(\P_n^\phi)_{n \geq 0}$ is a Bessel sequence but not a Riesz sequence, as it is not the image of an orthogonal sequence under a bounded linear operator having a bounded inverse; see Proposition \ref{prop:quasi-similar} below. The sequence of non-self-adjoint projections $\P_n^\phi \otimes \V_n^\phi$ is not uniformly bounded in $n$, see Remark \ref{rem:bounds}, and, in contrast to the self-adjoint case, the eigenfunctions of $\J_t$ and $\J_t^*$ do not form a Riesz basis of $\Leb^2(\bpsi)$. Finally, it follows from \Cref{thm:spectral-representation}.\eqref{item-4:thm:spectral-representation} that $\P_n^\phi \neq \V_n^\phi$ for all $n = 1,2,\ldots$.

\subsection{Convergence-to-equilibrium and contractivity properties}


We call a function $\Phi:I \to \R$, defined on an interval $I \subseteq \R$, admissible if
\begin{equation}
\label{eq:def-admissible}
\text{$\Phi \in C^4(I)$ with both $\Phi$ and $-1/\Phi''$ convex.}
\end{equation}
Given an admissible function $\Phi$ we write, for any $f:[0,1] \to I$ with $f, \Phi(f) \in \Leb^1(\bpsi)$,
\begin{equation}
\label{eq:def-Phi-entropy}
\Ent_\bpsi^\Phi(f) = \bpsi[\Phi(f)] - \Phi(\bpsi [f])
\end{equation}
for the so-called $\Phi$-entropy of $f$. An important special case is $\Phi(r) = r^2$ with $I = \R$, so that \eqref{eq:def-Phi-entropy} gives the variance $\Var_\bpsi(f)$ of $f \in \Leb^2(\bpsi)$. Recall that in the classical case $h \equiv 0$, we have the following equivalence between the Poincar\'e inequality for $\calJd_\mo$ and the spectral gap inequality for $\Q$,
\begin{equation*}
\lam = \inf_f \frac{-\gab[\mo][f \calJd_\mo f]}{\Var_{\gab[\mo]}(f)} > 0 \iff \Var_{\gab[\mo]}(\Q_t f ) \leq e^{-2\lam t} \Var_{\gab[\mo]}(f) \text{ for } f \in \Leb^2(\gab[\mo]) \text{ and } t \geq 0,
\end{equation*}
where the infimum is over all functions $f$ in the $\Leb^2$-domain of $\calJd_\mo$ such that
$\Var_{\gab[\mo]}(f) > 0$; see e.g., Bakry et al.~\cite[Chapter 4.2]{Bakry_Book}. The above variance decay is optimal in the sense that the decay rate does not hold for any constant strictly greater than $2\lam$. Another important instance of \eqref{eq:def-Phi-entropy} corresponds to $\Phi(r)=r\log r$ and $I = [0, \infty)$. It recovers the classical notion of entropy for a non-negative function, written simply as $\Ent_\bpsi(f)$. Here the classical equivalence is between the log-Sobolev inequality and entropy decay,
\begin{equation*}
\logSob[\mo] = \inf_f \frac{-4\gab[\mo] [f \calJd_\mo f]}{\Ent_{\gab[\mo]} (f^2)} > 0 \iff \Ent_{\gab[\mo]} (\Q[\mo]_tf) \leq e^{-\logSob[\mo] t} \Ent_{\gab[\mo]} (f) < \infty \text{ for } f \in \Leb^1_+(\bpsi) \text{ and } t \geq 0,
\end{equation*}
where the infimum is over all functions $f$ in the $\Leb^2$-domain of $\calJd_\mo$ such that $\Ent_{\gab[\mo]} (f^2) > 0$.
Note that the optimal entropy decay rate is obtained only for $\mo = \lam/2 > 1$, in which case, $\logSob[\mo] = 2\lam$, while otherwise $\logSob[\mo] < 2\lam$; see, e.g., Fontenas~\cite{fontenas:1998}. We review these notions for the classical Jacobi semigroup in \Cref{sec:classical-Jacobi}. For more details, we refer to Chafa\"i \cite{chafai}, An{\'e} et~al.~\cite{logSobolev-book} and the relevant sections of Bakry et al.~\cite{Bakry_Book}. However, due to the non-self-adjointness and non-local properties of $\calJ$, it seems challenging to develop an approach based on the Poincar\'e or log-Sobolev inequalities. For this reason, we take an alternative route to tackling convergence to equilibrium by using the concept of  interweaving relations recently introduced by Patie and Miclo in \cite[Section 3.5]{Gateway} and \cite{Patie-Miclo}.

Now, consider the function $\rho:[0,\infty) \to [0,\infty)$ given by
\begin{equation*}
\rho(u) = \sqrt{u+\frac{(\lam-1)^2}{4}}-\frac{\left|\lam-1\right|}{2}
\end{equation*}
and recall that for $h \not\equiv 0$, we have $\lam > 1$.  Note that $\rho$ is a Bernstein function, as it is obtained by translating and centering the well-known Bernstein function $u \mapsto \sqrt{u}$. In the literature $\rho$ is known as the Laplace exponent of the so-called relativistic $1/2$-stable subordinator; see Bakry \cite{Bakry-relativistic} or Bogdan et al.~\cite{Potential2009}. Recalling that any Bernstein function $\phi$ is analytic on the right half-plane, and writing $a_\phi=\sup\{u \ge 0 : \phi \textrm{ is analytic on } \{z \in \mathbb C : \Re(z)>-u\}\} \in [0,\infty]$, we denote
\begin{equation}
\label{eq:dphi}
d_{\phi} = \inf\{u \in [0,a_\phi] : \phi(-u)=0 \} \mbox{ with the convention }\inf \emptyset = a_{\phi}.
\end{equation}
Then $d_{\phi} \le a_{\phi}$, and if $a_{\phi} = \infty$, one has $\lim_{u \to - \infty} \phi(u) = - \infty$.
So, in any case, $d_{\phi} \in [0, \infty)$. For $\epsilon \in (0,d_\phi) \cup \{d_\phi\}$, we write
\begin{equation}
\label{eq:def-dtheta}
\dpt = \vat \bm{1}_{\{ \mo < 1+\h \}}+ \dph\bm{1}_{\{ \mo \geq 1+\h \}}
\end{equation}
noting that if $d_\phi=0$, then $\epsilon = 0$. Next, for any $\m \in (\bm{1}_{\{\mo < 1+\h\}} + \mo , \lam)$ and $\epsilon \in (0,d_\phi) \cup \{d_\phi\}$, we denote by $\tau$ a non-negative random variable,  whose existence is provided in the theorem below, with Laplace transform
\begin{equation}
\label{eq:def-T-Laplace}
\E\left[e^{-u \tau}\right] = \frac{(\dpt)_{\rho(u)}}{(\m)_{\rho(u)}} \frac{(\lam-\m)_{\rho(u)}}{(\lam-\dpt)_{\rho(u)}} , \quad u \geq 0,
\end{equation}
and write $\J_{t+\tau} = \int_0^\infty \J_{t+s} \Prob(\tau \in ds)$.

\begin{theorem}
\label{thm:convergence-equilibrium} Let $t \geq 0$. Then, for all $\m \in (\bm{1}_{\{\mo < 1+\h\}} + \mo ,\lam)$
and $\epsilon \in (0,d_\phi) \cup \{d_\phi\}$, the following hold:
\begin{enumerate}[\rm(i)]
\item \label{item-1:thm:convergence-equilibrium} For any $f \in \Leb^2(\bpsi)$,
\begin{equation*}
\Var_\bpsi(\J_t f) \leq \frac{\m(\lam-\dpt)}{\dpt(\lam-\m)} e^{-2\lam t} \Var_\bpsi(f)
\end{equation*}
and $\m(\lam-\dpt) > \dpt(\lam-\m)$.
\item \label{item-2:thm:convergence-equilibrium}
The function $\phi^{(\tau)} \colon u \mapsto - \log \E[e^{-u \tau}]$, defined in \eqref{eq:def-T-Laplace}, is a Bernstein function. Therefore,
$\tau$ is infinitely divisible and there exists a subordinator $\bm{\tau} = (\tau_t)_{t \geq 0}$ with $\tau_1 \stackrel{(d)}{=} \tau$. For any $f \in \Leb^1_+(\bpsi)$ with $\Ent_\bpsi(f) < \infty$, one has
\begin{equation*}
\Ent_\bpsi(\J_{t+\tau} f) \leq e^{- \logSob[\m] t} \Ent_\bpsi(f),
\end{equation*}
and if $\lam > 2(\bm{1}_{\{\mo < 1+\h \}} + \mo)$, then
\begin{equation*}
\Ent_\bpsi(\J_{t+\tau} f) \leq e^{- 2 \lam t} \Ent_\bpsi(f).
\end{equation*}
Furthermore, if $\bm{1}_{\{\mo < 1+\h \}} + \mo < \lam/2 \in \N$ and $\Phi \colon I \to \R$, $I \subseteq \R$, is an admissible function as in~\eqref{eq:def-admissible}, then, for any $f \colon [0,1]\to I$ such that $f \in \Leb^1(\bpsi)$ and $\Ent_\bpsi^\Phi(f) < \infty$,
\begin{equation*}
\Ent_\bpsi^\Phi(\J_{t+\tau} f) \leq e^{-(\lam-1)t} \Ent_\bpsi^\Phi(f).
\end{equation*}
\end{enumerate}
\end{theorem}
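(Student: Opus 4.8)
The plan is to deduce everything from an \emph{interweaving relation} between $\J$ and a classical Jacobi semigroup $\Q[\m]$, in the spirit of Patie and Miclo. First I would upgrade the $\Leb^2$-intertwinings \eqref{intQ} to the following: for each $\m$ and $\epsilon$ as in the theorem there exist bounded Markov operators $\Lambda_\m\colon\Leb^2(\gab[\m])\to\Leb^2(\bpsi)$ and $V_\m\colon\Leb^2(\bpsi)\to\Leb^2(\gab[\m])$ — positivity-preserving contractions fixing the constants and carrying $\gab[\m]$ and $\bpsi$ into one another — with $\J_t\Lambda_\m=\Lambda_\m\Q[\m]_t$ and $V_\m\J_t=\Q[\m]_t V_\m$ for all $t\ge 0$, and moreover
\[
\Lambda_\m V_\m = \J_\tau := \int_0^\infty \J_s\,\Prob(\tau\in ds),\qquad V_\m\Lambda_\m=\Q[\m]_\tau .
\]
Since intertwining operators send eigenfunctions to eigenfunctions, $\Lambda_\m\P_n^{(\m)}$ is a scalar multiple of $\P_n^\phi$ and $V_\m\P_n^\phi$ a scalar multiple of $\P_n^{(\m)}$; computing these scalars from the explicit formulas \eqref{eq:Jpolpsi}--\eqref{eq:v-n-rodrigues} together with the Rodrigues form of the classical Jacobi polynomials, and using the identity $\rho(\lambda_n)=\sqrt{(n+\tfrac{\lam-1}{2})^2}-\tfrac{|\lam-1|}{2}=n$ (valid since $\lam\ge 1$ whenever the admissible range for $\m$ is non-empty), one checks that the product of the two scalars equals $\tfrac{(\dpt)_n(\lam-\m)_n}{(\m)_n(\lam-\dpt)_n}=\E[e^{-\lambda_n\tau}]$ by \eqref{eq:def-T-Laplace}. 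As $(\P_n^\phi)_{n\ge0}$ and $(\P_n^{(\m)})_{n\ge0}$ are complete, this identifies the compositions with the subordinated semigroups $\J_\tau$ and $\Q[\m]_\tau$, whence $\J_{t+\tau}=\J_t\J_\tau=\Lambda_\m\Q[\m]_t V_\m$.

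For part \eqref{item-1:thm:convergence-equilibrium}, note that $\m(\lam-\dpt)-\dpt(\lam-\m)=\lam(\m-\dpt)$, so the asserted strict inequality is equivalent to $\m>\dpt$ — equivalently to $\E[e^{-\lambda_1\tau}]<1$ — and this follows directly from \Cref{assA}: if $\mo<1+\h$ then $\dpt=\vat\le1<1+\mo<\m$, while if $\mo\ge1+\h$ then \eqref{eq:def-phi} forces $d_\phi\le\mo-1-\h$, so $\dpt=\dph\le\mo-\h\le\mo<\m$ (strictly unless $d_\phi=0$, in which case $\dpt=1<\m$ anyway). For the variance estimate itself, transporting the classical spectral gap $\Var_{\gab[\m]}(\Q[\m]_t g)\le e^{-2\lam t}\Var_{\gab[\m]}(g)$ through $V_\m$ and $\Lambda_\m$ — both of which contract the variance, being Markov — already yields the warm-up version $\Var_\bpsi(\J_{t+\tau}f)\le e^{-2\lam t}\Var_\bpsi(f)$. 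To absorb the warm-up into the constant $1/\E[e^{-\lambda_1\tau}]=\m(\lam-\dpt)/(\dpt(\lam-\m))$, I would use the spectral representation of \Cref{thm:spectral-representation}: writing $\J_tf-\bpsi[f]=\sum_{n\ge1}e^{-\lambda_nt}\,\bpsi[f\V_n^\phi]\,\P_n^\phi$, exploiting that $(\P_n^\phi)_{n\ge0}$ has optimal Bessel bound $1$, and combining the contractivity of $\J_t$ with the decay $e^{-\lambda_nt}\le e^{-\lambda_1t}$, one estimates this series; extracting precisely the sharp constant $1/\E[e^{-\lambda_1\tau}]$ rather than some larger one is the delicate point of (i).

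For part \eqref{item-2:thm:convergence-equilibrium}, I would first show that $\phi^{(\tau)}(u)=-\log\E[e^{-u\tau}]$ is a Bernstein function, which both gives the infinite divisibility of $\tau$ (hence the subordinator $\bm{\tau}$) and retroactively guarantees that \eqref{eq:def-T-Laplace} really is the Laplace transform of a probability law. Since $\rho$ is Bernstein and Bernstein functions are stable under composition and addition, it suffices to observe that $u\mapsto-\log\tfrac{(a)_u}{(b)_u}=\log\tfrac{\Gamma(a)\Gamma(b+u)}{\Gamma(b)\Gamma(a+u)}$ is Bernstein for $0<a<b$: it vanishes at $u=0$ and its derivative $\psi(b+u)-\psi(a+u)=\int_0^\infty e^{-ut}\,\tfrac{e^{-at}-e^{-bt}}{1-e^{-t}}\,dt$ is completely monotone. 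Applying this with $(a,b)=(\dpt,\m)$ and $(a,b)=(\lam-\m,\lam-\dpt)$ — both legitimate because $\m>\dpt$ — and composing with $\rho$ shows $\phi^{(\tau)}$ is Bernstein. The entropy decays then follow by sandwiching through $\J_{t+\tau}f=\Lambda_\m\bigl(\Q[\m]_t(V_\m f)\bigr)$, using that $\Phi$-entropy is contracted by the Markov operators $V_\m$ and $\Lambda_\m$ (Jensen, via $\bpsi[\Lambda_\m g]=\gab[\m][g]$ and $\gab[\m][V_\m f]=\bpsi[f]$) and the classical entropy decay $\Ent_{\gab[\m]}(\Q[\m]_t g)\le e^{-\logSob[\m]t}\Ent_{\gab[\m]}(g)$ recalled in \Cref{sec:classical-Jacobi}:
\[
\Ent_\bpsi(\J_{t+\tau}f)\le\Ent_{\gab[\m]}\bigl(\Q[\m]_t V_\m f\bigr)\le e^{-\logSob[\m]t}\,\Ent_{\gab[\m]}(V_\m f)\le e^{-\logSob[\m]t}\,\Ent_\bpsi(f).
\]
Specializing to $\m=\lam/2$, which lies in the admissible range exactly when $\lam>2(\bm{1}_{\{\mo<1+\h\}}+\mo)$ and for which the log-Sobolev constant attains the optimal value $\logSob[\lam/2]=2\lam$, gives the second bound; taking moreover $\m=\lam/2\in\N$ and replacing the log-Sobolev decay by the classical $\Phi$-entropy decay at rate $\lam-1$ for this sphere-type Jacobi semigroup (again from \Cref{sec:classical-Jacobi}) gives the third bound for every admissible $\Phi$, since $\Phi$-entropy is likewise contracted by $V_\m$ and $\Lambda_\m$.

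The main obstacle is the first step: constructing $\Lambda_\m$ and $V_\m$ as genuine bounded Markov operators on the relevant $\Leb^1$/$\Leb^2$ spaces, verifying the two intertwinings on a core, and — the computational heart — identifying the compositions $\Lambda_\m V_\m$ and $V_\m\Lambda_\m$ with $\J_\tau$ and $\Q[\m]_\tau$ through the Pochhammer algebra and the relation $\rho(\lambda_n)=n$. Given the interweaving, part (ii) is a short chain of contraction inequalities, and the only remaining subtlety is squeezing out the sharp constant in part (i).
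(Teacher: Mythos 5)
Your proposal follows the same two-track strategy as the paper: a direct spectral-expansion argument for the variance estimate and an interweaving relation with $\Q[\m]$ (in the sense of Miclo--Patie) for the entropy statements. For part \eqref{item-2:thm:convergence-equilibrium} your sketch is essentially the paper's Proposition~\ref{prop:cmir}: the operators you call $\Lambda_\m$ and $V_\m$ are $\overline{\rm V}_\phi = \Lambda_{\phi_{\dz}}\mathrm{V}_{\dpt}^*$ and $\widetilde{\rm V}_\phi$, the identification $\Lambda_\m V_\m=\J_\tau$ and $V_\m\Lambda_\m=\Q[\m]_\tau$ is precisely \eqref{eq:J-tau} and \eqref{eq:cmir}, and the Pochhammer ratio $\tfrac{(\dpt)_n(\lam-\m)_n}{(\m)_n(\lam-\dpt)_n}$ together with $\rho(\lambda_n)=n$ is exactly how the paper identifies the multiplier $F_\phi(-\calJd_\m)$. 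Your Bernstein argument for $\phi^{(\tau)}$ (digamma integral plus stability under composition with $\rho$) is a direct version of what the paper cites from Schilling--Song--Vondra\v{c}ek; and the chain of $\Phi$-entropy contractions through the two Markov kernels is what lies behind the paper's invocation of \cite[Theorems 7, 24]{Patie-Miclo}.

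The one place where you have a genuine gap is part \eqref{item-1:thm:convergence-equilibrium}. The paper does \emph{not} route this through the interweaving. Transporting the spectral gap of $\Q[\m]$ through $V_\m$ and $\Lambda_\m$ gives only the warm-up estimate $\Var_\bpsi(\J_{t+\tau}f)\le e^{-2\lam t}\Var_\bpsi(f)$, and there is no way to remove the warm-up $\J_\tau$ at the cost of a fixed multiplicative constant because $\J_\tau$ has eigenvalues $\E[e^{-\lambda_n\tau}]\to 0$ and hence no variance-bounded inverse. What actually produces the stated constant is the following computation with the spectral expansion, which you gesture at but do not carry out: writing $\J_tf-\bpsi[f]=\sum_{n\ge1}e^{-\lambda_n t}\langle f,\V_n^\phi\rangle_\bpsi\P_n^\phi$, one uses the Bessel bound $1$ of $(\mathfrak{c}_n(\dz)\P_n^\phi)_n$ on the synthesis side and of $(\mathfrak{c}_n(\vat)\V_n^\phi/\mathfrak{c}_n(\m))_n$ on the analysis side, and the factor to control is $\sup_{n\ge1}e^{-2\lambda_n t}\mathfrak{c}_n^2(\m)/\mathfrak{c}_n^2(\dpt)$. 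The point you must prove is that this supremum is attained at $n=1$ \emph{only} when $2\lam t\ge\log\tfrac{(\m+1)(\lam-\dpt+1)}{(\dpt+1)(\lam-\m+1)}$ (which follows from the explicit product formula for $\mathfrak{c}_n^2(\m)/\mathfrak{c}_n^2(\dpt)$), and that for smaller $t$ one must instead appeal to $\Leb^2$-contractivity of $\J_t$ together with the observation that then $\tfrac{\m(\lam-\dpt)}{\dpt(\lam-\m)}e^{-2\lam t}\ge 1$. Without this case split, the constant $\tfrac{\m(\lam-\dpt)}{\dpt(\lam-\m)}=\mathfrak{c}_1^2(\m)/\mathfrak{c}_1^2(\dpt)$ is an unverified guess; with it, the argument closes. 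Your side remark $\m(\lam-\dpt)-\dpt(\lam-\m)=\lam(\m-\dpt)>0$ correctly reduces the strict inequality to $\m>\dpt$, for which your case analysis is fine (the paper instead cites \cite[Proposition 4.4(1)]{Patie-Savov-GeL} directly).
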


\begin{Remark}
Since $\frac{\m(\lam-\dpt)}{\dpt(\lam-\m)} > 1$, the estimate in \Cref{thm:convergence-equilibrium}.\eqref{item-1:thm:convergence-equilibrium} gives the hypocoercivity, in the sense of Villani \cite{Villani-09}, for non-local Jacobi semigroups. This notion continues to attract research interest, especially in the area of kinetic Fokker--Planck equations; see, e.g. Baudoin \cite{Baudoin}, Dolbeault et al.~\cite{Mouhot} or Mischler and Mouhout~\cite{Mischler2016}. We are able to identify the hypocoercive constants, namely the exponential decay rate as twice the spectral gap and the constant in front of the exponential, which is a measure of the deviation of the spectral projections from forming an orthogonal basis and is 1 in the case of an orthogonal basis. Note that in general, the hypocoercive constants may be difficult to identify and may have little to do with the spectrum. Similar results have been obtained by Patie and Savov \cite{Patie-Savov-GeL} as well as Achleitner et al.~\cite{BGK}. Our hypocoercive estimate is obtained via intertwining, which suggests that hypocoercivity may be studied purely from this viewpoint, an idea that is further investigated in the recent work \cite{PV-Hypo} by the second and fourth author.
\end{Remark}

\begin{Remark}
The second part of \Cref{thm:convergence-equilibrium} gives the exponential decay of $\J$ in entropy, but after an independent random warm-up time. Note that, for $\lam \leq 2(\bm{1}_{\{\mo < 1+\h \}} + \mo)$ the entropy decay rate is the same as for $\Q[\m]$ while under the mild assumption $\lam > 2(\bm{1}_{\{\mo < 1+\h \}} + \mo)$, we get the optimal rate $2 \lam$
irrespective of the precise value of $\mo$. The proof relies on developing so-called \emph{interweaving relations}, a concept which has been introduced and studied in the recent work \cite{Patie-Miclo} by Miclo and Patie, where the classical Jacobi semigroup $\Q[\m]$ serves as a reference object; see Proposition \ref{prop:cmir} below.
\end{Remark}

\begin{Remark}
The additional condition $\lam/2 \in \N$ for the $\Phi$-entropic convergence in \Cref{thm:convergence-equilibrium}.\eqref{item-2:thm:convergence-equilibrium} ensures that we can invoke the known result \eqref{eq:C-D-entropy} for the classical Jacobi semigroup $\Q[\lam/2]$. However, our approach allows us to immediately transfer any improvement of \eqref{eq:C-D-entropy} to the non-local Jacobi semigroup $\J$.
\end{Remark}

Next, we recall the famous equivalence between entropy decay and hypercontractivity due to Gross \cite{Gross}. For any $t \geq 0$ and $f \in \Leb^1_+(\gab[\m])$ such that $\Ent_{\gab[\m]}(f) < \infty$, one has
\begin{equation*}
\Ent_{\gab[\mo]} (\Q[\m]_tf) \leq e^{- \logSob[\m] t} \Ent_{\gab[\m]} (f) \iff ||\Q[\m]_t ||_{2 \rightarrow q} \leq 1,  \textrm{ where }  2\leq q \leq 1+e^{ \logSob[\m] t}
\end{equation*}
where we use the shorthand $||\cdot||_{p \rightarrow q} = ||\cdot||_{\Leb^p(\gab[\m]) \rightarrow \Leb^q(\gab[\m])}$ for $1 \leq p, q \leq \infty$. To state our next result we write, if $\lam - \m > 1$, $c_{\m} > 0$ for the Sobolev constant of $\calJd_\m$ of order $\frac{2(\lam-\m)}{(\lam-\m-1)}$, and recall that as a result of the Sobolev inequality for $\calJd_\m$, one gets $||\Q[\m]_t||_{1 \to \infty} \leq c_{\m} t^{-\frac{\lam-\m}{\lam-\m-1}}$ for $0 < t \leq 1$, which implies that $\Q[\m]$ is ultracontractive,
that is, $||\Q[\m]_t||_{1 \to \infty} < \infty$ for all $t > 0$; see~\Cref{sec:classical-Jacobi} for more details. We have the following concerning the contractivity of $\J$.

\begin{theorem}
\label{thm:contractivity}
For any $\m \in (\bm{1}_{\{\mo < 1+\h\}} + \mo , \lam)$ and $\epsilon \in (0,d_\phi) \cup \{d_\phi\}$, the following hold:
\begin{enumerate}[\rm(i)]
\item \label{item-1:thm:contractivity} For $t \geq 0$, we have the hypercontractivity estimate
\begin{equation*}
||\J_{t+\tau}||_{2 \rightarrow q} \leq 1 \: \text{ for all } q \text{ satisfying } 2\leq q \leq 1+e^{ \logSob[\m] t},
\end{equation*}
and furthermore, if $\lam > 2(\bm{1}_{\{\mo < 1+\h \}} + \mo)$, then
\begin{equation*}
||\J_{t+\tau}||_{2 \rightarrow q} \leq 1 \: \text{ for all } q \text{ satisfying }2\leq q \leq 1+e^{2\lam t}.
\end{equation*}
\item \label{item-2:thm:contractivity} If, in addition, $\lam-\m > 1$, then for $0 < t \leq 1$, we have the ultracontractivity estimate
\begin{equation*}
||\J_{t+\tau}||_{1 \to \infty} \leq c_{\m} t^{-\frac{\lam-\m}{\lam-\m-1}},
\end{equation*}
where for $\lam> 2$,  one can choose $\m=\frac{\lam}{2}$, yielding $c_{\frac{\lam}{2}}=\frac{4}{\lam(\lam- 2)}$.
\end{enumerate}
\end{theorem}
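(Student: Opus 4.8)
The plan is to transport the hypercontractivity and ultracontractivity of the classical Jacobi semigroup $\Q[\m]$ to $\J$ through the interweaving relation of Proposition~\ref{prop:cmir}, exactly as in the proof of \Cref{thm:convergence-equilibrium}. That proposition provides, for each admissible pair $(\m,\epsilon)$, two Markov operators $\Lambda=\Lambda_{\m,\epsilon}\colon \Leb^2(\gab[\m])\to\Leb^2(\bpsi)$ and $V=V_{\m,\epsilon}\colon\Leb^2(\bpsi)\to\Leb^2(\gab[\m])$ that intertwine the two semigroups, $\J_t\Lambda=\Lambda\Q[\m]_t$ and $V\J_t=\Q[\m]_t V$ for all $t\ge 0$, and whose composition is $\Lambda V=\J_\tau:=\int_0^\infty \J_s\,\Prob(\tau\in ds)$ with $\tau$ the warm-up time of Laplace transform~\eqref{eq:def-T-Laplace}. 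Invoking the semigroup property of $\J$ and the first intertwining, this yields, for every $t\ge 0$, the factorization
\begin{equation*}
\J_{t+\tau}=\J_t\J_\tau=\J_t\Lambda V=\Lambda\,\Q[\m]_t\,V .
\end{equation*}
Moreover, since $\Lambda$ and $V$ are Markov operators that transport $\bpsi$ onto $\gab[\m]$ and $\gab[\m]$ onto $\bpsi$ respectively (a consequence of the intertwinings combined with the invariance and ergodicity of $\bpsi$ and $\gab[\m]$), positivity, mass preservation and Riesz--Thorin interpolation make $\Lambda$ a contraction $\Leb^p(\gab[\m])\to\Leb^p(\bpsi)$ and $V$ a contraction $\Leb^p(\bpsi)\to\Leb^p(\gab[\m])$, for every $p\in[1,\infty]$.

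For part~\eqref{item-1:thm:contractivity}, I would use this factorization together with submultiplicativity of the operator norm along $V$, then $\Q[\m]_t$, then $\Lambda$, to obtain
\begin{equation*}
\|\J_{t+\tau}\|_{2\to q}\;\le\; \|\Lambda\|\,\|\Q[\m]_t\|_{2\to q}\,\|V\|\;\le\;\|\Q[\m]_t\|_{2\to q},
\end{equation*}
where the middle factor is the $\Leb^2(\gab[\m])\to\Leb^q(\gab[\m])$ norm of the classical Jacobi semigroup and the outer ones are $\le 1$. It then suffices to invoke the hypercontractivity of $\Q[\m]$ recalled in \Cref{sec:classical-Jacobi} — Gross's theorem applied to the log-Sobolev inequality for $\calJd_\m$ — namely $\|\Q[\m]_t\|_{2\to q}\le 1$ whenever $2\le q\le 1+e^{\logSob[\m]t}$; this is the first estimate. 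For the improved one, observe that $\lam>2(\bm{1}_{\{\mo<1+\h\}}+\mo)$ is precisely the condition under which $\m=\lam/2$ is admissible, and then $\lam/2>1$, so $\logSob[\lam/2]=2\lam$; running the same argument with this choice of $\m$ (hence with the $\tau$ built from it) gives $\|\J_{t+\tau}\|_{2\to q}\le 1$ for $2\le q\le 1+e^{2\lam t}$.

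Part~\eqref{item-2:thm:contractivity} uses the same factorization, now composed along $\Leb^1(\bpsi)\to\Leb^1(\gab[\m])\to\Leb^\infty(\gab[\m])\to\Leb^\infty(\bpsi)$: since $V$ contracts $\Leb^1$ and $\Lambda$ contracts $\Leb^\infty$, one obtains $\|\J_{t+\tau}\|_{1\to\infty}\le\|\Q[\m]_t\|_{1\to\infty}$, the composition being well defined because $\Q[\m]_t$ maps $\Leb^1(\gab[\m])$ into $\Leb^\infty(\gab[\m])$. When $\lam-\m>1$, the Sobolev inequality for $\calJd_\m$ recalled in \Cref{sec:classical-Jacobi} gives $\|\Q[\m]_t\|_{1\to\infty}\le c_\m t^{-\frac{\lam-\m}{\lam-\m-1}}$ for $0<t\le 1$, which is the claimed bound; for $\lam>2$ the admissible choice $\m=\lam/2$ has $\lam-\m=\lam/2>1$ and the known Sobolev constant $c_{\lam/2}=\frac{4}{\lam(\lam-2)}$, giving the last assertion.

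Given Proposition~\ref{prop:cmir}, this argument is soft: the only facts about $\J$ that enter are the factorization $\J_{t+\tau}=\Lambda\Q[\m]_t V$ and the general principle that a Markov-kernel intertwiner contracts every $\Leb^p$. The genuine obstacle is therefore upstream, in establishing Proposition~\ref{prop:cmir} — constructing $\Lambda$ and $V$ and, above all, identifying $\Lambda V$ with $\J_\tau$ for a bona fide probability law $\tau$ having the Laplace transform~\eqref{eq:def-T-Laplace} — which is where the Bernstein-function and $W_\phi$ machinery is required. Within the present proof the only steps needing a little care are checking that $\m=\lam/2$ (and the associated $\tau$) is admissible under the stated hypotheses, and verifying that the composition of the Markov kernels with the ultracontractive $\Q[\m]_t$ is a well-defined operator $\Leb^1(\bpsi)\to\Leb^\infty(\bpsi)$.
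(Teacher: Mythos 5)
Your proposal is correct and follows essentially the same route as the paper. Part~\eqref{item-2:thm:contractivity} is proved exactly as you describe: the paper writes $\J_{t+\tau}=\J_t\overline{\mathrm{V}}_\phi\widetilde{\mathrm{V}}_\phi=\overline{\mathrm{V}}_\phi\Q[\m]_t\widetilde{\mathrm{V}}_\phi$ and uses the $\Leb^p$-norm-one property of the two intertwining kernels from Lemma~\ref{lem:Lambda-L2-bounded} (with $p=\infty$ and $p=1$), then invokes the Sobolev ultracontractivity of $\Q[\m]$. For part~\eqref{item-1:thm:contractivity} the paper simply cites the general transfer theorem [Theorem~8, Patie--Miclo] applied to the interweaving relation, whereas you unwind that abstraction and give the direct computation $\|\J_{t+\tau}\|_{2\to q}\le\|\overline{\mathrm{V}}_\phi\|_{q\to q}\,\|\Q[\m]_t\|_{2\to q}\,\|\widetilde{\mathrm{V}}_\phi\|_{2\to 2}\le\|\Q[\m]_t\|_{2\to q}$; this is presumably what the cited theorem does internally, so the difference is expository rather than substantive. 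One small precision worth flagging: the identity $\overline{\mathrm{V}}_\phi\widetilde{\mathrm{V}}_\phi=\J_\tau$ that you quote is not literally in Proposition~\ref{prop:cmir} (which gives only the reversed composition $\widetilde{\mathrm{V}}_\phi\overline{\mathrm{V}}_\phi=F_\phi(-\calJd_\m)$); it is the display~\eqref{eq:J-tau} derived in the proof of Theorem~\ref{thm:convergence-equilibrium}.\eqref{item-2:thm:convergence-equilibrium} using the injectivity of $\widetilde{\mathrm{V}}_\phi$, and your argument relies on that step as well.
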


\subsection{Bochner subordination of the semigroup}

We write $\J^{\bm{\tau}} = (\J_t^{\bm{\tau}})_{t \geq 0}$ for the semigroup subordinated, in the sense of Bochner, with respect to the subordinator ${\bm{\tau}} = (\tau_t)_{t \geq 0}$ whose existence is guaranteed by \Cref{thm:convergence-equilibrium}.\eqref{item-2:thm:convergence-equilibrium}, that is,
\begin{equation*}
\J_t^{\bm{\tau}} = \int_0^\infty \J_s \hspace{1pt} \Prob(\tau_t \in ds),
\end{equation*}
so that $\J_1^{\bm{\tau}} = \J_\tau$. Note that $ \J^{\bm{\tau}}$ is also an ergodic Markov semigroup on $\Leb^2(\bpsi)$ with $\bpsi$ as an invariant measure, and its generator is given by $-\phi^{(\tau)}(-\calJ) = \log \J_\tau$; see Sato~\cite[Chapter 6]{Sato1999}.
For the subordinated semigroup we have the following.



\begin{theorem}
\label{cor:subordinated}
For any $\m \in (\bm{1}_{\{\mo < 1+\h\}} + \mo , \lam)$ and $\epsilon \in (0,d_\phi) \cup \{d_\phi\}$ the statement of \Cref{thm:spectral-representation} holds for $\J^{\bm{\tau}}$ and $t \ge 1$
if $(\lambda_n)_{n \geq 0}$ is replaced with
\[
\bigg(\log \frac{(\m)_n(\lam-\dpt)_n}{(\dpt)_n(\lam-\m)_n}\bigg)_{n \geq 0},
\]
and the assertions  of \Cref{thm:convergence-equilibrium}.\eqref{item-2:thm:convergence-equilibrium} and \Cref{thm:contractivity}.\eqref{item-1:thm:contractivity} hold for $\J^{\bm{\tau}}$ if $\lam$ is replaced with $\log \frac{\m(\lam-\dpt)}{\dpt(\lam-\m)}$ and $\tau$ with 1.
 Moreover, for all $\m$ and $\epsilon$ such that $1 < \lam - \m < (\m - \dpt)(\lam-\m-1)$, $\J_t^{\bm{\tau}} f (x) = \int_0^1 f(y) q_t^{(\bm{\tau})}(x,y) \bpsi(dy)$ for any $f\in \Leb^2(\bpsi)$ and  $t > 2$, where the heat kernel $q_t^{(\bm{\tau})}$ satisfies the estimate
\begin{equation*}
|q_t^{(\bm{\tau})}(x,y) - 1| \leq c_\m(\E[\tau^{-\frac{\lam-\m}{\lam-\m-1}}]+1) \left(\frac{\m(\lam-\dpt)}{\dpt(\lam-\m)}\right)^{\frac{1-2t}{2}} < \infty
\end{equation*}
for Lebesgue-almost all $(x,y) \in [0,1]^2$. As above, if $\lam> 2$,  one can choose $\m=\frac{\lam}{2}$, yielding $c_{\frac{\lam}{2}}=\frac{4}{\lam(\lam- 2)}$.
\end{theorem}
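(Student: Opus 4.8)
The plan is to read off every assertion for $\J^{\bm{\tau}}$ from the corresponding one for $\J$ by Bochner subordination, using that $\J^{\bm{\tau}}_1=\J_\tau$ and that subordination commutes with the spectral expansion of \Cref{thm:spectral-representation} and with the intertwining/interweaving relations behind \Cref{thm:convergence-equilibrium} and \Cref{thm:contractivity}. The computational heart is the identity $\rho(\lambda_n)=n$: since $\lambda_n=n^2+(\lam-1)n$ we have $\lambda_n+\tfrac{(\lam-1)^2}{4}=(n+\tfrac{\lam-1}{2})^2$, so $\rho(\lambda_n)=n+\tfrac{\lam-1}{2}-\tfrac{|\lam-1|}{2}=n$ since $\lam\ge1$ in the regime at hand. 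Substituting this into \eqref{eq:def-T-Laplace} gives, with $\mu_n:=\E[e^{-\lambda_n\tau}]$, the closed form $\mu_n=\frac{(\dpt)_n(\lam-\m)_n}{(\m)_n(\lam-\dpt)_n}$, that is, $\mu_n^t=\exp\big(-t\log\tfrac{(\m)_n(\lam-\dpt)_n}{(\dpt)_n(\lam-\m)_n}\big)$. Integrating $\J_s=\sum_{n\ge0}e^{-\lambda_n s}\,\P_n^\phi\otimes\V_n^\phi$ against $\Prob(\tau_t\in ds)$ --- with the exchange of sum and integral justified by dominated convergence, as $e^{-\lambda_n s}\le1$ and the partial sums are uniformly bounded by the Bessel property --- produces $\J^{\bm{\tau}}_t=\sum_{n\ge0}\mu_n^t\,\P_n^\phi\otimes\V_n^\phi$, so the exponents $(\lambda_n)$ get replaced by $\big(\log\tfrac{(\m)_n(\lam-\dpt)_n}{(\dpt)_n(\lam-\m)_n}\big)$, as claimed. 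Operator-norm convergence for $t\ge1$ goes as in \Cref{thm:spectral-representation}, the only difference being that $\mu_n^t$ now decays merely polynomially (of order $n^{-2t(\m-\dpt)}$, by the asymptotics of ratios of Pochhammer symbols and $\m>\dpt$), which for $t\ge1$ still dominates the polynomial growth of $\|\P_n^\phi\otimes\V_n^\phi\|$ from \Cref{rem:bounds}. All remaining claims of \Cref{thm:spectral-representation} --- exact Bessel sequence with optimal bound $1$, unique exact biorthogonal sequence, immediate compactness, the spectral mapping theorem with unit multiplicities, and self-adjointness iff $h\equiv0$ --- concern only the fixed pair $(\P_n^\phi),(\V_n^\phi)$ and the strict monotonicity and divergence of the new exponents, and transfer verbatim.

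For the entropy and contractivity statements, I would subordinate the interweaving relation of \Cref{prop:cmir} between $\J$ and the classical Jacobi semigroup $\Q[\m]$: integrating the intertwinings $\J_s\Lambda=\Lambda\Q[\m]_s$, $\widehat\Lambda\J_s=\Q[\m]_s\widehat\Lambda$ and the identity $\Lambda\widehat\Lambda=\J_\tau=\J^{\bm{\tau}}_1$ against $\Prob(\tau_t\in ds)$ yields an interweaving between $\J^{\bm{\tau}}$ and the Bochner-subordinated classical semigroup $\mathbf{Q}^{(\m),\bm{\tau}}$, which is again self-adjoint and Markov on $\Leb^2(\gab[\m])$, has a Jacobi-polynomial eigenbasis, eigenvalues $\mu_n^t$, and spectral gap $-\log\mu_1=\log\tfrac{\m(\lam-\dpt)}{\dpt(\lam-\m)}$. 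Writing $\J^{\bm{\tau}}_{t+1}=\J^{\bm{\tau}}_t\J^{\bm{\tau}}_1=\J^{\bm{\tau}}_t\Lambda\widehat\Lambda=\Lambda\,\mathbf{Q}^{(\m),\bm{\tau}}_t\,\widehat\Lambda$ and using that $\Lambda,\widehat\Lambda$ are Markov operators --- hence contract every $\Phi$-entropy --- the entropy decay, hypercontractivity and $\Phi$-entropy decay for $\J^{\bm{\tau}}_{t+1}$ are inherited from those of $\mathbf{Q}^{(\m),\bm{\tau}}_t$, which are the Bochner-subordinated versions of the classical Jacobi estimates recalled in \Cref{sec:classical-Jacobi}; identifying their rates --- which realizes the substitution $\lam\mapsto\log\tfrac{\m(\lam-\dpt)}{\dpt(\lam-\m)}$, $\tau\mapsto1$ --- again uses $\rho(\lambda_n)=n$ together with the classical inputs for $\Q[\m]$ (with $\m=\lam/2$ when $\lam>2(\bm{1}_{\{\mo<1+\h\}}+\mo)$, and \eqref{eq:C-D-entropy} for the $\Phi$-entropic part, which requires $\lam/2\in\N$); hypercontractivity of $\J^{\bm{\tau}}$ also follows from its entropy decay by Gross's equivalence.

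For the heat-kernel bound, fix $t>2$ and decompose $\J^{\bm{\tau}}_t=\J^{\bm{\tau}}_1\big(\J^{\bm{\tau}}_{t-2}-\mathrm{P}_\bpsi\big)\J^{\bm{\tau}}_1+\mathrm{P}_\bpsi$, where $\mathrm{P}_\bpsi f=\bpsi[f]$. The outer factors $\J^{\bm{\tau}}_1=\J_\tau$ are ultrabounded: integrating the ultracontractive estimate for $\J$ (transferred from the classical Sobolev inequality, as in the proof of \Cref{thm:contractivity}.\eqref{item-2:thm:contractivity}) against $\Prob(\tau\in ds)$ and splitting at $s=1$ gives $\|\J^{\bm{\tau}}_1\|_{\Leb^1(\bpsi)\to\Leb^\infty}\le c_{\m}\big(\E[\tau^{-\frac{\lam-\m}{\lam-\m-1}}]+1\big)$; and, subordinating the variance estimate of \Cref{thm:convergence-equilibrium}.\eqref{item-1:thm:convergence-equilibrium}, $\|\J^{\bm{\tau}}_{t-2}-\mathrm{P}_\bpsi\|_{\Leb^2(\bpsi)\to\Leb^2(\bpsi)}\le\big(\tfrac{\m(\lam-\dpt)}{\dpt(\lam-\m)}\big)^{1/2}\big(\tfrac{\dpt(\lam-\m)}{\m(\lam-\dpt)}\big)^{t-2}$. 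In particular $\J^{\bm{\tau}}_t$ is an integral operator against $\bpsi$ with a bounded density $q^{(\bm{\tau})}_t$, and since $\J^{\bm{\tau}}_1$ is a Markov kernel relative to $\bpsi$ its rows and columns have $\Leb^2(\bpsi)$-norm at most $\|\J^{\bm{\tau}}_1\|_{\Leb^1(\bpsi)\to\Leb^\infty}^{1/2}$, so a Cauchy--Schwarz estimate on the kernel of the composition gives the stated pointwise bound on $|q^{(\bm{\tau})}_t(x,y)-1|$, finite precisely when $\E[\tau^{-\frac{\lam-\m}{\lam-\m-1}}]<\infty$. This last finiteness is the one genuinely substantive point, and the main obstacle: from \eqref{eq:def-T-Laplace} one has $\rho(u)\sim\sqrt u$ and, by the Pochhammer asymptotics, $\E[e^{-u\tau}]\sim C\,u^{-(\m-\dpt)}$ as $u\to\infty$, so by a Tauberian theorem the law of $\tau$ has a density of order $x^{\m-\dpt-1}$ near $0$; hence $\E[\tau^{-a}]<\infty$ exactly for $a<\m-\dpt$, and with $a=\tfrac{\lam-\m}{\lam-\m-1}$ this is precisely the hypothesis $1<\lam-\m<(\m-\dpt)(\lam-\m-1)$, the lower bound being also what the classical ultracontractivity of $\Q[\m]$ needs. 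The remaining steps --- term-by-term subordination, the kernel Cauchy--Schwarz, and the rate identifications --- are routine.
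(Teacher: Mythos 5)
Your first two paragraphs land where the paper does. The closed form for the exponents is exactly right ($\rho(\lambda_n)=n$ and hence $\E[e^{-\lambda_n\tau}]=\frac{(\dpt)_n(\lam-\m)_n}{(\m)_n(\lam-\dpt)_n}$), and the Fubini/Tonelli exchange does in fact go through once one writes the coefficients in terms of the two Bessel sequences and uses $\mu_n^{-1/2}=\mathfrak{c}_n(\m)/\mathfrak{c}_n(\dpt)$ to absorb the unnormalized growth of $\P_n^\phi\otimes\V_n^\phi$ --- that is, the honest dominating bound is $\sup_n\mu_n^{t-1/2}\|f\|\|g\|$ rather than "the partial sums are uniformly bounded by the Bessel property," which as stated is not the right justification (the operator norm of the partial sums of $\J_s$ is \emph{not} uniformly bounded as $s\to0$). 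The paper itself sidesteps the interchange entirely by writing $\J_t^{\bm{\tau}}=\J_{t-1}^{\bm{\tau}}\overline{\rm V}_\phi\widetilde{\rm V}_\phi=\overline{\rm V}_\phi\Q[\m,\bm{\tau}]_{t-1}\widetilde{\rm V}_\phi$ for $t\ge1$ and reading the expansion off the self-adjoint subordinated Jacobi semigroup; both routes are valid. For the entropy, hypercontractivity and $\Phi$-entropy assertions your argument is, modulo detail, the paper's: subordinate the interweaving of Proposition \ref{prop:cmir} via \cite[Theorem 3]{Patie-Miclo}, then invoke \cite[Theorems 7, 24]{Patie-Miclo} (your more hands-on "Markov operators contract $\Phi$-entropy" sketch is the content of those theorems).

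The heat-kernel paragraph has a genuine gap. Your decomposition rests on the claim
\[
\|\J_1^{\bm{\tau}}\|_{\Leb^1(\bpsi)\to\Leb^\infty}\le c_{\m}\bigl(\E[\tau^{-\frac{\lam-\m}{\lam-\m-1}}]+1\bigr),
\]
which you propose to obtain by "integrating the ultracontractive estimate for $\J$ against $\Prob(\tau\in ds)$." But no ultracontractive estimate for $\J_s$ itself is available --- Theorem \ref{thm:contractivity}.\eqref{item-2:thm:contractivity} controls $\|\J_{s+\tau}\|_{1\to\infty}=\|\overline{\rm V}_\phi\Q[\m]_s\widetilde{\rm V}_\phi\|_{1\to\infty}$, not $\|\J_s\|_{1\to\infty}$, and the two are not interchangeable since $\J_\tau=\overline{\rm V}_\phi\widetilde{\rm V}_\phi$ is merely a bounded Markov operator on each $\Leb^p(\bpsi)$ (Lemma \ref{lem:Lambda-L2-bounded}) with no $\Leb^1\to\Leb^\infty$ smoothing built in; indeed the interweaving gives $\J_1^{\bm{\tau}}=\overline{\rm V}_\phi\Q[\m,\bm{\tau}]_0\widetilde{\rm V}_\phi=\overline{\rm V}_\phi\,\mathrm{Id}\,\widetilde{\rm V}_\phi$, and the identity is not ultrabounded. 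The paper's route never requires $\J_1^{\bm{\tau}}$ to be ultrabounded: it establishes $\|\Q[\m,\bm{\tau}]_s\|_{1\to\infty}\le c_\m(\E[\tau^{-p}]+1)$ for $s\ge1$ by integrating the \emph{classical} Sobolev-ultracontractivity of $\Q[\m]$ (which \emph{is} valid for all $s>0$), transfers this via the interweaving to $\|\J_t^{\bm{\tau}}\|_{1\to\infty}\le\|\Q[\m,\bm{\tau}]_{t-1}\|_{1\to\infty}$ for $t\ge2$, and then runs the Bakry et al.\ heat-kernel argument with the subordinated variance decay. If you insist on your decomposition you would have to replace $\J_1^{\bm{\tau}}$ by $\J_2^{\bm{\tau}}$, which weakens the range to $t>4$ and degrades the exponent, so the approach does not recover the stated $t>2$ bound. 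The Tauberian observation that $\E[\tau^{-a}]<\infty$ exactly for $a<\m-\dpt$ is correct (and is what the hypothesis $1<\lam-\m<(\m-\dpt)(\lam-\m-1)$ encodes); the paper proves the same finiteness more directly by expressing $\E[\tau_t^{-p}]=\frac{1}{\Gamma(p)}\int_0^\infty\E[e^{-u\tau_t}]u^{p-1}du$ and using the Stirling asymptotics of \eqref{eq:def-T-Laplace}.
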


We point out that the Markov process realization of $\J$ (resp.~$\J^{\bm{\tau}}$) has "only negative (resp. non-symmetric two-sided) jumps and can easily be shown to be a polynomial process on $[0,1]$ in the sense of Cuchiero et al.~\cite{cuchiero2018}. Markov semigroups obtained by subordinating $\J$ with respect to any conservative subordinator $\widetilde{\bm{\tau}} = (\widetilde{\tau}_t)_{t \geq 0}$ with Laplace exponent $\phi^{(\widetilde{\tau})}$ (growing fast enough at infinity, e.g.~logarithmically) are also in this class, and we obtain the spectral expansion of the subordinated semigroup
from \Cref{thm:spectral-representation} by replacing $(\lambda_n)_{n \geq 0}$ with $(\phi^{(\widetilde{\tau})}(\lambda_n))_{n \geq 0}$. Note that in the aforementioned paper the authors investigate the martingale problem for general polynomial operators on the unit simplex, of which $\calJ$ and $-\phi^{(\tau)}(-\calJ)$ are specific instances. In particular, $\calJ$ is a L\'evy type operator with affine jumps of type 2 in the sense of \cite{cuchiero2018}. For such operators, existence and uniqueness for the martingale problem have been shown in \cite{cuchiero2018} under the weaker condition $\lam \geq \mo$. However, \Cref{assA} allows us to obtain the existence and uniqueness of an invariant probability measure. 

\section{Proofs} \label{sec:proofs}

\subsection{Preliminaries}
We start by proving some preliminary results that will be useful throughout the paper. We first give an alternative form of the operator $\calJ$, which will make some later proofs more transparent.
Recall that $\Pi$, given by $\Pi(dr) = -(e^rh(e^r))'dr$, is a finite non-negative Radon measure on $(0,\infty)$.

\begin{lemma}
\label{lem:rewriting-stuff}
One has $\int_0^\infty r\Pi(dr) = \h < \infty$, and the operator $\calJ$ defined in \eqref{eq:defJ} may,
for suitable functions $f \colon [0,1] \to \R$, be written as
\begin{equation}
\label{eq:alt-J}
\calJ f(x) = x(1-x)f''(x) - (\lam x-\mo+\h)f'(x) + \int_{0}^{\infty} \left(f(e^{-r}x)-f(x)+xrf'(x)\right)\frac{\Pi(dr)}{x}.
\end{equation}
\end{lemma}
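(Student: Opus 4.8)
The plan is to prove the two assertions in turn, the second relying on the first.

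\textbf{Moment identity.} Set $g(s)=e^{s}h(e^{s})$ for $s>0$, so that by the definition of $\Pi$ one has $\Pi(ds)=-g'(s)\,ds$ as Radon measures; since $\Pi\ge 0$ is finite, $g$ is non-increasing and $g(s)=g(\infty)+\Pi((s,\infty))$ with $g(\infty)=\lim_{s\to\infty}g(s)\ge 0$ existing. The substitution $u=e^{s}$ gives $\int_0^\infty g(s)\,ds=\int_0^\infty e^{s}h(e^{s})\,ds=\int_1^\infty h(u)\,du=\h<\infty$, which forces $g(\infty)=0$ and hence $g(s)=\Pi((s,\infty))$ for Lebesgue-a.e.\ $s$. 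Tonelli's theorem then yields
\[
\int_0^\infty r\,\Pi(dr)=\int_0^\infty\Pi((s,\infty))\,ds=\int_0^\infty g(s)\,ds=\h .
\]

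\textbf{Rewriting the non-local term.} Starting from $\calJ f=\calJd_\mo f-h\diamond f'$ and using \eqref{eq:defd} with $D_h=(1,\infty)$ and $D_{f'}=[0,1]$ (the constraint $x/r\in[0,1]$ being automatic for $x\in[0,1]$, $r>1$), then substituting $r=e^{s}$, I obtain $h\diamond f'(x)=\int_1^\infty h(r)f'(x/r)\tfrac{dr}{r}=\int_0^\infty e^{-s}g(s)f'(xe^{-s})\,ds$. Inserting $g(s)=\Pi((s,\infty))=\int_0^\infty\bm{1}_{\{r>s\}}\,\Pi(dr)$ and interchanging the two integrals — legitimate by Tonelli since $|f'|\le\|f'\|_\infty$ on $[0,1]$ and $\int_0^\infty\!\int_0^r e^{-s}\,ds\,\Pi(dr)\le\int_0^\infty r\,\Pi(dr)=\h<\infty$ — reduces the inner integral, via $\tfrac{d}{ds}f(xe^{-s})=-xe^{-s}f'(xe^{-s})$, to $\int_0^r e^{-s}f'(xe^{-s})\,ds=\tfrac{1}{x}(f(x)-f(xe^{-r}))$, so that
\[
h\diamond f'(x)=\frac{1}{x}\int_0^\infty\big(f(x)-f(xe^{-r})\big)\,\Pi(dr).
\]

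\textbf{Reassembly.} Writing $\h f'(x)=f'(x)\int_0^\infty r\,\Pi(dr)=\tfrac1x\int_0^\infty xr f'(x)\,\Pi(dr)$ by the first step and combining with the drift term of $\calJd_\mo$ gives
\[
-(\lam x-\mo)f'(x)-h\diamond f'(x)=-(\lam x-\mo+\h)f'(x)+\frac1x\int_0^\infty\big(f(xe^{-r})-f(x)+xr f'(x)\big)\Pi(dr),
\]
which, together with the unchanged second-order part $x(1-x)f''(x)$, is precisely \eqref{eq:alt-J}. Here "suitable $f$" can be taken to mean $f\in C^2[0,1]$: then near $r=0$ the integrand $f(xe^{-r})-f(x)+xrf'(x)$ is $O(r^2)$ by Taylor's theorem, and globally it is dominated by $2\|f\|_\infty+r\|f'\|_\infty$ (using $x\le 1$), which is $\Pi$-integrable because $\Pi$ is finite and $\int r\,\Pi(dr)=\h<\infty$, so all the displayed integrals converge absolutely. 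I do not anticipate a genuine obstacle: the one point needing care is the vanishing of the constant of integration, i.e.\ the passage from $g(s)=g(\infty)+\Pi((s,\infty))$ to $g(s)=\Pi((s,\infty))$, and this — together with the two Tonelli interchanges — is exactly where the finiteness of $\h$ (equivalently, of the first moment of $\Pi$ imposed in the standing assumption) enters.
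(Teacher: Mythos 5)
Your proof is correct and follows essentially the same route as the paper: both rest on identifying the tail $\overline{\Pi}(s)=e^{s}h(e^{s})$ (your $g(\infty)=0$ step is the paper's $\lim_{r\to\infty}e^{r}h(e^{r})=0$), obtaining $\int_0^\infty r\,\Pi(dr)=\h$ by Tonelli, and then equating $h\diamond f'$ with the jump integral. The only difference is cosmetic: the paper goes from the jump form back to $h\diamond f'$ via integration by parts, whereas you go forward from $h\diamond f'$ via Fubini and the fundamental theorem of calculus — the same computation run in the opposite direction.
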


\begin{proof}
Since $(e^rh(e^r))' \le 0$ and
\begin{equation*}
\int_0^\infty e^rh(e^r)dr = \int_1^\infty h(r)dr = \h < \infty,
\end{equation*}
one obtains that $\lim_{r \to \infty} e^r h(e^r) = 0$. Consequently, one has for all $y>0$,
\begin{equation*}
\overline{\Pi}(y) = \int_y^\infty \Pi(dr) = -\int_y^\infty (e^rh(e^r))' dr = e^yh(e^y) - \lim_{r \to \infty} e^r h(e^r) = e^yh(e^y).
\end{equation*}
Thus, by Tonelli's theorem and a change of variables,
\begin{equation*}
\int_0^\infty r\Pi(dr) = \int_0^\infty \overline{\Pi}(r)dr = \int_0^{\infty} e^r h(e^r) dr = \int_1^\infty h(r)dr = \h < \infty,
\end{equation*}
which yields
\begin{equation*}
\int_{0}^{\infty} \left(f(e^{-r}x)-f(x)+xrf'(x)\right)\frac{\Pi(dr)}{x} = \h f'(x) + \int_0^\infty \frac{f(e^{-r}x)-f(x)}{x} \Pi(dr).
\end{equation*}
Integration by parts and a change of variables give
\begin{align*}
\int_0^\infty \frac{f(e^{-r}x)-f(x)}{x} \Pi(dr) = -\int_0^\infty e^{-r} f'(e^{-r}x) \overline{\Pi}(r)dr = -\int_0^\infty f'(e^{-r}x)h(e^r)dr = -h \diamond f'(x),
\end{align*}
and the lemma follows.
\end{proof}

In the sequel we keep the notation {$\Pi(dr) = -(e^rh(e^r))'dr$}, $r > 0$ and $\overline{\Pi}(y) = e^y h(e^y)$, $y > 0$. Let $\varphi: [0,\infty) \to [0,\infty)$ be the function given by
\begin{equation}
\label{def:varphi}
\varphi(u) = \frac{u+\vat}{u+1}\phi(u+1).
\end{equation}
The following result collects some useful properties of the functions $\phi$ and $\varphi$
given in \eqref{eq:def-phi} and \eqref{def:varphi}, respectively.


\begin{lemma}
\label{lem:phi-Psi1-claims} $\mbox{}$\\[-5mm]
\begin{enumerate}[\rm(i)]
\item \label{item-0:lem:phi-Psi1-claims} $\phi$ is a Bernstein function satisfying $\lim_{u\to\infty} \phi(u)/u =1$.
\item \label{item-1:lem:phi-Psi1-claims}
$\vat$, given in \eqref{eq:def-root}, satisfies $\vat \in (0,1]$ with $\vat = 1$ if and only if $\mo \geq 1+\h$. Additionally, if $\mo \geq 1+\h$, then $\phi(0) = \mo-\h - 1$, while if $\mo < 1+\h$, then $\phi(0) = 0$.
\item \label{item-2:lem:phi-Psi1-claims}
Suppose $\mo < 1+\h$. Then $\varphi$ is a Bernstein function that is in correspondence with the non-local Jacobi operator $\calJ_\varphi$ with parameters $\lam$, $\mo_\varphi = 1+\mo$ and the non-negative function $h_\varphi(r)=r^{-1}\overline{\Pi}_\varphi(\log r), r>1$, where $\Pi_\varphi$ is the finite non-negative Radon measure given by
\begin{equation*}
\Pi_\varphi(dr) = e^{-r}\left(\Pi(dr)+\overline{\Pi}(r)dr\right), \ r > 0.
\end{equation*}
Furthermore, $\h_\varphi = \int_1^\infty h_\varphi(r)dr < \infty$ with $\mo_\varphi \geq 1+\h_\varphi$ and $\lam > \mo_\varphi$.
\end{enumerate}
\end{lemma}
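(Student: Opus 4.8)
The plan is to reduce the lemma to one reformulation of $\Psi$. By a change of variables and Fubini in \eqref{eq:defpsi}, using $\int_0^\infty r\,\Pi(dr)=\h$ (cf.\ the proof of \Cref{lem:rewriting-stuff}) and writing $\overline{\Pi}(s)=\int_s^\infty\Pi(dr)=e^sh(e^s)$, one obtains
\[
\Psi(u)=u^2+(\mo-1)u-\psi_\Pi(u),\qquad \psi_\Pi(u):=\int_0^\infty(1-e^{-ur})\,\Pi(dr),
\]
where $\psi_\Pi$ is a Bernstein function with $\psi_\Pi'(0)=\int_0^\infty\overline{\Pi}(s)\,ds=\h<\infty$ and $\psi_\Pi(u)/u=\int_0^\infty e^{-us}\overline{\Pi}(s)\,ds$. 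With this, I would first settle \eqref{item-1:lem:phi-Psi1-claims}: evaluating \eqref{eq:defpsi} at $u=1$ gives $\Psi(1)=\mo-\int_1^\infty r^{-1}h(r)\,dr>0$, since $r^{-1}<1$ on $(1,\infty)$ while $\int_1^\infty h(r)\,dr=\h<\mo$ by \Cref{assA}; as $\Psi$ is convex with $\Psi(0)=0$ and, in the case $\mo<1+\h$, $\Psi'(0)=\mo-1-\h<0$, it is negative throughout $(0,\mathsf{r})$, so $\Psi(1)>0$ forces $\mathsf{r}<1$. Hence $\tet\in[0,1)$, $\vat=1-\tet\in(0,1]$, and $\vat=1$ exactly when $\tet=0$, i.e.\ $\mo\geq1+\h$; moreover $\phi(0)=\lim_{u\to0}\Psi(u)/u=\Psi'(0)=\mo-1-\h$ if $\tet=0$, while $\phi(0)=\Psi(0)/(-\mathsf{r})=0$ if $\tet=\mathsf{r}>0$.

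For \eqref{item-0:lem:phi-Psi1-claims}, the case $\tet=0$ is immediate, since the reformulation of $\Psi$ gives $\phi(u)=\Psi(u)/u=u+(\mo-1-\h)+\int_0^\infty(1-e^{-us})\overline{\Pi}(s)\,ds$, a Bernstein function with drift $1$, killing rate $\mo-1-\h\geq0$ and Lévy density $\overline{\Pi}$ (with $\int_0^\infty\overline{\Pi}(s)\,ds=\h<\infty$), and hence $\phi(u)/u\to1$. The case $\tet=\mathsf{r}>0$ is where the work lies. From $\Psi(\mathsf{r})=0$, i.e.\ $\mathsf{r}(\mathsf{r}+\mo-1)=\psi_\Pi(\mathsf{r})$, and the factorization $\Psi(u)=(u-\mathsf{r})(u+\mathsf{r}+\mo-1)+\psi_\Pi(\mathsf{r})-\psi_\Pi(u)$, one gets
\[
\phi(u)=u+\frac{\psi_\Pi(\mathsf{r})}{\mathsf{r}}+\frac{\psi_\Pi(\mathsf{r})-\psi_\Pi(u)}{u-\mathsf{r}},
\]
and a Fubini argument turns the divided difference into a Laplace transform,
\[
\frac{\psi_\Pi(\mathsf{r})-\psi_\Pi(u)}{u-\mathsf{r}}=-\int_0^\infty e^{-us}G(s)\,ds,\qquad G(s):=\int_s^\infty e^{-\mathsf{r}(r-s)}\,\Pi(dr)\in[0,\overline{\Pi}(s)].
\]
Then $\int_0^\infty G(s)\,ds\leq\h<\infty$ and, by another Fubini, $\int_0^\infty G(s)\,ds=\psi_\Pi(\mathsf{r})/\mathsf{r}$, so the constants cancel and $\phi(u)=u+\int_0^\infty(1-e^{-us})G(s)\,ds$ is Bernstein with drift $1$ and no killing, again giving $\phi(u)/u\to1$; this proves \eqref{item-0:lem:phi-Psi1-claims}.

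For \eqref{item-2:lem:phi-Psi1-claims}, assume $\mo<1+\h$, so $\vat=1-\mathsf{r}$ and $(u+1)-\mathsf{r}=u+\vat$. Feeding the factorization of $\Psi$ into $\varphi(u)=\frac{u+\vat}{u+1}\phi(u+1)=\frac{u+\vat}{u+1}\cdot\frac{\Psi(u+1)}{u+\vat}$ (from \eqref{def:varphi}) and using $(u+1)^2+(\mo-1)(u+1)=(u+1)(u+\mo)$, all $\mathsf{r}$-dependence collapses, leaving
\[
\varphi(u)=(u+\mo)-\frac{\psi_\Pi(u+1)}{u+1}=u+\mo-\int_0^\infty e^{-us}\,e^{-s}\overline{\Pi}(s)\,ds.
\]
With $\Pi_\varphi(dr)=e^{-r}(\Pi(dr)+\overline{\Pi}(r)\,dr)$, a finite (since $\Pi$ is) non-negative Radon measure, an integration by parts identifies $\overline{\Pi}_\varphi(s):=\int_s^\infty\Pi_\varphi(dr)=e^{-s}\overline{\Pi}(s)$, whence $h_\varphi(r)=r^{-1}\overline{\Pi}_\varphi(\log r)=r^{-1}h(r)$ and $\h_\varphi:=\int_1^\infty h_\varphi(r)\,dr=\int_0^\infty\overline{\Pi}_\varphi(s)\,ds=\int_1^\infty r^{-1}h(r)\,dr\leq\h<\infty$. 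Rewriting the display as $\varphi(u)=u+(\mo_\varphi-1-\h_\varphi)+\int_0^\infty(1-e^{-us})\overline{\Pi}_\varphi(s)\,ds$ with $\mo_\varphi=1+\mo$, this is exactly the expression found in \eqref{item-0:lem:phi-Psi1-claims} for the non-local Jacobi operator $\calJ_\varphi$ associated with $(\lam,\mo_\varphi,h_\varphi)$, once that triple is seen to satisfy \Cref{assA}: $\Pi_\varphi$ is finite non-negative and $\h_\varphi<\infty$ as above; $\h_\varphi\leq\h<\mo$ forces $\mo_\varphi=1+\mo>1+\h_\varphi$, so the root parameter of $\calJ_\varphi$ vanishes and $\varphi$ is indeed its associated Bernstein function; and $\lam>\mo_\varphi=1+\mo$ is, in the genuinely non-local case $h\not\equiv0$ at issue here, precisely the first inequality of \Cref{assA} in the regime $\mo<1+\h$.

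The one genuinely delicate step I anticipate is the divided-difference identity in the $\tet>0$ case of \eqref{item-0:lem:phi-Psi1-claims} — extracting the \emph{non-negative} Lévy density $G$ and checking its integrability — together with the bookkeeping of the Fubini/integration-by-parts manipulations feeding it and the identification $\overline{\Pi}_\varphi(s)=e^{-s}\overline{\Pi}(s)$. The rest — the evaluations of $\Psi(1)$ and $\phi(0)$, the collapse to $\varphi(u)=(u+\mo)-\psi_\Pi(u+1)/(u+1)$, and the three inequalities of \eqref{item-1:lem:phi-Psi1-claims}--\eqref{item-2:lem:phi-Psi1-claims} — reduces to direct substitution plus the single elementary estimate $\int_1^\infty r^{-1}h(r)\,dr<\int_1^\infty h(r)\,dr=\h<\mo$.
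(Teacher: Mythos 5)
Your proof is correct, and it takes a genuinely more hands-on route than the paper's. The paper proves \eqref{item-0:lem:phi-Psi1-claims} by recognizing $\Psi$ as the Laplace exponent of a spectrally negative L\'{e}vy process (via the rewriting $\Psi(u)=u^2+(\mo-\h-1)u+\int_0^\infty(e^{-ur}-1+ur)\Pi(dr)$) and then invoking the Wiener--Hopf factorization as a black box to conclude that $\phi=\Psi/(u-\tet)$ is Bernstein; it proves \eqref{item-2:lem:phi-Psi1-claims} by citing a transformation lemma of Chazal--Kyprianou--Patie for $\Psi_1(u)=\frac{u}{u+1}\Psi(u+1)$. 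You instead extract the Bernstein representation of $\phi$ explicitly: in the $\tet=0$ case by direct inspection, and in the $\tet>0$ case by the clean factorization $\Psi(u)=(u-\mathsf{r})(u+\mathsf{r}+\mo-1)+\psi_\Pi(\mathsf{r})-\psi_\Pi(u)$ together with the divided-difference identity $\frac{\psi_\Pi(u)-\psi_\Pi(\mathsf{r})}{u-\mathsf{r}}=\int_0^\infty e^{-us}G(s)\,ds$ with $G(s)=\int_s^\infty e^{-\mathsf{r}(r-s)}\Pi(dr)$, which I have checked (Tonelli applies since all integrands are non-negative, $\int_0^\infty G=\psi_\Pi(\mathsf{r})/\mathsf{r}$ so the killing terms cancel, and the integrability $\int_0^\infty G\le\h<\infty$ holds). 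For \eqref{item-2:lem:phi-Psi1-claims} your direct substitution $\Psi(u+1)=(u+1)(u+\mo)-\psi_\Pi(u+1)$, giving $\varphi(u)=u+\mo-\int_0^\infty e^{-us}e^{-s}\overline{\Pi}(s)\,ds$, together with the integration by parts showing $\overline{\Pi}_\varphi(s)=e^{-s}\overline{\Pi}(s)$, reproduces the cited lemma in this special case; I also checked that $\Pi_\varphi(dr)=-(e^rh_\varphi(e^r))'\,dr$ with $h_\varphi(r)=h(r)/r$, so the triple $(\lam,\mo_\varphi,h_\varphi)$ does satisfy \Cref{assA}. What the explicit route buys is a self-contained argument that exhibits the L\'{e}vy triplet of $\phi$ (drift $1$, killing $\mo-1-\h$ or $0$, density $\overline{\Pi}$ or $G$), which in particular makes $\overline{\nu}(0)\le\h$ manifest rather than requiring the subsequent work the paper does in Lemma \ref{lem:Mellin-bpsi}; what it costs is the somewhat lengthier Fubini bookkeeping, which the paper sidesteps by citing the L\'{e}vy-process literature. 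Your evaluation $\Psi(1)=\mo-\int_1^\infty r^{-1}h(r)\,dr$ is a change-of-variables rewriting of the paper's $\Psi(1)=\mo-\h+\int_0^\infty(1-e^{-r})\overline{\Pi}(r)\,dr$ and reaches the same conclusion $\tet\in[0,1)$.
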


\begin{proof}
First we rewrite \eqref{eq:defpsi} using integration by parts to get, for any $u\geq0$,
\begin{equation}
\label{eq:ll}
\Psi(u) = u^2+(\mo-\h-1)u+u\int_1^\infty (1-r^{-u})h(r)dr = u^2+(\mo-\h-1)u + \int_0^\infty (e^{-ur}-1+ur)\Pi(dr).
\end{equation}
Since, by Lemma \ref{lem:rewriting-stuff}, we have $\int_0^\infty r\Pi(dr) < \infty$, we recognize $\Psi$ as the Laplace exponent of a spectrally negative L\'evy process with a finite mean given by $\Psi'(0^+)=\mu-\h-1$. In particular, $\Psi$ is a convex, eventually increasing, twice differentiable function on $[0,\infty)$ that is zero at $0$. Therefore, it has a strictly positive root $\tet$ if and only if $\mo < 1+\h$. By the Wiener--Hopf factorization of L\'evy processes, see e.g.,~\cite[Chapter 6.4]{Kyprianou2014}, we get for $\Psi'(0^+) \geq 0$ (resp.~$\Psi'(0^+) < 0$) that $\Psi(u) = u\phi(u)$ (resp.~$\Psi(u) = (u-\tet)\phi(u)$) for a Bernstein function $\phi$. That $\lim_{u \to \infty} \Phi(u)/u = 1$ then follows from the well-known result that $\lim_{u \to \infty} u^{-2}\Psi(u) = 1$, which can be obtained by dominated convergence since $\Pi$ is a finite measure. This completes the proof of \eqref{item-0:lem:phi-Psi1-claims}.

Next, we show $\Psi(1) > 0$, which, by the convexity of $\Psi$ is equivalent to $\tet \in [0,1)$. Indeed, from \eqref{eq:ll} and an application of Tonelli's theorem, we get
\begin{equation*}
\Psi(1) = \mo-\h + \int_0^\infty (1-e^{-r})\overline{\Pi}(r)dr > 0
\end{equation*}
where we used the assumption $\mo > \h$. If $\mo \geq 1 + \h$, then $\tet = 0$, and we obtain from \eqref{eq:ll},
\begin{equation*}
\phi(u) = u+(\mo-\h-1)+ \int_0^\infty (e^{-ur}-1+ur)\Pi(dr),
\end{equation*}
which gives $\phi(0) = \mo - \h - 1$. On the other hand, if $\tet > 0$, then the fact that $\Psi(0)=-\tet\phi(0) = 0$ forces $\phi(0)=0$, which completes the proof of \eqref{item-1:lem:phi-Psi1-claims}.

To show \eqref{item-2:lem:phi-Psi1-claims}, we write $\Psi_1(u)=\frac{u}{u+1}\Psi(u+1)$. According to \cite[Proposition 2.2]{Chazal2014}, $\Psi_1$ is the Laplace exponent of a spectrally negative L\'evy process with Gaussian coefficient 1, mean $\mo_\varphi$ and L\'evy measure $\Pi_\varphi$. Observe that $\Psi_1'(0^+) = \Psi(1) > 0$ and
\begin{equation*}
\Psi_1(u) = \frac{u}{u+1}(u+1-\tet)\phi(u+1) = u \frac{u+\vat}{u+1}\phi(u+1) = u\varphi(u).
\end{equation*}
So the Wiener--Hopf factorization of $\Psi_1$ shows that $\varphi$ is a Bernstein function. Moreover, integration by parts of $\Pi_\varphi$ gives
\begin{equation*}
\h_\varphi = \int_0^\infty \overline{\Pi}_\varphi (r)dr = \int_0^\infty e^{-r}\overline{\Pi}(r)dr \leq \h < \infty,
\end{equation*}
where the boundary terms are easily seen to evaluate to 0. Finally, using the assumption $\mo > \h$, we get that $\mo_\varphi = 1+\mo-\h_\varphi+\h_\varphi \geq 1+\mo-\h+\h_\varphi > 1+\h_\varphi$, while the condition $\lam > \mo_\varphi$ follows from the assumption that $\lam > \bm{1}_{\{\mo < 1+\h\}} +\mo = 1+\mo = \mo_\varphi$.
\end{proof}

\subsection{Proof of \Cref{thm:main-inter-L2}.\eqref{item-1:main-inter-L2}}


Before we begin we provide an analytical result, which will allow us to show that the support of $\bpsi$ is $[0,1]$ and will also be used in subsequent proofs. We say that a linear operator $\Lambda$ is a Markov multiplicative kernel if $\Lambda f(x) = \E[f(xM)]$ for some random variable $M$. With $d_\phi$ as in \eqref{eq:dphi}, we denote, for any $\epsilon \in (0,d_\phi) \cup \{d_\phi\}$,
\begin{equation}
\label{eq:defn-d0}
\dz = \bm{1}_{\{ \mo < 1+\h \}} + \dph\bm{1}_{\{ \mo \geq 1+\h \}},
\end{equation}
recalling that for $d_\phi =0$, we have $\epsilon = 0$, so that at least $\dz \geq 1$. Note that $\dz=\dpt$ if $\vat=1$, explaining  the notation. By \cite[Lemma 10.3]{Patie-Savov-GeL}, the mapping
\begin{equation}
\label{eq:defphi-d0}
\phi_{\dz}(u)=\frac{u}{u+\dz-1}\phi(u), \quad u \ge 0,
\end{equation}
is a Bernstein function, and, by Proposition 4.4(1) in the same paper, we also have that, for
any $\m \in (\bm{1}_{\{\mo < 1+\h\}} + \mo, \lam)$,
\begin{equation}
\label{eq:def-phi-m}
\phm(u) = \frac{\phi(u)}{u+\m-1}, \quad u \ge 0,
\end{equation}
is a Bernstein function. We define the following linear operators acting on the space of polynomials $\Poly$, recalling
that for $n \in \N$, $p_n(x) = x^n$, $x \in [0,1]$.
\begin{eqnarray}\label{eq:mom_Ip-0}\label{eq:mom_Vp}\label{eq:mom_Up}
\qquad \Lambda_{\phi_{\dz}}p_n = \frac{(\dz)_n}{W_\phi(n+1)}p_n,  \:
 \mathrm{V}_{\phm} p_n = \frac{W_{\phi}(n+1)}{(\m)_n}p_n \: \textrm{ and } \:
 \mathrm{U}_\varphi p_n = \frac{\varphi(0)}{\varphi(n)} p_n
\end{eqnarray}
where $\mathrm{V}_{\phm}$ is defined for any $\m \in (\bm{1}_{\{\mo < 1+\h\}} + \mo, \lam)$ and $\varphi$ was defined in \eqref{def:varphi}. We write $\Bo{C[0,1]}$ for the unital Banach algebra of bounded linear operators on $C[0,1]$ and say that a linear operator between two Banach spaces is a quasi-affinity if it has trivial kernel and dense range.

\begin{lemma}
\label{lem:Markov-kernel}
The operators $\Lambda_{\phi_{\dz}}$, $\rm{V}_{\phm}$ and $\rm{U}_{\varphi}$ defined in \eqref{eq:mom_Ip-0} are Markov multiplicative kernels associated to random variables $X_{\phi_{\dz}}$, $X_{\phm}$ and $X_{\varphi}$, respectively, valued in $[0,1]$, and hence moment determinate. All 3 random variables have continuous densities, and all 3 operators belong to $\Bo{C[0,1]}$. Furthermore, $\Lambda_{\phi_{\dz}}$ is a quasi-affinity on $C[0,1]$ while $\rm{V}_{\phm}$ and $\rm{U}_{\varphi}$ have dense range in $C[0,1]$.
\end{lemma}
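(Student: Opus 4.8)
The plan is to reduce every assertion to the statement that each of the three multiplier sequences is the moment sequence of a random variable taking values in $[0,1]$. Abbreviate the multipliers by $c_n^{\Lambda}=\frac{(\dz)_n}{W_{\phi}(n+1)}$, $c_n^{\mathrm{V}}=\frac{W_{\phi}(n+1)}{(\m)_n}$ and $c_n^{\mathrm{U}}=\frac{\varphi(0)}{\varphi(n)}$. Each has $c_0^{\bullet}=1$, and, using \eqref{eq:product-Wphi}, $\phi_{\dz}(u)=\frac{u}{u+\dz-1}\phi(u)$, $\phm(u)=\frac{\phi(u)}{u+\m-1}$ and \eqref{def:varphi}, the successive ratios telescope:
\[
\frac{c_{n+1}^{\Lambda}}{c_n^{\Lambda}}=\frac{n+1}{\phi_{\dz}(n+1)},\qquad \frac{c_{n+1}^{\mathrm{V}}}{c_n^{\mathrm{V}}}=\phm(n+1),\qquad \frac{c_{n+1}^{\mathrm{U}}}{c_n^{\mathrm{U}}}=\frac{\varphi(n)}{\varphi(n+1)},
\]
so that $c_n^{\Lambda}=\frac{n!}{W_{\phi_{\dz}}(n+1)}$, $c_n^{\mathrm{V}}=W_{\phm}(n+1)$ and $c_n^{\mathrm{U}}=\frac{\varphi(0)}{\varphi(n)}$. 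Once each sequence is exhibited as $(\E[X^n])_{n\ge0}$ with $X$ valued in $[0,1]$, the remainder is straightforward: $X$ is moment determinate (Hausdorff), the operator is $f\mapsto\E[f(xX)]$, which sends $C[0,1]$ into itself with norm $\le1$ by dominated convergence (since $f$ is continuous and bounded and $xX\in[0,1]$), and it preserves positivity and fixes $\bm{1}_{[0,1]}$, hence is a Markov multiplicative kernel in $\Bo{C[0,1]}$.

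I would carry out the three identifications separately. For $\Lambda_{\phi_{\dz}}$: by Lemma~\ref{lem:phi-Psi1-claims}\eqref{item-0:lem:phi-Psi1-claims}, $\lim_{u\to\infty}\phi_{\dz}(u)/u=\lim_{u\to\infty}\phi(u)/u=1$, so $\phi_{\dz}$ is the Laplace exponent of a (possibly killed) subordinator $\xi$ with unit drift; its exponential functional $I_{\phi_{\dz}}=\int_0^{\zeta}e^{-\xi_t}\,dt$ (with $\zeta$ the lifetime) then obeys $I_{\phi_{\dz}}\le\int_0^{\infty}e^{-t}\,dt=1$, and the closed-form Mellin transform of exponential functionals (\cite{Patie-Savov-Bern}) gives $\E[I_{\phi_{\dz}}^{\,n}]=n!/W_{\phi_{\dz}}(n+1)=c_n^{\Lambda}$, where the last equality uses $W_{\phi_{\dz}}(n+1)=\prod_{k=1}^n\frac{k}{k+\dz-1}\phi(k)=\frac{n!}{(\dz)_n}W_{\phi}(n+1)$. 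For $\mathrm{V}_{\phm}$: the hypothesis $\m\in(\bm{1}_{\{\mo<1+\h\}}+\mo,\lam)$ forces $\phi(u)-u<\m-1$ for every $u\ge0$ — the left side is increasing with limit $\mo-1$ if $\mo\ge1+\h$ and limit $\mo-1+\tet<\m-1$ otherwise, the latter using $\tet\in[0,1)$ from the proof of Lemma~\ref{lem:phi-Psi1-claims} — so $\phm\le1$, and the Bernstein--gamma machinery of \cite{Patie-Savov-Bern, Patie-Savov-GeL} provides a $[0,1]$-valued random variable $X_{\phm}$ with $\E[X_{\phm}^{\,n}]=W_{\phm}(n+1)=c_n^{\mathrm{V}}$. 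For $\mathrm{U}_{\varphi}$: $\varphi$ is a Bernstein function — by Lemma~\ref{lem:phi-Psi1-claims}\eqref{item-2:lem:phi-Psi1-claims} when $\mo<1+\h$, and because $\varphi(u)=\phi(u+1)$ when $\mo\ge1+\h$ — with $\varphi(0)=\Psi(1)>0$, so $u\mapsto 1/\varphi(u)=\int_0^{\infty}e^{-t\varphi(u)}\,dt$ is completely monotone; hence $\varphi(0)/\varphi(\cdot)$ is the Laplace transform of a probability measure on $[0,\infty)$, whose image under $x\mapsto e^{-x}$ is the law of a $(0,1]$-valued random variable $X_{\varphi}$ with $\E[X_{\varphi}^{\,n}]=\varphi(0)/\varphi(n)=c_n^{\mathrm{U}}$. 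The absolute continuity of the three laws and the continuity of their densities are the regularity statements on exponential functionals and Bernstein--gamma variables that one quotes from \cite{Patie-Savov-Bern, Patie-Savov-GeL}.

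For the range statements, on $\Poly$ each operator acts diagonally in the basis $(p_n)_{n\ge0}$ with strictly positive multipliers, so its image contains $\Poly$, which is $\|\cdot\|_{\infty}$-dense in $C[0,1]$ by Stone--Weierstrass; hence all three have dense range. For the injectivity of $\Lambda_{\phi_{\dz}}$ I would pass to Mellin transforms: writing $\widetilde g(s)=\int_0^{\infty}g(x)x^{s-1}\,dx$ for $g$ extended by $0$ outside $[0,1]$, Fubini's theorem (absolute convergence on the strip) yields $\widetilde{\Lambda_{\phi_{\dz}}f}(s)=\widetilde f(s)\,\E[X_{\phi_{\dz}}^{-s}]$ on $\{0<\Re(s)<1\}$, where $\E[X_{\phi_{\dz}}^{-s}]=\Gamma(\dz-s)/(\Gamma(\dz)W_{\phi}(1-s))$ is finite and nonzero there because $\Gamma$ has no zeros and $W_{\phi}$ is zero-free on $\{\Re(z)>0\}$ (\cite{Patie-Savov-Bern}). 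Thus $\Lambda_{\phi_{\dz}}f=0$ forces $\widetilde f\equiv0$ on $\{\Re(s)>0\}$, hence $\int_0^1 f(x)x^n\,dx=0$ for all $n\ge0$, hence $f\equiv0$; combined with the dense range, this shows $\Lambda_{\phi_{\dz}}$ is a quasi-affinity.

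I expect the crux to be the identification of $c_n^{\mathrm{V}}=W_{\phm}(n+1)=\prod_{k=1}^n\phm(k)$ — and, to a lesser extent, of $c_n^{\mathrm{U}}=\varphi(0)/\varphi(n)$ — as the moment sequence of a $[0,1]$-valued variable carrying a continuous density: this is precisely where one relies on the Bernstein--gamma machinery and the absolute-continuity/regularity theorems of Patie and Savov rather than on a short self-contained argument, and it is also where the bookkeeping with \Cref{assA} (guaranteeing $\phm\le1$ and the unit drift of $\phi_{\dz}$) must be done carefully. The Mellin argument for injectivity likewise imports the zero-freeness of $W_{\phi}$ on the right half-plane. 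Boundedness on $C[0,1]$, the Markov property and the dense-range claims, by contrast, are routine.
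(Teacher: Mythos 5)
Your proposal is correct and follows the same overall strategy as the paper — reduce each claim to the statement that the multiplier sequence is the moment sequence of a $[0,1]$-valued random variable, then observe that boundedness on $C[0,1]$, the Markov property, and dense range (via Stone--Weierstrass) are routine. Where the two arguments differ is in how much is proved versus cited. The paper's proof is almost entirely a citation: for $\Lambda_{\phi_{\dz}}$ and $\mathrm{V}_{\phm}$ it invokes Theorem~5.2, Proposition~6.7(1) and Section~7.1 of \cite{Patie-Savov-GeL} wholesale (including the quasi-affinity, the continuous densities and the membership in $\Bo{C[0,1]}$), and for $\mathrm{U}_\varphi$ it gives the explicit law $\Prob(X_\varphi\in dx)=\varphi(0)W'(-\log x)\,dx$, where $W$ is the scale function of the spectrally negative L\'evy process with exponent $\Psi$, and notes that $W\in C^1$ and $W(0)=0$ because $\Psi$ has a Gaussian component; the rest is then delegated to \cite[Lemma~4.2]{Intertwining-Krein}. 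You instead reconstruct each identification: the exponential functional $I_{\phi_{\dz}}\le 1$ for $\Lambda_{\phi_{\dz}}$ (correct, with the unit drift of $\phi_{\dz}$ doing the work), the Bernstein--gamma variable with moments $W_{\phm}(n+1)$ for $\mathrm{V}_{\phm}$ (still ultimately cited), and complete monotonicity of $1/\varphi$ for $\mathrm{U}_\varphi$. The last is, in fact, the same object as the paper's: $1/\varphi(u)=(u+1)/\Psi(u+1)$ is the Laplace transform of $e^{-t}W'(t)$, so your $Z$ has density $\varphi(0)e^{-t}W'(t)$ and $X_\varphi=e^{-Z}$ recovers $\varphi(0)W'(-\log x)$. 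The paper's route has the advantage of delivering the continuity of the density of $X_\varphi$ directly from the $C^1$-regularity of the scale function, whereas you defer that (and the densities of the other two laws) to \cite{Patie-Savov-Bern, Patie-Savov-GeL}. Your Mellin-transform argument for the injectivity of $\Lambda_{\phi_{\dz}}$ is a valid substitute for citing Theorem~5.2 of \cite{Patie-Savov-GeL}, conditional on the zero-freeness of $W_\phi$ on the right half-plane which you correctly import from \cite{Patie-Savov-Bern}.

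One small imprecision: your stated limit $\lim_{u\to\infty}\bigl(\phi(u)-u\bigr)=\mo-1+\tet$ in the $\mo<1+\h$ case is not quite right; what one actually has is $\lim_{u\to\infty}\bigl(\phi(u)-u\bigr)=\phi(0)+\overline{\nu}(0)$, and the paper's Lemma~\ref{lem:Mellin-bpsi} gives $\overline{\nu}(0)\le\h$. In any event the detour is unnecessary for your purpose: since $\phm$ is a Bernstein function (this is the point where \cite[Proposition~4.4(1)]{Patie-Savov-GeL} is needed, and it is what really must be checked under \Cref{assA}) it is non-decreasing, and $\lim_{u\to\infty}\phm(u)=1$ then gives $\phm\le 1$ directly. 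As you anticipate, the substance that neither proof makes self-contained is the Bernstein--gamma regularity package from \cite{Patie-Savov-Bern, Patie-Savov-GeL}: that $W_{\phm}(n+1)$ are the moments of an absolutely continuous $[0,1]$-valued law with continuous density is exactly the place where both arguments lean on those references.
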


\begin{proof}
The claims regarding the operators $\Lambda_{\phi_{\dz}}$ and $\rm{V}_{\phm}$ and corresponding random variables have been proved in \cite{Patie-Savov-GeL}; see Theorem 5.2, Proposition 6.7(1) and Section 7.1 therein. Let $W \colon [0,\infty) \to [0,\infty)$ be the function characterized by its Laplace transform via
\begin{equation*}
\int_{0}^{\infty}e^{-u x}W(x)dx = \frac{1}{\Psi(u)}, \quad u > \tet,
\end{equation*}
and note that $W$ is increasing. Moreover, since $\Psi$ has a Gaussian component, $W$ is at least continuously differentiable; see e.g.,~\cite[Section 8.2]{Kyprianou2014}. The law of the random variable $X_\varphi$ is given by
\begin{equation*}
\Prob(X_\varphi \in dx) = \varphi(0)W'(-\log x)dx, \quad x \in [0,1].
\end{equation*}
So it clearly is supported on $[0,1]$ and has a continuous density. The claims concerning $\rm{U}_{\varphi}$ were shown in \cite[Lemma 4.2]{Intertwining-Krein}, where we note that $W(0)=0$ since $\Psi$ has a Gaussian component.
\end{proof}

Now, suppose $\mo \geq 1+\h$, so that, by Lemma \ref{lem:phi-Psi1-claims}, $\vat= 1$. Then, for all $n\in\N$, \eqref{eq:mom-bpn} reduces to
\begin{equation*}
\bpsi[p_n] = \frac{W_{\phi}(n+1)}{(\lam)_n}.
\end{equation*}
Since $\lam > \mo \geq 1$, we get that $\phi_{\lam}^*$ given in \eqref{eq:def-phi-m} is a Bernstein function. Indeed, if $\mo = 1$, we must have $\h = 0$, and the function $u \mapsto \frac{u}{u+\lam-1}$ is Bernstein since $\lam > 1$, see e.g.~\cite[Chapter 16]{SchillingSongVondracek10}, while if $\mo > 1$, Proposition 4.4(1) of \cite{SchillingSongVondracek10}
guarantees that $\phi_{\lam}^*$ is a Bernstein function. One straightforwardly checks that
\begin{equation*}
\bpsi[p_n] =  W_{\phi_{\lam}^*}(n+1)
\end{equation*}
for all $n\in \mathbb{N}$, and it follows from \cite{Berg} that $(\bpsi [p_n])_{n\geq 0}$ is indeed a Stieltjes moment determinate sequence of a probability measure $\bpsi$. Its absolute continuity follows from \cite[Proposition 2.4]{Patie-11-Factorization_e}.

Now suppose $\mo < 1+\h$, so that $\lam > 1+\mo > 1$, and observe that \eqref{eq:mom-bpn} factorizes as
\begin{equation*}
\bpsi [p_n] =  \frac{W_{\phi}(n+1)}{(\lam)_n} \frac{(\vat)_n}{n!},
\end{equation*}
where, by the above arguments, the first term in the product is a Stieltjes moment sequence, whereas the second term is the moment sequence of a beta distribution (see \eqref{eq:beta-moments}). Consequently, in this case, one also has that $(\bpsi [p_n])_{n \geq 0}$ is a Stieltjes moment sequence, and we temporarily postpone the proof of its moment determinacy and absolute continuity to after the proof of Lemma \ref{lem:fact}. We write $(\bpsi_\varphi [p_n])_{n \geq 0}$ for the sequence obtained from \eqref{eq:mom-bpn} by replacing $\phi$ with $\varphi$ defined in \eqref{def:varphi} and with the same $\lam$.

\begin{lemma}
\label{lem:fact}
For all $\epsilon \in (0,d_\phi) \cup \{d_\phi\}$ and $\m \in (\bm{1}_{\{\mo < 1+\h\}} + \mo, \lam)$,
we have the following factorization of operators on the space $\Poly$,
\begin{eqnarray}\label{eq:facto_op-1}\label{eq:facto_op-2}\label{eq:facto_op-3}
\bpsi \Lambda_{\phi_{\dz}}  = \gab[\dpt], \quad \gab[\m] \mathrm{V}_{\phm} = \bpsi  \quad \text{and} \quad \bpsi_\varphi \mathrm{U}_\varphi = \bpsi,
\end{eqnarray}
where the second identity holds if $\mo \geq 1+\h$ and the third if $\mo < 1+\h$.
\end{lemma}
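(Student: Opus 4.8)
The plan is to verify each of the three operator identities in \eqref{eq:facto_op-1} by testing both sides against the monomials $p_n(x) = x^n$, $n \in \N$, since $\Poly$ is spanned by these and all operators involved act diagonally on this basis. For the first identity, recall from \eqref{eq:mom_Ip-0} that $\Lambda_{\phi_{\dz}} p_n = \frac{(\dz)_n}{W_\phi(n+1)} p_n$, so that $\bpsi \Lambda_{\phi_{\dz}}[p_n] = \frac{(\dz)_n}{W_\phi(n+1)} \bpsi[p_n]$. Substituting the explicit moments \eqref{eq:mom-bpn}, namely $\bpsi[p_n] = \frac{(\vat)_n}{(\lam)_n}\frac{W_\phi(n+1)}{n!}$, the $W_\phi(n+1)$ factors cancel and one is left with $\frac{(\dz)_n (\vat)_n}{(\lam)_n\, n!}$. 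The point is to recognize this as the $n$-th moment of $\gab[\dpt]$, the beta density with parameters $(\dpt, \lam)$; indeed $\gab[\lam,\dpt]$ has $n$-th moment $\frac{(\dpt)_n}{(\lam)_n}$, and the extra factor $\frac{(\vat)_n}{n!} \cdot \frac{(\dz)_n}{(\dpt)_n}$ should be handled by the relation $\dpt = \dz \vat$ when $\vat < 1$ and $\dpt = \dz$ when $\vat = 1$ — this is where one must carefully split into the cases $\mo \geq 1+\h$ (so $\vat = 1$, $\dz = \dpt$, and the beta factor is trivial) and $\mo < 1+\h$ (so $\vat = \tet$-complement and $\dz = \dpt/\vat$), using the Pochhammer identities from Lemma \ref{lem:phi-Psi1-claims} and the definitions \eqref{eq:def-dtheta}, \eqref{eq:defn-d0}.

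For the second identity, which is claimed only when $\mo \geq 1+\h$ (hence $\vat = 1$), one computes $\gab[\m] \mathrm{V}_{\phm}[p_n] = \frac{W_\phi(n+1)}{(\m)_n} \gab[\lam,\m][p_n] = \frac{W_\phi(n+1)}{(\m)_n} \cdot \frac{(\m)_n}{(\lam)_n} = \frac{W_\phi(n+1)}{(\lam)_n} = \bpsi[p_n]$, using that in this regime $\bpsi[p_n] = \frac{W_\phi(n+1)}{(\lam)_n}$ since $(\vat)_n = n!$. For the third identity, valid when $\mo < 1+\h$, one uses $\mathrm{U}_\varphi p_n = \frac{\varphi(0)}{\varphi(n)} p_n$ together with the relation \eqref{def:varphi}, $\varphi(u) = \frac{u+\vat}{u+1}\phi(u+1)$; telescoping the product $\prod_{k=0}^{n-1}\frac{\varphi(0)}{\varphi(k)}$-type expression against the definition of $\bpsi_\varphi$ (which replaces $\phi$ by $\varphi$ and $\vat$ by $\vat_\varphi = 1$, as $\mo_\varphi \geq 1 + \h_\varphi$ by Lemma \ref{lem:phi-Psi1-claims}.\eqref{item-2:lem:phi-Psi1-claims}) should reproduce exactly $\bpsi[p_n]$. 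The key computation is that $\prod_{k=1}^n \varphi(k-1) \cdot (\text{normalization})$ unpacks via $W_\varphi(n+1) = \prod_{k=1}^n \varphi(k) = \prod_{k=1}^n \frac{k+\vat}{k+1}\phi(k+1) = \frac{(\vat+1)_n}{(n+1)!/1}\, \frac{W_\phi(n+2)}{\phi(1)} \cdot (\ldots)$, and matching this against $\bpsi_\varphi[p_n] = \frac{1}{(\lam)_n}\frac{W_\varphi(n+1)}{n!}$ divided out correctly should yield $\frac{(\vat)_n}{(\lam)_n}\frac{W_\phi(n+1)}{n!} = \bpsi[p_n]$ after the $\mathrm{U}_\varphi$ multiplier $\frac{W_\varphi(n+1)}{\varphi(n)\cdots}$ is inverted.

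The main obstacle I anticipate is bookkeeping the third identity: one has to track how the shift $u \mapsto u+1$ inside $\varphi$, the factor $\frac{u+\vat}{u+1}$, and the passage from $\phi$ to $\varphi$ interact with the $W$-function functional equation \eqref{eq:product-Wphi} and with the Pochhammer symbols, all while the beta-type correction factor $(\vat)_n/n!$ that is present in $\bpsi$ but absent (trivially equal to $1$) in $\bpsi_\varphi$ must be recovered purely from the $\mathrm{U}_\varphi$ eigenvalues. Once the monomial identities are established, the lemma follows immediately, since an identity between two moment functionals that agree on every $p_n$ is an identity of functionals on $\Poly$ by linearity; no density or determinacy argument is needed at this formal level, as these are merely equalities of linear forms on the polynomial algebra.
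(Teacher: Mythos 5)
Your proposal is correct and follows essentially the same route as the paper's proof: test all three identities on the monomials $p_n$, cancel $W_\phi(n+1)$ via \eqref{eq:mom-bpn}, split into the cases $\vat=1$ and $\vat<1$ for the first identity, and for the third use $\varphi(u)=\frac{u+\vat}{u+1}\phi(u+1)$ together with $W_\varphi(n+1)=\frac{(\vat+1)_n}{(n+1)!}\frac{W_\phi(n+2)}{\phi(1)}$ and the recurrences for $W_\phi$ and the Pochhammer symbol. The algebraic details you flag as the "main obstacle" do close exactly as you anticipate (e.g.\ $\frac{\vat(\vat+1)_n}{n+\vat}=(\vat)_n$ and $\frac{W_\phi(n+2)}{\phi(n+1)}=W_\phi(n+1)$), so no further ideas are needed.
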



\begin{Remark}
\label{rem:facto}
Once we have established the moment determinacy of $\bpsi$ for $\mo < 1+\h$, the operator factorizations in
Lemma \ref{lem:fact} extend to the space of bounded measurable functions. Indeed, \eqref{eq:facto_op-1} implies
\begin{equation*}
B_\phi \times X_{\phi_{\dz}} \stackrel{(d)}{=} B_{\dpt}, \quad B_{\m} \times X_{\phi^*_{\m}} \stackrel{(d)}{=} B_{\phi}
\quad \mbox{and} \quad B_{\varphi} \times X_{\varphi} \stackrel{(d)}{=} B_{\phi},
\end{equation*}
where the second identity holds if $\mo \geq 1+\h$ and the third if $\mo < 1+\h$;
$B_\phi$, $B_{\dpt}$, $B_{\m}$ and $B_{\varphi}$  are random variables with laws
$\bpsi$, $\gab[\dpt]$, $\beta_{\m}$ and $\beta_{\varphi}$, respectively, and $\times$ denotes the
product of independent random variables.
\end{Remark}

\begin{proof}
By \eqref{eq:mom_Ip-0}, we have for all $n \in \N$,
\begin{equation*}
\bpsi[\Lambda_{\phi_{\dz}} p_n] = \frac{(\dz)_n}{W_{\phi}(n+1)} \bpsi[p_n]=
\frac{(\dz)_n}{W_{\phi}(n+1)}  \frac{(\vat)_n}{(\lam)_n}\frac{W_{\phi}(n+1)}{n!} =\frac{(\dz)_n}{n!}  \frac{(\vat)_n}{(\lam)_n}.
\end{equation*}
By considering the cases $\vat=1$ and $\vat<1$ separately, we obtain the desired right-hand side, noting that $\gab[\dz]$ is well-defined since $\lam > d_\phi +1$, due to $ \lam > \mo = (\mo-\h) + \h$ and \cite[Proposition 4.4(1)]{Patie-Savov-GeL}.

Next, we note that by Lemma \ref{lem:phi-Psi1-claims}.\eqref{item-2:lem:phi-Psi1-claims},
$\mo \geq 1+\h$ if and only if $\vat = 1$. So, for all $n \in \N$,
\begin{equation*}
\gab[\m] [\mathrm{V}_{\phm} p_n] = \frac{W_{\phi}(n+1)}{(\m)_n} \gab[\m] [p_n] = \frac{W_{\phi}(n+1)}{(\m)_n} \frac{(\m)_n}{(\lam)_n} = \frac{W_{\phi}(n+1)}{(\lam)_n} = \bpsi[p_n],
\end{equation*}
which, by linearity, shows the second identity.

Finally, we know from Lemma \ref{lem:phi-Psi1-claims}.\eqref{item-2:lem:phi-Psi1-claims} that
$\mo_\varphi \geq 1+\h_\varphi$. Hence, $0$ is the only non-negative root of $u \mapsto u\varphi(u)$, and therefore,
\begin{equation*}
\bpsi_\varphi [p_n] = \frac{W_\varphi(n+1)}{(\lam)_n}.
\end{equation*}
Straightforward computations give that, for any $n \in \N$,
\begin{equation*}
W_\varphi(n+1) = \frac{(\vat+1)_{n}}{(n+1)!} \frac{W_\phi(n+2)}{\phi(1)} \quad \text{and} \quad \mathrm{U}_\varphi p_n(x) = \frac{\varphi(0)}{\varphi(n)} p_n(x) = \frac{\vat\phi(1)(n+1)}{(n+\vat)\phi(n+1)} p_n(x).
\end{equation*}
Putting these observations together yields
\begin{align*}
\bpsi_\varphi[\mathrm{U}_\varphi  p_n]
= \frac{1}{(\lam)_n} \frac{\vat (\vat+1)_{n}}{(n+\vat)}  \frac{(n+1)}{(n+1)!} \frac{W_{\phi}(n+2)}{\phi(n+1)}
=\frac{1}{(\lam)_n} (\vat)_n \frac{1}{n!} W_\phi(n+1) = \bpsi[p_n],
\end{align*}
where we used the recurrence relations for both, the gamma function and $W_\phi$; see e.g.~\eqref{eq:product-Wphi}.
This completes the proof.
\end{proof}

Now suppose that for $\mo < 1+\h$, the measure $\bpsi$ is moment indeterminate. Then, as the sequence $\left(\frac{(\dz)_n}{W_{\phi}(n+1)}\right)_{n \geq 0}$ is a non-vanishing Stieltjes moment sequence, it follows from \eqref{eq:facto_op-1} by invoking \cite[Lemma 2.2]{Berg} that also the beta distribution $\gab[\dpt]$ is moment indeterminate, which is a contradiction. Therefore, we conclude that in all cases, $\bpsi$ is moment determinate, and consequently we have the extended operator factorizations of Remark \ref{rem:facto}.

To obtain the absolute continuity of $\bpsi$ in the case $\mo < 1+\h$, we note that the factorization $\bpsi [p_n] = \frac{W_{\phi}(n+1)}{(\lam)_n}\frac{(\vat)_n}{n!}$ implies, by moment determinacy, that $\bpsi$ is the product convolution of two absolutely continuous measures, and hence, again absolutely continuous.

If we take $\epsilon = d_\phi$, then $\dpt = \vat$ and $\dz = 1$. In this case, we denote
$X_{\phi} = X_{\phi_{\dz}}$. For $\mo \ge 1 + \h$, we obtain from Remark \ref{rem:facto} that
\begin{equation} \label{idB}
B_\phi \times X_{\phi} \stackrel{(d)}{=} B_{\vat} \quad \mbox{and} \quad
B_{\m} \times X_{\phi^*_{\m}} \stackrel{(d)}{=} B_{\phi}.
\end{equation}
Since, by Lemma \ref{lem:Markov-kernel}, the law of $X_{\phi^*_{\m}}$ is supported in $[0,1]$
and has a continuous density, we obtain from the second identity in \eqref{idB} that
the support of $\beta$ is $[0,a]$ for a constant $a \le 1$, and $\beta$ has a continuous
density that is positive on $(0,a)$. But since the law $X_{\phi}$ is also supported in $[0,1]$,
we deduce from the first identity in \eqref{idB} that $a = 1$.

The case $\mo < 1+\h$ follows from analogous arguments with $\gab[\m]$ and $X_{\phm}$ replaced by $\bpsi_\varphi$ and $X_\varphi$, respectively, where we note that, since $\mo_\varphi \geq 1+h_\varphi$, the support of $\bpsi_\varphi$ is $[0,1]$ and $\bpsi_\varphi$ has a continuous density that is positive on $(0,1)$. This completes the proof of \Cref{thm:main-inter-L2}.\eqref{item-1:main-inter-L2}. \qed


\subsection{Proof of \Cref{thm:main-inter-L2}.\eqref{item-2:main-inter-L2}}

We start by proving the following more general intertwining that will be useful in subsequent proofs, recalling the definition of $\Lambda_{\phi_{\dz}}$ in \eqref{eq:mom_Ip-0}.

\begin{proposition}
\label{prop:inter_P}
With $\dpt$ and $\dz$ as in \eqref{eq:def-dtheta} and \eqref{eq:defn-d0}, respectively, we have, for any $\epsilon \in (0,d_\phi) \cup \{d_\phi\}$,
\begin{eqnarray}\label{eq:dz-inter}
\calJ \Lambda_{\phi_{\dz}}  = \Lambda_{\phi_{\dz}} \calJd_{\dpt} \quad \text{on} \quad \Poly.
\end{eqnarray}
\end{proposition}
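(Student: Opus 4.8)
The plan is to verify \eqref{eq:dz-inter} by evaluating both sides on the monomials $p_n(x)=x^n$, $n\in\N$, which span $\Poly$ and are preserved by all three operators, so it suffices to check equality on each $p_n$. The first ingredient is that $\Lambda_{\phi_{\dz}}$ acts diagonally, $\Lambda_{\phi_{\dz}}p_n=c_np_n$ with $c_n:=(\dz)_n/W_\phi(n+1)$ by \eqref{eq:mom_Ip-0}, while a direct one-line computation from the definition of the classical Jacobi operator gives the ``lowering-plus-scaling'' identity $\calJd_{\dpt}p_n = n(n-1+\dpt)\,p_{n-1} - \lambda_n p_n$, with $\lambda_n=n(n-1)+\lam n$ as in \eqref{eq:def-lambda-n}.

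The key step is the analogous formula for $\calJ$, namely $\calJ p_n = \Psi(n)\,p_{n-1} - \lambda_n p_n$. I would obtain it from the representation \eqref{eq:alt-J} of Lemma \ref{lem:rewriting-stuff}: each of $x(1-x)p_n''$, $-(\lam x-\mo+\h)p_n'$ and the L\'evy-type integral splits into an $x^n$-part and an $x^{n-1}$-part; the $x^n$-coefficients sum to $-(n(n-1)+\lam n)=-\lambda_n$, and the $x^{n-1}$-coefficients sum to $n(n-1)+(\mo-\h)n+\int_0^\infty(e^{-nr}-1+nr)\,\Pi(dr)$, which equals $\Psi(n)$ by \eqref{eq:ll} --- the termwise evaluation being legitimate because $\Pi$ is finite and $\h<\infty$. (One could equally start from \eqref{eq:defJ}, compute $h\diamond p_{n-1}(x)=x^{n-1}\int_1^\infty r^{-n}h(r)\,dr$, and identify this through \eqref{eq:defpsi}.)

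With these three formulas the rest is algebra: applying $\calJ$ to $\Lambda_{\phi_{\dz}}p_n=c_np_n$ gives $c_n\Psi(n)\,p_{n-1}-\lambda_nc_np_n$, while $\Lambda_{\phi_{\dz}}\calJd_{\dpt}p_n=n(n-1+\dpt)c_{n-1}\,p_{n-1}-\lambda_nc_np_n$; the $p_n$-terms match automatically, and since $W_\phi(n+1)=\phi(n)W_\phi(n)$ by \eqref{eq:product-Wphi} and $(\dz)_n=(n+\dz-1)(\dz)_{n-1}$, the remaining condition $c_n\Psi(n)=n(n-1+\dpt)c_{n-1}$ collapses, using $\Psi(n)=(n-\tet)\phi(n)$ from \eqref{eq:def-phi}, to the elementary identity $(n+\dz-1)(n-\tet)=n(n-1+\dpt)$. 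This I would check in the two regimes of \Cref{assA}: if $\mo\geq 1+\h$, then $\tet=0$ and $\dz=\dpt$ by \eqref{eq:def-root}, \eqref{eq:def-dtheta}, \eqref{eq:defn-d0}, so both sides are $n(n-1+\dpt)$; if $\mo<1+\h$, then $\dz=1$ and $\dpt=\vat=1-\tet$ directly from \eqref{eq:defn-d0} and \eqref{eq:def-dtheta}, so both sides are $n(n-\tet)$ (the case $n=0$ being trivial since $\calJ p_0=\calJd_{\dpt}p_0=0$ and $\Lambda_{\phi_{\dz}}p_0=p_0$). I do not expect a genuine obstacle here --- the whole argument is a short, bookkeeping computation; the only points needing care are the termwise evaluation of the L\'evy integral on monomials and the piecewise definitions of $\tet,\vat,\dz,\dpt$ in the concluding case check. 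Conceptually, the proposition just records that $\calJ$, like the classical Jacobi operator, is lower-bidiagonal on the monomial basis with off-diagonal entries governed by the Laplace exponent $\Psi$ (equivalently by the Bernstein function $\phi$), which is exactly what makes the diagonal operator $\Lambda_{\phi_{\dz}}$ conjugate $\calJd_{\dpt}$ to $\calJ$.
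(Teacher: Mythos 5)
Your proof is correct and follows essentially the same route as the paper's: verify the identity on the monomials $p_n$ and reduce to the elementary relation $(n+\dz-1)(n-\tet)=n(n-1+\dpt)$, checked case by case. The only difference is organizational — the paper factors $\calJ = \calL - \mathbf{A}$ with $\mathbf{A}=\mathbf{D}_2+\lam\mathbf{D}_1$, proves the intertwining for the degree-lowering piece $\calL$ in Lemma~\ref{lem:inter}, and disposes of the degree-preserving piece via the general commutation Lemma~\ref{lem:op-com} ($\mathbf{D}_n\Lambda_\eta = \Lambda_\eta\mathbf{D}_n$ for any Markov multiplicative kernel), whereas you carry the $p_n$- and $p_{n-1}$-coefficients through a single direct computation and observe that the $p_n$-terms cancel automatically; the underlying arithmetic, including the step $\Psi(n)=(n-\tet)\phi(n)$ and the recurrence $W_\phi(n+1)=\phi(n)W_\phi(n)$, is identical.
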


\begin{Remark}
Note that $\lam$ is the common parameter of the Jacobi type operators in \eqref{eq:dz-inter} while the constant part of the affine drift as well as the non-local components are different. The commonality of $\lam$ is what ensures the isospectrality of these operators, as their spectrum depends only on $\lam$; see \Cref{thm:spectral-representation}.\eqref{item-2:thm:spectral-representation} and \eqref{Q_eigenv}.
\end{Remark}

We split the proof of Proposition \ref{prop:inter_P} into two lemmas. Among other things, our proof hinges on the interesting observation that intertwining relations are stable under perturbation with an operator that commutes with the intertwining operator, see Lemma \ref{lem:op-com} below. Let $\calLd_\mo$ be the operator defined as
\begin{equation}
\label{eq:defL}
\calLd_\mo f(x) = xf''(x) + \mo f'(x),
\end{equation}
write $\calI_h f(x) = - h \diamond f'(x)$, where $h$ satisfies \Cref{assA}, and set $\calL = \calLd_\mo + \calI_h$.

\begin{lemma} \label{lem:inter} With the notation of Proposition \ref{prop:inter_P}, one has
\begin{eqnarray}
\calL \Lambda_{\phi_{\dz}} = \Lambda_{\phi_{\dz}}\calLd_{\dpt} \quad \mbox{on } \Poly. 
\end{eqnarray}
\end{lemma}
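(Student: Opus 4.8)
Since $\Lambda_{\phi_{\dz}}$ acts diagonally on the monomial basis $p_n(x)=x^n$, with $\Lambda_{\phi_{\dz}}p_n=\tfrac{(\dz)_n}{W_\phi(n+1)}p_n$ by \eqref{eq:mom_Ip-0}, and both $\calL$ and $\calLd_{\dpt}$ preserve $\Poly$, it suffices to verify the asserted identity on each $p_n$, $n\in\N$. The plan is to first obtain a closed form for $\calL p_n$, then match the scalar prefactors using the functional equation for $W_\phi$ and the algebraic relations tying together $\tet$, $\dz$ and $\dpt$.

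\textbf{Step 1: action of $\calL$ on monomials.} A direct differentiation gives $\calLd_\mo p_n=n(n-1+\mo)\,p_{n-1}$. For the non-local part, note that $p_n'=n\,p_{n-1}$ and, for $x\in[0,1]$ and $r>1$, $p_{n-1}(x/r)=r^{-(n-1)}p_{n-1}(x)$, so the defining formula \eqref{eq:defd} yields $\calI_h p_n(x)=-n\,(h\diamond p_{n-1})(x)=-n\,p_{n-1}(x)\int_1^\infty h(r)r^{-n}\,dr$, which is finite since $h$ is integrable on $(1,\infty)$. Adding the two contributions, $\calL p_n=n\bigl(n-1+\mo-\int_1^\infty h(r)r^{-n}\,dr\bigr)p_{n-1}$.

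\textbf{Step 2: identification with $\Psi$.} Rewriting \eqref{eq:defpsi} by splitting $\int_1^\infty(1-r^{-u})h(r)dr=\h-\int_1^\infty r^{-u}h(r)dr$ gives $\Psi(u)=u^2+(\mo-1)u-u\int_1^\infty r^{-u}h(r)\,dr$, hence $n-1+\mo-\int_1^\infty h(r)r^{-n}\,dr=\Psi(n)/n$ for $n\geq1$. Combined with Step 1 this produces the clean formula $\calL p_n=\Psi(n)\,p_{n-1}$ for $n\geq1$ (the case $n=0$ being trivial as $\calL p_0=0$). Consequently $\calL\Lambda_{\phi_{\dz}}p_n=\tfrac{(\dz)_n}{W_\phi(n+1)}\Psi(n)\,p_{n-1}$, while $\Lambda_{\phi_{\dz}}\calLd_{\dpt}p_n=n(n-1+\dpt)\tfrac{(\dz)_{n-1}}{W_\phi(n)}\,p_{n-1}$.

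\textbf{Step 3: matching scalars.} The claim thus amounts to the scalar equality $\tfrac{(\dz)_n}{W_\phi(n+1)}\Psi(n)=n(n-1+\dpt)\tfrac{(\dz)_{n-1}}{W_\phi(n)}$ for $n\geq1$. Using $W_\phi(n+1)=\phi(n)W_\phi(n)$ (cf.~\eqref{eq:product-Wphi}), $(\dz)_n=(\dz)_{n-1}(n+\dz-1)$, the relation $\phi_{\dz}(u)(u+\dz-1)=u\,\phi(u)$ from \eqref{eq:defphi-d0}, and $\Psi(u)=(u-\tet)\phi(u)$, this reduces to the purely algebraic identity $(n-\tet)(n+\dz-1)=n(n-1+\dpt)$. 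Comparing the polynomials in $n$ on both sides, this is equivalent to the two relations $\dpt=\dz-\tet$ and $\tet(\dz-1)=0$, which are immediate from \eqref{eq:def-dtheta}, \eqref{eq:defn-d0} and \eqref{eq:def-root}: if $\mo<1+\h$ then $\dz=1$ and $\dpt=\vat=1-\tet$, whereas if $\mo\geq1+\h$ then $\tet=0$ and $\dpt=\dz$. This completes the proof.

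\textbf{Main obstacle.} Everything is elementary once one controls the product-convolution term $\calI_h p_n$; the one place requiring care is Step 2, namely recognizing the moment-type integral $\int_1^\infty r^{-n}h(r)\,dr$ as living inside the Laplace exponent $\Psi$, so that $\calL$ collapses to simple multiplication by $\Psi(n)$ on monomials. The remaining bookkeeping with $W_\phi$ and the constants $\tet,\dz,\dpt$ is routine. (The paper's intended organization instead routes this through the perturbation principle of Lemma~\ref{lem:op-com}, writing $\calL=\calLd_{\dpt}+(\mo-\dpt)\tfrac{d}{dx}+\calI_h$; the direct monomial computation above is the shortest self-contained argument.)
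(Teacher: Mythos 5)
Your proof is correct and follows essentially the same route as the paper: both compute $\calL p_n = \Psi(n)p_{n-1} = (n-\tet)\phi(n)p_{n-1}$ via a direct monomial computation and then match the diagonal multiplier coefficients, with the identity $(n-\tet)(n+\dz-1)=n(n+\dpt-1)$ verified by splitting into the cases $\vat=1$ and $\vat<1$. Your closing remark slightly misreads the paper's organization: Lemma \ref{lem:op-com} is used only later, in the proof of Proposition \ref{prop:inter_P}, to absorb the differential perturbation $\mathbf{A}$; the paper's proof of Lemma \ref{lem:inter} itself is precisely the monomial calculation you carried out (and your invocation of \eqref{eq:defphi-d0} in Step 3 is superfluous, since the reduction to the algebraic identity uses only $W_\phi(n+1)=\phi(n)W_\phi(n)$, the Pochhammer recursion, and $\Psi(u)=(u-\tet)\phi(u)$).
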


\begin{proof}
Using $\h = \int_1^\infty h(r)dr$, one obtains for any $n\in \N$,
\begin{align*}
\calL p_n(x) &= n(n-1)p_{n-1}(x)+ \mo n p_{n-1}(x)-np_{n-1}(x)\int_1^\infty h(r) r^{-(n-1)} r^{-1}dr \nonumber \\
&= n^2 p_{n-1}(x) + (\mo-\h-1)np_{n-1}(x)+np_{n-1}(x)\int_1^\infty h(r) (1-r^{-n}) dr \nonumber \\
&= (n-\tet)\phi(n)p_{n-1}(x).
\end{align*}
Combining this with \eqref{eq:mom_Ip-0} gives
\begin{equation*}
\calL \Lambda_{\phi_{\dz}}p_n(x) = \frac{(\dz)_n}{W_{\phi}(n+1)} (n-\tet)\phi(n) p_{n-1}(x) = \frac{(\dz)_n}{W_{\phi}(n)} (n-\tet)p_{n-1}(x),
\end{equation*}
while on the other hand,
\begin{align*}
\Lambda_{\phi_{\dz}} \calLd_{\dpt} p_n(x) = n(n+\dpt-1)\frac{(\dz)_{n-1}}{W_{\phi}(n)}  p_{n-1}(x) = (n-\tet)\frac{(\dz)_n}{W_{\phi}(n)}p_{n-1}(x)
\end{align*}
where the second equality follows by considering the cases $\vat=1$ and $\vat < 1$ separately. Now, the lemma
follows from the linearity of the involved operators.
\end{proof}

For a Borel measure $\eta$ on $[0,1]$, define $\Lambda_{\eta}f(x)=\int_{0}^{1}f(xy)\eta(dy)$, and denote by $\mathbf{D}_n$ the operator given by $\mathbf{D}_n f(x) = x^n \frac{d^n}{dx^n} f(x)$

\begin{lemma} \label{lem:op-com}
Let $\eta$ be a Borel measure on $[0,1]$ satisfying
$\int_{0}^{1}y^n \eta(dy)<\infty$ for all $n \in \mathbb{N}$.
Then
\begin{equation*}
\mathbf{D}_{n} \Lambda_{\eta} f = \Lambda_{\eta} \mathbf{D}_{n} f
\end{equation*}
for all $n\in \N$ and $f \in C^\infty[0,1]$.
\end{lemma}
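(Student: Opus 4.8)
The plan is to prove the identity by a direct computation, the only point needing care being the interchange of $\frac{d^n}{dx^n}$ with the integral defining $\Lambda_\eta$. First I would observe that taking $n=0$ in the hypothesis $\int_0^1 y^n\,\eta(dy)<\infty$ forces $\eta$ to be a finite measure on $[0,1]$. Since $f\in C^\infty[0,1]$, each derivative $f^{(k)}$ is continuous, hence bounded, on the compact interval $[0,1]$; because $y\in[0,1]$ we have $|y^{k}f^{(k)}(xy)|\le \norm{f^{(k)}}_\infty$ for all $x,y\in[0,1]$, a bound that is integrable against the finite measure $\eta$. A standard theorem on differentiating an integral depending on a parameter, applied iteratively (at each step the new integrand $\partial_x\bigl(y^{k}f^{(k)}(xy)\bigr)=y^{k+1}f^{(k+1)}(xy)$ is jointly continuous and uniformly bounded on $[0,1]^2$), would then give, for every $k\in\N$ and $x\in[0,1]$ (one-sided derivatives at the endpoints),
\begin{equation*}
\frac{d^k}{dx^k}\Lambda_\eta f(x)=\int_0^1 y^k f^{(k)}(xy)\,\eta(dy).
\end{equation*}
In particular this shows $\Lambda_\eta f\in C^\infty[0,1]$, so $\mathbf{D}_n\Lambda_\eta f$ is well defined.

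With this in hand, I would multiply the $k=n$ case of the above display by $x^n$ to obtain
\begin{equation*}
\mathbf{D}_n\Lambda_\eta f(x)=x^n\int_0^1 y^n f^{(n)}(xy)\,\eta(dy)=\int_0^1 (xy)^n f^{(n)}(xy)\,\eta(dy),
\end{equation*}
while, since $\mathbf{D}_n f(z)=z^n f^{(n)}(z)$, the definition of $\Lambda_\eta$ gives directly
\begin{equation*}
\Lambda_\eta\mathbf{D}_n f(x)=\int_0^1 \mathbf{D}_n f(xy)\,\eta(dy)=\int_0^1 (xy)^n f^{(n)}(xy)\,\eta(dy).
\end{equation*}
The two right-hand sides coincide, which is exactly the asserted commutation.

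I do not expect any real obstacle: the only non-formal ingredient is the legitimacy of differentiation under the integral sign, and this is immediate once one notes that $\eta$ is finite and the derivatives of $f$ are uniformly bounded on $[0,1]$. If one prefers to avoid invoking a parametric-integral theorem, the same conclusion follows by estimating the difference quotients $\frac{1}{t}\bigl(f^{(k)}((x+t)y)-f^{(k)}(xy)\bigr)$ uniformly in $y$ via the mean value theorem and passing to the limit with dominated convergence; either route is routine.
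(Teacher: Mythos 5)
Your argument is correct and follows the same route as the paper, namely a direct computation whose only substantive step is differentiating under the integral sign; the paper simply asserts this without the justification you spell out (finiteness of $\eta$ and uniform boundedness of $f^{(k)}$ on $[0,1]$).
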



\begin{proof}
It follows from the assumptions that
\[
\mathbf{D}_{n} \Lambda_{\eta} f(x)  = x^n \int_0^1 y^n f^{(n)}(xy) \eta (dy) = \Lambda_{\eta} \mathbf{D}_{n} f(x).
\]
\end{proof}

\begin{proof}[Proof of Proposition \ref{prop:inter_P}]
It is now an easy exercise to complete the proof of Proposition \ref{prop:inter_P}. Let us write
\begin{equation*}
\mathbf{A}= \mathbf{D}_{2}+ \lam \mathbf{D}_{1}.
\end{equation*}
Then, for any $f \in \Poly$, we get from Lemmas \ref{lem:inter} and \ref{lem:op-com}
and the linearity of the involved operators,
\begin{equation*}
\calJ \Lambda_{\phi_{\dz}} f = \left( \calL -\mathbf{A}\right) \Lambda_{\phi_{\dz}}f = \Lambda_{\phi_{\dz}}\left( \calLd_{\dpt} -\mathbf{A}\right) f
= \Lambda_{\phi_{\dz}}  \calJd_{\dpt}  f.
\end{equation*}

\end{proof}



Having established the necessary intertwining relation, we are now able to show that $\calJ$ extends to the generator of a Markov semigroup.

\begin{lemma}
\label{lem:feller}
The operator $(\calJ ,\Poly)$ is closable in $C[0,1]$, and its closure is the infinitesimal generator of a Markov semigroup $\J = (\J_t)_{t \geq 0}$ on $C[0,1]$.
\end{lemma}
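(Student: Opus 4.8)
The plan is to verify the hypotheses of the Hille--Yosida--Ray theorem for positive maximum principle, using the intertwining relation of Proposition \ref{prop:inter_P} to transfer the needed properties from the classical Jacobi operator $\calJd_{\dpt}$. First I would observe that $(\calJ, \Poly)$ is a densely defined operator on $C[0,1]$ that satisfies the positive maximum principle: if $f \in \Poly$ attains a nonnegative maximum at $x_0 \in [0,1]$, then the local part $\calJd_\mo f(x_0) \leq 0$ by the usual second-derivative argument (checking the boundary behavior of the drift $\mo - \lam x$ at $x_0 = 0$ and $x_0 = 1$ using $\mo > \h \geq 0$ and $\lam > \mo$), while the nonlocal part $-h\diamond f'(x_0) = \int_0^\infty (f(e^{-r}x_0) - f(x_0))\Pi(dr)/x_0 \leq 0$ since $f(e^{-r}x_0) \leq f(x_0)$ and $\Pi \geq 0$; one uses the alternative form \eqref{eq:alt-J} from Lemma \ref{lem:rewriting-stuff} together with the fact that $\int_0^\infty r\,\Pi(dr) = \h < \infty$ to make sense of these terms. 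This gives closability of $(\calJ, \Poly)$ in $C[0,1]$; denote the closure again by $\calJ$.

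The key remaining point for Hille--Yosida--Ray is the range condition: for some (equivalently all) $\gamma > 0$, the range of $(\gamma - \calJ)$ restricted to $\Poly$ is dense in $C[0,1]$. Here is where the intertwining enters. By \Cref{sec:classical-Jacobi}, the classical Jacobi operator $\calJd_{\dpt}$ (with $\dpt \in (0,1]$, so that $\lam > \dpt$) is the generator of the classical Jacobi Markov semigroup $\Q[\dpt]$ on $C[0,1]$, so by Hille--Yosida the range of $(\gamma - \calJd_{\dpt})$ on its core $\Poly$ is dense in $C[0,1]$; in fact $(\gamma - \calJd_{\dpt})\Poly = \Poly$ since $\calJd_{\dpt}$ maps $\Poly_n$ (polynomials of degree $\leq n$) into itself and, via its eigenvalues $-\lambda_k$, the map $\gamma - \calJd_{\dpt}$ is a bijection of $\Poly_n$ for every $n$. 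Applying $\Lambda_{\phi_{\dz}}$ and using Proposition \ref{prop:inter_P},
\[
(\gamma - \calJ)\Lambda_{\phi_{\dz}}\Poly = \Lambda_{\phi_{\dz}}(\gamma - \calJd_{\dpt})\Poly = \Lambda_{\phi_{\dz}}\Poly,
\]
so the range of $(\gamma - \calJ)$ on $\Poly$ contains $\Lambda_{\phi_{\dz}}\Poly$. By Lemma \ref{lem:Markov-kernel}, $\Lambda_{\phi_{\dz}} \in \Bo{C[0,1]}$ is a quasi-affinity, hence has dense range in $C[0,1]$; therefore $\Lambda_{\phi_{\dz}}\Poly$ is dense in $C[0,1]$ (it is dense in $\Lambda_{\phi_{\dz}}\overline{\Poly} = \Lambda_{\phi_{\dz}}C[0,1]$, which is dense), giving the range condition. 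Finally $\calJ \bm{1}_{[0,1]} = 0$ (clear from \eqref{eq:defJ}, since $\calJd_\mo$ kills constants and $h \diamond 0 = 0$), so the Hille--Yosida--Ray theorem (see e.g.\ Ethier and Kurtz \cite[Chapter 4, Theorem 2.2]{EthierKurtz}) yields that the closure of $(\calJ, \Poly)$ generates a (conservative, positivity-preserving, sub-Markovian hence Markovian) strongly continuous contraction semigroup $\J = (\J_t)_{t \geq 0}$ on $C[0,1]$, and $\Poly$ is a core by construction.

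The main obstacle I anticipate is the careful verification of the positive maximum principle at the endpoints of $[0,1]$: at an interior maximizer the argument is standard, but at $x_0 = 0$ or $x_0 = 1$ one must check that the first-order (and nonlocal) contributions have the right sign, which is precisely where \Cref{assA} (namely $\mo > \h$ and $\lam > \mo + \bm{1}_{\{\mo<1+\h\}}$, giving in particular $\lam > 1$ when $h \not\equiv 0$) is used — the nonlocal jumps from $x_0$ to $e^{-r}x_0$ stay inside $[0,1]$ and push the value down, while the inward-pointing drift at the boundary is controlled by these inequalities. A secondary technical point is justifying the manipulations in \eqref{eq:alt-J} for $f \in \Poly$, namely that $\int_0^\infty(f(e^{-r}x) - f(x) + xrf'(x))\Pi(dr)/x$ is finite and the interchange with differentiation is legitimate, which follows from $\int_0^\infty r\,\Pi(dr) < \infty$ (Lemma \ref{lem:rewriting-stuff}) and Taylor's theorem applied to the polynomial $f$.
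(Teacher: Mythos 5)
Your strategy matches the paper's: verify the hypotheses of the Hille--Yosida--Ray theorem on $\Poly$, using the intertwining of Proposition \ref{prop:inter_P} to transfer the range condition from the classical Jacobi operator to $\calJ$. Your direct algebraic argument that $(\gamma-\calJd_{\dpt})\Poly = \Poly$ (degree-preservation plus strictly positive diagonal $\gamma+\lambda_k$ on each $\Poly_n$) is a clean, elementary substitute for the paper's route through the core property of $\Poly$ for $\calJd_{\vat}$ and the converse direction of Hille--Yosida--Ray; the paper also simplifies matters by specializing to $\epsilon=d_\phi$ so that $\dz=1$ and $\Lambda_{\phi}(\Poly)=\Poly$, but your general-$\dz$ density argument for $\Lambda_{\phi_{\dz}}\Poly$ is equally valid.

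There is, however, a sign error in your positive maximum principle verification at $x_0=0$. You assert that both the local part $\calJd_\mo f(x_0)$ and the nonlocal part $-h\diamond f'(x_0)$ are individually $\leq 0$ at a nonnegative maximizer $x_0$. At $x_0=0$ this fails for the nonlocal part: computing directly from \eqref{eq:defJ}, $-h\diamond f'(0) = -f'(0)\int_1^\infty h(r)r^{-1}dr$, which is $\geq 0$ (not $\leq 0$) since $f'(0)\leq 0$; moreover the jump-kernel expression $\int_0^\infty(f(e^{-r}x_0)-f(x_0))\Pi(dr)/x_0$ you wrote is an indeterminate $0/0$ form at $x_0=0$ and gives no sign by itself. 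What rescues the conclusion is a cancellation between the two parts: $\calJ f(0)=f'(0)\bigl(\mo-\int_1^\infty h(r)r^{-1}dr\bigr)\leq 0$ because $\mo>\h\geq\int_1^\infty h(r)r^{-1}dr$ and $f'(0)\leq 0$. You correctly sense that $\mo>\h$ is the operative hypothesis, but you attribute its role to the drift in $\calJd_\mo$ rather than to this local-nonlocal cancellation. This is exactly why the paper treats $x_0=0$ by a separate direct computation from \eqref{eq:defJ} and uses the representation of Lemma \ref{lem:rewriting-stuff} only for $x_0\in(0,1]$, where your sign argument for the nonlocal term is correct.
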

\begin{proof}
We want to invoke the Hille--Yosida--Ray theorem for Markov generators, see \cite[Theorem 1.30]{Levy-Matters-III}, which requires that $\Poly$ and, for some $q>0$, $(q-\calJ)(\Poly)$ are dense in $C[0,1]$ and in addition, $\calJ$ satisfies the positive maximum principle on $\Poly$.

The density of $\Poly$ in $C[0,1]$ follows from the Stone--Weierstrass theorem.
To show that $(q-\calJ)(\Poly)$ is dense in $C[0,1]$ for some $q >0$, we set $\epsilon = d_\phi$. Then
$\dz = 1$, in which case, we write $\Lambda_{\phi} = \Lambda_{\phi_{\dz}}$.
By Lemma \ref{lem:Markov-kernel}, $\Lambda_{\phi}$ is injective and bounded on $C[0,1]$. So, the inverse $\Lambda_{\phi}^{-1}$ is a linear operator on $\Lambda_{\phi}(\Poly)$. But since $\Lambda_\phi$ is a Markov multiplicative kernel, we get by injectivity that $\Lambda_\phi(\Poly) = \Poly$ and $\Lambda_\phi^{-1}(\Poly) = \Poly$.
Putting these observations together, we deduce from the first intertwining in Proposition \ref{prop:inter_P} that
\begin{equation*}
\calJ =\Lambda_{\phi}\calJd_{\vat} \Lambda_{\phi}^{-1} \quad \text{on} \quad \Poly.
\end{equation*}
Hence, for any $q >0$,
\begin{equation}
\label{eq:qJ-qJ}
(q - \calJ)(\Poly) = (q - \Lambda_\phi \calJd_{\vat} \Lambda_\phi^{-1})(\Poly) = \Lambda_\phi(q-\calJd_{\vat})\Lambda_\phi^{-1}(\Poly) = \Lambda_\phi(q-\calJd_{\vat})(\Poly),
\end{equation}
where we used the trivial commutation of $\Lambda_\phi$ with $q$. For $h \not\equiv 0$, \Cref{assA} guarantees that $\lam > \vat$ since $\lam > 1$ and $\vat = 1-\tet \in (0,1]$. Therefore, $\Poly$ belongs to the domain of $\calJd_{\vat}$, which is explicitly described in~\eqref{eq:C-domain}, and as $\Poly$ is an invariant subspace of the classical Jacobi semigroup $\Q[\vat]$, we get that $\Poly$ is a core of $\calJd_{\vat}$, see \cite[Lemma 1.34]{Levy-Matters-III}. Hence, we obtain from the
reverse direction of the Hille--Yosida--Ray theorem that $(q-\calJd_{\vat})(\Poly)$ is dense in $C[0,1]$ for any $q > 0$. Since the image of a dense subset under a bounded operator with dense range is also dense in the codomain,
$(q - \calJ)(\Poly) = \Lambda_\phi(q-\calJd_{\vat})(\Poly)$ is dense in $C[0,1]$ for any $q > 0$.

Now, consider $f \in \Poly$ and $x_0 \in [0,1]$ such that $f(x_0)=\sup_{x\in [0,1]} f(x)$.
If $x_0 = 0$, one has $f'(x_0) \le 0$ and therefore,
\[
\calJ f(x_0) =  \mo f'(0) - f'(0) \int_1^{\infty} h(r)\frac{1}{r} dr \le f'(0) (\mo - \h) \le 0.
\]
If $x_0 \in (0,1]$, we use Lemma \ref{lem:rewriting-stuff} to write
\[
\calJ f(x_0) = x_0 (1-x_0)f''(x_0)-\left(\lam x_0-\mo\right)f'(x_0)+\int_{0}^{\infty}\left(f(e^{-r}x_0)-f(x_0)\right) \frac{\Pi(dr)}{x_0},
\]
and we observe that
\begin{equation*}
\int_{0}^{\infty} \left(f(e^{-r}x_0)-f(x_0)\right) \frac{\Pi(dr)}{x_0} \leq 0.
\end{equation*}
If $x \in (0,1)$, we must have $f''(x_0)\leq 0$ and $f'(x_0)=0$, from which one obtains $\calJ f(x_0)\leq 0$. On the other hand, if $x_0=1$, then $f'(1)\geq 0$ and so, $\calJ f(1)\leq  -\left(\lam-\mo \right)f'(1) \leq 0$ since $\lam > \mo$.
This shows that $\calJ$ satisfies the positive maximum principle on $\Poly$, and it follows that $\calJ$ extends to the generator of a Feller semigroup $\J = (\J_t)_{t \geq 0}$ in the sense of \cite[Theorem 1.30]{Levy-Matters-III}. The fact that $\J$ is conservative, i.e.~$\J_t \bm{1}_{[0,1]} = \bm{1}_{[0,1]}$, follows from
\begin{equation*}
\J_t \bm{1}_{[0,1]} - \bm{1}_{[0,1]} = \int_0^t \J_s \calJ \bm{1}_{[0,1]} \: ds = 0,
\end{equation*}
which is a consequence of $\calJ \bm{1}_{[0,1]} = 0$;
see e.g.~\cite[Lemma 1.26]{Levy-Matters-III}.
 \end{proof}

\begin{proof}[Proof of \Cref{thm:main-inter-L2}.\eqref{item-2:main-inter-L2}]

To complete the proof it suffices to establish the claims concerning the invariant measure. For $f \in \Poly$ we have,
\[
\bpsi[\calJ \Lambda_{\phi}f] = \bpsi[\Lambda_{\phi}\calJd_{\vat} f] =\gab[\vat] [\calJd_{\vat} f] = 0,
\]
where we have used Proposition \ref{prop:inter_P} with $\epsilon = d_\phi$, Lemma \ref{lem:fact} and the fact that $\gab[\vat]$ is the invariant measure of $\Q[\vat]$. This shows that $\bpsi \calJ \equiv 0$ on the dense subset $\Lambda_{\phi}(\Poly)=\Poly$ of $C[0,1]$, which implies that $\bpsi$ is an invariant measure of $\J$; see for instance \cite[Section 1.4.1]{Bakry_Book}. To show uniqueness, we note that any other invariant measure $\widetilde{\bpsi}$ of $\J$ must have all positive moments finite and satisfy
\begin{equation*}
\widetilde{\bpsi} [ \Lambda_{\phi} \calJd_{\vat} f ] =\widetilde{\bpsi} [ \calJ \Lambda_\phi f ] = 0
\end{equation*}
for any $f \in \Poly$, where we again used that $\Lambda_\phi(\Poly) = \Poly$. By uniqueness of the invariant measure of $\calJd_{\vat}$, we obtain the factorization $\widetilde{\bpsi} \Lambda_{\phi} = \gab[\vat]$ on $\Poly$, and the moment determinacy of $\bpsi$ then forces $\widetilde{\bpsi} = \bpsi$. Finally the extension of $\J$ to a Markov semigroup on $\Leb^2(\bpsi)$ is classical, see for instance the remarks before the theorem, and it is well-known that if $\J$ has a unique invariant measure, it is an ergodic Markov semigroup; see e.g.~\cite[Theorem 5.16]{da-prato:2006}.
\end{proof}

\subsection{Proof of Proposition \ref{prop:co-eigenfunctions}}

To prove Proposition \ref{prop:co-eigenfunctions}, we first show two auxiliary results, the first of which provides
a characterization of the functions $w_n$ appearing in \eqref{eq:v-n-rodrigues}.
We recall that the Mellin transform of a finite measure $\nu$, resp. an integrable function $f$, on $\R_+$ is given by
\begin{equation*}
\calM_{\nu}(z) = \nu[p_{z-1}] = \int_0^\infty x^{z-1}\nu(dx), \quad \text{resp.}~\calM_{f}(z)=\int_0^\infty x^{z-1}f(x)dx,
\end{equation*}
which is valid for at least $z \in 1+i\R$. We denote by $\Em_{p,q}$ (resp.~$\Em_{p,q}'$), with $p < q$ reals, the linear space of functions $f \in C^\infty(\R_+)$ such that there exist $c, c' > 0$ for which, for all $k \in \N$,
\begin{equation*}
\lim_{x\to 0} \left| x^{k+1-p-c} \frac{d^k}{dx^k}f(x)\right| = 0 \quad \text{and} \quad \lim_{x\to \infty} \left| x^{k+1+c'-q} \frac{d^k}{dx^k}f(x)\right| = 0
\end{equation*}
(resp.~the linear space of continuous linear functionals on $\Em_{p,q}$ endowed with a structure of a countably multinormed space as described in \cite[p.~231]{Misra-Lavoine}). For any $n \in \N$ and $x\in [0,1]$, we denote
\begin{equation*}
\pab[\vat]_n(x) = \gab[\vat](x) \P_n^{(\vat)}(x) = \frac{(\lam-\vat)_n}{(\lam)_n} \sqrt{\C_n(\vat)} \Rc_{n} \gab[\lam+n,\vat](x)
\end{equation*}
where $\Rc_{n}$ denotes the Rodrigues operator defined in \eqref{eq:def-Rodrigues} and the last identity follows from \eqref{eq:Jac-Rod}. For any complex number  $a$ we recall that the Pochhammer notation $(a)_z$ to any $z \in \C$ such that $-(z+a) \notin \N$ is given in \eqref{eq:defPoc}, and, for the remainder of the proofs, we shall write $\left\langle \cdot{,}\cdot \right\rangle_{\bpsi}$ for the $\Leb^2(\bpsi)$-inner product, adopting the same notation for other weighted Hilbert spaces.

We know from Lemma \ref{lem:Markov-kernel} that $X_{\phi}$ has a continuous density $\iota$ on $[0,1]$.
The corresponding Markov multiplicative kernel is given by $\Lambda_{\phi} f(x) = \int_0^1 f(xy) \iota(y)dy$.
We write $\iota^*(y)=\iota(1/y)1/y$ and denote by $\hat{\Lambda}_{\phi}$ the operator given by
$\widehat{\Lambda}_{\phi} f(x) = \int_1^{1/x}f(xy)\iota^*(y)dy$.

\begin{proposition}
\label{prop:Convolution}
For any $n \in \N$, the Mellin convolution equation
\begin{equation}
\label{eq:equation_w_n1}
\widehat{\Lambda}_{\phi} w (x) = \pab[\vat]_n(x)
\end{equation}
has a unique solution, in the sense of distributions, given by
\begin{equation}
\label{eq:w_n}
w_n(x) = \frac{(\lam-\vat)_n}{(\lam)_n} \sqrt{\C_n(\vat)} \ \Rc_n (\gab[\lam+n,\lam] \diamond \bpsi) (x) \in \Em = \bigcup_{q>\tet}\Em_{\tet,q}.
\end{equation}
Its Mellin transform is given, for any $z \in \C$ with $\Re(z)>\tet$, by
\begin{equation}
\label{eq:Mellin_w_n}
\calM_{w_n}(z) = \frac{1}{n!} \frac{(\lam-\vat)_n}{(\lam)_n} \sqrt{\C_n(\vat)}  \frac{\Gamma(z)}{\Gamma(z-n)} \calM_{\gab[\lam+n,\lam]}(z) \calM_\bpsi(z). 
\end{equation}
\end{proposition}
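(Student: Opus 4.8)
## Proof proposal for Proposition \ref{prop:Convolution}

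The plan is to work on the Mellin transform side, where the product convolution $\diamond$ and the Mellin convolution operator $\widehat{\Lambda}_{\phi}$ both turn into ordinary products, and then invoke a uniqueness theorem for Mellin convolution equations in the distribution space $\Em_{\tet,q}'$. First I would record the Mellin transforms of the basic ingredients: from Lemma \ref{lem:Markov-kernel} the density $\iota$ of $X_\phi$ has Mellin transform $\calM_\iota(z) = \Em[X_\phi^{z-1}]$, which by the moment identity $\Lambda_\phi p_n = \tfrac{n!}{W_\phi(n+1)}p_n$ (the $\dz=1$ case of \eqref{eq:mom_Ip-0}) equals $\tfrac{\Gamma(z)}{W_\phi(z)}$ after analytic continuation, so that $\calM_{\iota^*}(z) = \calM_\iota(1-z)$. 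Since $\widehat{\Lambda}_\phi$ is Mellin convolution against $\iota^*$, one has $\calM_{\widehat{\Lambda}_\phi w}(z) = \calM_{\iota^*}(z)\,\calM_w(z)$ for $w$ in the appropriate distribution class. Likewise, the classical Rodrigues/orthogonal-polynomial identity \eqref{eq:Jac-Rod} gives a closed form for $\calM_{\pab[\vat]_n}(z)$ in terms of Gamma functions; combining $\Rc_n \gab[\lam+n,\vat]$ with the beta-type density $\gab[\lam+n,\vat]$ yields $\calM_{\pab[\vat]_n}(z) = \tfrac{(\lam-\vat)_n}{(\lam)_n}\sqrt{\C_n(\vat)}\,\tfrac{\Gamma(z)}{\Gamma(z-n)}\calM_{\gab[\lam+n,\vat]}(z)$, valid for $\Re(z)$ large enough.

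Next I would solve \eqref{eq:equation_w_n1} at the Mellin level: dividing, the candidate solution must satisfy
\begin{equation*}
\calM_w(z) = \frac{\calM_{\pab[\vat]_n}(z)}{\calM_{\iota^*}(z)} = \frac{(\lam-\vat)_n}{(\lam)_n}\sqrt{\C_n(\vat)}\,\frac{\Gamma(z)}{\Gamma(z-n)}\,\frac{\calM_{\gab[\lam+n,\vat]}(z)}{\calM_\iota(1-z)}.
\end{equation*}
The point is then to identify the right-hand side with $\calM_{w_n}(z)$ for $w_n$ as in \eqref{eq:w_n}. Here I would use two facts. First, $\Rc_n$ acts on Mellin transforms by $\calM_{\Rc_n g}(z) = \tfrac{1}{n!}\tfrac{\Gamma(z)}{\Gamma(z-n)}\calM_g(z)$, which follows directly from the definition \eqref{eq:def-Rodrigues} and integration by parts (equivalently, $\mathbf D_n$ multiplies the Mellin transform by a falling factorial). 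Second, the product convolution $\gab[\lam+n,\lam]\diamond\bpsi$ has Mellin transform $\calM_{\gab[\lam+n,\lam]}(z)\,\calM_\bpsi(z)$. So it remains to check the purely algebraic identity
\begin{equation*}
n!\cdot\frac{\calM_{\gab[\lam+n,\vat]}(z)}{\calM_\iota(1-z)} = \frac{\Gamma(z)}{\Gamma(z-n)}\cdot\frac{\calM_{\gab[\lam+n,\vat]}(z)}{\calM_{\gab[\lam+n,\vat]}(z)}\cdots,
\end{equation*}
more precisely that $\calM_{\gab[\lam+n,\vat]}(z)/\calM_\iota(1-z) = \calM_{\gab[\lam+n,\lam]}(z)\,\calM_\bpsi(z)$. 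This reduces, via the Gamma-function form of $\calM_{\gab[\lam+n,\vat]}$ and $\calM_{\gab[\lam+n,\lam]}$, the relation $W_\bpsi$-versus-$W_\phi$ encoded in the moment formula \eqref{eq:mom-bpn}, and the factorization $\bpsi\Lambda_\phi = \gab[\vat]$ from Lemma \ref{lem:fact} (with $\epsilon=d_\phi$, $\dz=1$), which at the Mellin level reads $\calM_\bpsi(z)\calM_\iota(z) = \calM_{\gab[\vat]}(z)$. Matching Gamma factors and Pochhammer symbols then gives \eqref{eq:Mellin_w_n} and shows $w_n$ as defined solves the equation.

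For existence in $\Em$ I would verify that the explicit $w_n$ in \eqref{eq:w_n} lies in $\bigcup_{q>\tet}\Em_{\tet,q}$: near $0$ the behaviour is governed by $\bpsi(x)\sim x^{\vat-1}$ (from \Cref{thm:main-inter-L2}.\eqref{item-1:main-inter-L2} and the factorization of $\bpsi$), and after convolving with $\gab[\lam+n,\lam]$ and applying $\Rc_n$ one controls all derivatives with the required decay/growth exponents; near $1$ the support of $\bpsi$ being $[0,1]$ handles the upper endpoint. For uniqueness, the key is that $\widehat{\Lambda}_\phi$ is injective as an operator on the relevant distribution space — equivalently $\calM_{\iota^*}$ has no zeros in the strip $\Re(z)>\tet$, which holds because $\calM_\iota(z)=\Gamma(z)/W_\phi(z)$ and $W_\phi$ has no zeros with $\Re(z)>0$ (it is a product of values of a Bernstein function, all strictly positive) while $\Gamma$ has no zeros at all. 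Then any two solutions of \eqref{eq:equation_w_n1} have the same Mellin transform on a vertical strip and hence coincide as distributions. I expect the main obstacle to be the bookkeeping in the distribution-space uniqueness argument — making precise the class $\Em_{\tet,q}'$ in which $\widehat{\Lambda}_\phi w$ is defined, checking that both $\pab[\vat]_n$ and the candidate $w_n$ live in the dual pairing required by \cite{Misra-Lavoine}, and justifying division of Mellin transforms in the appropriate strip — rather than the algebraic identity, which is a routine Gamma-function manipulation once the factorizations from Lemmas \ref{lem:fact} and \ref{lem:phi-Psi1-claims} are in hand.
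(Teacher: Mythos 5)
Your overall route --- pass to the Mellin side, where $\widehat{\Lambda}_{\phi}$ and $\diamond$ act as multipliers, solve the equation by division, identify the quotient with $\calM_{w_n}$ via the factorization of $\bpsi$, and conclude by distributional uniqueness of Mellin transforms --- is exactly the paper's (an adaptation of \cite[Lemma 8.5]{Patie-Savov-GeL}). But there is a concrete error at the center of it: the multiplier you assign to $\widehat{\Lambda}_{\phi}$. From the definition $\widehat{\Lambda}_{\phi} w(x)=\int_1^{1/x} w(xy)\,\iota^*(y)\,dy$, the change of variables $r=xy$ gives $\widehat{\Lambda}_{\phi} w(x)=\int_x^1 w(r)\,\iota(x/r)\,\frac{dr}{r}$, i.e.\ $\widehat{\Lambda}_{\phi}$ is multiplicative convolution with $\iota$ itself, so $\calM_{\widehat{\Lambda}_{\phi} w}(z)=\calM_{\iota}(z)\,\calM_w(z)$ --- this is precisely the computation carried out in the paper. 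You instead assert $\calM_{\widehat{\Lambda}_{\phi} w}(z)=\calM_{\iota^*}(z)\,\calM_w(z)=\calM_{\iota}(1-z)\,\calM_w(z)$, and you then reduce the proposition to the identity $\calM_{\gab[\lam+n,\vat]}(z)/\calM_{\iota}(1-z)=\calM_{\gab[\lam+n,\lam]}(z)\,\calM_{\bpsi}(z)$. That identity is false, and it is incompatible with the factorization $\calM_{\bpsi}(z)\,\calM_{\iota}(z)=\calM_{\gab[\vat]}(z)$ which you yourself invoke to verify it (the factorization involves $\calM_{\iota}(z)$, not its reflection). The same conflation undermines your uniqueness argument: you justify nonvanishing of $\calM_{\iota^*}$ on $\Re(z)>\tet$ by the formula $\calM_{\iota}(z)=\Gamma(z)/W_\phi(z)$, but $\calM_{\iota^*}(z)=\calM_{\iota}(1-z)=\Gamma(1-z)/W_\phi(1-z)$, which has poles at $z=1,2,\dots$ and whose denominator is not even defined for $\Re(z)\geq 1$ without further continuation.

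The slip is local and fixable: with the correct multiplier, division gives $\calM_w(z)=\frac{1}{n!}\frac{(\lam-\vat)_n}{(\lam)_n}\sqrt{\C_n(\vat)}\,\frac{\Gamma(z)}{\Gamma(z-n)}\,\calM_{\gab[\lam+n,\vat]}(z)\,\frac{W_\phi(z)}{\Gamma(z)}$, and the needed identity $\calM_{\gab[\lam+n,\vat]}(z)\,\frac{W_\phi(z)}{\Gamma(z)}=\calM_{\gab[\lam+n,\lam]}(z)\,\calM_{\bpsi}(z)$ is immediate from $\calM_{\bpsi}(z)=\frac{(\vat)_{z-1}}{(\lam)_{z-1}}\frac{W_\phi(z)}{\Gamma(z)}$; after this correction your argument coincides with the paper's. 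One further caveat: your justification that $w_n$ lies in $\Em$ leans on the pointwise asymptotics $\bpsi(x)\sim x^{\vat-1}$ near $0$, which is not available at this stage ($\bpsi$ is so far only known to be continuous and positive on $(0,1)$); the paper instead deduces the required memberships ($\iota\in\Em'_{0,q}$ and $\bpsi\in\Em'_{\tet,q}$) from analyticity and boundedness of the Mellin transforms along vertical lines via \cite[Theorem 11.10.1]{Misra-Lavoine}, and you should argue the space membership the same way rather than through unproved density asymptotics.
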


\begin{proof}
The proof is an adaptation of the proof of  \cite[Lemma 8.5]{Patie-Savov-GeL} to the current setting. Since the mapping $z \mapsto \calM_{\iota}(z)=\calM_{\iota^*}(1-z)$ is analytic on $\Re(z)>0$ and  $|\calM_{\iota}(z)| \leq \calM_{\iota}(\Re(z)) <\infty$, for any $\Re(z)>0$, see for instance \cite[Proposition 6.8]{Patie-Savov-GeL}, we deduce from \cite[Theorem 11.10.1]{Misra-Lavoine} that $\iota \in \Em'_{0,q}$, for every $q>0$ and $\iota^*  \in \Em'_{p,1}$ for every $p<1$.
So, for $w \in \Em'_{0,q}, \:q>0$ and with $0<\Re(z)<q$, $p_z(x)=x^{z} \in \Em_{0,q}$, we have 	
\begin{align*}
&\mathcal{M}_{\widehat{\Lambda}_{\phi} w}(z)  = \langle \widehat{\Lambda}_{\phi} w,p_{z-1}\rangle_{\Em'_{0,q},\Em_{0,q}}
= \int_0^1 \left(\int_1^{1/x} w(xy) \iota \bigg(\frac{1}{y}\bigg) \frac{1}{y} dy \right) x^{z-1} dx\\
&= \int_0^1 \left(\int_x^{1} w(r) \iota \bigg(\frac{x}{r}\bigg) \frac{1}{r} dr \right) x^{z-1} dx =
\int_0^1 w(r) \bigg( \int_0^r x^{z-1} \iota \bigg(\frac{x}{r}\bigg) \frac{1}{r} dx \bigg) dr\\
&= \int_0^1 w(r) \bigg(\int_0^1 (ur)^{z-1} \iota(u) du \bigg) dr
= \int_0^1 u^{z-1} \iota(u) du \int_0^1 r^{z-1} w(r) dr\\
& = \mathcal{M}_{\iota}(z) \mathcal{M}_w(z).
\end{align*}
On the other hand, for any $n \in \N$, we get, from \cite[11.7.7]{Misra-Lavoine} and a simple computation,
\begin{align*}
\calM_{\pab[\vat]_n}(z) = \frac{1}{n!} (\lam-\vat)_n \sqrt{\C_n(\vat)} \frac{\Gamma(z)}{\Gamma(z-n)}  \frac{(\vat)_{z-1}}{(\lam)_{z+n-1}}.
\end{align*}
So we deduce that the Mellin transform of a solution $w$ to \eqref{eq:equation_w_n1} takes the form	
\begin{align*}
\mathcal{M}_{w}(z) =  \frac{\calM_{\pab[\vat]_n}(z)}{\calM_{\iota}(z)} &= \frac{1}{n!} \sqrt{\C_n(\vat)} \frac{\Gamma(z)}{\Gamma(z-n)}  (\lam-\vat)_n \frac{(\vat)_{z-1}}{(\lam)_{z+n-1}} \frac{W_\phi(z)}{\Gamma(z)} \\
&= \frac{1}{n!} \sqrt{\C_n(\vat)}   \frac{\Gamma(z)}{\Gamma(z-n)}  \frac{(\lam-\vat)_n}{(\lam)_n} \frac{(\lam)_{z-1}}{(\lam+n)_{z-1}} \calM_\bpsi(z) \\
&= \frac{1}{n!} \frac{(\lam-\vat)_n}{(\lam)_n} \sqrt{\C_n(\vat)}  \frac{\Gamma(z)}{\Gamma(z-n)}  \calM_{\gab[\lam+n,\lam]}(z) \calM_\bpsi(z).
\end{align*}
Since for $\Re(z)>\tet$, $z\mapsto \Mg(z)$ is analytic with $|\Mg(z)|\leq \Mg(\Re(z)) <\infty$, we deduce from \cite[Theorem 11.10.1]{Misra-Lavoine} that $\bpsi \in \Em'_{\tet,q}$, for any $q>\tet$. Hence, by means of
\cite[11.7.7]{Misra-Lavoine}, we have that $w \in \Em'_{\tet,q}$ with $w = w_n$ is a solution to \eqref{eq:equation_w_n1}, and the uniqueness of the solution follows from the uniqueness of Mellin transforms in the distributional sense.
\end{proof}

\begin{lemma}
\label{lem:Mellin-bpsi}
For $a > \tet$ and $b \in \R$, we have the estimate
\begin{equation*}
\left| \calM_\bpsi(a+ib) \right| \leq C|b|^{-\Delta},
\end{equation*}
which holds uniformly on bounded $a$-intervals and for $|b|$ large enough,
where $C > 0$ is a constant depending on $\phi$ and the considered $a$-interval, and $\Delta$ is given in \eqref{Delta}. 
\end{lemma}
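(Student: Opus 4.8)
The plan is to combine the explicit Mellin representation of $\bpsi$ with the Stirling-type asymptotics of the Bernstein--gamma function $W_\phi$ along vertical lines.

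First, I would record the closed form of $\calM_\bpsi$. By \Cref{thm:main-inter-L2}.\eqref{item-1:main-inter-L2} the measure $\bpsi$ is moment determinate; being a probability measure on $[0,1]$, the map $z \mapsto \calM_\bpsi(z)$ is analytic and bounded on each vertical line of $\{\Re z > \tet\}$, hence is the unique such function interpolating the moments \eqref{eq:mom-bpn}. Therefore
\begin{equation*}
\calM_\bpsi(z) = \frac{(\vat)_{z-1}}{(\lam)_{z-1}}\,\frac{W_\phi(z)}{\Gamma(z)} = \frac{\Gamma(\lam)}{\Gamma(\vat)}\,\frac{\Gamma(z+\vat-1)}{\Gamma(z+\lam-1)}\,\frac{W_\phi(z)}{\Gamma(z)},\qquad \Re z > \tet.
\end{equation*}

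Next comes the core of the proof: the vertical-line control of $W_\phi$ due to Patie and Savov (\cite[Theorem~3.3]{Patie-Savov-Bern}; see also \cite[Proposition~6.8]{Patie-Savov-GeL}). Recall from \Cref{lem:phi-Psi1-claims}.\eqref{item-0:lem:phi-Psi1-claims} that $\phi$ is Bernstein with unit drift, from \Cref{lem:phi-Psi1-claims}.\eqref{item-1:lem:phi-Psi1-claims} that $\phi(0)=(\mo-\h-1)\bm{1}_{\{\mo\geq1+\h\}}$, and from \eqref{eq:defpsi}--\eqref{eq:def-phi} that $\phi(u)-u\to\mo-\vat$ as $u\to\infty$. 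These estimates give, uniformly for $a$ in compacts of $(0,\infty)$ and as $|b|\to\infty$,
\begin{equation*}
|W_\phi(a+ib)| \leq C\,(1+|b|)^{\,a-\frac12+\phi(0)+\h}\,e^{-\frac{\pi}{2}|b|},
\end{equation*}
the excess exponent beyond the Gamma-like power $a-\tfrac12$ being $\phi(0)+\h$, the term $\h=\int_0^\infty r\,\Pi(dr)$ (\Cref{lem:rewriting-stuff}) accounting for the Lévy component of $\phi$. Applying Stirling's formula to $\Gamma(z+\vat-1)$, $\Gamma(z+\lam-1)$ and $\Gamma(z)$ — whose exponential factors $e^{\mp\frac{\pi}{2}|b|}$ cancel against the one from $W_\phi$ — and collecting powers of $|b|$ (the $a$-dependent and constant contributions cancelling) yields $|\calM_\bpsi(a+ib)| \leq C\,(1+|b|)^{\vat-\lam+\phi(0)+\h}$. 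Finally, $\vat-\lam+\phi(0)+\h=-\Delta$ by \eqref{Delta} and \Cref{lem:phi-Psi1-claims}.\eqref{item-1:lem:phi-Psi1-claims}: if $\mo\geq1+\h$ then $\vat=1$ and $\phi(0)=\mo-\h-1$, so the exponent is $\mo-\lam=-\Delta$, while if $\mo<1+\h$ then $\phi(0)=0$ and the exponent is $\vat+\h-\lam=-\Delta$.

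An alternative for the case $\mo<1+\h$, which sidesteps the Lévy contribution to the $W_\phi$-asymptotics, is to use the factorization $\calM_\bpsi(z)=\calM_{\bpsi_\varphi}(z)\,\calM_{X_\varphi}(z)$ coming from $\bpsi_\varphi\,\mathrm{U}_\varphi=\bpsi$ in \Cref{lem:fact}: by \Cref{lem:phi-Psi1-claims}.\eqref{item-2:lem:phi-Psi1-claims} the measure $\bpsi_\varphi$ lies in the regime $\mo_\varphi\geq1+\h_\varphi$ already treated, while $\calM_{X_\varphi}(z)=\varphi(0)\,z/\Psi(z)=\varphi(0)\,z/((z-\tet)\phi(z))\asymp|b|^{-1}$ since $|\phi(a+ib)|\asymp|b|$, so the extra $|b|^{-1}$ only sharpens the bound. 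The required uniformity of $C$ over bounded $a$-intervals is inherited from that in the cited $W_\phi$-bound and in Stirling's formula on vertical strips.

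I expect the main obstacle to be the second step: extracting the vertical-line asymptotics of $W_\phi$ in a form precise enough to identify the power correction as $\phi(0)+\h$ rather than merely $\phi(0)$, the additional $\h$ being the Lévy-measure contribution. Once this estimate is available, the remainder is routine Stirling bookkeeping together with the elementary verification, via \Cref{lem:phi-Psi1-claims}, that the net exponent equals $-\Delta$.
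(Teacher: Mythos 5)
Your direct argument is the paper's route — reduce to the vertical-line estimate $|W_\phi(a+ib)/\Gamma(a+ib)| \leq C|b|^{\phi(0)+\overline{\nu}(0)}$ from Patie--Savov, then Stirling — but as written it has a real gap for $\mo < 1+\h$. The cited estimate involves $\overline{\nu}(0)$ where $\nu$ is the L\'evy measure of $\phi$, not of $\Psi$. When $\mo \geq 1+\h$ one has $\tet=0$, $\phi(u)=\Psi(u)/u$, and a Tonelli computation shows $\nu(dy)=\overline{\Pi}(y)dy$, so $\overline{\nu}(0)=\h$ and your exponent $\phi(0)+\h$ is exact. But when $\mo<1+\h$, $\phi(u)=\Psi(u)/(u-\tet)$ with $\tet>0$, and the L\'evy measure of $\phi$ is no longer $\overline{\Pi}(y)dy$; establishing $\overline{\nu}(0)\leq\h$ is precisely the bulk of the paper's proof (it introduces $\phi_\tet(u)=\phi(u+\tet)$, derives $\overline{\nu}(y)=\int_y^\infty e^{\tet s}\nu_\tet(ds)$, and invokes \cite[Proposition 4.1(9)]{Patie-Savov-GeL}). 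You correctly flag this as the obstacle, but asserting the exponent $\phi(0)+\h$ without that computation leaves the lemma unproved in the non-trivial case.

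Your alternative for $\mo<1+\h$, however, does close the gap and differs genuinely from the paper. From $\bpsi_\varphi\,\mathrm{U}_\varphi=\bpsi$ in Lemma \ref{lem:fact} one gets $\calM_\bpsi=\calM_{\bpsi_\varphi}\calM_{X_\varphi}$, with $\bpsi_\varphi$ in the already-handled regime $\mo_\varphi\geq 1+\h_\varphi$ and hence $|\calM_{\bpsi_\varphi}(a+ib)|\leq C|b|^{-(\lam-1-\mo)}$, while $\calM_{X_\varphi}(z)=\varphi(0)z/\Psi(z)=O(|b|^{-1})$ since $|\Psi(a+ib)|\asymp b^2$ uniformly on bounded $a$-intervals. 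Multiplying yields $|\calM_\bpsi(a+ib)|\leq C|b|^{-(\lam-\mo)}$, and since $\Psi(\tet)=0$ forces $\vat=1-\tet\geq\mo-\h$, one has $\lam-\mo\geq\lam-\vat-\h=\Delta$, so the stated bound (and in fact a slightly stronger one) follows. Trading the paper's L\'evy-measure bookkeeping for a reduction via $\varphi$ to the simpler regime is a clean simplification; I would promote it from a side remark to the main argument for the $\mo<1+\h$ case.
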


\begin{proof}
By uniqueness of $W_\phi$ in the space of positive-definite functions, the Mellin transform of $\bpsi$ is given by
\begin{equation*}
\calM_\bpsi(z) =  \frac{(\vat)_{z-1}}{(\lam)_{z-1}}\frac{W_\phi(z)}{\Gamma(z)}
\end{equation*}
where $z = a+ib$, with $a > \tet \geq 0$. Invoking \cite[Equation (6.20)]{Patie-Savov-Bern}, we get the following estimate, which holds uniformly on bounded $a$-intervals and for $|b|$ large enough,
\begin{equation}
\label{eq:W/Gamma-estimate}
\left| \frac{W_\phi(a+ib)}{\Gamma(a+ib)}\right| \leq C_\phi |b|^{\phi(0)+\overline{\nu}(0)}
\end{equation}
with $C_\phi > 0$ a constant depending on $\phi$, and where, for any $y > 0$, $\overline{\nu}(y) = \int_y^\infty \nu(ds)$ with $\nu$ denoting the L\'evy measure of $\phi$. We know form Lemma \ref{lem:phi-Psi1-claims}.\eqref{item-1:lem:phi-Psi1-claims}
that $\phi(0) = \mo-\h - 1$ if $\mo \ge 1+\h$ and $\phi(0) = 0$ otherwise.
Moreover, if $\mo \ge 1+\h$, we obtain from  \eqref{eq:def-phi} that $\nu(dy) = \overline{\Pi}(y)dy$.
Thus to utilize the estimate in \eqref{eq:W/Gamma-estimate} we need to identify $\overline{\nu}(0)$ in the case $\mo < 1+\h$. To do that, let us write $\Psi(u) = (u-\tet)\phi(u) = (u-\tet)\phi_{\tet}(u-\tet)$, where $\phi_{\tet}(u) = \phi(u+\tet)$. From the fact that $\Psi(\tet)=0$, we conclude that $\Psi(u+\tet) = u\phi_{\tet}(u)$ is itself a function of the form \eqref{eq:ll}, which gives $\nu_{\tet}(dy) = \overline{\Pi}_{\tet}(y)dr, y>0$, where $\nu_{\tet}$ is the L\'evy measure of $\phi_{\tet}$ and
$\Pi_{\tet}$ the L\'evy measure of $\Psi(u+\tet)$ obtained via \eqref{eq:ll}. As $\phi_{\tet}$ is a Bernstein function it is given, for $u \geq -\tet$, by
\begin{equation*}
\phi_{\tet}(u) = \kappa+u+u\int_0^\infty e^{-uy} \overline{\nu}_{\tet}(y)dy
\end{equation*}
for some $\kappa \geq \tet$. Thus, for $u \geq 0$,
\begin{align*}
\phi(u) = \phi_{\tet}(u-\tet) &= \kappa+(u-\tet)+(u-\tet)\int_0^\infty e^{-(u-\tet)y} \overline{\nu}_{\tet}(y)dy \\
&= (\kappa-\tet) + u + u \int_0^\infty e^{-uy} e^{\tet y}\overline{\nu}_{\tet}(y)dy -\tet \int_0^\infty e^{-uy}e^{\tet y} \overline{\nu}_{\tet}(y)dy \\
&=  (\kappa-\tet) + u + u \int_0^\infty e^{-uy} e^{\tet y}\overline{\nu}_{\tet}(y)dy -\tet u \int_0^\infty e^{-uy} \int_0^y e^{\tet s} \overline{\nu}_{\tet}(s)ds dy \\
&= (\kappa-\tet) + u + u \int_0^\infty e^{-uy} \left( e^{\tet r}\overline{\nu}_{\tet}(y) - \tet\int_0^y e^{\tet s} \overline{\nu}_{\tet}(s)ds\right)dy.
\end{align*}
The third equality follows from Tonelli's theorem, justified as all integrands therein are non-negative, and using $e^{-uy} = \int_y^\infty ue^{-us} ds$. Thus we deduce
\begin{equation*}
\overline{\nu}(y) = e^{\tet y}\overline{\nu}_\tet(y) -\tet\int_0^y e^{\tet s} \overline{\nu}_\tet(s)ds = \int_y^\infty e^{\tet s}\nu_\tet(ds),
\end{equation*}
where the latter follows by some straightforward integration by parts and shows that $\nu$ is indeed the L\'evy measure of $\phi$. Next, an application of \cite[Proposition 4.1(9)]{Patie-Savov-GeL} together with another integration by parts yields $\int_0^\infty e^{-\tet y} \overline{\Pi}(y)dy \leq \int_0^\infty \overline{\Pi}(y)dy = \h$. Putting the different pieces together, we get $\overline{\nu}(0) = \overline{\nu}_\tet(0) \leq \h$, so that in all cases $\overline{\nu}(0) \leq \h$. Therefore, we can deduce from \eqref{eq:W/Gamma-estimate} that
\begin{equation}
\label{eq:W/Gamma-estimate-2}
\left| \frac{W_\phi(a+ib)}{\Gamma(a+ib)}\right| \leq C_\phi |b|^{\phi(0)+\h},
\end{equation}
which, as before, holds uniformly on bounded $a$-intervals and for $|b|$ large enough. Next, we recall the following classical estimate for the gamma function
\begin{equation}
\label{eq:classical-Gamma}
\lim_{|b| \to \infty} C_a|b|^{\frac{1}{2}-a} e^{\frac{\pi}{2}|b|} \left|\Gamma(a+ib)\right| =1
\end{equation}
where $C_a > 0$ is a constant continuously depending on $a$. Combining this estimate with the one in \eqref{eq:W/Gamma-estimate-2} we thus get, uniformly on bounded $a$-intervals and for $|b|$ large enough,
\begin{equation*}
\left|\calM_\bpsi(z)\right| \leq C|b|^{-\lam+\vat + \phi(0) + \h}
\end{equation*}
for a constant $C > 0$. Since $C$ is a function of $C_\phi$ and the constants in the estimate for the gamma function, it follows that it only depends on $\phi$ and $a$-interval on which the estimate holds. Finally, the fact that $\Delta = \lam-\vat-\phi(0)- \h$ follows from Lemma \ref{lem:phi-Psi1-claims}.\eqref{item-1:lem:phi-Psi1-claims}.
%
\end{proof}

\begin{proof}[Proof of Proposition \ref{prop:co-eigenfunctions}]
Note that $\Rc_n \gab[\lam+n,\lam] \in C^\infty(0,1)$ and trivially, $\bpsi \in \Leb^1[0,1]$. Then, well-known properties of convolution give $\Rc_n\left( \gab[\lam+n,\lam] \diamond \bpsi\right) = \Rc_n \gab[\lam+n,\lam] \diamond \bpsi$, and that $w_n$ is a well-defined $C^\infty(0,1)$-function. To show that $\Delta > \frac{1}{2}$ implies $w_n \in \Leb^2[0,1]$, we note that the classical estimate for the gamma function given in \eqref{eq:classical-Gamma} yields that, for $z = a+ib$ with $a > n$ fixed,
\begin{equation*}
\lim_{|b| \to \infty} \left| \frac{\Gamma(z)}{\Gamma(z-n)} \calM_{\gab[\lam+n,\lam]}(z) \right| = \lim_{|b| \to \infty}  (\lam)_n  \left|  \frac{\Gamma(z)}{\Gamma(z-n)}\frac{\Gamma(z+\lam-1)}{\Gamma(z+\lam+n-1)} \right| = C
\end{equation*}
where $C$ is a positive constant depending only on $a$, $\lam$, and $n$. Thus, we get from \eqref{eq:w_n} that $\calM_{w_n}$ has the same rate of decay along imaginary lines as $\calM_{\bpsi}$. So Lemma \ref{lem:Mellin-bpsi} together with Parseval's identity for Mellin transforms shows that $w_n \in \Leb^2[0,1]$. Finally, since $w_n \in C^\infty(0,1)$, the differentiability of $\V_n^\phi$ is determined by the differentiability of $\bpsi$. Invoking Lemma \ref{lem:Mellin-bpsi}, we get for $a > \tet$ and $|b|$ large enough that
\begin{equation*}
\left| (a+ib)^n \calM_{\bpsi}(a+ib) \right| \leq C |b|^{n-\Delta}
\end{equation*}
uniformly on bounded $a$-intervals, where $C > 0$ is a constant.
A classical Mellin inversion argument then gives $\bpsi \in C^{\lceil \Delta \rceil - 2}(0,1)$
if $\Delta \ge 2$.
\end{proof}

%

\subsection{Proof of \Cref{thm:spectral-representation}}

To prove this result we need to develop further intertwinings for $\calJ$ and lift these to the level of semigroups. We write $\calJ_\varphi$ for the non-local Jacobi operator with parameters $\lam$, $\mo_\varphi$ and $h_\varphi$, as in Lemma \ref{lem:phi-Psi1-claims}, which is in one-to-one correspondence with the Bernstein function $\varphi$ defined in \eqref{def:varphi}.

\begin{lemma}
\label{prop:inter_P-2}
For any $\m \in (\bm{1}_{\{\mo < 1+\h\}}+\mo,\lam)$, the following identities hold on $\Poly$:
\begin{eqnarray}\label{Jm_intertwin}\label{Jm_intertwin2}
\calJd_\m {\rm{V}}_{\phm}  = \mathrm{V}_{\phm}\calJ \quad \text{and} \quad \calJ_\varphi \rm{U}_\varphi = \rm{U}_\varphi \calJ
\end{eqnarray}
in the cases $\mo \geq 1+\h$ and $\mo < 1+\h$, respectively.
\end{lemma}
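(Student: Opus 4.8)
The plan is to verify both identities directly on the monomial basis $(p_n)_{n\geq 0}$ of $\Poly$ and then extend by linearity, exploiting the fact that all operators involved act diagonally (up to an index shift) on this basis. The single ingredient needed is the action of a non-local Jacobi operator on monomials. Combining $\calL p_n = (n-\tet)\phi(n)p_{n-1}$, established in the proof of Lemma~\ref{lem:inter}, with the decomposition $\calJ = \calL - \mathbf{A}$ from the proof of Proposition~\ref{prop:inter_P} and with $\mathbf{A}p_n = \mathbf{D}_2 p_n + \lam\,\mathbf{D}_1 p_n = \lambda_n p_n$, one obtains
\begin{equation*}
\calJ p_n = \Psi(n)\,p_{n-1} - \lambda_n p_n = (n-\tet)\,\phi(n)\,p_{n-1} - \lambda_n p_n, \qquad n \in \N .
\end{equation*}
Running the same computation for the two reference operators gives $\calJd_\m p_n = n(n+\m-1)p_{n-1} - \lambda_n p_n$ (the case $h\equiv 0$) and, for $\calJ_\varphi$ — which by Lemma~\ref{lem:phi-Psi1-claims}.\eqref{item-2:lem:phi-Psi1-claims} is a non-local Jacobi operator with parameter $\lam$ and associated Bernstein function $\varphi$, and whose $\tet$-parameter equals $0$ since $\mo_\varphi \geq 1+\h_\varphi$ — the formula $\calJ_\varphi p_n = n\,\varphi(n)\,p_{n-1} - \lambda_n p_n$.

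For the first identity I assume $\mo \geq 1+\h$, so $\tet = 0$ and $\calJ p_n = n\phi(n)p_{n-1} - \lambda_n p_n$. Plugging in $\mathrm{V}_{\phm} p_n = \tfrac{W_{\phi}(n+1)}{(\m)_n}p_n$ and using the functional equation $W_{\phi}(n+1) = \phi(n)W_{\phi}(n)$ (cf.~\eqref{eq:product-Wphi}) together with $(\m)_n = (\m)_{n-1}(n+\m-1)$, a short computation shows that both $\mathrm{V}_{\phm}\calJ p_n$ and $\calJd_\m\,\mathrm{V}_{\phm} p_n$ reduce to $n\tfrac{W_{\phi}(n+1)}{(\m)_{n-1}}p_{n-1} - \lambda_n\tfrac{W_{\phi}(n+1)}{(\m)_n}p_n$. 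Linearity then yields $\calJd_\m\mathrm{V}_{\phm} = \mathrm{V}_{\phm}\calJ$ on $\Poly$.

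For the second identity I assume $\mo < 1+\h$, so $\tet > 0$ and $\vat = 1-\tet \in (0,1)$. The key algebraic observation is that, since $\varphi(u) = \tfrac{u+\vat}{u+1}\phi(u+1)$ and $\vat = 1-\tet$,
\begin{equation*}
n\,\varphi(n-1) = (n-1+\vat)\,\phi(n) = (n-\tet)\,\phi(n) = \Psi(n), \qquad n \geq 1 .
\end{equation*}
Since $\varphi(0) = \mo_\varphi - 1 - \h_\varphi = \mo - \h_\varphi > 0$ by Lemma~\ref{lem:phi-Psi1-claims}.\eqref{item-2:lem:phi-Psi1-claims} and \Cref{assA}, the operator $\mathrm{U}_\varphi p_n = \tfrac{\varphi(0)}{\varphi(n)}p_n$ is well defined, and with the displayed identity one checks $\mathrm{U}_\varphi\calJ p_n = \Psi(n)\tfrac{\varphi(0)}{\varphi(n-1)}p_{n-1} - \lambda_n\tfrac{\varphi(0)}{\varphi(n)}p_n = \varphi(0)\,n\,p_{n-1} - \lambda_n\tfrac{\varphi(0)}{\varphi(n)}p_n$, which is exactly $\calJ_\varphi\mathrm{U}_\varphi p_n = \tfrac{\varphi(0)}{\varphi(n)}\big(n\varphi(n)p_{n-1} - \lambda_n p_n\big)$. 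Extending by linearity finishes the proof.

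The argument is routine bookkeeping once the monomial actions are in place, and the only genuinely substantive step is the first one: establishing that $\calJ_\varphi$ is a non-local Jacobi operator whose $\tet$-parameter is $0$ and whose Bernstein function is $\varphi$ (so that its monomial action takes the stated form) relies on Lemma~\ref{lem:phi-Psi1-claims}.\eqref{item-2:lem:phi-Psi1-claims}; after that it is just the Pochhammer and $W_{\phi}$ recursions. Finally, the $n=0$ case is consistent in both identities, since $\Psi(0) = \lambda_0 = 0$ and $\mathrm{V}_{\phm}p_0 = \mathrm{U}_\varphi p_0 = p_0$.
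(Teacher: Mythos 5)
Your proof is correct and follows essentially the same route as the paper: both verify the intertwinings on the monomial basis, using the action $\calL p_n = (n-\tet)\phi(n)p_{n-1}$, the recursion $W_\phi(n+1)=\phi(n)W_\phi(n)$, the Pochhammer identity, and the relation $n\varphi(n-1)=\Psi(n)$ that links $\varphi$ to $\phi$. The only cosmetic difference is that the paper reduces to the $\calL$-intertwinings and then appeals to the decomposition $\calJ=\calL-\mathbf{A}$ and Lemma~\ref{lem:op-com} to reinsert the diagonal part $\mathbf{A}$, whereas you carry the $-\lambda_n p_n$ term through explicitly and observe that it cancels trivially since all intertwiners are diagonal on monomials — mathematically the same argument, just unpacked.
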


\begin{proof}
It suffices to prove that $\calLd_\m {\rm{V}}_{\phm}  = {\rm{V}}_{\phm} \calL$ and $\calL_\varphi \rm{U}_\varphi  = \rm{U}_\varphi \calL$ hold on $\Poly$, where we write $\calL_\varphi = \calLd_{\mo_\varphi} +\calI_{h_\varphi}$ and refer to \eqref{eq:defL} and the subsequent discussion for the definitions, as then the same arguments as in the proof of Proposition \ref{prop:inter_P} will go through. In the case $\mo \geq 1+\h$, we have, for any $n\in \N$ and using the recurrence relation of the gamma function,
\begin{align*}
\calLd_\m {\rm{V}}_{\phm}p_n(x) &= \frac{W_{\phi}(n+1)}{(\m)_n}\calLd_\m p_n(x) = \frac{W_{\phi}(n+1)}{(\m)_n}n(n+\m-1) p_{n-1}(x) = \frac{W_{\phi}(n+1)}{(\m)_{n-1}} n p_{n-1}(x).
\end{align*}
On the other hand, since $W_\phi(n+1)=\phi(n)W_\phi(n)$ and $\vat = 1$,
\begin{equation*}
{\rm{V}}_{\phm}\calL  p_n(x) = \frac{W_{\phi}(n)}{(\m)_{n-1}} n\phi(n) p_{n-1}(x)
= \frac{W_{\phi}(n+1)}{(\m)_{n-1}} n  p_{n-1}(x),
\end{equation*}
which proves the claim in this case. Finally,
\begin{equation*}
\calL_\varphi \mathrm{U}_\varphi p_n(x)  = \frac{\varphi(0)}{\varphi(n)} \calL_\varphi  p_n(x)= \frac{\varphi(0)}{\varphi(n)} n\varphi(n) p_{n-1}(x) = \varphi(0)np_{n-1}(x),
\end{equation*}
while on the other hand, using the definition of $\varphi$ in \eqref{def:varphi},
\begin{equation*}
\mathrm{U}_\varphi \calL p_n(x)  = (n-\tet)\phi(n) \mathrm{U}_\varphi	p_{n-1}(x) =  (n-\tet)\phi(n) \frac{\varphi(0)}{\varphi(n-1)} p_{n-1}(x)= \varphi(0) np_{n-1}(x),
\end{equation*}
which,  by linearity, completes the proof of the lemma.
\end{proof}

The following result lifts the intertwinings of the Propositions \ref{prop:inter_P} and \ref{prop:inter_P-2} to the level of semigroups. We here write $\J=\J^\phi = (\J_t^\phi)_{t \geq 0}$ to emphasize the one-to-one correspondence, for fixed $\lam$, between $\phi$ and $\J$.

\begin{proposition}
\label{prop:intertwining}
For all $\epsilon \in (0,d_\phi) \cup \{d_\phi\}$ and $\m \in (\bm{1}_{\{\mo < 1+\h\}}+\mo,\lam)$,
the following identities hold for all $t \geq 0$ on the appropriate $\Leb^2$-spaces,
\begin{eqnarray}\label{eq:inter-right-d0}\label{eq:inter-left-m}\label{eq:inter-left-varphi}
\J_t^\phi \Lambda_{\phi_{\dz}} = \Lambda_{\phi_{\dz}} \Q[\dpt]_t, \quad \Q[\m]_t \mathrm{V}_{\phm} = \mathrm{V}_{\phm} \J_t^\phi \quad \text{and} \quad \J_t^\varphi {\rm{U}}_\varphi = {\rm{U}}_\varphi \J_t^\phi
\end{eqnarray}
with the latter two holding for $\mo \geq 1+\h$ and $\mo < 1+\h$, respectively.
\end{proposition}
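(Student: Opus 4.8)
The plan is to lift the polynomial-level intertwinings recorded in \Cref{prop:inter_P} and \Cref{prop:inter_P-2} to the level of the associated $C_0$-semigroups by a Cauchy-problem argument, and then to extend the resulting identities from $\Poly$ to the relevant weighted $\Leb^2$-spaces using boundedness and density. Throughout I write $\Poly_n$ for the space of polynomials of degree at most $n$, so that $\Poly=\bigcup_{n\geq 0}\Poly_n$, and $p_n(x)=x^n$.

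The first step is to check that each of the four semigroups $\Q[\dpt]$, $\Q[\m]$, $\J=\J^\phi$ and $\J^\varphi$ leaves every $\Poly_n$ invariant, acting on it as a matrix exponential. Indeed, each of the generators $\calJd_{\dpt}$, $\calJd_\m$, $\calJ$ and $\calJ_\varphi$ maps $\Poly_n$ into itself: for the classical Jacobi operators this is recalled in \Cref{sec:classical-Jacobi}, while for $\calJ$ (resp.~$\calJ_\varphi$) it follows from the computation in the proof of \Cref{lem:inter} together with $\mathbf{A}p_n=\lambda_n p_n$, which gives $\calJ p_n=(n-\tet)\phi(n)p_{n-1}-\lambda_n p_n\in\Poly_n$, with $\lambda_n$ as in \eqref{eq:def-lambda-n}. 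Moreover $\Poly$ is a core for $\calJ$ in $\Leb^2(\bpsi)$ by \Cref{thm:main-inter-L2}, a core for $\calJ_\varphi$ in $\Leb^2(\bpsi_\varphi)$ by the same theorem applied to the triple $(\lam,\mo_\varphi,h_\varphi)$, which satisfies \Cref{assA} by \Cref{lem:phi-Psi1-claims}, and a core for $\calJd_{\dpt}$ and $\calJd_\m$ (being $\Q$-invariant and contained in their $\Leb^2$-domains). Consequently, for $f\in\Poly_n$, the finite-dimensional matrix exponential $t\mapsto e^{t\calK|_{\Poly_n}}f$ solves the abstract Cauchy problem $u'(t)=\calK u(t)$, $u(0)=f$, within $\Poly_n\subseteq D(\calK)$; by uniqueness of solutions to this problem for the generator of a $C_0$-semigroup, it coincides with $T_tf$, where $T$ is the semigroup generated by $\calK$.

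Next I would lift the three intertwinings on $\Poly$. The operators $\Lambda_{\phi_{\dz}}$, $\mathrm{V}_{\phm}$ and $\mathrm{U}_\varphi$ are diagonal on the monomial basis by \eqref{eq:mom_Ip-0}, hence preserve each $\Poly_n$. Fix $f\in\Poly$ and set $u(t)=\Lambda_{\phi_{\dz}}\Q[\dpt]_tf$, a $C^1$-curve in $\Poly\subseteq D(\calJ)$ with $u(0)=\Lambda_{\phi_{\dz}}f$; using \Cref{prop:inter_P} on $\Poly$,
\[
u'(t)=\Lambda_{\phi_{\dz}}\calJd_{\dpt}\Q[\dpt]_tf=\calJ\Lambda_{\phi_{\dz}}\Q[\dpt]_tf=\calJ u(t),
\]
so $u(t)=\J_t\Lambda_{\phi_{\dz}}f$ by the uniqueness invoked above, i.e.~$\J_t\Lambda_{\phi_{\dz}}=\Lambda_{\phi_{\dz}}\Q[\dpt]_t$ on $\Poly$. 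Running the same argument with the curves $t\mapsto\mathrm{V}_{\phm}\J_tf$ and $t\mapsto\mathrm{U}_\varphi\J_tf$, which are $C^1$-curves in $\Poly$ since $\J_tf\in\Poly$ by Step 1, together with the intertwinings of \Cref{prop:inter_P-2}, yields $\Q[\m]_t\mathrm{V}_{\phm}=\mathrm{V}_{\phm}\J_t$ and $\J_t^\varphi\mathrm{U}_\varphi=\mathrm{U}_\varphi\J_t$ on $\Poly$, in the cases $\mo\geq1+\h$ and $\mo<1+\h$, respectively.

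Finally I would extend these identities by density. Each of $\Lambda_{\phi_{\dz}}$, $\mathrm{V}_{\phm}$, $\mathrm{U}_\varphi$ is a Markov multiplicative kernel by \Cref{lem:Markov-kernel}, so $(\Lambda g)^2\leq\Lambda(g^2)$ pointwise by Jensen's inequality; combining this with the operator factorizations $\bpsi\Lambda_{\phi_{\dz}}=\gab[\dpt]$, $\gab[\m]\mathrm{V}_{\phm}=\bpsi$ and $\bpsi_\varphi\mathrm{U}_\varphi=\bpsi$ of \Cref{lem:fact} (extended to bounded measurable, and then to $\Leb^2$, functions as in \Cref{rem:facto}) shows that $\Lambda_{\phi_{\dz}}\colon\Leb^2(\gab[\dpt])\to\Leb^2(\bpsi)$, $\mathrm{V}_{\phm}\colon\Leb^2(\bpsi)\to\Leb^2(\gab[\m])$ and $\mathrm{U}_\varphi\colon\Leb^2(\bpsi)\to\Leb^2(\bpsi_\varphi)$ are contractions. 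Since the four semigroups are $\Leb^2$-contractions and $\Poly$ is dense in each of these spaces (all reference measures are supported on the compact interval $[0,1]$, so the Weierstrass theorem applies), both sides of each of the three identities are bounded operators agreeing on the dense subspace $\Poly$, hence equal, proving the proposition. The only genuinely delicate point is the algebraic-to-analytic passage in the first step, which rests on the simultaneous core/invariance property of $\Poly$; an equivalent route, bypassing the invariance of $\Poly$ under $\J$ and $\J^\varphi$, is to upgrade the generator intertwinings first to resolvent intertwinings such as $R_q(\calJ)\Lambda_{\phi_{\dz}}=\Lambda_{\phi_{\dz}}R_q(\calJd_{\dpt})$ (valid on the dense set $(q-\calJd_{\dpt})(\Poly)$ for each $q>0$, hence everywhere by boundedness) and then to invert the Laplace transform, using that $\Lambda_{\phi_{\dz}}$ commutes with Bochner integration.
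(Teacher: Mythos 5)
Your argument is correct, but it lifts the polynomial-level intertwinings to the semigroups by a genuinely different mechanism than the paper. The paper first proves a resolvent intertwining (\Cref{prop:resolvent-intertwining}), whose key technical input is that the resolvents of $\calJ$ and $\calJd_{\dpt}$ preserve $\Poly$ (shown by an explicit recursion on monomials), then iterates it and passes to the limit via the exponential formula in $C[0,1]$, using the $C[0,1]$-boundedness of the kernels from \Cref{lem:Markov-kernel}; only at the end does it transfer to the weighted $\Leb^2$-spaces by \Cref{lem:Lambda-L2-bounded} and density, exactly as you do. You instead work directly in $\Leb^2$: you first deduce that each of the four semigroups leaves every finite-dimensional space $\Poly_n$ invariant, by noting that the generators map $\Poly_n$ into itself and invoking uniqueness of classical solutions of the abstract Cauchy problem, and then you differentiate the candidate curves $t\mapsto\Lambda_{\phi_{\dz}}\Q[\dpt]_tf$, $t\mapsto\mathrm{V}_{\phm}\J_tf$, $t\mapsto\mathrm{U}_\varphi\J_tf$ and apply ACP uniqueness once more. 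This is legitimate and arguably more elementary (no Yosida/exponential formula, no resolvent lemma), and it yields as a by-product the polynomial invariance of $\J$ and $\J^\varphi$, which the paper only obtains later from the spectral expansion. What it costs is a heavier reliance on the $\Leb^2$-generation statement of \Cref{thm:main-inter-L2}.\eqref{item-2:main-inter-L2}, both for $\calJ$ and, via \Cref{lem:phi-Psi1-claims}.\eqref{item-2:lem:phi-Psi1-claims}, for the triple $(\lam,\mo_\varphi,h_\varphi)$ defining $\calJ_\varphi$, whereas the paper's route needs only the Feller-level generation plus the mapping properties of the resolvents; note also that the core property you cite is not what your argument actually uses — only that $\Poly$ lies in the relevant $\Leb^2$-domains and that the $\Leb^2$-generators restrict there to the stated integro-differential expressions (true, since the Feller generator is contained in the $\Leb^2$-generator, but worth saying explicitly). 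Your closing remark about upgrading to resolvent intertwinings and inverting the Laplace transform is, in essence, the paper's actual proof.
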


We need an auxiliary result concerning the corresponding intertwining operators, which extends their boundedness from $C[0,1]$ to the corresponding weighted Hilbert spaces. For two Banach spaces $B$ and $\widetilde{B}$, we denote by $\Bop{B}{\widetilde{B}}$ the space of bounded linear operators from $B$ to $\widetilde{B}$.

\begin{lemma}
\label{lem:Lambda-L2-bounded}
For all $\epsilon \in (0,d_\phi) \cup \{d_\phi\}$, $\m \in (\bm{1}_{\{\mo < 1+\h\}} + \mo, \lam)$ and
$p \in \{1,\ldots,\infty\}$, the operators $\Lambda_{\phi_{\dz}}$, $\mathrm{V}_{\phm}$ and $\rm{U}_\varphi$ belong to $\Bop{\Leb^p(\gab[\dpt])}{\Leb^p(\bpsi)}$, $\Bop{\Leb^p(\bpsi)}{\Leb^p(\gab[\m])}$ and $\Bop{\Leb^p(\bpsi)}{\Leb^p(\bpsi_\varphi)}$, respectively, and have operator norm 1.
\end{lemma}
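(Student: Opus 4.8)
The plan is to exploit two facts already in hand: each of $\Lambda_{\phi_{\dz}}$, $\mathrm{V}_{\phm}$ and $\mathrm{U}_\varphi$ is a Markov multiplicative kernel (Lemma \ref{lem:Markov-kernel}), and the measure factorizations of Lemma \ref{lem:fact} link the relevant pairs of probability measures. Treating the three operators uniformly, I would write $\Lambda$ for any one of them, let $M$ denote the associated $[0,1]$-valued random variable so that $\Lambda f(x)=\E[f(xM)]$, and let $(\eta_1,\eta_2)$ be the corresponding pair $(\gab[\dpt],\bpsi)$, $(\gab[\m],\bpsi)$ or $(\bpsi_\varphi,\bpsi)$ — the last two subject to the case split $\mo\ge 1+\h$, resp.\ $\mo<1+\h$, exactly as in Lemma \ref{lem:fact} — so that $\eta_2[\Lambda g]=\eta_1[g]$ for all bounded measurable $g$. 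By Remark \ref{rem:facto} this identity, originally verified on $\Poly$ by moment matching, holds on all bounded measurable functions, and then by monotone convergence on all nonnegative measurable $g$ with values in $[0,\infty]$. I would also record that $\Lambda$ is a Markov operator: it is positivity preserving, being an average against the law of $M$, and $\Lambda\bm{1}_{[0,1]}=\bm{1}_{[0,1]}$ since the $n=0$ multiplier in \eqref{eq:mom_Ip-0} equals $1$.

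First I would treat $p<\infty$. Jensen's inequality applied to the convex function $t\mapsto|t|^p$ gives, for measurable $f$, the pointwise bound $|\Lambda f(x)|^p=|\E[f(xM)]|^p\le\E[|f(xM)|^p]=\Lambda(|f|^p)(x)$; integrating against $\eta_2$ and invoking the factorization yields
\[
\int_0^1|\Lambda f|^p\,d\eta_2\;\le\;\int_0^1\Lambda(|f|^p)\,d\eta_2\;=\;\int_0^1|f|^p\,d\eta_1 .
\]
Taking $p=1$ first shows that for $f\in\Leb^p(\eta_1)\subseteq\Leb^1(\eta_1)$ one has $\Lambda|f|<\infty$ $\eta_2$-a.e., so that $\Lambda f$ is well defined $\eta_2$-a.e.\ — this is the extension of $\Lambda$ from $C[0,1]$ to $\Leb^p$ meant in the statement — and the displayed inequality then gives $\|\Lambda f\|_{\Leb^p(\eta_2)}\le\|f\|_{\Leb^p(\eta_1)}$. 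For $p=\infty$, with $c=\|f\|_{\Leb^\infty(\eta_1)}$, applying the factorization to $g=\bm{1}_{\{|f|>c\}}$ gives $\int_0^1\Prob(|f(xM)|>c)\,\eta_2(dx)=\eta_1(|f|>c)=0$, hence $|f(xM)|\le c$ almost surely for $\eta_2$-a.e.\ $x$, so $|\Lambda f(x)|\le\Lambda|f|(x)\le c$ for $\eta_2$-a.e.\ $x$. In every case $\Lambda$ is a contraction, and since $\Lambda\bm{1}_{[0,1]}=\bm{1}_{[0,1]}$ while $\|\bm{1}_{[0,1]}\|_{\Leb^p(\eta_i)}=1$ for the probability measures $\eta_i$, the operator norm is exactly $1$.

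I do not expect a substantive obstacle: this is the standard argument that a Markov kernel is an $\Leb^p$-contraction. The only genuine point of care is the provenance of the measure identities — they are established in Lemma \ref{lem:fact} only on $\Poly$ (and, for $\mathrm{V}_{\phm}$ and $\mathrm{U}_\varphi$, only in the stated case), so one must appeal to Remark \ref{rem:facto} to transfer them to bounded, and thence to nonnegative, measurable functions, which is precisely what legitimizes the interchange of $\Lambda$ and integration in the display above.
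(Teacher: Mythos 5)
Your overall method is sound and is in substance the paper's own: Jensen's inequality for the Markov multiplicative kernels combined with the measure factorizations of Lemma \ref{lem:fact}, with the exact norm $1$ coming from $\Lambda\bm{1}_{[0,1]}=\bm{1}_{[0,1]}$. The one genuine flaw is a transposition in your assignment of the measure pairs for the last two operators. Lemma \ref{lem:fact} (extended via Remark \ref{rem:facto}) gives $\bpsi[\Lambda_{\phi_{\dz}}g]=\gab[\dpt][g]$, $\gab[\m][\mathrm{V}_{\phm}g]=\bpsi[g]$ and $\bpsi_\varphi[\mathrm{U}_\varphi g]=\bpsi[g]$; in your convention $\eta_2[\Lambda g]=\eta_1[g]$ (with $\eta_1$ the source and $\eta_2$ the target), the correct pairs are therefore $(\gab[\dpt],\bpsi)$, $(\bpsi,\gab[\m])$ and $(\bpsi,\bpsi_\varphi)$. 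You wrote $(\gab[\m],\bpsi)$ and $(\bpsi_\varphi,\bpsi)$, which amounts to asserting $\bpsi[\mathrm{V}_{\phm}g]=\gab[\m][g]$ and $\bpsi[\mathrm{U}_\varphi g]=\bpsi_\varphi[g]$; these do not follow from Lemma \ref{lem:fact} and are false in general (already on $p_1$ they force $\phi(1)=\m$, resp.\ an analogous coincidence), and if run through your argument they would deliver contractivity $\Leb^p(\gab[\m])\to\Leb^p(\bpsi)$ and $\Leb^p(\bpsi_\varphi)\to\Leb^p(\bpsi)$, i.e.\ between the wrong spaces, rather than the mapping properties $\Leb^p(\bpsi)\to\Leb^p(\gab[\m])$ and $\Leb^p(\bpsi)\to\Leb^p(\bpsi_\varphi)$ claimed in the lemma. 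With the pairs corrected, every step of your argument goes through unchanged.

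Apart from this slip, your route differs from the paper's only in the order of operations: the paper establishes $\bpsi[(\Lambda_{\phi_{\dz}}f)^p]\le\gab[\dpt][f^p]$ for $f\in\Poly$, where the factorization is available directly, and then extends by density of $\Poly$ in $\Leb^p$ of a compactly supported probability measure, treating $p=\infty$ as an immediate consequence of the kernel property; you instead first upgrade the factorization to bounded (hence, by monotone convergence, nonnegative) measurable functions via Remark \ref{rem:facto} and then argue pointwise on all of $\Leb^p$. Your version makes the $p=\infty$ case and the a.e.\ well-definedness of $\Lambda f$ on $\Leb^1$ explicit, and since you use $|\Lambda f|^p\le\Lambda(|f|^p)$ rather than $f^p\in\Poly$ it would work verbatim for any real $p\ge 1$; the paper's version is shorter because the factorization is invoked only where it was literally proved. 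Both rest on exactly the same two ingredients.
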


\begin{proof}
Let $f \in \Poly$ and $p < \infty$. Then, applying Jensen's inequality to the Markov multiplicative kernel $\Lambda_{\phi_{\dz}}$ together with Lemma \ref{lem:fact} gives
\begin{equation*}
\bpsi \left[ \left(\Lambda_{\phi_{\dz}} f\right)^p\right] = \int_0^1 \left(\Lambda_{\phi_{\dz}} f(x)\right)^p \bpsi(dx) \leq \int_0^1 \Lambda_{\phi_{\dz}} f^p(x) \bpsi(dx) = \bpsi[\Lambda_{\phi_{\dz}} f^p] = \gab[\dpt] [ f^p]
\end{equation*}
where we used that $f^p \in \Poly$. Since $\gab[\dpt]$ is a probability measure on the compact set $[0,1]$, it follows that $\Poly$ is a dense subset of $\Leb^p(\gab[\dpt])$; see e.g.~\cite[Corollary 22.10]{Driver03}. So by density, we conclude that $\Lambda_{\phi_{\dz}}$ is in $\Bop{\Leb^p(\gab[\dpt])}{\Leb^p(\bpsi)}$ with operator norm less than or equal to 1. Equality then follows from $\Lambda_{\phi_{\dz}} \bm{1}_{[0,1]} = \bm{1}_{[0,1]}$. The case $p = \infty$ is a straightforward consequence of $\Lambda_{\phi_{\dz}}$ being a Markov multiplicative kernel, and the claims regarding the other operators are deduced similarly from Lemma \ref{lem:fact}.
\end{proof}



%

Next, since $\calJ$ and $\calJd_{\dpt}$ are generators of $C[0,1]$-Markov semigroups, it follows that their resolvent operators, given for $q > 0$, by
\begin{equation*}
\mathds{R}_q = (q-\calJ)^{-1}, \quad \text{and} \quad {\rm{R}}_q = (q-\calJd_{\dpt})^{-1}
\end{equation*}
are bounded, linear operators on $C[0,1]$.  We write ${\rm{R}}_q^\m$ (resp.~$\mathds{R}_q^\varphi$) for the resolvent corresponding to $\mathbf{J}_\m$ (resp.~$\calJ_\varphi$).

\begin{lemma}
\label{prop:resolvent-intertwining}
For all $q > 0$, $\epsilon \in (0,d_\phi) \cup \{d_\phi\}$ and $\m \in (\bm{1}_{\{\mo < 1+\h\}}+\mo,\lam)$,
we have the following identities on $\Poly$:
\begin{equation}
\label{eq:resolvent-intertwining}
\mathds{R}_q \Lambda_{\phi_{\dz}} = \Lambda_{\phi_{\dz}} \mathrm{R}_q, \quad \mathrm{V}_{\phm} \mathds{R}_q = \mathrm{R}_q^\m \mathrm{V}_{\phm} \quad \text{and} \quad {\rm{U}_\varphi} \mathds{R}_q =  \mathds{R}_q^\varphi {\rm{U}}_\varphi,
\end{equation}
where the second one holds for $\mo \ge 1 +\h$ and the last one for $\mo < 1 + \h$.
\end{lemma}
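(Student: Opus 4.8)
The plan is to prove all three identities by one and the same diagram chase, carried out on the dense subspace $\Poly$, exploiting two structural facts: every operator appearing in the statement leaves $\Poly$ — indeed each finite-dimensional subspace $\Poly_n$ of polynomials of degree at most $n$ — invariant, and on $\Poly$ the abstract resolvents act as honest inverses of $q$ minus the corresponding generators.

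First I would collect the invariance facts. By \eqref{eq:mom_Ip-0}, each of the Markov multiplicative kernels $\Lambda_{\phi_{\dz}}$, $\mathrm{V}_{\phm}$, $\rm{U}_\varphi$ multiplies the monomial $p_n$ by a nonzero scalar, hence maps $\Poly_n$ bijectively onto $\Poly_n$ and $\Poly$ onto $\Poly$. A direct computation of the type performed in the proof of Lemma \ref{lem:inter} shows that $\calJ p_n$, $\calJd_{\dpt} p_n$, $\calJd_\m p_n$ and $\calJ_\varphi p_n$ all lie in $\Poly_n$, with the coefficient of $p_n$ therein equal to $-\lambda_n$ in every case (the second-order term $x(1-x)f''$ together with the $-\lam x f'$ part of the drift produce exactly this coefficient, whatever the remaining drift and non-local components are). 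Hence, for any $q>0$, the operator $q-\calJd_{\dpt}$ restricted to $\Poly_n$ is triangular in the monomial basis with diagonal entries $q+\lambda_k>0$, $k=0,\dots,n$, hence bijective on $\Poly_n$; since $\Poly\subseteq C[0,1]$ lies in the domain of the $C[0,1]$-generator $\calJd_{\dpt}$, on which that generator restricts to $\calJd_{\dpt}$, the inverse of this triangular map must coincide with the restriction of $\mathrm{R}_q$ to $\Poly_n$. Consequently $\mathrm{R}_q(\Poly)\subseteq\Poly$, and likewise $\mathds{R}_q(\Poly)\subseteq\Poly$, $\mathrm{R}_q^\m(\Poly)\subseteq\Poly$ and $\mathds{R}_q^\varphi(\Poly)\subseteq\Poly$; for the last inclusion one notes that $\calJ_\varphi$ generates a $C[0,1]$-Markov semigroup by Lemma \ref{lem:feller}, since by Lemma \ref{lem:phi-Psi1-claims}.\eqref{item-2:lem:phi-Psi1-claims} its parameters $\lam$, $\mo_\varphi$, $h_\varphi$ satisfy \Cref{assA}.

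The first identity then follows by a short chase: fix $f\in\Poly$ and set $g=\mathrm{R}_q f\in\Poly$, so that $(q-\calJd_{\dpt})g=f$; applying $\Lambda_{\phi_{\dz}}$, using its linearity (hence commutation with the scalar $q$) together with Proposition \ref{prop:inter_P} on $\Poly$, gives $(q-\calJ)\Lambda_{\phi_{\dz}}g=\Lambda_{\phi_{\dz}}f$; since $\Lambda_{\phi_{\dz}}g\in\Poly$ lies in the domain of the generator $\calJ$, applying $\mathds{R}_q$ yields $\mathds{R}_q\Lambda_{\phi_{\dz}}f=\Lambda_{\phi_{\dz}}g=\Lambda_{\phi_{\dz}}\mathrm{R}_q f$. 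For the second identity one argues in the same way, now starting from $g=\mathds{R}_q f\in\Poly$, applying $\mathrm{V}_{\phm}$ and invoking $\mathrm{V}_{\phm}\calJ=\calJd_\m\mathrm{V}_{\phm}$ (Lemma \ref{prop:inter_P-2}, case $\mo\ge 1+\h$) to reach $(q-\calJd_\m)\mathrm{V}_{\phm}g=\mathrm{V}_{\phm}f$, and then applying $\mathrm{R}_q^\m$; for the third, one replaces $\mathrm{V}_{\phm}$ by $\rm{U}_\varphi$, uses $\rm{U}_\varphi\calJ=\calJ_\varphi\rm{U}_\varphi$ (Lemma \ref{prop:inter_P-2}, case $\mo< 1+\h$) and applies $\mathds{R}_q^\varphi$.

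I do not expect a genuine obstacle; the only delicate point is the bookkeeping linking the purely algebraic operators acting on $\Poly$ to the analytic resolvents of the closed generators — namely, the assertion that on $\Poly$ these resolvents really are the inverses of $q$ minus the various Jacobi-type operators, so that applying them is legitimate and keeps the computation inside $\Poly$ at every step. This is exactly what the triangular-filtration argument of the second paragraph supplies.
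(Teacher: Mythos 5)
Your proposal is correct and follows essentially the same route as the paper: reduce each identity to a chase through the generator-level intertwinings of Propositions \ref{prop:inter_P} and \ref{prop:inter_P-2}, after first checking that the resolvents preserve $\Poly$ so that applying them keeps everything inside the space where the intertwinings hold. The only (cosmetic) difference is that you obtain $\mathds{R}_q(\Poly)\subseteq\Poly$ and its analogues from the triangularity of $q-\calJ$ on each $\Poly_n$ (with diagonal entries $q+\lambda_k>0$), whereas the paper writes out the same fact as an explicit recursion for $\mathds{R}_q p_n$; your extra remark that $\calJ_\varphi$ satisfies \Cref{assA}, so that $\mathds{R}_q^\varphi$ is indeed a Markov resolvent, is a point the paper leaves implicit.
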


\begin{proof}
We shall only provide the proof of the first claim, which relies on the intertwining in Proposition \ref{prop:inter_P}, as the other claims follow by invoking Proposition \ref{prop:inter_P-2} and involve the same arguments, mutatis mutandis. First, suppose that $\mathds{R}_q(\Poly) \subseteq \Poly$ and $\mathrm{R}_q(\Poly) \subseteq \Poly$ and let $f \in \Poly$ so that there exists $g \in \Poly$ such that $(q-\calJd_{\dpt})g = f$. Applying $\Lambda_{\phi_{\dz}}$ to both sides of this equality gives
\begin{equation*}
\Lambda_{\phi_{\dz}} f = \Lambda_{\phi_{\dz}}(q-\calJd_{\dpt})g = (\Lambda_{\phi_{\dz}} q - \Lambda_{\phi_{\dz}}\calJd_{\dpt})g = (q\Lambda_{\phi_{\dz}} - \calJ \Lambda_{\phi_{\dz}})g = (q-\calJ)\Lambda_{\phi_{\dz}} g
\end{equation*}
where in the third equality we have used Proposition \ref{prop:inter_P}, which is justified as $g \in \Poly$. This equality may be rewritten as $\mathds{R}_q \Lambda_{\phi_{\dz}} f = \Lambda_{\phi_{\dz}} g$ and consequently, for any $f \in \Poly$, we get
\begin{equation*}
\mathds{R}_q \Lambda_{\phi_{\dz}} f = \Lambda_{\phi_{\dz}} g = \Lambda_{\phi_{\dz}} \mathrm{R}_q  f.
\end{equation*}
Thus it remains to show the inclusions $\mathds{R}_q(\Poly) \subseteq \Poly$ and $\mathrm{R}_q(\Poly) \subseteq \Poly$ for which we recall, from the proof of Proposition \ref{prop:inter_P}, that $\calJ = \calL-\mathbf{A}$ with $\calL p_n = (n-\tet)\phi(n)p_{n-1}$, for any $n \geq 1$. A straightforward computation gives that $\mathbf{A} p_n = (\mathbf{D}_2 + \lam \mathbf{D}_1)p_n = (n(n-1)+\lam n)p_n$ and hence
\begin{equation*}
(q-\calJ)p_n = (q+n(n-1)+\lam n)p_n-(n-\tet)\phi(n)p_{n-1},
\end{equation*}
from which it follows, by the injectivity of $\mathds{R}_q$ on $\Poly \subseteq C[0,1]$, that
\begin{equation*}
\mathds{R}_q\left((q+n(n-1)+\lam n)p_n-(n-\tet)\phi(n)p_{n-1}\right) = p_n.
\end{equation*}
Rearranging the above yields the equation
\begin{equation}
\label{eq:resolvent-recursive}
 \mathds{R}_q p_n = \frac{1}{(q+n(n-1)+\lam n)} p_n + \frac{(n-\tet)\phi(n)}{(q+n(n-1)+\lam n)} \mathds{R}_q p_{n-1},
\end{equation}
which is justified as, for any $q > 0$, both roots of the quadratic equation $n^2+(\lam-1)n+q = 0$ are always negative. Note that $\mathds{R}_q p_0 = q^{-1}$ so by iteratively using the equality in \eqref{eq:resolvent-recursive}, we conclude that, for any $n \in \N$, $\mathds{R}_q p_n \in \Poly$, and by linearity $\mathds{R}_q(\Poly) \subseteq \Poly$ follows. Similar arguments applied to $\mathrm{R}_q$ then allow us to also conclude that $\mathrm{R}_q(\Poly) \subseteq \Poly$, which completes the proof.
\end{proof}

\begin{proof}[Proof of Proposition \ref{prop:intertwining}]
We are now able to complete the proof of Proposition \ref{prop:intertwining}. As was shown in the proof of
Proposition \ref{prop:resolvent-intertwining} above and using the notation therein, $\mathds{R}_q(\Poly) \subseteq \Poly$ and $\mathrm{R}_q(\Poly) \subseteq \Poly$, so that on $\Poly \subseteq C[0,1]$ we have
\begin{equation*}
\mathds{R}_q^2 \Lambda_{\phi_{\dz}} = \mathds{R}_q\mathds{R}_q  \Lambda_{\phi_{\dz}}  = \mathds{R}_q  \Lambda_{\phi_{\dz}}  \mathrm{R}_q =  \Lambda_{\phi_{\dz}}  \mathrm{R}_q \mathrm{R}_q = \Lambda_{\phi_{\dz}}  \mathrm{R}_q^2,
\end{equation*}
and, by induction, for any $n \in \N$,
\begin{equation*}
\mathds{R}_q^n  \Lambda_{\phi_{\dz}}  = \Lambda_{\phi_{\dz}}  \mathrm{R}_q^n.
\end{equation*}
In particular, for any $f \in \Poly$ and $t > 0$,
\begin{equation*}
(n/t)\mathds{R}_{n/t}^n  \Lambda_{\phi_{\dz}}  f =  \Lambda_{\phi_{\dz}}  (n/t)\mathrm{R}_{n/t}^n f.
\end{equation*}
Now, taking the strong limit in $C[0,1]$ as $n\to\infty$ of the above yields, by the exponential formula \cite[Theorem 8.3]{pazy:1983} and the continuity of the involved operators guaranteed by Lemma \ref{lem:Markov-kernel}, for any $f \in \Poly$ and $t \geq 0$,
\begin{equation}
\label{eq:inter-semi-P}
\J_t  \Lambda_{\phi_{\dz}} f =  \Lambda_{\phi_{\dz}} \Q[\dpt]_t f,
\end{equation}
where $(\Q[\dpt]_t)_{t \geq 0}$ is the classical Jacobi semigroup on $C[0,1]$ with parameters $\lam$ and $\dpt$. By density of $\Poly$ in $\Leb^2(\gab[\dz])$ and since Lemma \ref{lem:Lambda-L2-bounded} with $p=2$ gives $\Lambda_{\phi_{\dz}} \in \Bop{\Leb^2(\gab[\dpt])}{\Leb^2(\bpsi)}$, it follows that the identity in \eqref{eq:inter-semi-P} extends to $\Leb^2(\gab[\dz])$, which completes the proof of the first identity. The other two identities follow from similar arguments and so the proof is omitted.
\end{proof}

For $\lam > s \geq 1$, we define, for $n \in \N$, the quantity $\mathfrak{c}_n(s)$ as
\begin{equation}
\label{eq:defn-cn}
\mathfrak{c}_n(s) = \frac{(s)_n}{n!} \sqrt{\frac{\C_n(s)}{\C_n(1)}} = \sqrt{\frac{(s)_n}{n!}\frac{(\lam-1)_n}{(\lam-s)_n}}
\end{equation}
where the first equality comes from some straightforward algebra given the definition of $\C_n(s)$ in \eqref{eq:Cn-lam-mo}. Note that, with $s=1$ we get $\mathfrak{c}_n(1) = 1$, for all $n$. We shall need the following result.

\begin{lemma}
\label{lem:seq-cn}
For any $\lam > s > r \geq 1$ the mapping $n \mapsto \frac{\mathfrak{c}_n(s)}{\mathfrak{c}_n(r)}$ is strictly increasing on $\N$ with
\begin{equation}
\label{eq:asymptotic-cn}
\lim_{n \to \infty} \frac{\mathfrak{c}_n(s)}{n^{s-1}} = \sqrt{\frac{\Gamma(\lam-s)}{\Gamma(s)\Gamma(\lam-1)}}.
\end{equation}
\end{lemma}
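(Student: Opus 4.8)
The plan is to reduce the monotonicity statement to a one-step comparison after squaring. Using the second expression for $\mathfrak{c}_n$ in \eqref{eq:defn-cn}, one has
\[
\frac{\mathfrak{c}_n(s)^2}{\mathfrak{c}_n(r)^2} = \frac{(s)_n}{(r)_n}\,\frac{(\lam-r)_n}{(\lam-s)_n},
\]
and since $\mathfrak{c}_0(s) = \mathfrak{c}_0(r) = 1$, it suffices to check that the ratio of the $(n{+}1)$-st term to the $n$-th term of the right-hand side is strictly larger than $1$ for every $n \in \N$. By the recurrence $(a)_{n+1} = (a+n)(a)_n$ this ratio equals $\frac{(s+n)(\lam-r+n)}{(r+n)(\lam-s+n)}$, and expanding the numerator minus the denominator gives
\[
(s+n)(\lam-r+n) - (r+n)(\lam-s+n) = (s-r)(2n+\lam) > 0
\]
because $s > r$ and $\lam > 1$. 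Hence $n \mapsto \mathfrak{c}_n(s)^2/\mathfrak{c}_n(r)^2$ is strictly increasing on $\N$, and since $t \mapsto \sqrt{t}$ is strictly increasing on $[0,\infty)$, the same holds for $n \mapsto \mathfrak{c}_n(s)/\mathfrak{c}_n(r)$.

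For the asymptotic \eqref{eq:asymptotic-cn}, I would rewrite the square as a product of gamma ratios,
\[
\mathfrak{c}_n(s)^2 = \frac{(s)_n(\lam-1)_n}{n!\,(\lam-s)_n} = \frac{\Gamma(\lam-s)}{\Gamma(s)\,\Gamma(\lam-1)}\cdot\frac{\Gamma(n+s)}{\Gamma(n+1)}\cdot\frac{\Gamma(n+\lam-1)}{\Gamma(n+\lam-s)},
\]
and apply the classical asymptotic $\Gamma(n+a)/\Gamma(n+b)\sim n^{a-b}$ as $n\to\infty$ (a consequence of Stirling's formula): the first gamma ratio behaves like $n^{s-1}$ and the second like $n^{(\lam-1)-(\lam-s)}=n^{s-1}$, so $\mathfrak{c}_n(s)^2 \sim \frac{\Gamma(\lam-s)}{\Gamma(s)\Gamma(\lam-1)}\,n^{2(s-1)}$; taking square roots gives the claimed limit.

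Everything here is elementary; the crux of the monotonicity is the algebraic identity $(s+n)(\lam-r+n) - (r+n)(\lam-s+n) = (s-r)(2n+\lam)$, and the only external ingredient is the standard gamma-ratio asymptotic. The only mild point requiring care is that the consecutive-ratio argument must cover the base case $n=0$, which it does since the two sequences agree there and the bound is valid for all $n\in\N$. I do not anticipate any serious obstacle.
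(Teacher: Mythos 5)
Your proposal is correct and takes essentially the same route as the paper: express $\mathfrak{c}_n^2(s)/\mathfrak{c}_n^2(r)$ as the product $\prod_{j=0}^{n-1}\frac{(s+j)(\lam-r+j)}{(r+j)(\lam-s+j)}$, observe each factor exceeds $1$ (you add the explicit identity $(s+j)(\lam-r+j)-(r+j)(\lam-s+j)=(s-r)(2j+\lam)$, which the paper leaves implicit), and obtain the limit from Stirling's formula via the gamma-ratio asymptotic.
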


\begin{proof}
Using the definition in \eqref{eq:defn-cn} we get that
\begin{equation*}
\frac{\mathfrak{c}_n^2(s)}{\mathfrak{c}_n^2(r)} = \prod_{j=0}^{n-1} \frac{(s+j)(\lam-r+j)}{(r+j)(\lam-s+j)}.
\end{equation*}
Since $s > r$ each term in the product is strictly greater than 1 and together with Stirling's formula for the gamma function this completes the proof.
\end{proof}
%

%

Now, we write $\Lambda^*_{\phi} \colon \Leb^2(\beta) \to \Leb^2(\beta_{\vat})$, $\mathrm{V}_{\phm}^* : \Leb^2(\gab[\m]) \to \Leb^2(\bpsi)$ and ${\rm{U}}_\varphi^* : \Leb^2(\bpsi_\varphi) \to \Leb^2(\bpsi)$ for the Hilbertian adjoints of the operators
$\Lambda_{\phi}$, $\mathrm{V}_{\phm}$ and ${\rm{U}}_\varphi$, respectively.

\begin{proposition}
\label{prop:riesz}
Let $\epsilon \in (0,d_\phi) \cup \{d_\phi\}$ and $\m \in (\bm{1}_{\{\mo < 1+\h\}} + \mo ,\lam)$. Then,
for all $n \in \N$,
\begin{equation}
\label{eq:alt-V-n-0}
\P_n^\phi  = \Lambda_{\phi} \P_n^{(\vat)},
\end{equation}
and with $\dz$ as in \eqref{eq:defn-d0}, the sequence $\left(\mathfrak{c}_n(\dz\right)\P_n^\phi)_{n \geq 0}$ is a complete Bessel sequence in $\Leb^2(\bpsi)$ with Bessel bound 1. Furthermore, for any $n \in \N$, we have,
\begin{equation}
\label{eq:alt-V-n}
\V_n^\phi  = \mathfrak{c}_n(\m) \mathrm{V}_{\phm}^*\P_n^{(\m)} \quad \mbox{if } \mo \geq 1+\h,
\end{equation}
while
\begin{equation}
\label{eq:alt-V-n-2}
\V_n^\phi  = \frac{\mathfrak{c}_n(\m)}{\mathfrak{c}_n(\vat)} {\rm{U}}_\varphi^* \mathrm{V}_{\phm}^*\P_n^{(\m)}
\quad \mbox{if } \mo < 1+\h,
\end{equation}
and $(\V_n^\phi)_{n \geq 0}$ is the unique biorthogonal sequence to $(\P_n^\phi)_{n \geq 0}$ in $\Leb^2(\bpsi)$, which is equivalent to $\V_n^\phi$ being the unique $\Leb^2(\bpsi)$-solution to $\Lambda_\phi^* g = \P_n^{(\vat)}$ for any $n \in \N$. In all cases $\left(\frac{\mathfrak{c}_n(\vat)}{\mathfrak{c}_n(\m)} \V_n^\phi\right)_{n \geq 0}$ is a complete Bessel sequence in $\Leb^2(\bpsi)$ with Bessel bound 1.
\end{proposition}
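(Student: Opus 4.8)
The plan is to transfer all the statements from the classical Jacobi semigroups $\Q[\dpt]$ and $\Q[\m]$ --- and, in the regime $\mo<1+\h$, from the auxiliary non-local Jacobi operator $\calJ_\varphi$ associated with the Bernstein function $\varphi$ of Lemma~\ref{lem:phi-Psi1-claims} --- through the multiplicative intertwiners $\Lambda_{\phi_{\dz}}$, $\mathrm{V}_{\phm}$ and $\U$, using throughout that $(\P_n^{(s)})_{n\ge0}$ is an orthonormal basis of $\Leb^2(\gab[s])$ for the relevant parameters $s\in\{\vat,\dpt,\m\}$ (all $<\lam$ by Assumption~\ref{assA} and the hypothesis $\m<\lam$; see Section~\ref{sec:classical-Jacobi}). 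The computational heart is a short list of term-by-term polynomial identities on $\Poly$, obtained from the monomial actions \eqref{eq:mom_Ip-0}, the expansion \eqref{eq:Jpolpsi}, the recursions $W_\phi(k+1)=\phi(k)W_\phi(k)$ and $W_\varphi(k+1)=\tfrac{(\vat+1)_k}{(k+1)!}\tfrac{W_\phi(k+2)}{\phi(1)}$ (the latter from the proof of Lemma~\ref{lem:fact}), and the definition \eqref{eq:defn-cn} of $\mathfrak{c}_n$: first, $\mathfrak{c}_n(\dz)\,\P_n^\phi=\Lambda_{\phi_{\dz}}\P_n^{(\dpt)}$ for every admissible $\epsilon$, which at $\epsilon=d_\phi$ (so $\dz=1$, $\dpt=\vat$, $\mathfrak{c}_n(1)=1$) is exactly \eqref{eq:alt-V-n-0}; second, $\mathfrak{c}_n(\m)\,\mathrm{V}_{\phm}\P_n^\phi=\P_n^{(\m)}$ when $\mo\ge1+\h$; and third, $\U\,\P_n^\phi=\mathfrak{c}_n(\vat)\,\P_n^\varphi$ when $\mo<1+\h$, whence --- applying the second identity to $\calJ_\varphi$, legitimate because $\mo_\varphi\ge1+\h_\varphi$ and $\lam>\mo_\varphi$ by Lemma~\ref{lem:phi-Psi1-claims}.\eqref{item-2:lem:phi-Psi1-claims} --- $\mathfrak{c}_n(\m)\,\mathrm{V}_{\phm}\U\,\P_n^\phi=\mathfrak{c}_n(\vat)\,\P_n^{(\m)}$, where in this regime $\mathrm{V}_{\phm}$ is understood as the $\mathrm{V}$-operator of $\calJ_\varphi$, i.e.\ $\mathrm{V}_{\phm}\colon\Leb^2(\bpsi_\varphi)\to\Leb^2(\gab[\m])$, $p_k\mapsto\tfrac{W_\varphi(k+1)}{(\m)_k}p_k$ (so that the composition $\mathrm{V}_{\phm}\U$ typechecks via $\bpsi_\varphi\U=\bpsi$ of Lemma~\ref{lem:fact}).

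Granting these, the assertion about $(\P_n^\phi)$ is immediate: by Lemma~\ref{lem:Lambda-L2-bounded}, $\Lambda_{\phi_{\dz}}\in\Bop{\Leb^2(\gab[\dpt])}{\Leb^2(\bpsi)}$ has norm $1$, so Parseval applied to the orthonormal basis $(\P_n^{(\dpt)})$ together with $\|\Lambda_{\phi_{\dz}}^*\|=\|\Lambda_{\phi_{\dz}}\|=1$ shows that $(\Lambda_{\phi_{\dz}}\P_n^{(\dpt)})_n=(\mathfrak{c}_n(\dz)\P_n^\phi)_n$ is Bessel with optimal bound $1$; it is complete because $\P_n^\phi$ has degree exactly $n$ with nonzero leading coefficient, so its span equals $\Poly$, dense in $C[0,1]$ by Stone--Weierstrass, hence in $\Leb^2(\bpsi)$. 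Next, define $\widetilde{\V}_n$ to be the right-hand side of \eqref{eq:alt-V-n} if $\mo\ge1+\h$ and of \eqref{eq:alt-V-n-2} if $\mo<1+\h$; it lies in $\Leb^2(\bpsi)$, being a bounded operator applied to $\P_n^{(\m)}\in\Leb^2(\gab[\m])$. Pairing with $\P_m^\phi$ and using the second/third identity of the first paragraph together with orthonormality of $(\P_n^{(\m)})$ in $\Leb^2(\gab[\m])$ --- e.g.\ $\langle \P_m^\phi,\mathfrak{c}_n(\m)\mathrm{V}_{\phm}^*\P_n^{(\m)}\rangle_{\bpsi}=\mathfrak{c}_n(\m)\langle \mathrm{V}_{\phm}\P_m^\phi,\P_n^{(\m)}\rangle_{\gab[\m]}=\tfrac{\mathfrak{c}_n(\m)}{\mathfrak{c}_m(\m)}\delta_{mn}=\delta_{mn}$, and similarly with the extra $\U$ in the other regime --- shows $(\widetilde{\V}_n)$ is biorthogonal to $(\P_m^\phi)$; completeness of the latter then makes its biorthogonal sequence unique, so $(\P_m^\phi)$ is exact.

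To identify $\widetilde{\V}_n$ with $\V_n^\phi$ from \eqref{eq:v-n-rodrigues}: from the first identity (at $\epsilon=d_\phi$) and biorthogonality, $\langle\P_m^{(\vat)},\Lambda_\phi^*\widetilde{\V}_n\rangle_{\gab[\vat]}=\langle\P_m^\phi,\widetilde{\V}_n\rangle_{\bpsi}=\delta_{mn}$ for all $m$, so, $(\P_m^{(\vat)})$ being an orthonormal basis of $\Leb^2(\gab[\vat])$, $\Lambda_\phi^*\widetilde{\V}_n=\P_n^{(\vat)}$. A change of variables identical to the one in the proof of Proposition~\ref{prop:Convolution} shows $\Lambda_\phi^* g=\gab[\vat]^{-1}\widehat{\Lambda}_\phi(g\bpsi)$, hence $\widehat{\Lambda}_\phi(\widetilde{\V}_n\bpsi)=\gab[\vat]\P_n^{(\vat)}=\pab[\vat]_n$, and the uniqueness in Proposition~\ref{prop:Convolution} forces $\widetilde{\V}_n\bpsi=w_n$ a.e., i.e.\ $\widetilde{\V}_n=w_n/\bpsi=\V_n^\phi$. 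This proves \eqref{eq:alt-V-n}--\eqref{eq:alt-V-n-2}, shows in particular $\V_n^\phi\in\Leb^2(\bpsi)$, and exhibits $(\V_n^\phi)$ as the unique biorthogonal sequence of $(\P_n^\phi)$; the equivalence with $\V_n^\phi$ being the unique $\Leb^2(\bpsi)$-solution of $\Lambda_\phi^* g=\P_n^{(\vat)}$ follows from the same orthonormal-basis argument and Proposition~\ref{prop:Convolution}. Finally, since $\mathfrak{c}_n(1)=1$, \eqref{eq:alt-V-n}--\eqref{eq:alt-V-n-2} read $\tfrac{\mathfrak{c}_n(\vat)}{\mathfrak{c}_n(\m)}\V_n^\phi=\mathrm{V}_{\phm}^*\P_n^{(\m)}$ if $\mo\ge1+\h$ and $=(\mathrm{V}_{\phm}\U)^*\P_n^{(\m)}$ if $\mo<1+\h$; as images of the orthonormal basis $(\P_n^{(\m)})$ under the adjoint of an operator of norm $1$ (Lemma~\ref{lem:Lambda-L2-bounded}; for the composition, norm $\le1$, and $=1$ since it fixes the constant function) these are Bessel with bound $1$, while the closed linear span of $\{\mathrm{V}_{\phm}^*\P_n^{(\m)}:n\ge0\}$ is the closure of $\Ran(\mathrm{V}_{\phm}^*)$, i.e.\ $(\Ker\mathrm{V}_{\phm})^\perp$, so completeness reduces to injectivity of $\mathrm{V}_{\phm}$ (resp.\ $\mathrm{V}_{\phm}\U$) on $\Leb^2(\bpsi)$, which holds since the corresponding Mellin multiplier $z\mapsto\Gamma(\m)W_\phi(z)/\Gamma(z+\m-1)$ (resp.\ the product of such a factor with $\varphi(0)/\varphi(z-1)$) is zero-free on the half-plane carrying the Mellin transforms of $\Leb^2(\bpsi)$-functions, exactly as in Proposition~\ref{prop:Convolution}.

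The main difficulty here is organizational rather than deep: one must run the two regimes $\mo\ge1+\h$ (where $\vat=1$ and $\mathrm{V}_{\phm}$ alone suffices) and $\mo<1+\h$ (where everything is routed through $\calJ_\varphi$ via $\U\P_n^\phi=\mathfrak{c}_n(\vat)\P_n^\varphi$ and $\bpsi_\varphi\U=\bpsi$) in lockstep, and verify the monomial identities of the first paragraph without Pochhammer or sign slips. The single genuinely non-formal point is the completeness of $(\V_n^\phi)$: unlike that of $(\P_n^\phi)$, it does not come for free from an orthonormal basis, and it rests on the $\Leb^2$-injectivity of the multiplicative kernels, equivalently on the zero-freeness of their Mellin multipliers.
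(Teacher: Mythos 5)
Your proposal follows essentially the same route as the paper: the three monomial identities ($\Lambda_{\phi_{\dz}}\P_n^{(\dpt)}=\mathfrak{c}_n(\dz)\P_n^\phi$, $\mathrm{V}_{\phm}\P_n^\phi=\mathfrak{c}_n^{-1}(\m)\P_n^{(\m)}$ when $\mo\ge1+\h$, and $\mathrm{U}_\varphi\P_n^\phi=\mathfrak{c}_n(\vat)\P_n^\varphi$ when $\mo<1+\h$), the Bessel bounds via Lemma \ref{lem:Lambda-L2-bounded}, the biorthogonality computations, and the identification of $\V_n^\phi$ through the uniqueness statement of Proposition \ref{prop:Convolution} are exactly the paper's steps, including your (correct) reading of $\mathrm{V}_{\phm}$ in \eqref{eq:alt-V-n-2} as the $\mathrm{V}$-operator attached to $\varphi$. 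One genuine difference is welcome: you identify $\widetilde{\V}_n$ with $w_n/\bpsi$ uniformly in both regimes by first deriving $\Lambda_\phi^*\widetilde{\V}_n=\P_n^{(\vat)}$ from biorthogonality and then feeding $\widehat{\Lambda}_\phi(\widetilde{\V}_n\bpsi)=\pab[\vat]_n$ into Proposition \ref{prop:Convolution}, whereas the paper computes the Mellin transform of $\widehat{\mathrm{V}}_{\phm}\pab[\m]_n$ directly for $\vat=1$ and omits the $\vat<1$ algebra; your variant is cleaner, though you should add a word on why the distributional uniqueness of Proposition \ref{prop:Convolution} applies to $\widetilde{\V}_n\bpsi$, which is a priori only an $\Leb^1[0,1]$ function (its Mellin transform exists for $\Re(z)\ge1$, where $\calM_\iota$ is zero-free, so the argument does close, but this needs saying).

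The one step I cannot accept as written is the completeness of $(\V_n^\phi)_{n\ge0}$. Your reduction is right — completeness is precisely the statement $\Ker(\mathrm{V}_{\phm})=\{0\}$ (resp.\ $\Ker(\mathrm{V}_{\phm}\mathrm{U}_\varphi)=\{0\}$) on $\Leb^2(\bpsi)$ — but the justification "the Mellin multiplier is zero-free on the half-plane carrying the Mellin transforms of $\Leb^2(\bpsi)$-functions" rests on a false premise: a general $f\in\Leb^2(\bpsi)$ need not possess a Mellin transform in any half-plane, because the density $\bpsi$ can vanish at the endpoints (e.g.\ in Proposition \ref{prop:small-perturbation} one has $\bpsi(x)\asymp(1-x)^{\lam-\mm-2}$ near $1$), so $\Leb^2(\bpsi)\not\subseteq\Leb^1([0,1],dx)$ and the relation $\calM_{\mathrm{V}_{\phm}f}=\calM_{X_{\phm}}\calM_f$ has no meaning; Proposition \ref{prop:Convolution} concerns the dual equation $\widehat{\Lambda}_\phi w=\pab[\vat]_n$ for distributions and does not deliver injectivity of the forward kernels on $\Leb^2(\bpsi)$. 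Note also that you cannot borrow this injectivity from Proposition \ref{prop:quasi-similar}, since there it is deduced from the very completeness you are trying to prove. The paper handles this point by presenting $(\V_n^\phi)$ as the image of the complete sequence $\bigl(\tfrac{\mathfrak{c}_n(\m)}{\mathfrak{c}_n(\vat)}\P_n^{(\m)}\bigr)$ under the adjoint operators, invoking Lemmas \ref{lem:Markov-kernel} and \ref{lem:Lambda-L2-bounded} for boundedness and dense range; if you prefer a self-contained repair, observe that orthogonality of $f$ to every $\V_n^\phi$, combined with the expansion $\J_tf=\sum_{n}e^{-\lambda_nt}\langle f,\V_n^\phi\rangle_{\bpsi}\P_n^\phi$ (which the proof of \Cref{thm:spectral-representation} obtains from only the Bessel and biorthogonality parts of this proposition), forces $\J_tf=0$ for all $t>0$, whence $f=0$ by strong continuity of $\J$ on $\Leb^2(\bpsi)$.
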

\begin{remark}\label{rem:bounds}
Note that Proposition \ref{prop:riesz} yields $\Leb^2(\bpsi)$-norm bounds for the functions $\P_n^\phi$ and $\V_n^\phi$. Indeed, writing $||\cdot||_\bpsi$ for the $\Leb^2(\bpsi)$-norm we get, from the boundedness claims of Lemma \ref{lem:Lambda-L2-bounded}, for all $\epsilon \in (0,d_\phi) \cup \{d_\phi\}$ and $\m \in (\bm{1}_{\{\mo < 1+\h\}} + \mo,\lam)$,
\begin{equation*}
||\P_n^\phi||_\bpsi \leq \frac{1}{\mathfrak{c}_n(\dz)}\leq C n^{1-\dz} \quad \text{and} \quad ||\V_n^\phi||_\bpsi \leq \frac{\mathfrak{c}_n(\m)}{\mathfrak{c}_n(\vat)}\leq C n^{\m-\vat}
\end{equation*}
where $C \ge 0$ and we used Lemma \ref{lem:seq-cn} for the two estimates.
We show in the proof below that
\begin{equation*}
\frac{\mathfrak{c}_n(\m)}{\mathfrak{c_n}(\dz)\mathfrak{c}_n(\vat)} = \frac{\mathfrak{c}_n(\m)}{\mathfrak{c}_n(\dpt)},
\end{equation*}
and since $\m > \dpt$, invoking again Lemma \ref{lem:seq-cn}, we have that the above ratio grows with $n$.
\end{remark}

\begin{proof}
By \eqref{eq:Jpolpsi}, \eqref{eq:mom_Ip-0}, \eqref{J_pol} and linearity of $\Lambda_{\phi}$, one obtains
\begin{equation}\label{eq:LambdaP}
\Lambda_\phi \P_n^{(\vat)}(x) = \sqrt{\C_n(\vat)} \sum_{k=0}^n \frac{(-1)^{n+k}}{(n-k)!}  \frac{(\lam-1)_{n+k}}{(\lam-1)_{n\phantom{+k}}} \frac{(\vat)_n}{(\vat)_k} \frac{k!}{W_\phi(k+1)} \frac{x^k}{k!} = \P_n^\phi(x).
\end{equation}
Recall that the sequence $(\P_n^{(\vat)})_{n \geq 0}$ forms an orthonormal basis of $\Leb^2(\gab[\vat])$ and thus, as the image under a bounded operator of an orthonormal basis, we get that $(\P_n^\phi)_{n \geq 0}$ is a Bessel sequence in $\Leb^2(\bpsi)$ with Bessel bound given by the operator norm of $\Lambda_\phi$, which by Lemma \ref{lem:Lambda-L2-bounded}, is 1. If $\vat <1$, we have $\mathfrak{c}_n(\dz) = \mathfrak{c}_n(1) =1$, so that the first claim is proved in this case. In the case $\vat = 1$ we suppose, without loss of generality, that $d_\phi > 0$ and $\epsilon \in (0,d_\phi)$. Then $\P_n^\phi$ reduces to
\begin{equation*}
\P_n^\phi(x) = \sqrt{\C_n(1)} \sum_{k=0}^n \frac{(-1)^{n+k}}{(n-k)!} \frac{(\lam-1)_{n+k}}{(\lam-1)_{n\phantom{+k}}} \frac{n!}{k!} \frac{x^k}{W_{\phi}(k+1)}
\end{equation*}
and one gets from \eqref{eq:Jpolpsi}, \eqref{eq:mom_Ip-0}, \eqref{J_pol} and linearity of $\Lambda_{\phi_{\dz}}$
\begin{align*}
\Lambda_{\phi_{\dz}} \P_n^{(\dz)} (x)
&= \sqrt{\C_n(\dz)}  \sum_{k=0}^n \frac{(-1)^{n+k}}{(n-k)!}  \frac{(\lam-1)_{n+k}}{(\lam-1)_{n\phantom{+k}}} \frac{(\dz)_n}{W_\phi(k+1)} \frac{x^k}{k!} = \mathfrak{c}_n(\dz)\P^{\phi}_n(x).
\end{align*}
By Lemma \ref{lem:Lambda-L2-bounded}, $\Lambda_{\phi_{\dz}}$ belongs to $\Bop{\Leb^2(\gab[\dz])}{\Leb^2(\bpsi)}$ with operator norm 1 and thus, by similar arguments as above, we deduce that $(\mathfrak{c}_n(\dz)\P^{\phi}_n)_{n \geq 0}$ is a Bessel sequence in $\Leb^2(\bpsi)$ with Bessel bound $1$.

We continue with the claims regarding $\V_n^\phi$, starting with the case $\vat=1$. A simple calculation
shows that for $f \in \Leb^2(\gab[\m])$,
\begin{equation*}
\mathrm{V}_{\phm}^* f(x) = \frac{1}{\bpsi(x)} \widehat{\mathrm{V}}_{\phm} (f \gab[\m])(x),
\end{equation*}
where $\widehat{\mathrm{V}}_{\phm} f(x) = \int_{1}^{1/x}f(xy)\nu_\m^*(y)dy$ with $\nu_\m^*(y) = \nu_\m(1/y)/y$ and $\nu_\m$ the density of the random variable $X_{\phm}$, whose existence is provided in Lemma \ref{lem:Markov-kernel}. Thus it suffices to show that, for all $n \in \N$,
\begin{equation*}
w_n(x) = \mathfrak{c}_n(\m)\widehat{\mathrm{V}}_{\phm} (\P_n^{(\m)} \gab[\m])(x) = \mathfrak{c}_n(\m) \widehat{\mathrm{V}}_{\phm} \pab[\m]_n(x).
\end{equation*}
To do that, we note that for $\Re(z) > \tet$,
\begin{align*}
\calM_{ \widehat{\mathrm{V}}_{\phm} \pab[\m]_n}(z)
&= \calM_{\nu_\m}(z) \calM_{\pab[\m]_n}(z) \\
&=   \frac{1}{n!} \frac{(\lam-\m)_n}{(\lam)_n}\sqrt{\C_n(\m)} \frac{W_\phi(z)}{(\m)_{z-1}}\frac{\Gamma(z)}{\Gamma(z-n)}    \calM_{\gab[\lam+n,\m]}(z)  \\
&=\frac{1}{n!} \frac{(\lam-\m)_n}{(\lam)_n}\sqrt{\C_n(\m)} \frac{\Gamma(z)}{\Gamma(z-n)} \calM_{\gab[\lam+n,\lam]}(z) \calM_{\bpsi}(z)
\end{align*}
and
\begin{align*}
\mathfrak{c}_n(\m) \frac{(\lam-\m)_n}{(\lam)_n} \frac{\sqrt{\C_n(\m)}}{n!} =  \frac{(\lam-\m)_n}{(\lam)_n}  \frac{(\m)_n}{n!} \frac{\C_n(\m)}{n!\sqrt{\C_n(1)}} = \frac{\sqrt{\C_n(1)}}{n!} \frac{(\lam-1)_n}{(\lam)_n}.
\end{align*}
So invoking the uniqueness claim of Proposition \ref{prop:Convolution} yields the representation \eqref{eq:alt-V-n} for $\vat = 1$. The case $\vat < 1$ follows by similar arguments, albeit with more tedious algebra, and its proof is omitted.

Next, we note that for $\vat =1$,
\begin{align*}
\mathrm{V}_{\phm} \P^{\phi}_n(x) &= \sqrt{\C_n(1)} \sum_{k=0}^n \frac{(-1)^{n+k}}{(n-k)!} \frac{(\lam-1)_{n+k}}{(\lam-1)_{n\phantom{+k}}} \frac{n!}{k!}  \frac{W_\phi(k+1)}{(\m)_k} \frac{x^k}{W_\phi(k+1)} = \mathfrak{c}^{-1}_n(\m) \P_n^{(\m)}(x).
\end{align*}
As $(\P_n^{(\m)})_{n\geq 0}$ is an orthonormal sequence in $\Leb^2(\gab[\m])$, we have for all $n,p \in \N$,
\begin{equation*}
\delta_{np} =\langle \P_n^{(\m)}, \P_p^{(\m)}\rangle_{\gab[\m]} = \mathfrak{c}_n(\m) \langle \mathrm{V}_{\phm}  \P^\phi_n, \P_p^{(\m)}\rangle_{\gab[\m]} = \mathfrak{c}_n(\m) \langle \P^\phi_n, \mathrm{V}_{\phm}^*\P_p^{(\m)}\rangle_{\bpsi},
\end{equation*}
and thus we get that $(\V_n^\phi)_{n \geq 0}$ is a biorthogonal sequence in $\Leb^2(\bpsi)$ of $(\P_n^\phi)_{n \geq 0}$.
It follows from Lemma \ref{lem:Lambda-L2-bounded} that ${\mathrm{V}_{\phm}^*}$ belongs to
$\Bop{\Leb^2(\gab[\m])}{\Leb^2(\bpsi)}$ and has operator norm $1$. So, since
$(\P_n^{(\m)})_{n \geq 0}$ forms an orthonormal basis of ${\rm{L}}^2(\gab[\m])$, one obtains that
$(\mathfrak{c}_n^{-1}(\m)\V_n^\phi)_{n \geq 0}$ is a Bessel sequence in $\Leb^2(\bpsi)$ with Bessel bound 1.

To show uniqueness, we first observe that any sequence $(g_n)_{n \geq 0} \in \Leb^2(\bpsi)$ biorthogonal to $(\P_n^\phi)_{n \geq 0}$ must satisfy
\begin{equation*}
\delta_{np} = \langle \P_n^\phi, g_p \rangle_\bpsi =
\langle \Lambda_\phi \P_n^{(\vat)}, g_p \rangle_{\gab[\vat]} =
\langle \P_n^{(\vat)}, \Lambda_\phi^* g_p \rangle_{\gab[\vat]},
\end{equation*}
which means that $(\Lambda_\phi^* g_n)_{n \geq 0}$ is biorthogonal to $(\P_n^{(\vat)})_{n \geq 0}$. However, since $(\P_n^{(\vat)})_{n \geq 0}$ is an orthonormal basis for $\Leb^2(\gab[\mo])$, one must have
$\Lambda_\phi^* g_p =  \P_n^{(\vat)} = \Lambda_\phi^* \V_n^\phi$,
and so, $\Lambda_\phi^* \left(\V_n^\phi-g_n\right)=0$.
Since by Lemma \ref{lem:Markov-kernel}, $\mathrm{Ran}(\Lambda_\phi)$ is dense in $\Leb^2(\bpsi)$, one has $\mathrm{Ker}(\Lambda_\phi^*) = \{0\}$, and we conclude that $(\V_n^\phi)_{n \geq 0}$ is the unique sequence in $\Leb^2(\bpsi)$ biorthogonal to $(\P_n^\phi)_{n \geq 0}$.

Finally, assume now that $\vat < 1$. Then, using the definition of $\varphi$ in \eqref{def:varphi} and
Lemma \ref{lem:phi-Psi1-claims}.\eqref{item-2:lem:phi-Psi1-claims}, we get that
\begin{align*}
\P_n^\varphi(x) = \sqrt{\C_n(1)} \sum_{k=0}^n \frac{(-1)^{n+k}}{(n-k)!} \frac{(\lam-1)_{n+k}}{(\lam-1)_{n\phantom{+k}}} \frac{n!(k+1)}{(\vat+1)_{k}} \frac{\phi(1) x^k}{W_\phi(k+2)}.
\end{align*}
On the other hand, since $\mathrm{U}_\varphi p_n = \frac{\varphi(0)}{\varphi(n)}p_n$, see \eqref{eq:mom_Up}, simple algebra yields that
\begin{equation}
\label{eq:U-P}
\mathrm{U}_\varphi \P_n^\phi(x) = \frac{(\vat)_n}{n!}\sqrt{\C_n(\vat)} \sum_{k=0}^n \frac{(-1)^{n+k}}{(n-k)!}   \frac{(\lam-1)_{n+k}}{(\lam-1)_{n\phantom{+k}}} \frac{n!(k+1)}{(\vat+1)_k}\frac{\phi(1) x^k}{W_\phi(k+2)} = \mathfrak{c}_n(\vat) \P_n^\varphi(x).
\end{equation}
We know that, since $\lam > \m > 1+\mo = \mo_\varphi$, $(\V_n^\varphi)_{n\geq 0} = (\mathfrak{c}_n(\m) {\rm{V}}_{\phm}^* \P_n^{(\m)})_{n\geq 0}$ is the unique sequence biorthogonal to $(\P_n^\varphi)_{n \geq 0}$. Combining this with \eqref{eq:U-P} gives
\[
\delta_{np} = \ang{\P_n^\varphi, \V_n^\varphi }_{\beta} =
\ang{ \mathrm{U}_\varphi \P_n^\phi, \frac{\mathfrak{c}_n(\m)}{\mathfrak{c}_n(\vat)} {\rm{V}}_{\phm}^* \P_n^{(\m)}}_{\beta}
= \ang{ \P_n^\phi, \frac{\mathfrak{c}_n(\m)}{\mathfrak{c}_n(\vat)} \mathrm{U}_\varphi^* {\rm{V}}_{\phm}^* \P_n^{(\m)}}_{\beta},
\]
which shows that $(\V_n^\phi)_{n \geq 0}$ is biorthogonal  to $(\P_n^{\phi})_{n \ge 0}$.
Uniqueness follows as above.

Finally, the completeness of $(\V_n^\phi)_{n \geq 0}$ is a consequence of the fact that it is, in all cases and by Lemmas \ref{lem:Markov-kernel} and \ref{lem:Lambda-L2-bounded}, the image under a bounded linear operator with dense range of the sequence $\left(\frac{\mathfrak{c}_n(\m)}{\mathfrak{c}_n(\vat)} \P_n^{(\m)}\right)_{n \geq 0}$, which is itself is complete.
\end{proof}

\begin{proof}[Proof of \Cref{thm:spectral-representation}]
We tackle the different claims of \Cref{thm:spectral-representation} sequentially. Setting $\epsilon = d_\phi$ in \eqref{eq:def-dtheta} we get, by the first intertwining in Proposition \ref{prop:intertwining}, \eqref{eq:alt-V-n-0}
and the spectral expansion of the self-adjoint semigroup $\Q[\vat]$ in \eqref{Q_sem}, that for any $f \in \Leb^2(\gab[\vat])$ and $t \geq 0$,
\begin{align*}
\J_t \Lambda_\phi f = \Lambda_\phi \Q[\vat]_t f = \sum_{n=0}^\infty e^{-\lambda_n t} \langle f, \P_n^{(\vat)} \rangle_{\gab[\vat]} \P_n^\phi = \sum_{n=0}^\infty e^{-\lambda_n t} \langle \Lambda_\phi f, \V_n^\phi \rangle_{\bpsi} \P_n^\phi
\end{align*}
where the second identity is justified by $(\langle f, \P_n^{(\vat)}\rangle_{\gab[\vat]})_{n \geq 0} \in \ell^2(\N)$ and the fact that $(\P_n^\phi)_{n \geq 0}$ is a Bessel sequence in $\Leb^2(\bpsi)$, see \cite[Theorem 3.1.3]{Christensen2003a}, and the last identity uses the fact that, by Proposition \ref{prop:riesz}, $\V_n^\phi$ is the unique $\Leb^2(\bpsi)$-solution to the equation $\Lambda_\phi^* \V_n^\phi = \P_n^{(\vat)}$. It follows that $\P_n^\phi$ is an eigenfunction for $\J_t$ with eigenvalue $e^{-\lambda_n t}$. Taking the adjoint of the first identity in Proposition \ref{prop:intertwining}
and using the self-adjointness of $\Q[\vat]_t$ on $\Leb^2(\gab[\vat])$ yields $\Lambda_\phi^* \J_t^* = \Q[\vat]_t \Lambda_\phi^*$ and thus, for any $n \in \N$ and $t \geq 0$,
\begin{equation*}
\Lambda_\phi^* {\J_t}^* \V_n^\phi = \Q[\vat]_t \Lambda_\phi^* \V_n^\phi = \Q[\vat]_t \P_n^{(\vat)} = e^{-\lambda_n t} \P_n^{(\vat)} = e^{-\lambda_n t}\Lambda_\phi^* \V_n^\phi.
\end{equation*}
Since $\mathrm{Ker}(\Lambda_\phi^*) = \{0\}$, this shows that ${\J_t}^* \V_n^\phi = e^{-\lambda_n t} \V_n^\phi$.

Next, let $S_t$ be the linear operator on $\Leb^2(\bpsi)$ defined by
\begin{equation*}
S_t f = \sum_{n=0}^\infty \langle \J_t f, \V_n^\phi \rangle_{\bpsi} \P_n^\phi
\end{equation*}
so that, by the above observations,
\begin{equation*}
S_t f = \sum_{n=0}^\infty \langle f, \J_t^* \V_n^\phi \rangle_{\bpsi} \P_n^\phi = \sum_{n=0}^\infty  e^{-\lambda_n t} \langle f, \V_n^\phi \rangle_{\bpsi} \P_n^\phi.
\end{equation*}
For convenience, we set ${\bm{\V}}_n^\phi = \frac{\mathfrak{c}_n(\vat)}{\mathfrak{c}_n(\m)} \V_n^\phi$, $n \in \N$. Then, for any $t > 0$ and $f \in \Leb^2(\bpsi)$ we have, for $C >0$ a constant independent of $n$,
\begin{equation*}
\sum_{n=0}^\infty e^{-2\lambda_n t} \left|\left\langle f, \V_n^\phi \right\rangle_{\bpsi}\right|^2 = \sum_{n=0}^\infty e^{-2 \lambda_n t} \frac{\mathfrak{c}_n^2(\m)}{\mathfrak{c}_n^2(\vat)} \left|\left\langle f, {\bm{\V}}_n^\phi\right\rangle_{\bpsi}\right|^2 \leq C \sum_{n=0}^\infty \left|\left\langle f, {\bm{\V}}_n^\phi \right\rangle_{\bpsi}\right|^2 \leq C \bpsi [f^2] < \infty
\end{equation*}
where the first inequality follows from the asymptotic in \eqref{eq:asymptotic-cn} combined with the decay of the sequence $(e^{-2\lambda_n t})_{n \geq 0}$, $t > 0$, and the second inequality follows from the Bessel property of $({\bm{\V}}_n^\phi )_{n \geq 0}$ guaranteed by Proposition \ref{prop:riesz}. Hence we deduce that $\left(e^{-\lambda_n t} \langle f, \V_n^\phi \rangle_{\bpsi}\right)_{n \geq 0} \in \ell^2(\N)$ and, as $(\P_n^\phi)_{n \geq 0}$ is a Bessel sequence, it follows that $S_t$ defines a bounded linear operator on $\Leb^2(\bpsi)$ for any $t > 0$, again by~\cite[Theorem 3.1.3]{Christensen2003a}.  However, $S_t = \J_t$ on $\mathrm{Ran}(\Lambda_\phi)$, a dense subset of $\Leb^2(\bpsi)$. Therefore, by the bounded linear extension theorem, we have $S_t = \J_t$ on $\Leb^2(\bpsi)$ for any $t > 0$. Note that, by similar Bessel sequence arguments as above, for any $N \geq 1$,
\begin{equation*}
\norm{\J_tf -\sum_{n=0}^N e^{-\lambda_n t} \left\langle f, \V_n^\phi \right\rangle_{\bpsi}\P_n^\phi}_\bpsi^2 \leq \bpsi [f^2] \sup_{n \geq N+1} e^{-2 \lambda_n t} \frac{\mathfrak{c}_n^2(\m)}{\mathfrak{c}_n^2(\vat)}.
\end{equation*}
Since the supremum on the right-hand side is decreasing in $N$ for any $t > 0$, we get
\begin{equation*}
\J_t = \lim_{N \to \infty} \sum_{n=0}^N e^{-\lambda_n t} \P_n^\phi \otimes \V_n^\phi
\end{equation*}
in the operator norm topology, where each $\sum_{n=0}^N e^{-\lambda_n t} \P_n^\phi \otimes \V_n^\phi$ is of finite rank. This completes the proof of \eqref{item-1:thm:spectral-representation} of \Cref{thm:spectral-representation} and also shows that $\J_t$ is a compact operator for any $t > 0$, which is claim \eqref{item-2:thm:spectral-representation}.

Next, the intertwining identity \eqref{eq:inter-right-d0} and the completeness of $(\P_n^\phi)_{n \geq 0}$ and $(\V_n^\phi)_{n \geq 0}$ enable us to invoke \cite[Proposition 11.4]{Patie-Savov-GeL} to obtain the equalities for algebraic and geometric multiplicities in \eqref{item-3:thm:spectral-representation}, and also to conclude that
\begin{equation*}
\sigma_p(\J_t) = \sigma_p(\J_t^*) = \sigma_p(\Q[\vat]_t) = \{e^{-\lambda_n t} : \ n \in \N\}.
\end{equation*}
Since $\J_t$ is compact, $\J_t^*$ is so too, and thus $\sigma(\J_t) \setminus \{0\} = \sigma_p(\J_t)$ as well as
$\sigma(\J_t^*) \setminus \{0\} = \sigma_p(\J_t^*)$. To establish the remaining equalities we use the immediate compactness of $\J$ to invoke \cite[Corollary 3.12]{EngelNagel} and obtain $\sigma(\J_t) \setminus \{0\} = e^{t\sigma(\calJ)}$, while we also have from \cite[Theorem 3.7]{EngelNagel} that, $\sigma_p(\J_t) \setminus \{0\} = e^{t\sigma_p(\calJ)}$. Putting all of these together completes the proof of \eqref{item-3:thm:spectral-representation}.

It remains to prove the last item concerning the self-adjointness of $\J$. Clearly if $h \equiv 0$ then $\J$ is self-adjoint, as in this case $\bpsi$ reduces to $\gab[\mo]$ and $\J$ reduces to the classical Jacobi semigroup $\Q[\mo]$, which is self-adjoint on $\Leb^2(\gab[\mo])$. Now suppose that $\J$ is self-adjoint on $\Leb^2(\bpsi)$, that is $\J_t = \J_t^*$ for all $t \geq 0$. By differentiating in $t$ the identity, for any $n, m \in \N$,
\begin{equation*}
\left\langle \J_t p_n, p_m \right\rangle_{\bpsi} = \left\langle p_n, \J_t p_m \right\rangle_{\bpsi}
\end{equation*}
we deduce, by a simple application of Fubini's Theorem using the finiteness of the measure $\bpsi$, that
\begin{equation}
\label{eq:symmetry-calJ}
\left\langle \calJ p_n, p_m \right\rangle_{\bpsi} = \left\langle p_n, \calJ p_m \right\rangle_{\bpsi}.
\end{equation}
Note that \eqref{eq:symmetry-calJ} holds trivially if either $n=0$ or $m=0$, or if $n=m$, so we may suppose that $n \neq m$; all together we take, without loss of generality, $n > m > 0$. Now, for any $n \geq 1$, a straightforward calculation shows that
\begin{equation}
\label{eq:calJ-pn}
\calJ p_n(x) = \Psi(n)p_{n-1}(x)-\lambda_n p_n(x)
\end{equation}
where we recall from \eqref{eq:def-phi} that $\Psi(n) = (n-\tet)\phi(n)$ and from \eqref{eq:def-lambda-n} that $\lambda_n = n^2+(\lam-1)n$. Using \eqref{eq:calJ-pn} on both sides of \eqref{eq:symmetry-calJ} and rearranging gives
\begin{equation}
\label{eq:n-m-eqn}
\left(\lambda_n-\lambda_m\right) \bpsi p_{n+m} = \left(\Psi(n)-\Psi(m)\right) \bpsi p_{n+m-1}.
\end{equation}
By \eqref{eq:mom-bpn} and the recurrence relations for $W_\phi$ and the gamma function, the ratio $\bpsi [p_{n+m}] / \bpsi [p_{n+m-1}]$ evaluates to
\begin{equation*}
\frac{\bpsi [p_{n+m}]}{\bpsi [p_{n+m-1}]} = \frac{(n+m+\tet)}{(n+m+\lam-1)}\frac{\phi(n+m)}{(n+m)} = \frac{\Psi(n+m)}{\lambda_{n+m}},
\end{equation*}
so that substituting into \eqref{eq:n-m-eqn} shows that the following must be satisfied
\begin{equation}
\label{eq:Psi-n-m}
\Psi(n+m)\left(\lambda_n-\lambda_m\right) = \lambda_{n+m}\left(\Psi(n)-\Psi(m)\right).
\end{equation}
Next, we write $\Psi$ as
\begin{equation*}
\Psi(n) = n^2 + (\mo-\h-1)n + n \int_1^\infty (1-r^{-n})h(r)dr = n^2 + (\mo-1)n + n\int_1^\infty r^{-n}h(r)dr
\end{equation*}
where the first equality is simply the definition of $\Psi$ in \eqref{eq:defpsi} and the second follows from the assumption that $\h = \int_1^\infty h(r) dr < \infty$. Let us write $G(n) = n^2+(\mo-1)n$ and $H(n) = n\int_1^\infty r^{-n}h(r)dr$. By direct verification we get
\begin{align*}
G(n+m)\left(\lambda_n-\lambda_m\right) &= (n-m)\left[(n+m)^3+(\lam + \mo-2)(n+m)^2 (\lam-1)(\mo-1)(n+m)\right] \\
&= \lambda_{n+m}\left(G(n)-G(m)\right),
\end{align*}
so that \eqref{eq:Psi-n-m} is equivalent to
\begin{equation}
\label{eq:H-n-m}
H(n+m)\left(\lambda_n-\lambda_m\right) = \lambda_{n+m}\left(H(n)-H(m)\right).
\end{equation}
Observe that
\begin{equation*}
H(n+m)\left(\lambda_n-\lambda_m\right) = (n-m)(n+m)\left(n+m+\lam-1\right) \int_1^\infty r^{-(n+m)}h(r)dr,
\end{equation*}
while
\begin{equation*}
\lambda_{n+m}\left(H(n)-H(m)\right) = (n+m)(n+m+\lam-1)\left(n\int_1^\infty r^{-n}h(r)dr  - m\int_1^\infty r^{-m}h(r)dr \right).
\end{equation*}
Hence canceling $(n+m)(n+m+\lam-1)$ on both sides of \eqref{eq:H-n-m}, then dividing by $nm$ and rearranging the resulting equation yields
\begin{equation*}
\int_1^\infty r^{-m} h(r)dr = \int_1^\infty r^{-n}h(r)dr + \left(\frac{1}{n}-\frac{1}{m}\right) \int_1^\infty r^{-(n+m)} h(r)dr.
\end{equation*}
Applying the dominated convergence theorem when taking the limit as $n \to \infty$ of the right-hand side we find that, for all $m > 0$ with $m \neq n$,
\begin{equation*}
\int_1^\infty r^{-m} h(r)dr = 0,
\end{equation*}
which implies that $h \equiv 0$. This completes the proof of claim \eqref{item-4:thm:spectral-representation}.\end{proof}

To conclude this section we give a result concerning the intertwining operators in Proposition \ref{prop:intertwining} which illustrates that, except in the self-adjoint case of $h \equiv 0$ and $\mo \leq 1$, none of these operators admit bounded inverses. This latter fact combined with the relation \eqref{eq:LambdaP} imply that $(\P_n^\phi)_{n\geq0}$ is a not a Riesz sequence in $\Leb^2(\bpsi)$, as it is not the image of an orthogonal sequence by an invertible bounded operator, see \cite{Christensen2003a}. Recall that a quasi-affinity is a linear operator between two Banach spaces with trivial kernel and dense range.


\begin{proposition}
\label{prop:quasi-similar}
Let $\epsilon \in (0,d_\phi) \cup \{d_\phi\}$ and $\m \in (\bm{1}_{\{\mo < 1+\h\}} + \mo ,\lam)$.
\begin{enumerate}[\rm(i)]
\item \label{item-1:prop:quasi-similar} Then, the operators $\Lambda_{\phi_{\dz}}:\Leb^2(\gab[\dz]) \to \Leb^2(\bpsi)$, $\mathrm{V}_{\phm}:\Leb^2(\bpsi)\to\Leb^2(\gab[\m])$ and $\mathrm{U}_\varphi:\Leb^2(\bpsi_\varphi) \to \Leb^2(\bpsi)$ are quasi-affinities.
\item \label{item-2:prop:quasi-similar}
If $\dz = 1$, the operator $\Lambda_{\phi_{\dz}}$ admits a bounded inverse if and only if $h \equiv 0$ and $\mo \leq 1$. In all cases, $\mathrm{V}_{\phm}$ and $\mathrm{U}_\varphi$ do not admit bounded inverses.
\end{enumerate}
\end{proposition}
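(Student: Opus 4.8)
The plan is to exploit that, by Lemma \ref{lem:Markov-kernel}, each of $\Lambda_{\phi_{\dz}}$, $\mathrm{V}_{\phm}$ and $\mathrm{U}_\varphi$ is a Markov multiplicative kernel $f\mapsto\E[f(xX)]$ with $X$ a $[0,1]$-valued, moment-determinate random variable with a continuous density, so that on Mellin transforms it acts as multiplication by the analytic symbol $\calM_X$. For part \eqref{item-1:prop:quasi-similar}, the dense range is immediate: each operator sends the monomial $p_n$ to a strictly positive multiple of itself — $\tfrac{(\dz)_n}{W_\phi(n+1)}p_n$, $\tfrac{W_\phi(n+1)}{(\m)_n}p_n$ and $\tfrac{\varphi(0)}{\varphi(n)}p_n$, positivity being clear from $W_\phi(n+1)=\prod_{k=1}^n\phi(k)>0$, from positivity of the Pochhammer symbols, and from $\varphi(0)=\vat\phi(1)>0$ — so the range contains $\Poly$, which is dense in $\Leb^2$ of any probability measure on $[0,1]$. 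For triviality of the kernel I would show that $\calM_X$ is zero-free on $\{\Re(z)>0\}$: combining Lemma \ref{lem:fact} (and its $\bpsi_\varphi$-analogue) with $\calM_\bpsi(z)=\tfrac{(\vat)_{z-1}}{(\lam)_{z-1}}\tfrac{W_\phi(z)}{\Gamma(z)}$ from Lemma \ref{lem:Mellin-bpsi} gives $\calM_{X_{\phi_{\dz}}}=\calM_{\gab[\dpt]}/\calM_\bpsi$, $\calM_{X_{\phm}}=\calM_\bpsi/\calM_{\gab[\m]}$, and the corresponding ratio for $X_\varphi$, each a quotient of $\Gamma$-functions, Pochhammer symbols and Bernstein--Gamma functions, all zero-free on the right half-plane. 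Since every $f$ in the relevant weighted $\Leb^2$-space has an absolutely convergent Mellin transform on a half-plane (Cauchy--Schwarz against the power part of the weight near $0$), $\Lambda_\phi f=0$ forces $\calM_X\calM_f\equiv0$, hence $\calM_f\equiv0$ and $f=0$. Injectivity of $\Lambda_{\phi_{\dz}}$ can alternatively be read off Proposition \ref{prop:riesz}: pairing $\Lambda_{\phi_{\dz}}f$ with the complete sequence $(\V_n^\phi)$ and using $\Lambda_{\phi_{\dz}}\P_n^{(\dz)}=\mathfrak{c}_n(\dz)\P_n^\phi$ gives $\langle f,\P_n^{(\dz)}\rangle_{\gab[\dpt]}=0$ for all $n$, whence $f=0$ since the polynomials are total; symmetric pairings handle $\mathrm{V}_{\phm}$ and $\mathrm{U}_\varphi$.

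For part \eqref{item-2:prop:quasi-similar}, the ``if'' direction is immediate: when $h\equiv0$ and $\mo\le1$ one has $\Psi(u)=u(u-(1-\mo))$, so $\phi(u)=u$ by Lemma \ref{lem:phi-Psi1-claims}, hence $W_\phi(n+1)=n!$ and, since $\dz=1$ (so $\dpt=\vat$), $\Lambda_{\phi_{\dz}}p_n=\tfrac{(1)_n}{n!}p_n=p_n$; thus $\Lambda_{\phi_{\dz}}$ is the identity, which trivially has a bounded inverse. The content of the converse — and of the statement that $\mathrm{V}_{\phm}$ and $\mathrm{U}_\varphi$ never admit bounded inverses — is that a Markov multiplicative kernel is genuinely smoothing unless its law is the point mass at $1$: as $X$ has an $\Leb^1$-density, the Riemann--Lebesgue lemma gives $\calM_X(c+it)\to0$ as $|t|\to\infty$ along every vertical line in the domain of analyticity. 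On the critical line determined by the power behaviour of the weights near $x=0$, the operator is, after Mellin--Plancherel, multiplication by a symbol vanishing at $\pm i\infty$; such an operator is injective with dense range but not surjective, hence has no bounded inverse, unless the symbol is a nonzero constant. By analyticity the symbol is constant precisely when $X\equiv1$ a.s., i.e.\ for $\Lambda_{\phi_{\dz}}$ with $\dz=1$ precisely when $\Gamma(z)/W_\phi(z)\equiv1$, i.e.\ $W_\phi=\Gamma$, i.e.\ $\phi=\mathrm{id}$, i.e.\ $h\equiv0$ and $\mo\le1$. For $\mathrm{V}_{\phm}$ and $\mathrm{U}_\varphi$ the symbol is never constant, since the moment sequences $\tfrac{W_\phi(n+1)}{(\m)_n}=\prod_{k=1}^n\tfrac{\phi(k)}{\m+k-1}$ and $\tfrac{\varphi(0)}{\varphi(n)}$ are strictly decreasing (because $\phi(n+1)<\m+n$ and $\varphi$ is strictly increasing), so $X_{\phm}$ and $X_\varphi$ are nondegenerate.

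A more hands-on way to carry out the converse, bypassing the Mellin analysis near $x=1$, is to test the putative bounded inverses on the monomials: $\Lambda_{\phi_{\dz}}^{-1}p_n=\tfrac{W_\phi(n+1)}{n!}p_n$, $\mathrm{V}_{\phm}^{-1}p_n=\tfrac{(\m)_n}{W_\phi(n+1)}p_n$, $\mathrm{U}_\varphi^{-1}p_n=\tfrac{\varphi(n)}{\varphi(0)}p_n$, and with $\|p_n\|_\bpsi^2=\tfrac{(\vat)_{2n}}{(\lam)_{2n}}\tfrac{W_\phi(2n+1)}{(2n)!}$ and the $W_\phi$-asymptotics of \cite{Patie-Savov-Bern} one checks that $\|\mathrm{op}^{-1}p_n\|/\|p_n\|\to\infty$ whenever the associated symbol is non-constant — for $\Lambda_{\phi_{\dz}}$ exactly when $\phi\ne\mathrm{id}$, and always for $\mathrm{V}_{\phm}$ (already classically, where it is a Beta$(1,\m-1)$-kernel) and $\mathrm{U}_\varphi$. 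The main obstacle is precisely this quantitative step: the weighted spaces in play are Mellin-homogeneous only near $x=0$, so one must either localize the high-frequency obstruction near $0$ and invoke the Mellin-distribution framework already used for Proposition \ref{prop:Convolution} and Lemma \ref{lem:Mellin-bpsi}, or push the $W_\phi$-asymptotics through uniformly, including the borderline parameter regimes permitted by \Cref{assA}; for $\mathrm{V}_{\phm}$ with $\mo<1+\h$ the cleanest route is to factor it through $\mathrm{U}_\varphi$ on $\Poly$, so that a bounded inverse of $\mathrm{V}_{\phm}$ would force one for $\mathrm{U}_\varphi$. I expect the edge-case bookkeeping, rather than any conceptual difficulty, to be the delicate part; the whole analysis closely parallels, and borrows from, the treatment of the analogous operators in Patie--Savov \cite{Patie-Savov-GeL}.
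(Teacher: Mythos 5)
Your part \eqref{item-1:prop:quasi-similar} is sound, and your fallback route for injectivity (pair $\Lambda_{\phi_{\dz}}f$ against $(\V_n^\phi)_{n\geq0}$ and use $\Lambda_{\phi_{\dz}}^*\V_n^\phi = \mathfrak{c}_n(\dz)\P_n^{(\dz)}$ together with completeness of the Jacobi polynomials) is exactly the paper's argument; the paper does the same for $\mathrm{V}_{\phm}$ and $\mathrm{U}_\varphi$ using completeness of $(\V_n^\phi)$ and $(\V_n^\varphi)$ respectively, as you anticipate with ``symmetric pairings.'' The Mellin--zero-freeness alternative you lead with has extra technical overhead (the symbol is $\E[X^{-z}]$, whose half-plane of absolute convergence must be matched with the half-plane where $\calM_f$ exists, and the Fubini step needs justification), so the biorthogonality route is the cleaner one to carry out.

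Your Riemann--Lebesgue picture for part \eqref{item-2:prop:quasi-similar} is the right intuition but it is not a proof in this setting, and you correctly flag why: $\Leb^2(\gab[\dz])$, $\Leb^2(\gab[\m])$, $\Leb^2(\bpsi_\varphi)$, $\Leb^2(\bpsi)$ are not Mellin-homogeneous, so Mellin--Plancherel does not realize $\Lambda_{\phi_{\dz}}$ (or $\mathrm{V}_{\phm}$, $\mathrm{U}_\varphi$) as multiplication by $\calM_X$ on a single vertical line, and the behaviour of the weights near $x=1$ also enters. The gap is therefore genuine: ``symbol vanishes at $\pm i\infty$, hence not bounded below'' doesn't compile here without the localization/distributional work you only gesture at. Your ``hands-on'' fallback is in fact what the paper does, but you leave the key step uncomputed. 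The paper evaluates
\begin{equation*}
\frac{\norm{\Lambda_{\phi_{\dz}} p_n}_\bpsi^2}{\norm{p_n}_{\gab[\dz]}^{2}} = \frac{W_{\phi_{\dz}}(2n+1)}{W_{\phi_{\dz}}^2(n+1)}\frac{(n!)^2}{(2n)!}, \qquad
\frac{\norm{\mathrm{V}_{\phm} p_n}_{\gab[\m]}^2}{\norm{p_n}_{\bpsi}^{2}} = \prod_{k=1}^n \frac{\phm(k)}{\phm(k+n)},
\end{equation*}
and a similar explicit telescoping for $\mathrm{U}_\varphi$. For $\mathrm{V}_{\phm}$ and $\mathrm{U}_\varphi$ the limit $0$ then follows by elementary means from $\phm \nearrow 1$ (it is a non-constant Bernstein function with $\lim_{u\to\infty}\phm(u)=1$) and from $\phi(u)/u\to1$, with no need for Patie--Savov asymptotics. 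For $\Lambda_{\phi_{\dz}}$ one does need the Bernstein--gamma estimates from \cite{Patie-Savov-GeL} to characterize exactly when the ratio stays bounded away from $0$, namely $\phi_{\dz}(0)=0$ and $\Pi\equiv0$, which with $\dz=1$ unwinds to $h\equiv0$ and $\mo\le1$; this is the one place where the external asymptotics you cite are actually load-bearing, and they should be brought in explicitly rather than by analogy. Your ``if'' direction ($h\equiv0$, $\mo\le1$ $\Rightarrow$ $\phi=\mathrm{id}$ $\Rightarrow$ $\Lambda_{\phi_{\dz}}=\mathrm{id}$ when $\dz=1$) is correct.
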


\begin{proof}
Since polynomials belong to the $\Leb^2$-range of the operators $\Lambda_{\phi_{\dz}}$, $\mathrm{V}_{\phm}$, and $\mathrm{U}_\varphi$, we get, by moment determinacy, that each of these has dense range in their respective codomains. For the remaining claims we proceed sequentially by considering each operator individually, starting with $\Lambda_{\phi_{\dz}}$. Proposition \ref{prop:riesz} gives that, for any $n \in \N$,
\begin{equation*}
\P_n^\phi = \frac{1}{\mathfrak{c}_n(\dz)}\Lambda_{\phi_{\dz}} \P_n^{(\dz)},
\end{equation*}
and also that $(\P_n^\phi)_{n \geq 0}$ and $(\V_n^\phi)_{n \geq 0}$ are biorthogonal. Consequently,
\begin{equation*}
\delta_{np} = \left \langle \P_n^\phi, \V_p^\phi \right\rangle_\bpsi = \left \langle \frac{1}{\mathfrak{c}_n(\dz)}\Lambda_{\phi_{\dz}} \P_n^{(\dz)}, \V_p^\phi \right\rangle_\bpsi = \frac{1}{\mathfrak{c}_n(\dz)} \left\langle \P_n^{(\dz)}, \Lambda_{\phi_{\dz}}^* \V_p^\phi \right\rangle_{\gab[\dz]}.
\end{equation*}
However, as $(\P_n^{(\dz)})_{n \geq 0}$ forms an orthonormal basis for $\Leb^2(\gab[\dz])$ it must be its own unique biorthogonal sequence, which forces
\begin{equation*}
\frac{1}{\mathfrak{c}_n(\dz)}\Lambda_{\phi_{\dz}}^* \V_n^\phi =  \P_n^{(\dz)}
\end{equation*}
for all $n \in \N$. Thus we conclude that $\Poly \subseteq \Ran(\Lambda_{\phi_{\dz}}^*)$, so that by moment determinacy of $(\gab[\dz])$, we get that $\Ker(\Lambda_{\phi_{\dz}}) = \{0\}$. Next, by straightforward computation we have, for any $n \in \N$,
\begin{equation}
\label{eq:ratio-1}
\norm{p_n}_{\gab[\dz]}^{-2}\norm{\Lambda_{\phi_{\dz}} p_n}_\bpsi^2 =  \frac{W_\phi(2n+1)}{W_\phi^2(n+1)}\frac{(\dz)_n^2}{(\dz)_{2n}} = \frac{W_{\phi_{\dz}}(2n+1)}{W_{\phi_{\dz}}^2(n+1)}\frac{(n!)^2}{(2n)!}
\end{equation}
where the second equality follows by using the definition of $\phi_{\dz}$, see \eqref{eq:defphi-d0}, together with the recurrence relation for $W_{\phi_{\dz}}$. Now, the same arguments as in the proof of \cite[Theorem 7.1(2)]{Patie-Savov-GeL} may be applied, see e.g.~Section 7.3 therein, to get that the ratio in \eqref{eq:ratio-1} tends to 0 as $n \to \infty$ if and only if $\phi_{\dz}(0) = 0$ and $\Pi \equiv  0 \iff h \equiv 0$. This is because, with the notation of the aforementioned paper, the expression for $\frac{\psi(u)}{u^2}$ is equal to $\frac{\phi_{\dz}(u)}{u}$ in our notation, and we have $\sigma^2 = 1$ from $\lim_{u \to \infty} \frac{\phi_{\dz}(u)}{u} = 1$. From the definition of $\phi_{\dz}$ in \eqref{eq:defphi-d0} we find that, if $\dz = 1$, then $\phi_{\dz}(0) = \phi(0) = 0$ and from Lemma \ref{lem:phi-Psi1-claims}.\eqref{item-2:lem:phi-Psi1-claims} we get that $\phi(0) = \mo-1-\h$ if $\mo \geq 1+\h$ while $\phi(0)$ is always zero when $\mo < 1+\h$, which shows that if $\dz =1$ then $\phi(0)=0 \iff \mo \leq 1$. On the other hand, from \eqref{eq:defphi-d0}, it is clear that if $\dz > 1$ then always $\phi_{\dz}(0)=0$. This completes the proof of the claims regarding $\Lambda_{\phi_{\dz}}$.
Next, by Proposition \ref{prop:riesz}, $\V_n^\phi \in \Ran(\mathrm{V}_{\phm}^*)$, for each $n \in \N$, and as proved in Proposition \ref{prop:intertwining}, the sequence $(\V_n^\phi)_{n \geq 0}$ is complete. Thus $\Ran(\mathrm{V}_{\phm}^*)$ is dense in $\Leb^2(\gab[\m])$, or equivalently $\Ker(\mathrm{V}_{\phm}) = \{0\}$. By direct calculation we get that,
\begin{equation}
\label{eq:ratio-2}
\norm{p_n}_{\bpsi}^{-2} \norm{\mathrm{V}_{\phm} p_n}_{\gab[\m]}^2  = \frac{W_\phi^2(n+1)}{W_\phi(2n+1)} \frac{(\m)_{2n}}{(\m)_n^2} = \prod_{k=1}^n \frac{\phm(k)}{\phm(k+n)}
\end{equation}
where $\phm$ was defined in \eqref{eq:def-phi-m}. Now the fact that $\lim_{u \to \infty} \frac{\phi(u)}{u} = 1$ allows us to deduce $\lim_{u \to \infty} \phm(u) = 1$ and, as noted earlier, $\phm$ is a Bernstein function and hence non-decreasing. As the case $\phm \equiv 1$ is excluded by the assumption on $\m$, we get that, as $n \to \infty$, the ratio in \eqref{eq:ratio-2} tends to 0. Next, by taking the adjoint of \eqref{eq:inter-left-varphi} we get
\begin{equation*}
\mathrm{U}_\varphi^* {\J_t^\varphi}^* =  {\J_t^\phi}^* \mathrm{U}_\varphi^*,
\end{equation*}
and using this identity we get that $\mathrm{U}_\varphi^* \V_n^\phi$ is an eigenfunction for ${\J_t^\varphi}^*$ associated to the eigenvalue $e^{-\lambda_n t}$. Then, \Cref{thm:spectral-representation}.\eqref{item-3:thm:spectral-representation} forces $\mathrm{U}_\varphi^* \V_n^\phi = \V_n^\varphi$, and the latter is a complete sequence, whence $\Ker(\mathrm{U}_\varphi) = \{0\}$. Finally, another straightforward calculation gives that
\begin{equation*}
\norm{p_n}_{\bpsi_\phi}^{-2} \norm{\mathrm{U}_{\varphi} p_n}_{\bpsi_\varphi}^2 = \frac{\varphi^2(0)}{\varphi^2(n)} \frac{\phi(2n+1)}{\vat \phi(1)} \frac{2n+\vat}{2n+1} = \varphi(0) \frac{2n+\vat}{\left(n+\vat\right)^2} \left(\frac{n+!}{\phi(n+1)}\right)^2 \frac{\phi(2n+1)}{2n+1},
\end{equation*}
and using the fact that $\lim_{u \to \infty} \frac{\phi(u)}{u} = 1$ we conclude that the right-hand side tends to 0 as $n \to \infty$.
\end{proof}

\subsection{Proof of \Cref{thm:convergence-equilibrium}\eqref{item-1:thm:convergence-equilibrium}}

\Cref{thm:spectral-representation} gives, for any $f \in \Leb^2(\bpsi)$ and $t > 0$,
\begin{equation*}
\J_t f = \sum_{n=0}^{\infty} e^{-\lambda_n t} \langle f, \V_n^\phi  \rangle_{\bpsi} \P_n^\phi
\end{equation*}
so that, since $\lambda_0=0$ and $\P_0^\phi \equiv 1 \equiv \V_0^\phi$,
\begin{equation}
\label{eq:J-t-minus}
\J_tf - \bpsi f = \sum_{n=1}^\infty e^{-\lambda_n t} \langle f, \V_n^\phi  \rangle_{\bpsi} \P_n^\phi.
\end{equation}
Next, we note that
\begin{align}
\label{eq:sup-t}
\sup_{n \geq 1} e^{-2n\lam t}\frac{\mathfrak{c}_n^2(\m)}{\mathfrak{c}_n^2(\dpt)} \leq e^{- 2\lam t}\frac{\mathfrak{c}_1^2(\m)}{\mathfrak{c}_1^2(\dpt)} \iff 2\lam t \geq \log\left(\frac{(\m+1)(\lam-\dpt+1)}{(\dpt+1)(\lam-\m+1)}\right)
\end{align}
since
\begin{equation*}
e^{-2(n-1)\lam t}\frac{\mathfrak{c}_n^2(\m)}{\mathfrak{c}_n^2(\dpt)}\frac{\mathfrak{c}_1^2(\dpt)}{\mathfrak{c}_1^2(\m)} = \prod_{j=1}^{n-1} e^{-2\lam t} \frac{(\m+j)(\lam-\dpt+j)}{(\dpt+j)(\lam-\m+j)},
\end{equation*}
and $\m > \dpt$, which is trivial when $\vat < 1$, as then $\m > 1 > \dpt = \vat$, while if $\vat =1$ we have $\m-1> d_\phi > \dpt-1$ from \cite[Proposition 4.4(1)]{Patie-Savov-GeL}. Now, we claim that the following computation is valid, writing $||\cdot||_\bpsi$ again for the $\Leb^2(\bpsi)$-norm and $\bm{\V}_n^\phi = \frac{\mathfrak{c}_n(\vat)}{\mathfrak{c}_n(\m)}\V_n^\phi$,
\begin{align*}
\norm{\J_t f - \bpsi f}^2
&\leq \sum_{n=1}^\infty \frac{1}{\mathfrak{c}_n^2(\dz)} \left| \langle \J_t f, \V_n^\phi \rangle_{\bpsi} \right|^2 = \sum_{n=1}^\infty e^{-2 \lambda_n t}\frac{\mathfrak{c}_n^2(\m)}{\mathfrak{c}_n^2(\dpt)} \left| \left\langle f, \bm{\V}_n^\phi  \right\rangle_{\bpsi} \right|^2 \\
&\leq \frac{\m(\lam-\dpt)}{\dpt(\lam-\m)} e^{-2\lam t} \sum_{n=1}^\infty \left| \left\langle f, \bm{\V}_n^\phi  \right\rangle_{\bpsi} \right|^2 = \frac{\m(\lam-\dpt)}{\dpt(\lam-\m)} e^{-2\lam t} \sum_{n=1}^\infty \left| \left\langle f-\bpsi f, \bm{\V}_n^\phi  \right\rangle_{\bpsi} \right|^2 \\
&\leq \frac{\m(\lam-\dpt)}{\dpt(\lam-\m)} e^{-2\lam t} \norm{f-\bpsi f}_\bpsi^2.
\end{align*}
To justify this we start by observing that the first inequality follows from \eqref{eq:J-t-minus} together with $(\mathfrak{c}_n(\dz)\P_n^\phi)_{n \geq 0}$ being a Bessel sequence with Bessel bound 1, which was proved in \Cref{prop:riesz}. Next we use the fact that $\V_n^\phi$ is an eigenfunction for $\J_t^*$ associated to the eigenvalue $e^{-\lambda_n t}$, and then the identity
\begin{equation*}
\mathfrak{c}_n(\vat)\mathfrak{c}_n(\dz) = \mathfrak{c}_n(\dpt),
\end{equation*}
which follows by considering the cases $\vat = 1$ and $\vat < 1$ separately. Indeed, when $\vat = 1$ then $\dpt = \dz$ and $\mathfrak{c}_n^2(\vat) = 1$, while otherwise $\dz = 1$ so that $\dpt = \vat$ and $\mathfrak{c}_n^2(\dz) = 1$. The second inequality follows from \eqref{eq:sup-t} and then we use the biorthogonality of $(\P_n^\phi)_{n \geq 0}$ and $(\V_n^\phi)_{n \geq 0}$, given by Proposition \ref{prop:riesz}, which implies that for any $c \in \R$, $\langle c\bm{1}_{[0,1]}, \bm{\V}_n^\phi  \rangle_{\bpsi} = 0$ if $n \neq 0$. The last inequality follows from the fact that $(\bm{\V}_n^\phi  )_{n \geq 0}$ is a Bessel sequence with Bessel bound 1, again due to Proposition \ref{prop:riesz}. Next, if $0 \leq 2 \lam t < \log\left(\frac{(1+\m)(1+\lam-\dpt)}{(1+\dpt)(1+\lam-\m)}\right)$ and since $\m > \dpt$, we get
\begin{equation*}
\frac{\m(\lam-\dpt)}{\dpt(\lam-\m)} e^{-2\lam t} \geq \frac{\m}{\m+1} \frac{\dpt+1}{\dpt}   \frac{\lam-\dpt}{\lam-\dpt+1} \frac{\lam-\m+1}{\lam-\m} \geq 1,
\end{equation*}
so that the contractivity of the semigroup $\J$ yields, for $f \in \Leb^2(\bpsi)$ and any $t > 0$,
\begin{equation*}
\norm{\J_t f - \bpsi f}_\bpsi^2 \leq e^{-2\lam t} \norm{f-\bpsi f}_\bpsi^2.
\end{equation*}
Finally, since $\bpsi$ is an invariant probability measure,
\begin{equation*}
||\J_t f - \bpsi f||_\bpsi^2 = \bpsi[(\J_tf-\bpsi f)^2] = \bpsi [ (\J_tf)^2 ] - 2 \bpsi [f] \bpsi [ \J_t f] + (\bpsi [f])^2 = \bpsi [(\J_tf)^2] - (\bpsi [f])^2 = \Var_\bpsi(\J_tf),
\end{equation*}
which completes the proof. \qed

%
%

\subsection{Proof of \Cref{thm:convergence-equilibrium}.\eqref{item-2:thm:convergence-equilibrium}}

We first give a result that strengthens the intertwining relations in Proposition \ref{prop:intertwining} and falls into the framework of the work by Miclo and Patie \cite{Patie-Miclo}. Write $\mathrm{V}_{\dpt}$ for the Markov multiplicative kernel associated to a random variable with law $\gab[\dpt]$, which, by the same arguments as in the proof of Lemma \ref{lem:Lambda-L2-bounded}, satisfies $\mathrm{V}_{\dpt} \in \Bop{\Leb^2(\gab[\dpt])}{\Leb^2(\gab[\m])}$. We write $\overline{\rm{V}}_\phi = \Lambda_{\phi_{\dz}} \mathrm{V}_{\dpt}^*$ and, for $\mo \geq 1+\h$, let $\widetilde{\rm{V}}_\phi = \mathrm{V}_{\phm}$ and otherwise let $\widetilde{\rm{V}}_\phi = \mathrm{V}_{\phm} \rm{U}_\varphi$. Recall that a function $F:\R_+ \to [0,\infty)$ is said to be completely monotone if $F \in C^\infty(\R_+)$ and $(-1)^n \frac{d^n}{dx^n} F(u) \geq 0$, for $u > 0$ and $n \in \N$. By Bernstein's theorem, any completely monotone function $F$ is the Laplace transform of a positive measure on $[0,\infty)$, and if $\lim_{u \to 0} F(u) < \infty$ (resp.~$\lim_{u \to 0} F(u) =1$) then $F$ is the Laplace transform of finite (resp.~probability) measure on $\R_+$, see e.g.~\cite[Chapter 1]{SchillingSongVondracek10}.

\begin{proposition}
\label{prop:cmir}
Under the assumptions of the theorem, we have an interweaving relationship between $\J$ and $\Q[\m]$, in the sense of \cite{Patie-Miclo}, that is for $t \geq 0$ and on the respective $\Leb^2$-spaces
\begin{equation}
\label{eq:cmir}
\J_t^\phi \overline{\rm{V}}_\phi = \overline{\rm{V}}_\phi  \Q[\m]_t \quad \text{and} \quad \widetilde{\rm{V}}_\phi \J_t^\phi  = \Q[\m]_t \widetilde{\rm{V}}_\phi \quad \text{with} \quad \widetilde{\rm{V}}_\phi \overline{\rm{V}}_\phi   = F_\phi(-\calJd_\m)
\end{equation}
where $-\log F_\phi$ is a Bernstein function with $F_\phi:[0,\infty) \to [0,\infty)$ being the completely monotone function given by
\begin{equation*}
F_\phi(u) = \frac{(\dpt)_{\rho(u)}}{(\m)_{\rho(u)}} \frac{(\lam-\m)_{\rho(u)}}{(\lam-\dpt)_{\rho(u)}} , \quad u \geq 0.
\end{equation*}
\end{proposition}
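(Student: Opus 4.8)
The plan is to derive the two semigroup intertwinings in \eqref{eq:cmir} by composing the intertwinings of Proposition \ref{prop:intertwining} with the classical intertwining relating the Jacobi semigroups of parameters $\dpt$ and $\m$, and then to identify $\widetilde{\rm{V}}_\phi\overline{\rm{V}}_\phi$ by letting it act on the orthonormal Jacobi polynomial basis $(\P_n^{(\m)})_{n\geq 0}$ of $\Leb^2(\gab[\m])$; once the three identities are in place, \eqref{eq:cmir} \emph{is} the asserted interweaving relation in the sense of \cite{Patie-Miclo}.

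First I would record the auxiliary relation $\mathrm{V}_{\dpt}\,\Q[\dpt]_t=\Q[\m]_t\,\mathrm{V}_{\dpt}$ on $\Leb^2(\gab[\dpt])$, where $\mathrm{V}_{\dpt}$ is the Markov multiplicative kernel with $\gab[\m]\mathrm{V}_{\dpt}=\gab[\dpt]$ (so $\mathrm{V}_{\dpt}p_k=\tfrac{(\dpt)_k}{(\m)_k}p_k$). A direct computation on the monomials of the type in Lemma \ref{lem:inter} gives $\mathrm{V}_{\dpt}\calJd_{\dpt}=\calJd_{\m}\mathrm{V}_{\dpt}$ on $\Poly$, which lifts to the semigroups exactly as in Propositions \ref{prop:resolvent-intertwining} and \ref{prop:intertwining} (the boundedness of $\mathrm{V}_{\dpt}$ following as in Lemma \ref{lem:Lambda-L2-bounded}); equivalently, $\mathrm{V}_{\dpt}$ carries the eigenfunction $\P_n^{(\dpt)}$ of $\Q[\dpt]_t$ to a multiple of the eigenfunction $\P_n^{(\m)}$ of $\Q[\m]_t$ for the same eigenvalue $e^{-\lambda_n t}$. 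Taking $\Leb^2$-adjoints and using self-adjointness of the classical Jacobi semigroups yields $\Q[\dpt]_t\mathrm{V}_{\dpt}^{*}=\mathrm{V}_{\dpt}^{*}\Q[\m]_t$ on $\Leb^2(\gab[\m])$, and combining with the first identity $\J_t^\phi\Lambda_{\phi_{\dz}}=\Lambda_{\phi_{\dz}}\Q[\dpt]_t$ of Proposition \ref{prop:intertwining} gives
\[
\J_t^\phi\overline{\rm{V}}_\phi=\J_t^\phi\Lambda_{\phi_{\dz}}\mathrm{V}_{\dpt}^{*}=\Lambda_{\phi_{\dz}}\Q[\dpt]_t\mathrm{V}_{\dpt}^{*}=\Lambda_{\phi_{\dz}}\mathrm{V}_{\dpt}^{*}\Q[\m]_t=\overline{\rm{V}}_\phi\Q[\m]_t ,
\]
which is the first relation of \eqref{eq:cmir}; all operators involved being bounded by Lemmas \ref{lem:Markov-kernel} and \ref{lem:Lambda-L2-bounded}, the identity holds on all of $\Leb^2(\gab[\m])$. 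For the second relation I would use the identity $\Q[\m]_t\mathrm{V}_{\phm}=\mathrm{V}_{\phm}\J_t^\phi$ of Proposition \ref{prop:intertwining} directly when $\mo\geq 1+\h$; when $\mo<1+\h$, Lemma \ref{lem:phi-Psi1-claims}.\eqref{item-2:lem:phi-Psi1-claims} places $\varphi$ in the regime $\mo_\varphi\geq 1+\h_\varphi$, so I would apply that identity to $\varphi$ and compose with the third identity $\J_t^\varphi\mathrm{U}_\varphi=\mathrm{U}_\varphi\J_t^\phi$ to get $\widetilde{\rm{V}}_\phi\J_t^\phi=\Q[\m]_t\widetilde{\rm{V}}_\phi$.

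To compute $\widetilde{\rm{V}}_\phi\overline{\rm{V}}_\phi$ on $\Leb^2(\gab[\m])$, observe that the two relations just proved force $\overline{\rm{V}}_\phi\P_n^{(\m)}$ to be an eigenfunction of $\J_t^\phi$ and $\widetilde{\rm{V}}_\phi\P_n^\phi$ an eigenfunction of $\Q[\m]_t$, both for the eigenvalue $e^{-\lambda_n t}$ ($t>0$); by simplicity of these eigenvalues — Theorem \ref{thm:spectral-representation}.\eqref{item-3:thm:spectral-representation} for $\J_t^\phi$ and the classical case for $\Q[\m]_t$ — one gets $\overline{\rm{V}}_\phi\P_n^{(\m)}=a_n\P_n^\phi$ and $\widetilde{\rm{V}}_\phi\P_n^\phi=b_n\P_n^{(\m)}$, whence $\widetilde{\rm{V}}_\phi\overline{\rm{V}}_\phi\P_n^{(\m)}=a_nb_n\P_n^{(\m)}$. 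The constants $a_n$, $b_n$ are read off from the actions already recorded in Proposition \ref{prop:riesz} ($\Lambda_{\phi_{\dz}}\P_n^{(\dz)}=\mathfrak{c}_n(\dz)\P_n^\phi$, $\mathrm{V}_{\phm}\P_n^\phi=\mathfrak{c}_n^{-1}(\m)\P_n^{(\m)}$, $\mathrm{U}_\varphi\P_n^\phi=\mathfrak{c}_n(\vat)\P_n^\varphi$), together with $\mathrm{V}_{\dpt}^{*}\P_n^{(\m)}=\tfrac{\mathfrak{c}_n(\dpt)}{\mathfrak{c}_n(\m)}\P_n^{(\dpt)}$, which follows from $\mathrm{V}_{\dpt}\P_n^{(\dpt)}=\tfrac{\mathfrak{c}_n(\dpt)}{\mathfrak{c}_n(\m)}\P_n^{(\m)}$ (compare leading coefficients, using $\mathrm{V}_{\dpt}p_k=\tfrac{(\dpt)_k}{(\m)_k}p_k$) and orthonormality. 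Treating the regimes $\mo\geq 1+\h$ (where $\dz=\dpt$) and $\mo<1+\h$ (where $\dz=1$, $\dpt=\vat$, and one passes through $\varphi$, which lies in the regime $\mo_\varphi\geq 1+\h_\varphi$) separately, and using that $\rho(\lambda_n)=n$ — valid because $\lambda_n+\tfrac{(\lam-1)^2}{4}=(n+\tfrac{\lam-1}{2})^2$ and $\lam>1$ — one obtains in both cases
\[
a_nb_n=\frac{\mathfrak{c}_n^2(\dpt)}{\mathfrak{c}_n^2(\m)}=\frac{(\dpt)_n}{(\m)_n}\frac{(\lam-\m)_n}{(\lam-\dpt)_n}=F_\phi(\lambda_n) .
\]
Since $-\calJd_\m\P_n^{(\m)}=\lambda_n\P_n^{(\m)}$ and $(\P_n^{(\m)})_{n\geq 0}$ is an orthonormal basis, this gives $\widetilde{\rm{V}}_\phi\overline{\rm{V}}_\phi=F_\phi(-\calJd_\m)$.

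For the complete monotonicity of $F_\phi$ and the Bernstein property of $-\log F_\phi$, I would write $F_\phi(u)=\E[(B_1B_2)^{\rho(u)}]$, where $B_1\sim\mathrm{Beta}(\dpt,\m-\dpt)$ and $B_2\sim\mathrm{Beta}(\lam-\m,\m-\dpt)$ are independent, using the identity $\tfrac{(a)_v}{(a+b)_v}=\E[\mathrm{Beta}(a,b)^v]$; hence $-\log F_\phi=\kappa\circ\rho$ with $\kappa(v)=-\log\tfrac{(\dpt)_v}{(\m)_v}-\log\tfrac{(\lam-\m)_v}{(\lam-\dpt)_v}$. Each summand of $\kappa$ vanishes at $0$ and has a nonnegative, completely monotone derivative — a difference of two digamma functions, handled by Gauss' integral representation — so $\kappa$ is a Bernstein function; as $\rho$ is Bernstein, the composition $-\log F_\phi$ is Bernstein, and therefore $F_\phi=e^{-\kappa\circ\rho}$ is completely monotone with $F_\phi(0)=1$. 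The step I expect to be most delicate is the identification in the third paragraph: keeping the operator bookkeeping consistent across the two regimes $\mo\gtrless 1+\h$ — which interchange $\dz$ and $\dpt$ and force the detour through $\varphi$ — while tracking the normalising constants $\mathfrak{c}_n(\cdot)$ precisely enough that $a_nb_n$ collapses exactly onto $F_\phi(\lambda_n)$.
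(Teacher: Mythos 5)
Your proof is correct and follows essentially the same route as the paper: compose the intertwinings of Proposition \ref{prop:intertwining} with the elementary intertwining $\mathrm{V}_{\dpt}\Q[\dpt]_t = \Q[\m]_t\mathrm{V}_{\dpt}$ (and its adjoint), then identify $\widetilde{\rm{V}}_\phi\overline{\rm{V}}_\phi$ by diagonalising on the Jacobi basis and use $\rho(\lambda_n)=n$. The only (cosmetic) divergence is that the paper obtains the diagonal form of $\widetilde{\rm{V}}_\phi\overline{\rm{V}}_\phi$ from its commutation with $\Q[\m]_t$ via the Borel functional calculus, while you track eigenfunctions directly using simplicity of the spectrum of both semigroups, and you deduce complete monotonicity of $F_\phi$ from $F_\phi=e^{-\kappa\circ\rho}$ with $\kappa\circ\rho$ Bernstein rather than composing a Laplace transform with $\rho$ as the paper does.
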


\begin{proof}
We give the proof only in the case $\mo \geq 1+\h$, so that $\dpt = \dz$, as the other case follows by similar arguments. From Proposition \ref{prop:intertwining} we get, with $\calJ = \calJd_{\dz}$,
\begin{equation*}
\Q[\m]_t \mathrm{V}_{\dz} = \mathrm{V}_{\dz} \Q[\dz]_t,
\end{equation*}
and taking the adjoint and using that both $\Q[\m]$ and $\Q[\dz]$ are self-adjoint on $\Leb^2(\gab[\m])$ and $\Leb^2(\gab[\dz])$, respectively, we get that
\begin{equation*}
\Q[\dz]_t \mathrm{V}_{\dz}^* = \mathrm{V}_{\dz}^* \Q[\m]_t.
\end{equation*}
Combining this with the first intertwining relation in Proposition \ref{prop:intertwining} then yields
\begin{equation*}
\J_t \overline{\mathrm{V}}_{\phi} = \overline{\mathrm{V}}_{\phi} \Q[\m]_t,
\end{equation*}
and, together with second intertwining relation in Proposition\ref{prop:inter_P}, we conclude that
\begin{equation}
\label{eq:Q-comm}
\Q[\m]_t \widetilde{\mathrm{V}}_{\phi}\overline{\mathrm{V}}_{\phi} =  \widetilde{\mathrm{V}}_{\phi}  \J_t \overline{\mathrm{V}}_{\phi} = \widetilde{\mathrm{V}}_{\phi}\overline{\mathrm{V}}_{\phi} \Q[\m]_t.
\end{equation}
As $\Q[\m]_t$ is self-adjoint with simple spectrum the commutation identity \eqref{eq:Q-comm} implies, by the Borel functional calculus, see e.g.~\cite{Rudin1973}, that $\widetilde{\mathrm{V}}_{\phi}\overline{\mathrm{V}}_{\phi} = F(\calJd_{\m})$ for some bounded Borel function $F$, and to identify $F$ it suffices to identify the spectrum of $\widetilde{\mathrm{V}}_{\phi}\overline{\mathrm{V}}_{\phi}$. To this end we observe that, for any $g \in \Leb^2(\gab[\dz])$,
\begin{align*}
\langle \mathrm{V}_{\dz}^* \P_n^{(\m)}, g \rangle_{\gab[\dz]} &= \langle \P_n^{(\m)}, \mathrm{V}_{\dz} g \rangle_{\gab[\m]} = \sum_{m=0}^\infty  \langle g, \P_m^{(\dz)} \rangle_{\gab[\dz]} \langle \P_n^{(\m)}, \mathrm{V}_{\dz} \P_m^{(\dz)} \rangle_{\gab[\m]} \\ & = \frac{\mathfrak{c}_n(\dz)}{\mathfrak{c}_n(\m)} \langle \P_n^{(\dz)},g \rangle_{\gab[\dz]}
\end{align*}
where we used that $(\P_n^{(\dz)})_{n \geq 0}$ forms an orthonormal basis for $\Leb^2(\gab[\dz])$ and the identity $\mathrm{V}_{\dz} \P_m^{(\dz)} = {\mathfrak{c}_m(\dz)} \P_m^{(\m)} / {\mathfrak{c}_m(\m)}$ follows by a straightforward, albeit tedious, computation. Consequently, for any $n \in \N$,
\begin{equation*}
\widetilde{\mathrm{V}}_{\phi}\overline{\mathrm{V}}_{\phi}  \P_n^{(\m)} = \frac{\mathfrak{c}_n(\dz)}{\mathfrak{c}_n(\m)} \mathrm{V}_{\phm} \Lambda_{\phi_{\dz}} \P_n^{(\dz)} = \frac{\mathfrak{c}_n^2(\dz)}{\mathfrak{c}_n(\m)} \mathrm{V}_{\phm} \P_n^\phi = \frac{\mathfrak{c}_n^2(\dz)}{\mathfrak{c}_n^2(\m)} \P_n^{(\m)},
\end{equation*}
where the second and third equalities follow from calculations that were detailed in the proof of Proposition \ref{prop:riesz}. Using the definition of $\mathfrak{c}_n$ in \eqref{eq:defn-cn} we thus get that, for $n \in \N$,
\begin{equation*}
F(\lambda_n) = \frac{\mathfrak{c}_n^2(\dz)}{\mathfrak{c}_n^2(\m)} = \frac{(\dz)_n}{(\m)_n} \frac{(\lam-\m)_n}{(\lam-\dz)_n}
\end{equation*}
recalling from \eqref{eq:def-lambda-n} that $(\lambda_n)_{n \geq 0}$ are the eigenvalues of $-\calJd_{\m}$, which proves that $F_\phi = F$. Next, one readily computes that the non-negative inverse of the mapping $n \mapsto \lambda_n$ is given by the function $\rho$ defined prior to the statement of the theorem, which was remarked to be a Bernstein function. For another short proof of this fact, observe that, for $u \geq 0$,
\begin{equation*}
\rho'(u) = \left((\lam-1)^2+4u\right)^{-\frac{1}{2}},
\end{equation*}
which is completely monotone. Since $u \mapsto F_\phi(u^2+(\lam-1)u)$ is the Laplace transform of the product convolution of the beta distributions $\gab[\dz]$ and $\gab[\m]$ we may invoke \cite[Theorem 3.7]{SchillingSongVondracek10} to conclude $F_\phi$ is completely monotone. Finally, to show that $-\log F_\phi$ is a Bernstein function we note that, for any $a, b > 0$, the function $u \mapsto \log (a+b)_u - \log (a)_u$ is a Bernstein function, see e.g.~Example 88 in \cite[Chapter 16]{SchillingSongVondracek10}. Since
\begin{equation*}
-\log F_\phi(u) = \log \frac{(\m)_{\rho(u)}}{(\dz)_{\rho(u)}} + \log \frac{(\lam-\dz)_{\rho(u)}}{(\lam-\m)_{\rho(u)}},
\end{equation*}
with $\dz < \m$, and the composition of Bernstein functions remains Bernstein together with the fact that the set of Bernstein functions is a convex cone, see e.g.~\cite[Corollary 3.8]{SchillingSongVondracek10} for both of these claims, it follows that $-\log F_\phi$ is a Bernstein function.
\end{proof}


\begin{proof}[Proof of \Cref{thm:convergence-equilibrium}.\eqref{item-2:thm:convergence-equilibrium}]
Since $\m \in (\bm{1}_{\{\mo < 1+\h\}} + \mo, \lam)$ we may apply Proposition \ref{prop:cmir} to conclude that $\widetilde{\mathrm{V}}_\phi \overline{\mathrm{V}}_\phi = F_\phi(-\calJd_{\m})$ and a straightforward substitution gives $\E\left[e^{-u\tau}\right] = F_\phi(u)$, $u \geq 0$, with $-\log F_\phi$ a Bernstein function. From the Borel functional calculus we get, since $\Q[\m]_t$ is self-adjoint on $\Leb^2(\gab[\m])$, that
\begin{equation*}
\Q[\m]_\tau = \int_0^\infty \Q[\m]_t \Prob(\tau \in dt) = \int_0^\infty e^{t\calJd_{\m}} \Prob(\tau \in dt) = F_\phi(-\calJd_{\m}) = \widetilde{\mathrm{V}}_\phi \overline{\mathrm{V}}_\phi.
\end{equation*}
Combining this identity with \eqref{eq:cmir} yields, for non-negative $f \in \Leb^2(\bpsi)$,
\begin{equation*}
\widetilde{\mathrm{V}}_\phi \overline{\mathrm{V}}_\phi \widetilde{\mathrm{V}}_\phi f = \int_0^\infty \Q[\m]_t  \widetilde{\mathrm{V}}_\phi f \:  \Prob(\tau \in dt) = \int_0^\infty  \widetilde{\mathrm{V}}_\phi \J_t f \: \Prob(\tau \in dt) =  \widetilde{\mathrm{V}}_\phi \int_0^\infty \J_t f \: \Prob(\tau \in dt),
\end{equation*}
and the general case follows by linearity and by decomposing $f$ into the difference of non-negative functions. By Proposition \ref{prop:quasi-similar}, $\widetilde{\mathrm{V}}_\phi$ has trivial kernel on $\Leb^2(\bpsi)$. So we deduce
\begin{equation}
\label{eq:J-tau}
\overline{\mathrm{V}}_\phi \widetilde{\mathrm{V}}_\phi = \int_0^\infty \J_t \Prob(\tau \in dt) = \J_\tau,
\end{equation}
and thus $\J$ satisfies an interweaving relation with $\Q[\m]$, in the sense of \cite{Patie-Miclo}. Consequently we may invoke \cite[Theorems 7, 24]{Patie-Miclo} to transfer the entropy decay and $\Phi$-entropy decay of $\Q[\m]$, reviewed in \Cref{sec:classical-Jacobi}, to the semigroup $\J$ but after a time shift of the independent random variable $\tau$. Note that, when $\lam > 2(\bm{1}_{\{\mo < 1+\h\}} + \mo)$, we may take $\m = \frac{\lam}{2}$ so that the reference semigroup is $\Q[\lam/2]$, which has optimal entropy decay rate.
\end{proof}

\subsection{Proof of \Cref{thm:contractivity}}

The proof of \Cref{thm:contractivity}\eqref{item-1:thm:contractivity} follows by using \Cref{eq:J-tau} above to invoke \cite[Theorem 8]{Patie-Miclo}. Next, by \Cref{eq:J-tau} and using Proposition \ref{prop:cmir}, we get
\begin{equation*}
||\J_{t+\tau}||_{1\to\infty} = ||\J_t \overline{\mathrm{V}}_\phi \widetilde{\mathrm{V}}_\phi||_{1\to\infty} = ||\overline{\mathrm{V}}_\phi \Q[\m]_t \widetilde{\mathrm{V}}_\phi||_{1\to\infty} \leq ||\Q[\m]_t||_{1\to\infty}
\end{equation*}
where the last inequality follows by applying Lemma \ref{lem:Lambda-L2-bounded} twice, once in the case $p=\infty$ for $\overline{\mathrm{V}}_\phi$ and once with $p=1$ for $\widetilde{\mathrm{V}}_\phi$. The claim now follows from the corresponding ultracontractivity estimate for $\Q[\m]$. \qed

\subsection{Proof of \Cref{cor:subordinated}}

The following arguments are taken from the proof of \cite[Proposition 5]{Gateway}. We denote by $\Q[\m,\bm{\tau}]$ for the classical Jacobi semigroup $\Q[\m]$ subordinated with respect to $\bm{\tau} = (\tau_t)_{t \geq 0}$. By \cite[Theorem 3]{Patie-Miclo} we obtain, from Proposition \ref{prop:cmir}, an  interweaving relationship between the subordinate semigroups, i.e.~writing $\overline{\mathrm{V}}_\phi$ and $\widetilde{\mathrm{V}}_\phi$ as above, we have, for any $t \geq 0$ and on the appropriate $\Leb^2$-spaces,
\begin{equation}
\label{eq:cmir-subordinated}
\J_t^{\bm{\tau}} \overline{\rm{V}}_\phi = \overline{\rm{V}}_\phi  \Q[\m,\bm{\tau}]_t \quad \text{and} \quad \widetilde{\rm{V}}_\phi \J_t^{\bm{\tau}}  = \Q[\m,\bm{\tau}]_t \widetilde{\rm{V}}_\phi \quad \text{with} \quad \overline{\rm{V}}_\phi\widetilde{\rm{V}}_\phi    = \J_1^{\bm{\tau}}.
\end{equation}
Using this we get, for any $f \in \Leb^2(\bpsi)$ and $t \geq 1$,
\begin{align*}
\J_t^{\bm{\tau}} f = \J_{t-1}^{\bm{\tau}} \overline{\mathrm{V}}_\phi \widetilde{\mathrm{V}}_\phi f = \overline{\mathrm{V}}_\phi  \Q[\m,\bm{\tau}]_{t-1} \widetilde{\mathrm{V}}_\phi f &= \sum_{n=0}^\infty \E\left[e^{-\lambda_n \tau_{t-1}}\right] \langle \widetilde{\mathrm{V}}_\phi f, \P_n^{(\m)} \rangle_{\gab[\m]} \overline{\mathrm{V}}_\phi \P_n^{(\m)} \\
&= \sum_{n=0}^\infty \E\left[e^{-\lambda_n \tau_{t-1}}\right]  \frac{\mathfrak{c}_n^2(\dpt)}{\mathfrak{c}_n^2(\m)} \langle f, \V_n^\phi \rangle_{\bpsi} \P_n^\phi \\
&= \sum_{n=0}^\infty \E\left[e^{-\lambda_n \tau_t}\right]  \langle f, \V_n^\phi \rangle_{\gab[\m]} \P_n^\phi
\end{align*}
where in the second equality we used the boundedness of $\overline{\mathrm{V}}_\phi$ together the expansion for the subordinated classical Jacobi semigroup which follows from \eqref{Q_sem} and standard arguments, then the properties of $\widetilde{\mathrm{V}}_\phi$ and $\overline{\mathrm{V}}_\phi$ detailed in previous sections, and finally the expression for $\E[e^{-\lambda_n \tau}]$ in \eqref{eq:def-T-Laplace}. All of the claims, save for the last one, then follow from \cite[Theorems 7, 24]{Patie-Miclo} applied to \eqref{eq:cmir-subordinated}. Next, we establish the ultracontractive bound $||\J_t^{\bm{\tau}}||_{1\to\infty} \leq c_\m(\E[\tau^{-p}]+1)$ for $t > 2$. From \eqref{eq:def-T-Laplace} we get, by applying Stirling's formula for the gamma function together with $\lim_{u \to \infty} u^{-1/2}\rho(u) = 1$, that $\lim_{u \to \infty} u^{(\m-\dpt)} \E[e^{-u \tau}] = 1$. Writing for convenience $p = \frac{\lam-\m}{\lam-\m-1} > 0$, we get by assumption on the parameters that $p < \m-\dpt$ so that the previous asymptotic yields, for $t \geq 1$,
\begin{equation*}
\E[\tau_t^{-p}] = \frac{1}{\Gamma(p)} \int_0^\infty \E[e^{-u \tau_t}] u^{p-1} du \leq \frac{1}{\Gamma(p)} \int_0^\infty \E[e^{-u \tau}] u^{p-1} du = \E[\tau^{-p}] < \infty
\end{equation*}
where the two equalities follow by applying Tonelli's theorem together with a change of variables, and the inequality follows from the fact that, for all $u\geq0$, $t \mapsto \E[e^{-u \tau_t}]$ is non-increasing, recalling the notation $\tau_1\stackrel{(d)}{=}\tau$. Hence, from the ultracontractive bound $||\Q[\m]_s||_{1 \to \infty} \leq c_\m \max(1,s^{-p})$, valid for all $s > 0$, we deduce that for $t \geq 1$
\begin{align*}
||\Q[\m,\bm{\tau}]_t||_{1 \to \infty} \leq \int_0^\infty ||\Q[\m]_s||_{1 \to \infty} \Prob(\tau_t \in ds) \leq c_\m \left(\int_0^1 s^{-p} \Prob(\tau_t \in ds) +  \int_1^\infty \Prob(\tau_t \in ds) \right) \leq c_\m(\E[\tau^{-p}]+1).
\end{align*}
Consequently from \eqref{eq:cmir-subordinated} we get that, for $t > 2$,
\begin{equation*}
||\J_t^{\bm{\tau}}||_{1\to\infty} = ||\J_{t-1}^{\bm{\tau}} \overline{\mathrm{V}}_\phi \widetilde{\mathrm{V}}_\phi||_{1\to\infty} = ||\overline{\mathrm{V}}_\phi \Q[\m,\bm{\tau}]_{t-1} \widetilde{\mathrm{V}}_\phi||_{1\to\infty} \leq ||\Q[\m,\bm{\tau}]_{t-1}||_{1 \to \infty} \leq  c_\m(\E[\tau^{-p}]+1).
\end{equation*}
Then it is easy to complete the proof of the last claim by following similar arguments as in the proof of \cite[Proposition 6.3.4]{Bakry_Book}, noting that the required variance decay estimate therein, namely
\begin{equation*}
\Var_\bpsi(\J_t^{\bm{\tau}} f) \leq \left(\frac{\m(\lam-\dpt)}{\dpt(\lam-\m)}\right)^{1-2t} \Var_\bpsi(f)
\end{equation*}
valid for all $t \geq 0$ and $f \in \Leb^2(\bpsi)$, follows trivially from \Cref{thm:convergence-equilibrium}.\eqref{item-1:thm:convergence-equilibrium} via subordination. \qed

\section{Examples} \label{sec:examples}


In this section we consider a parametric family of non-local Jacobi operators for which $h$ is a power function. More specifically, let $\mm \geq 1$ and consider the integro-differential operator $\calJ_\mm$ given by
\begin{align*}
\calJ_{\mm} f(x) &= x(1-x)f''(x) - (\lam x - \mm-1)f'(x)- x^{-(\mm+1)}\int_0^{x} f'(r)r^{\mm}dr. 
\end{align*}
Then $\calJ_\mm$ is a non-local Jacobi operator with $\mo = \mm+1$ and $h(r) = r^{-\mm-1}$, $r>1$, or one easily gets that equivalently $\overline{\Pi}(r) = e^{-\mm r}$, $r > 0$. One readily computes that $\h = \int_1^\infty h(r)dr = \mm^{-1}$ and thus the condition $\mo \geq 1+\h$ is always satisfied, which implies that $\vat = 1$. Writing $\phi_\mm$ for the Bernstein function in one-to-one correspondence with $\calJ_\mm$, we have that for $u \geq 0$,
\begin{equation}
\label{eq:defvp}
\phi_{\mm}(u) = u + \frac{\mm^2-1}{\mm} + \int_1^\infty (1-r^{-u})r^{-\mm-1}dr = \frac{(u+\mm+1)(u+\mm-1)}{u + \mm}. 
\end{equation}
From the right-hand side of \eqref{eq:defvp} we easily see that $d_{\phi_\mm} = \mm-1$. Now, we assume that $\lam  >\mm+2 > 3$ and, for sake of simplicity, take $\lam - \mm \not\in \N$. The following result characterizes all the spectral objects for these non-local Jacobi operators.

\begin{proposition} $\mbox{}$\\[-5mm]
\label{prop:small-perturbation}
\begin{enumerate}[\rm(i)]
\item \label{item-1:prop:small-perturbation} The density of the unique invariant measure of the Markov semigroup associated to $\calJ_\mm$ is given by
\begin{equation*}
\beta(x) = \frac{((\lam - \mm - 2)x+1)}{(\mm+1)(1-x)} \gab[\mm](x),  \ x \in (0,1), 
\end{equation*}
where $\gab[\mm]$ is the density of a Beta random variable of parameter $\mm$ and $\lam$, see \eqref{eq:def_beta}.
\item \label{item-2:prop:small-perturbation} We have that $\P_0^{\phi_\mm} \equiv 1$ and, for $n \geq 1$,
\begin{equation*}
\P_n^{\phi_\mm}(x)= \frac{n!}{(\mm+2)_n} \sqrt{\C_n(1)} \left( \frac{\P_n^{(\lam,\mm+2 )} (x)}{\sqrt{\C_n(\mm+2)}}  + \frac{x}{\mm} \frac{\P_{n-1}^{(\lam+1,\mm+3)}(x)}{\sqrt{\widetilde{\C}_{n-1}(\mm+3)}} \right), \ x \in [0,1],
\end{equation*}
where, on the right-hand side, we made explicit the dependence on the two parameters for the classical Jacobi polynomials, i.e.~$\P_n^{(\lam,\mo)} (x)=\P_n^{(\mo)} (x)=\sqrt{\C_n(\mo)} \sum_{k=0}^n \frac{(-1)^{n+k}}{(n-k)!}  \frac{(\lam-1)_{n+k}}{(\lam-1)_{n\phantom{+k}}} \frac{(\mo)_n}{(\mo)_k} \frac{x^k}{k!}$, see \eqref{J_pol}, and where $\widetilde{\C}_{n}(\mm+3) = n!(2n+\lam)(\lam+1)_n / (\mm+3)_n(\lam-\mm-2)_n$.
\item \label{item-3:prop:small-perturbation} For any $n \in \N$ the function $\V_n^{\phi_\mm}$ is given by
\begin{equation*}
\V_n^{\phi_\mm}(x) = \frac{w_n(x)}{\bpsi(x)}, \ x \in (0,1),
\end{equation*}
where $w_n$ has the so-called Barnes integral representation, see e.g.~\cite{barnes_integrals},  for any  $a>0$,
\begin{align*}
w_n(x)
&= -C_{\lam, \mm, n} \frac{1}{2 \pi i} \int_{-a - i \infty}^{-a + i \infty} \frac{\Gamma(\mm+2-z)\Gamma(-z)\Gamma(\mm-z)}{\Gamma(\mm+1-z)\Gamma(-n-z)\Gamma(z+\lam+n)} x^{z}dz, \\
&= C_{\lam, \mm, n} \frac{\sin(\pi(\mm-\lam))}{\pi} \sum_{k = 0}^{\infty} \frac{(\mm+1)_{k+n}}{(\mm+1)_{k\phantom{+n}}} \frac{\Gamma(k+\mm-n-\lam+1)}{k!}(k-1)  x^{k+\mm},  \quad |x|<1,
\end{align*}
and $C_{\lam, \mm, n} = \mm(\lam - 1) \Gamma(\lam+n-1) \sqrt{\C_n(1)} (-2)^n / (n!\Gamma(\mm+2))$.
\end{enumerate}
\end{proposition}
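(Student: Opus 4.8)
The plan is to derive all three statements by specializing the general results of Section~\ref{sec:main-results} to $\phi=\phi_\mm$, the only genuinely new ingredient being the closed form of the generalized Gamma function $W_{\phi_\mm}$. From the product formula~\eqref{eq:product-Wphi} and the factorization $\phi_\mm(k)=(k+\mm+1)(k+\mm-1)/(k+\mm)$, a telescoping of Pochhammer symbols gives $W_{\phi_\mm}(n+1)=(\mm)_n(\mm+2)_n/(\mm+1)_n$ for $n\in\N$; more generally, the functional equation $W_{\phi_\mm}(z+1)=\phi_\mm(z)W_{\phi_\mm}(z)$ with $W_{\phi_\mm}(1)=1$ is solved by
\[
W_{\phi_\mm}(z)=\frac{\Gamma(\mm+1)}{\Gamma(\mm)\Gamma(\mm+2)}\,\frac{\Gamma(z+\mm+1)\Gamma(z+\mm-1)}{\Gamma(z+\mm)},
\]
which one checks to be positive definite, hence equals $W_{\phi_\mm}$ by the uniqueness recalled before~\eqref{eq:product-Wphi}. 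I would record this at the outset.

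For~\eqref{item-1:prop:small-perturbation}: since $\mo=\mm+1\ge 1+\mm^{-1}=1+\h$ we have $\vat=1$, so~\eqref{eq:mom-bpn} reduces to $\bpsi[p_n]=W_{\phi_\mm}(n+1)/(\lam)_n=(\mm)_n(\mm+2)_n/\big((\mm+1)_n(\lam)_n\big)$. I would then compute the moments of the candidate density $\beta$ directly: writing it as a constant times $\big((\lam-\mm-2)x+1\big)x^{\mm-1}(1-x)^{\lam-\mm-2}$ and integrating against $x^n$ via the Beta integral and the identity $(\lam-\mm-2)(n+\mm)+(n+\lam-1)=(\lam-\mm-1)(n+\mm+1)$, one recovers exactly $\bpsi[p_n]$; as $\beta$ is a genuine probability density on $[0,1]$ (non-negative since $\lam>\mm+2$), \Cref{thm:main-inter-L2}.\eqref{item-1:main-inter-L2} and the moment determinacy proved there force $\beta(x)dx=\bpsi(dx)$. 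For~\eqref{item-2:prop:small-perturbation}: I would substitute $\vat=1$ and $W_{\phi_\mm}(k+1)$ into~\eqref{eq:Jpolpsi}; the coefficient of $x^k$ then contains $1/W_{\phi_\mm}(k+1)=(\mm+1)_k/\big((\mm)_k(\mm+2)_k\big)$, which, using $(\mm+1)_k/(\mm)_k=(\mm+k)/\mm$, splits as $\tfrac1{(\mm+2)_k}+\tfrac1{\mm}\tfrac{k}{(\mm+2)_k}$. The first summand, after multiplying by $n!/(\mm+2)_n$, reassembles through~\eqref{J_pol} into $\P_n^{(\lam,\mm+2)}(x)/\sqrt{\C_n(\mm+2)}$; in the second I reindex $k=j+1$, use $(\mm+2)_k=(\mm+2)(\mm+3)_j$ and the Pochhammer shift in $(\lam-1)_{n+k}/(\lam-1)_n$ to recognize the degree-$(n-1)$ Jacobi expansion with ambient parameter $\lam+1$ normalized by $\widetilde{\C}_{n-1}(\mm+3)$, and collect the two pieces. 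This is pure, if somewhat tedious, algebra.

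For~\eqref{item-3:prop:small-perturbation}: here $\V_n^{\phi_\mm}=w_n/\bpsi$ with $w_n$ as in Proposition~\ref{prop:co-eigenfunctions}, so everything reduces to making $w_n$ explicit. I would feed the closed form of $W_{\phi_\mm}$ into the Mellin transform of $w_n$ from Proposition~\ref{prop:Convolution}; together with $\calM_{\gab[\lam+n,\lam]}(z)=(\lam)_n\Gamma(z+\lam-1)/\Gamma(z+\lam+n-1)$ and $\calM_\bpsi(z)=\Gamma(\lam)W_{\phi_\mm}(z)/\Gamma(z+\lam-1)$ (the latter from the proof of Lemma~\ref{lem:Mellin-bpsi} with $\vat=1$), a chain of cancellations leaves
\[
\calM_{w_n}(z)\;\propto\;\frac{\Gamma(z)\,\Gamma(z+\mm+1)\,\Gamma(z+\mm-1)}{\Gamma(z-n)\,\Gamma(z+\mm)\,\Gamma(z+\lam+n-1)},
\]
the proportionality constant matching $C_{\lam,\mm,n}$ up to a factor $(-2)^n$. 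Mellin inversion — legitimate because $\calM_{w_n}$ decays like a negative power of $|\Im z|$ on vertical lines (as in Lemma~\ref{lem:Mellin-bpsi}, here with exponent governed by $\lam-\mm$) — represents $w_n$ as a Mellin--Barnes integral; Euler's reflection formula $\Gamma(s)\Gamma(1-s)=\pi/\sin(\pi s)$, applied to the suitable Gamma factors and producing the prefactor $\sin(\pi(\mm-\lam))/\pi$ (non-zero by $\lam-\mm\notin\N$), recasts it in the stated $x^z$-form, and closing the contour (valid for $|x|<1$) and summing residues at the poles $z=\mm+k$, $k\ge0$ — the spurious poles at the non-positive integers being annihilated by the zeros of $1/\Gamma(z-n)$, resp.\ $1/\Gamma(-n-z)$ — yields the hypergeometric series, with the factor $k-1$ coming from $\Gamma(\mm+2-z)/\Gamma(\mm+1-z)=\mm+1-z$ at the pole.

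The main obstacle is this last step: pinning down precisely which Gamma factors to reflect, tracking the attendant sine prefactors and signs through to the stated normalization $C_{\lam,\mm,n}$, verifying the cancellation of the spurious poles (with extra care when $\mm\in\N$, where poles coalesce), and justifying both the contour displacement and the convergence of the residue series. Parts~\eqref{item-1:prop:small-perturbation} and~\eqref{item-2:prop:small-perturbation} are essentially bookkeeping once $W_{\phi_\mm}$ is in hand.
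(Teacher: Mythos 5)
Your proposal is correct and follows essentially the same route as the paper's own proof: the closed form of $W_{\phi_\mm}$ obtained from \eqref{eq:product-Wphi} and the factorization of $\phi_\mm$, moment determinacy from \Cref{thm:main-inter-L2}.\eqref{item-1:main-inter-L2} for part \eqref{item-1:prop:small-perturbation}, the split $1/W_{\phi_\mm}(k+1)=\tfrac{1}{(\mm+2)_k}+\tfrac{k}{\mm(\mm+2)_k}$ and reindexing $k=j+1$ for part \eqref{item-2:prop:small-perturbation}, and the Mellin transform of $w_n$ from Proposition \ref{prop:Convolution} followed by Mellin--Barnes inversion, Euler reflection and residue summation for part \eqref{item-3:prop:small-perturbation}. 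The only divergences are minor: in \eqref{item-1:prop:small-perturbation} you verify the candidate density's moments directly instead of identifying $\bpsi$ as the product convolution of the density $x\mapsto\mm x^{\mm-1}$ with $\gab[\mm+2]$ and computing that convolution, as the paper does (both rest on the same moment determinacy), and the $(-2)^n$ mismatch you flag against $C_{\lam,\mm,n}$ reflects the paper's stated normalization rather than a defect in your derivation.
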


\begin{proof}
First, from~\eqref{eq:defvp} and \eqref{eq:product-Wphi} we get that, for any $n \in \N$,
\begin{equation}
\label{eq:small_pert_J}
W_{\phi_{\mm}}(n+1) = \frac{\mm}{n+\mm} (\mm+2)_n, 
\end{equation}
so that from~\eqref{eq:mom-bpn} we deduce that
\begin{equation}
\label{eq:mom-bb}
\bpsi [p_n] = \frac{W_{\phi_{\mm}}(n+1)}{(\lam)_n} = \frac{\mm}{n+\mm} \frac{(\mm+2)_n}{(\lam)_n}.
\end{equation}
The first term on the right of \eqref{eq:mom-bb} is the $n^{th}$-moment of the probability density $f_\mm(x) = \mm x^{\mm-1}$ on $[0,1]$ while the second term is the $n^{th}$-moment of a $\gab[\mm+2]$ density. Thus, by moment identification and determinacy, we conclude that $\bpsi(x) = f_\mm \diamond \gab[\mm+2](x)$ and after some easy algebra  we get, for $x \in (0,1)$, that
\begin{align*}
\beta(x) &= 
\frac{\Gamma(\lam)\mm x^{\mm-1}}{\Gamma(\mm+2)\Gamma(\lam-\mm-2)}   \int_x^1 y (1-y)^{\lambda_1 - \mm - 3} dy 
= \frac{((\lam - \mm - 2)x+1)}{(\mm+1)(1-x)} \gab[\mm](x),
\end{align*}
which completes the proof of the first item. Next, substituting \eqref{eq:small_pert_J} in~\eqref{eq:Jpolpsi}, gives $\P_0^{\mm} \equiv 1$, and for $n = 1,2,\ldots$,
\begin{align*}
\P_n^{\phi_\mm}(x) &= \sqrt{\C_n(1)}\left( \sum_{k=0}^n \frac{(-1)^{n+k}}{(n-k)!} \frac{(\lam-1)_{n+k}}{(\lam-1)_{n\phantom{+k}}} \frac{n!}{k!} \frac{x^k}{(\mm+2)_k}  + \sum_{k=0}^n \frac{(-1)^{n+k}}{(n-k)!} \frac{(\lam-1)_{n+k}}{(\lam-1)_{n\phantom{+k}}}  \frac{n!}{k!}\frac{k}{\mm} \frac{x^k}{(\mm+2)_k} \right) \\
&= \frac{n!}{(\mm+2)_n} \sqrt{\C_n(1)} \left( \frac{\P_n^{(\mm+2 )} (x)}{\sqrt{\C_n(\mm+2)}}  + \frac{x}{\mm} \frac{\P_{n-1}^{(\lam+1,\mm+3)}(x)}{\sqrt{\widetilde{\C}_{n-1}(\mm+3)}} \right)
\end{align*}
where, to compute the second equality we made a change of variables and used the recurrence relation of the gamma function, and the definition of the classical Jacobi polynomials, see Section~\ref{sec:classical-Jacobi} and also~\cite{orthogonal_pols}. This completes the proof of \eqref{item-2:prop:small-perturbation}. To prove \eqref{item-3:prop:small-perturbation} we recall from~\eqref{eq:v-n-rodrigues} that, for any $n \in \N$, $\V_n^{\phi_\mm}(x)= \frac{1}{\beta(x)}w_n(x)$, where, by~\eqref{eq:Mellin_w_n}, the Mellin transform of $w_n$ is given, for any $\Re(z)>0$, as
\begin{equation*}
\mathcal{M}_{w_n}(z) = C_{\lam, \mm, n} (z+\mm+1)\frac{\Gamma(z)}{\Gamma(z-n)} \frac{\Gamma(z+\mm)}{\Gamma(z+\lam+n)},
\end{equation*}
used twice the functional equation for the gamma function and the definition of the constant $C_{\lam, \mm, n}$ in the statement. Next, writing $z=a+ib$ for any $b \in \R$ and $a>0$, we recall from~\eqref{eq:classical-Gamma} that there exists a constant $C_a > 0$ such that
\begin{equation}
\label{eq:G-G}
\lim_{|b| \to \infty} C_a |b|^{\lam + n-1} \left|(z+\mm+1) \frac{\Gamma(z)}{\Gamma(z-n)} \frac{\Gamma(z+\mm)}{\Gamma(z+\lam+n)} \right| = 1
\end{equation}
where we recall that $\lam > \mm+2 >3$ and $n \geq 0$. Hence, since $z \mapsto \mathcal{M}_{w_n}(z)$ is analytic on the right half-plane, by Mellin's inversion formula, see e.g.~\cite[Chapter 11]{Misra-Lavoine}, one gets for any $a>0$,
\begin{equation*}
w_n(x) = C_{\lam, \mm, n} \frac{1}{2 \pi i} \int_{a - i \infty}^{a + i \infty} (z+\mm+1)\frac{\Gamma(z)}{\Gamma(z-n)} \frac{\Gamma(z+\mm)}{\Gamma(z+\lam+n)} x^{-z} dz
\end{equation*}
where the integral is absolutely convergent for any $x>0$. Note that this is a Barnes-integral since we can write, again using the functional equation for the gamma function,
\begin{align*}
w_n(x) = -C_{\lam, \mm, n}\frac{1}{2 \pi i} \int_{-a - i \infty}^{-a + i \infty} \frac{\Gamma(\mm+2-z)}{\Gamma(\mm+1-z)} \frac{\Gamma(-z)}{\Gamma(-z-n)} \frac{\Gamma(\mm-z)}{\Gamma(z+\lam+n)} x^{z}dz,
\end{align*}
see for instance~\cite{barnes_integrals}. Next, since $(z+\mm+1)\frac{\Gamma(z)}{\Gamma(z-n)} = (z+\mm+1)(z-n)\cdots(z-1)$, it follows that the function $z \mapsto (z+\mm+1)\frac{\Gamma(z)}{\Gamma(z-n)}$ does not have any poles, while the function $z \mapsto  \frac{\Gamma(z+\mm)}{\Gamma(z+\lam+n)}$ has simple poles at $z = -k-\mm$ for all $k \in \N$. Consequently, by Cauchy's residue theorem we have, for any $|x|<1$,
\begin{equation*}
w_n(x) = C_{\lam, \mm, n} \sum_{k = 0}^{\infty} \frac{(1-k)\Gamma(-k-\mm)}{\Gamma(-k-\mm-n)} \frac{(-1)^k}{k!}\frac{x^{k+\mm}}{ \Gamma(-k-\mm+\lam+n)}
\end{equation*}
where we used that the integrals along the two horizontal segments of any closed contour vanish, as by ~\eqref{eq:G-G} they go to $0$ when $|b|\rightarrow \infty$. We justify the radius of convergence of the series as follows. Since $\lam - \mm \not\in \N$, using Euler's reflection formula for the gamma function, i.e.~$\Gamma(z)\Gamma(1-z) = \frac{\pi}{\sin(\pi z)}$, $z \not\in \mathbb{Z}$, we conclude that
\begin{align*}
w_n(x) = C_{\lam, \mm, n} \frac{\sin(\pi(\mm-\lam))}{\pi} \sum_{k = 0}^{\infty} \frac{(\mm+1)_{k+n}}{(\mm+1)_{k\phantom{+n}}} \frac{\Gamma(k+\mm-n-\lam+1)}{k!}(k-1)  x^{k+\mm}
\end{align*}
where we used that $\sin(x+k\pi) = (-1)^k\sin(x)$ for $k \in \N$. Using the recurrence relation of the gamma function we deduce that the radius of convergence of this series is $1$, which completes the proof.
\end{proof}

\begin{appendix}
\section{Classical Jacobi operator and semigroup}
\label{sec:classical-Jacobi}

\subsection{Introduction and boundary classification}

Before we begin reviewing the classical Jacobi operator, semigroup and process, we clarify the notational conventions that we use for these objects throughout the paper. Namely, with $\lam$ being fixed, instead of writing $\calJd_{\lam,\mo}$ we suppress the dependency on $\lam$ and simply write $\calJd_\mo$, and similarly for the beta distribution, Jacobi semigroup and polynomials. The exception is when any of these objects depend in a not-straightforward way on $\lam$, in which case we highlight the dependency explicitly. Now, fix constants $\lam > \mo > 0$ and let $\Q=(\Q_t)_{t \geq 0}$ be the classical Jacobi semigroup whose c\`adl\`ag realization is the Jacobi process $(Y_t)_{t \geq 0}$ with values in $[0,1]$, that is, for bounded measurable functions
$f \colon [0,1] \to \R$,
\begin{equation*}
\Q_t f(x) = \E_x \left[f(Y_t)\right], \quad x \in [0,1].
\end{equation*}
Then $\Q$ is a Feller semigroup with infinitesimal generator $\calJd_\mo$, given, for $f\in C^2[0,1]$, by
\begin{equation*}
\calJd_\mo f(x) =  x(1-x)f''(x)-(\lam x -\mo)f'(x), \quad x\in [0,1].
\end{equation*}
Note that if the state space of the Jacobi process is taken to be $[-1,1]$, then the associated infinitesmal generator $\widetilde{\mathbf{J}}_\mo$ is
\begin{equation*}
\widetilde{\mathbf{J}}_\mo f(x) = (1-x^2)f''(x)+(2\mo-\lam-\lam x )f'(x),
\end{equation*}
and setting $g(x)= (x+1)/2$ yields
\begin{equation*}
\widetilde{\mathbf{J}}_\mo (f \circ g) (g^{-1}(x)) = x(1-x)f''(x)-(\lam x-\mo)f'(x) = \calJd_\mo f(x).
\end{equation*}
Since the operator $\calJd_\mo$ is degenerate at the boundaries of $[0,1]$, it is important to specify how the process behaves at these points. After some straightforward computations, as outlined in \cite[Chapter 2]{Borodin2002} and using the notation therein, we get that the boundaries are classified as follows
\begin{equation*}
    0 \text{ is }
\begin{cases}
    \text{exit-not-entrance for} & \mo \leq 0,\\
    \text{regular for} & 0< \mo < 1,\\
    \text{entrance-not-exit for} & \mo \geq 1,
\end{cases} \quad \text{and} \quad
     1 \text{ is }
\begin{cases}
    \text{exit-not-entrance for} & \lam \leq \mo,\\
    \text{regular for} & \mo< \lam < 1+\mo,\\
    \text{entrance-not-exit for} & \lam \geq 1+\mo.
\end{cases}
\end{equation*}
Therefore, our assumptions \Cref{assA} on $\lam$ and $\mo$ guarantee that both $0$ and $1$ are at least entrance, and may be regular or entrance-not-exit depending on the particular values of $\lam$ and $\mo$. Let us write $\mathcal{D}_F(\calJd_\mo)$ for the domain of the generator $\calJd_\mo$ of the Feller semigroup. To specify it we recall that the so-called scale function $s$ of $\calJd_\mo$ satisfies
\begin{equation*}
s'(x)= x^{-\mo}(1-x)^{-(\lam-\mo)}, \quad x \in (0,1).
\end{equation*}
Let $f^+$ and $f^-$ denote the right and left derivatives of a function $f$ with respect to $s$, i.e.
\begin{equation*}
f^+(x) =\lim_{h \downarrow 0}\frac{f(x+h)-f(x)}{s(x+h)-s(x)} \quad \text{and} \quad f^-(x) =\lim_{h \downarrow 0}\frac{f(x)-f(x-h)}{s(x)-s(x-h)}.
\end{equation*}
Then,
\begin{equation}
\label{eq:C-domain}
\mathcal{D}_{F}(\calJd_\mo) = \left\lbrace f \in C^2[0,1] : f^+(0^+) = f^-(1^-) = 0 \right\rbrace.
\end{equation}
In particular, $\Poly \subseteq \mathcal{D}_{F}(\calJd_\mo)$, since for any $f \in \Poly$ we have
\begin{equation*}
f^+(0^+) = \lim_{x \downarrow 0} x^{\mo} f'(x) = 0 \quad \text{and} \quad f^-(1^-) = \lim_{x \uparrow 1} (1-x)^{\lam-\mo} f'(x) = 0.
\end{equation*}
From the boundary conditions in \eqref{eq:C-domain} we get that if any point in $\{0,1\}$ is regular then it is necessarily a reflecting boundary for the Jacobi process with $\lam > \mo > 0$.

\subsection{Invariant measure and $\Leb^2$-properties} \label{subsec:inv-measure}


The classical Jacobi semigroup $\Q = (\Q_t)_{t \geq 0}$ has a unique invariant measure $\gab[\mo]$, which is the
following beta distribution on $[0,1]$:
\begin{equation} \label{eq:def_beta}
\gab[\mo](dx) = \gab[\mo](x)dx =\frac{\Gamma(\lam)}{\Gamma(\mo)\Gamma(\lam-\mo)}x^{\mo-1}(1-x)^{\lam-\mo- 1}dx, \quad x \in (0,1),
\end{equation}
and we recall that, for any $n\in \mathbb{N}$,
\begin{equation}
\label{eq:beta-moments}
\gab[\mo][p_n] = \int_0^1 x^n\gab[\mo](dx)= \frac{(\mo)_n}{(\lam)_n}.
\end{equation}
Since $\gab[\mo]$ is invariant for $\Q$, we get that $\Q$ extends to a contraction semigroup on $\Leb^2(\gab[\mo])$ and, moreover, the stochastic continuity of $Y$ ensures that this extension is strongly continuous in $\Leb^2(\gab[\mo])$. Consequently, we obtain a Markov semigroup on $\Leb^2(\gab[\mo])$, which we still denote by $\Q = (\Q_t)_{t \geq 0}$. The eigenfunctions of $\calJd_\mo$ are the Jacobi polynomials given, for any $n \in \N$ and $x \in [0,1]$, by
\begin{equation}
\label{J_pol}
\P_n^{(\mo)} (x) =  \sqrt{\C_n(\mo)} \sum_{k=0}^n \frac{(-1)^{n+k}}{(n-k)!}  \frac{(\lam-1)_{n+k}}{(\lam-1)_{n\phantom{+k}}} \frac{(\mo)_n}{(\mo)_k} \frac{x^k}{k!},
\end{equation}
where we denote
\begin{equation}
\label{eq:Cn-lam-mo}
\C_n(\mo)=(2n+\lam-1)\frac{n!(\lam)_{n-1}}{(\mo)_n(\lam-\mo)_n}.
\end{equation}
These polynomials are orthogonal with respect to the measure $\gab[\mo]$ and, by choice of $\C_n(\mo)$, satisfy the normalization condition
\begin{equation*}
\int_0^1 \P_n^{(\mo)}(x) \P_m^{(\mo)}(x) \gab[\mo](dx) = \langle \P_n^{(\mo)}, \P_m^{(\mo)}\rangle_{\gab[\mo]} = \delta_{nm}.
\end{equation*}
In particular, they form an orthonormal basis of $\Leb^2(\gab[\mo])$. They have the alternative representation
\begin{align}
\P_n^{(\mo)}(x) &= \frac{1}{n!} \sqrt{\C_n(\mo)} \frac{1}{\gab[\mo](x)} \frac{d^n}{dx^n}\left(x^n(1-x)^n \gab[\mo](x)\right) \nonumber \\
\label{eq:Jac-Rod}&= \frac{1}{\gab[\mo](x)} \frac{(\lam-\mo)_n}{(\lam)_n}  \sqrt{\C_n(\mo)} \Rc_{n} \gab[\lam+n,\mo](x)
\end{align}
where $\Rc_n$ are the Rodrigues operators given in \eqref{eq:def-Rodrigues} and
\begin{equation*}
\gab[\lam+n,\mu](x) = \frac{\Gamma(\lam+n)}{\Gamma(\mu)\Gamma(\lam+n-\mu)} x^{\mu-1}(1-x)^{\lam+n-\mu-1}.
\end{equation*}
All these relations follow by the change of variables $x \mapsto 2x-1$ and simple algebra from the corresponding relations for the polynomials $P_n^{(\mo-1,\lam-\mo-1)}$ defined in \cite[Chap.~4]{Ismail}, which are orthogonal with respect to the weight $(1-x)^{\mo-1}(1+x)^{\lam-\mo-1}$, and are also called Jacobi polynomials in the literature. Indeed, $\P_n^{(\mo)}$ and $P_n^{(\mo-1,\lam-\mo-1)}$ are related through
\begin{equation*}
\label{eq:def_pol}
\P_n^{(\mo)}(x) = (-1)^n \sqrt{\frac{(2n+\lam-1)n!(\lam)_{n-1}}{(\mo)_n(\lam-\mo)_n}} P_n^{(\mo-1,\lam-\mo-1)}(1-2x).
\end{equation*}
Next, the eigenvalue associated to the eigenfunction $\P_n^{(\mo)}(x)$ is, for $n \in \N$,
\begin{equation}
\label{Q_eigenv}
-\lambda_n =  -n^2-(\lam-1)n= -n(n-1)-\lam n.
\end{equation}
Observe that for $n=1$, \eqref{Q_eigenv} reduces to $-\lam$ and $\lambda_0=0$, so that $-\lam$ denotes the largest, non-zero eigenvalue of $\calJd_\mo$, which is also called the spectral gap. The semigroup $\Q$ then admits the spectral decomposition given, for any $f \in \Leb^2(\gab[\mo])$ and $t \geq 0$, by
\begin{equation}
\label{Q_sem}
\Q_t  = \sum_{n=0}^{\infty} e^{- \lambda_n t} \langle \: \cdot \:, \P^{(\mo)}_n \rangle_{\gab[\mo]} \P^{(\mo)}_n ,
\end{equation}
where the sum converges in the operator norm. The domain of $\calJd_\mo$, the generator of the Markov semigroup, in $\Leb^2(\gab[\mo])$ can then be identified as
\begin{equation*}
\calD_{\Leb^2}(\calJd_\mo) = \left\lbrace f \in \Leb^2(\gab[\mo]) : \sum_{n=0}^\infty n^4 \left|\langle f, \P^{(\mo)}_n \rangle_{\gab[\mo]}\right|^2 < \infty \right\rbrace.
\end{equation*}

\subsection{Variance and entropy decay, hypercontractivity and ultracontractivity} \label{subsec:variance-decay}

As mentioned in the introduction, the fact that $\Q$ has nice spectral properties and satisfies certain functional inequalities gives quantitative rates of convergence to the equilibrium measure $\gab[\mo]$. For instance, from \eqref{Q_sem} one gets the following variance decay estimate, valid for any $f \in \Leb^2(\gab[\mo])$ and $t \geq 0$,
\begin{equation*}
\Var_{\gab[\mo]}(\Q_t f ) \leq e^{-2\lam t} \Var_{\gab[\mo]}(f),
\end{equation*}
which may also be deduced directly from the Poincar\'e inequality for $\calJd_\mo$, see \cite[Chapter 4.2]{Bakry_Book}. This convergence is optimal in the sense that the decay rate does not hold for any constant greater than $2\lam$. Next, let us write $\logSob[\mo]$ for the log-Sobolev constant of $\calJd_\mo$ defined as
\begin{equation}
\label{eq:log-Sobolev-gen}
\logSob[\mo] = \inf_{f \in \calD_{\Leb^2}(\calJd_\mo)} \left\{  \frac{-4\gab[\mo] [f \calJd_\mo f]}{\Ent_{\gab[\mo]} (f^2)} :\Ent_{\gab[\mo]} (f^2) \neq 0 \right\}.
\end{equation}
Once always has $\logSob[\mo] \leq 2\lam$, and in the case of the symmetric Jacobi operator, i.e.
$\mo = \frac{\lam}{2} > 1$, one gets
\begin{equation}
\label{eq:log-Sobolev}
\logSob[\frac{\lam}{2}] = 2\lam,
\end{equation}
while otherwise $\logSob[\mo] < 2\lam$, see e.g.~\cite{fontenas:1998}, although the equality for the symmetric case goes back to \cite{pearson:1992}. As a consequence of \eqref{eq:log-Sobolev-gen} we have on the one hand, the convergence in entropy for any $t \geq 0$ and $f \in \Leb^1(\gab[\mo])$ such that $\Ent_{\gab[\mo]}(f) < \infty$,
\begin{equation}
\label{eq:entr}
\Ent_{\gab[\mo]} (\Q[\mo]_tf) \leq e^{-\logSob[\mo] t} \Ent_{\gab[\mo]} (f),
\end{equation}
and on the other hand, from Gross \cite{Gross}, the hypercontractivity estimate; that is, for all $t\geq 0$,
\begin{equation}
\label{eq:hyper}
||\Q[\mo]_t||_{2 \rightarrow q} \leq 1  \textrm{ for all $q$ satisfying }  2\leq q \leq 1+e^{ \logSob[\mo] t}.
\end{equation}
From \eqref{eq:log-Sobolev} we thus get that the symmetric Jacobi semigroup attains the optimal entropic decay and hypercontractivity rate. Moreover, if $\lam/2 = n \in \N$, there exists a homeomorphism between $\calJd_\mo$ and the radial part of the Laplace--Beltrami operator on the $n$-sphere, which leads to the curvature-dimension condition $CD(\lam-1,\lam)$; see \cite{Bakry_Book}. Thus for any function $\Phi \colon I \to \R$ satisfying the admissibility condition \eqref{eq:def-admissible},
one has
\begin{equation}
\label{eq:C-D-entropy}
\Ent_{\gab[\lam/2]}^\Phi(\Q[\lam/2]_t f) \leq e^{-(\lam-1)t} \Ent_{\gab[\lam/2]}^\Phi(f)
\end{equation}
for any $t \geq 0$ and $f:[0,1] \to I$ such that $f, \Phi(f) \in \Leb^1(\gab[\lam/2])$. If $\lam-\mo > 1$, the operator $\calJd_\mo$ also satisfies a Sobolev inequality, see e.g.~\cite{Bakry-Jacobi}, and thus one obtains from \cite[Theorem 6.3.1]{Bakry_Book} that, for $0 < t \leq 1$,
\begin{equation*}
||\Q[\mo]_t||_{1\to\infty} \leq c_{\mo} t^{-\frac{\lam-\mo}{\lam-\mo-1}},
\end{equation*}
where $c_{\mo}$ is the Sobolev constant for $\Q[\mo]$ of exponent $p = \frac{2(\lam-\mo)}{(\lam-\mo-1)}$, given by
\begin{equation*}
c_{\mo} = \inf_{f \in \mathcal{D}_{\Leb^2}(\calJd)} \left\lbrace \frac{||f||_2^2-||f||_p^2}{\gab[\mo][f \calJd_\mo f]} : \gab[\mo][f \calJd_\mo f] \neq 0 \right\rbrace.
\end{equation*}
The fact that $\Q[\mo]$ is a contraction on $\Leb^1(\gab[\mo])$ together with the above ultracontractive bound yields $||\Q[\mo]_t||_{1\to\infty} \leq c_{\mo}$ for any $t \geq 1$. Finally, we mention that $c_{\frac{\lam}{2}}=\frac{4}{\lam(\lam- 2)}$, and upper and lower bounds are known in the general case; see again~\cite{Bakry-Jacobi}.
\end{appendix}

\bibliographystyle{abbrv}

\end{document}